\numberwithin{equation}{section}    
\newtheorem{thm}{Theorem}[section]
\newtheorem{lem}[thm]{Lemma}
\newtheorem{proposition}[thm]{Proposition}
\newtheorem{corollary}[thm]{Corollary}
\theoremstyle{definition}
\newtheorem{example}[thm]{Example}
\newtheorem{definition}[thm]{Definition}
\newtheorem{rmk}[thm]{Remark}
\DeclareMathOperator{\I}{I}
\DeclareMathOperator{\Dyck}{Dyck}
\DeclareMathOperator{\type}{type}
\DeclareMathOperator{\Krew}{Krew}
\DeclareMathOperator{\area}{area}
\DeclareMathOperator{\MLD}{MLD}
\DeclareMathOperator{\bounce}{bounce}
\DeclareMathOperator{\grFrob}{grFrob}
\DeclareMathOperator{\csum}{rsum}
\DeclareMathOperator{\rsum}{csum}
\DeclareMathOperator{\Merge}{Merge}
\DeclareMathOperator{\Split}{Split}
\DeclareMathOperator{\Inv}{Inv}
\DeclareMathOperator{\coarea}{coarea}
\DeclareMathOperator{\pinv}{pinv}
\DeclareMathOperator{\Park}{Park}
\DeclareMathOperator{\iDes}{iDes}
\DeclareMathOperator{\Des}{Des}
\DeclareMathOperator{\bw}{bw}
\DeclareMathOperator{\BW}{BW}
\DeclareMathOperator{\maj}{maj}
\DeclareMathOperator{\arm}{arm}
\DeclareMathOperator{\leg}{leg}
\DeclareMathOperator{\coleg}{coleg}
\DeclareMathOperator{\coarm}{coarm}
\DeclareMathOperator{\inv}{inv}
\DeclareMathOperator{\dinv}{dinv}
\DeclareMathOperator{\st}{st}
\DeclareMathOperator{\LLB}{LB}
\DeclareMathOperator{\LB}{lb}
\DeclareMathOperator{\wt}{wt}
\DeclareMathOperator{\Set}{Set}
\DeclareMathOperator{\Comp}{Comp}
\DeclareMathOperator{\stat}{stat}
\DeclareMathOperator{\cycling}{Cyc}
\DeclareMathOperator{\DR}{DR}
\DeclareMathOperator{\CC}{\mathbb{C}}
\DeclareMathOperator{\rev}{rev}
\DeclareMathOperator{\dimm}{dim}
\DeclareMathOperator{\boldOP}{\mathbf{OP}}
\DeclareMathOperator{\Mat}{Mat}
\DeclareMathOperator{\Matoo}{Mat^{\geq0}}
\DeclareMathOperator{\rred}{red}
\title{Shuffle formula in science fiction for Macdonald polynomials}
\author{Donghyun Kim}
\address{Department of Mathematical Sciences \\ Seoul National University \\Seoul 08826 \\ Republic of Korea}
\email{hyun920310@snu.ac.kr}
\author{Seung Jin Lee}
\address{Department of Mathematical Sciences \\ Research institute of Mathematics \\ Seoul National University \\ Seoul 08826 \\ Republic of Korea}
\email{lsjin@snu.ac.kr}
\author{Jaeseong Oh}
\address{June E Huh Center for Mathematical Challenges \\ Korea Institute for Advanced Study \\ Seoul 08826 \\ Republic of Korea}
\email{jsoh@kias.re.kr}
\begin{document}

\maketitle

\begin{abstract}
We initiate the study of the Macdonald intersection polynomials $\operatorname{I}_{\mu^{(1)},\dots,\mu^{(k)}}[X;q,t]$, which are indexed by $k$-tuples of partitions $\mu^{(1)},\dots,\mu^{(k)}$. These polynomials are conjectured to be equal to the bigraded Frobenius characteristic of the intersection of Garsia-Haiman modules, as proposed by the science fiction conjecture of Bergeron and Garsia. In this work, we establish the vanishing identity and the shape independence of the Macdonald intersection polynomials. Additionally, we unveil a remarkable connection between $\operatorname{I}_{\mu^{(1)},\dots,\mu^{(k)}}$ and the character $\nabla e_{k-1}$ of diagonal coinvariant algebra by employing the plethystic formula for the Macdonald polynomials of Garsia--Haiman--Tesler. Furthermore, we establish a connection between $\operatorname{I}_{\mu^{(1)},\dots,\mu^{(k)}}$ and the shuffle formula $D_{k-1}[X;q,t]$, utilizing novel combinatorial tools such as the column exchange rule and the lightning bolt formula for Macdonald intersection polynomials. Notably, our findings provide a new proof for the shuffle theorem.
\end{abstract}
\textit{Key words:} Macdonald polynomials, Macdonald intersection polynomials, science fiction conjecture, shuffle theorem, Kreweras numbers.

\textit{2020 Mathematics Subject Classification [MSC] codes:} 05E05, 05E10.
\section{Introduction}\label{Sec: Introduction}
Inspired by the science fiction conjecture proposed by Bergeron and Garsia \cite{BG99}, we examine \emph{Macdonald intersection polynomials}, which are conjectured to be equal to the bigraded Frobenius characteristic of the intersection of Garsia-Haiman modules. To provide a comprehensive overview of the primary results, we discuss the historical context and background of Macdonald polynomials, the shuffle theorem, and the science fiction conjecture.

\subsection{Macdonald polynomials}
In his seminal paper \cite{Mac88}, Macdonald introduced the \emph{Macdonald $P$-polynomials} $P_\mu[X;q,t]$, which can be seen as a $q,t$ generalization of Schur functions. Macdonald polynomials specialize to important families of symmetric functions such as Jack symmetric functions, Hall-Littlewood polynomials. They have been widely researched and have numerous applications in representation theory, algebraic geometry, and mathematical physics, among others. Garsia and Haiman introduced a combinatorial variant of Macdonald polynomials called the \emph{modified Macdonald polynomial}, which is defined by the plethystic substitution of the \emph{Macdonald $J$-polynomial} of \cite{Mac88}:
\[
    \widetilde{H}_\mu[X;q,t] = t^{n(\mu)}J_\mu\left[\dfrac{X}{1-t^{-1}};q,t^{-1}\right].
\]
One of the most fascinating questions surrounding Macdonald polynomials was the \emph{Macdonald positivity conjecture}, which deals with the Schur positivity of the modified Macdonald polynomials.

In 1993, Garsia and Haiman introduced the \emph{Garsia-Haiman module} $V_\mu$ associated to a partition $\mu\vdash n$ as the subspace in $\CC[\mathbf{x}_n,\mathbf{y}_n]\coloneqq\mathbb{C}[x_1,\dots,x_n,y_1,\dots,y_n]$ spanned by partial derivatives of the `generalized' Vandermonde determinant $\Delta_\mu$ associated to $\mu$ \cite{GH93, GH96module}. They expected that the Garsia-Haiman modules serve as a representation-theoretic model for the modified Macdonald polynomials. More specifically, they conjectured that
\[
    \grFrob(V_\mu;q,t) = \widetilde{H}_\mu[X;q,t],
\]
where $\grFrob$ denotes the bigraded Frobenius characteristic. This conjecture is known as the \emph{$n!$--conjecture}, which in particular implies that $\dimm V_\mu = n!$. Haiman \cite{Hai01} ultimately proved the $n!$--conjecture by utilizing the geometry of the Hilbert scheme of $n$ points in a plane, thereby confirming the Macdonald positivity conjecture. However, his proof does not offer a combinatorial formula for Schur coefficients, the \emph{$(q,t)$--Kostka polynomials} $\widetilde{K}_{\lambda,\mu}(q,t)$ which remains a significant open problem.

The celebrated result of Haglund, Haiman, and Loehr made important progress in giving a combinatorial formula for the modified Macdonald polynomials. For a partition $\mu$ of $n$, the Haglund--Haiman--Loehr formula \cite{HHL05} gives
\begin{equation}\label{Eq: HHL formula}
    \widetilde{H}_\mu[X;q,t] = \sum_{w \in \mathfrak{S}_n} q^{\inv_\mu(w)} t^{\maj_\mu(w)} F_{\iDes(w)}.
\end{equation}
Here, the sum is over all permutations of $[n]$, the functions $\inv_\mu$ and $\maj_\mu$ are certain statistics associated to a partition $\mu$, and $F$ is the (Gessel's) fundamental quasisymmetric function.

\subsection{The nabla operator and shuffle theorem}

The \emph{nabla operator} $\nabla$, introduced by Bergeron and Garsia, is an operator acting on symmetric functions with coefficients in $\mathbb{Q}(q,t)$. It is an eigen-operator for the {modified Macdonald polynomials}:
\[
    \nabla \widetilde{H}_\mu[X;q,t] = T_\mu \widetilde{H}_\mu[X;q,t]
\]
where $T_\mu\coloneqq\prod_{(i,j)\in\mu}t^{i-1}q^{j-1}$. Remarkably, the nabla operator is closely related to the \emph{diagonal coinvariant algebra} $\DR_n$ which is defined as 
\[
    \DR_n\coloneqq\CC[\mathbf{x}_n,\mathbf{y}_n]/\langle\CC[\mathbf{x}_n,\mathbf{y}_n]^{\mathfrak{S}_n}_+\rangle,
\]
where the symmetric group $\mathfrak{S}_n$ acts on the polynomial ring $\CC[\mathbf{x}_n,\mathbf{y}_n]$ by the diagonal action. Haiman \cite{Hai02} proved the \emph{$(n+1)^{n-1}$--conjecture} which asserts that $\dimm \DR_n = (n+1)^{n-1}$ and provided a formula for the bigraded Frobenius character of $\DR_n$,
\[
    \grFrob(\DR_n) = \nabla e_n,
\]
where $e_n$ is the $n$-th elementary symmetric function. 

The \emph{shuffle conjecture}, proposed by Haglund, Haiman, Loehr, Remmel, and Ulyanov \cite{HHLRU05} anticipates a combinatorial formula
\[
    D_n[X;q,t]\coloneqq\sum_{(\pi,w) \in \Park_n}t^{\area(\pi)}q^{\dinv(\pi,w)} F_{\iDes(w)}
\]
for $\nabla e_n$. Here, the sum is over all \emph{parking functions} $(\pi,w)$ of length $n$, and the functions $\area$ and $\dinv$ are statistics associated to each parking function. We refer to the generating function $D_n[X;q,t]$ as the \emph{shuffle formula}. Carlsson and Mellit \cite{CM18} eventually proved the shuffle conjecture by introducing the Dyck path algebra, which is closely related to the double affine Hecke algebra. Since then, numerous captivating aspects of the nabla operator and its generalizations, including their combinatorial formulas, have been extensively investigated \cite{Hai02, Mel18, BHMPS21LW, CM21, BHMPS21Delta, DM22}.

\subsection{Science fiction conjecture and Macdonald intersection polynomials}

Prior to Haiman's proof of the Macdonald positivity conjecture, Bergeron and Garsia proposed a conjectural combinatorial approach for constructing a basis for $V_\mu$ in \cite{BG99}, known as the \emph{science fiction conjecture}. Specifically, they presented series of conjectures concerning the intersection of Garsia-Haiman modules. To elaborate, for a partition $\mu\vdash n+1$, let $\mu^{(1)},\dots,\mu^{(k)}\subseteq \mu$ be $k$ distinct partitions such that
\[
    |\mu/\mu^{(1)}|=\cdots=|\mu/\mu^{(k)}|=1.
\] 
Let $T^{(i)}\coloneqq T_{\mu^{(i)}}$ for $i=1,\dots,k$. The science fiction conjecture suggests that
\begin{enumerate}
    \item the bigraded Frobenius characteristic of the intersection of the modules $\bigcap_{i=1}^{k} V_{\mu^{(i)}}$ is given by
    \[
        \I_{\mu^{(1)},\dots,\mu^{(k)}}[X;q,t]\coloneqq\sum_{i=1}^{k} \left(\prod_{j\neq i}\dfrac{T^{(j)}}{T^{(j)}-T^{(i)}}\right)\widetilde{H}_{\mu^{(i)}}[X;q,t], \qquad \text{and}
    \] 
    \item the dimension of $\bigcap_{i=1}^{k} V_{\mu^{(i)}}$ is $n!/k$.
\end{enumerate}
We refer to the symmetric function $\I_{\mu^{(1)},\dots,\mu^{(k)}}[X;q,t]$ as the \emph{Macdonald intersection polynomial}, and the second assertion as the $n!/k$--conjecture. In an earlier paper \cite{KLO22}, the authors examined Macdonald intersection polynomials for two partitions (when $k=2$), which was considered in Butler's conjecture \cite{But94}. More specifically, the authors provided a (positive) combinatorial formula for $\I_{\mu^{(1)},\mu^{(2)}}[X;q,t]$ that is analogous to the Haglund--Haiman--Loehr formula \eqref{Eq: HHL formula}. In this paper, we delve deeper into the properties of Macdonald intersection polynomials $\I_{\mu^{(1)},\dots,\mu^{(k)}}[X;q,t]$.

\subsection{Main results} We state the first main result of this paper. Direct implication of Theorem \ref{thm: main theorem} (b) and (c) is a new proof of the shuffle theorem.

\begin{thm}\label{thm: main theorem}
Given a partition $\mu \vdash n+1$, let $\mu^{(1)},\dots,\mu^{(k)}\subseteq \mu$ be $k$ distinct partitions such that
\[
    |\mu/\mu^{(1)}|=\cdots=|\mu/\mu^{(k)}|=1.
\]
The Macdonald intersection polynomial $\I_{\mu^{(1)},\dots,\mu^{(k)}}[X;q,t]$ satisfies the following.
\begin{enumerate}[(a)]
    \item For $m<k-1$, we have
    \begin{equation*}
        e_{n-m}^\perp\I_{\mu^{(1)},\dots,\mu^{(k)}}[X;q,t]=0.
    \end{equation*}
    \item We have
    \begin{equation*}
        \dfrac{1}{T_{\bigcap_{i=1}^k \mu^{(i)}}}e_{n+1-k}^\perp\left(\I_{\mu^{(1)},\dots,\mu^{(k)}}[X;q,t]\right) = \nabla e_{k-1}.
    \end{equation*}
    In particular, $e_{n+1-k}^\perp\I_{\mu^{(1)},\dots,\mu^{(k)}}[X;q,t]$  does not depend on the partitions $\mu^{(1)}, \dots, \mu^{(k)}$, and $\mu$ (up to a constant).
    \item We have
    \begin{equation*} 
        \dfrac{1}{T_{\bigcap_{i=1}^k \mu^{(i)}}}e_{n+1-k}^\perp\left(\I_{\mu^{(1)},\dots,\mu^{(k)}}[X;q,t]\right) = D_{k-1}[X;q,t].
    \end{equation*}
\end{enumerate}
Here, $e_n^\perp$ denotes the adjoint operator to the multiplication by $e_n$ with respect to the Hall inner product. 
\end{thm}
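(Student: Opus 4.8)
My plan is to prove the three parts in the order (a) $\to$ (b) $\to$ (c), deducing each from structural facts about the Macdonald intersection polynomials and the plethystic calculus of the modified Macdonald basis. For part (a) and (b) the key is the rational‐function expression defining $\I_{\mu^{(1)},\dots,\mu^{(k)}}$ as a sum of $\widetilde H_{\mu^{(i)}}$ weighted by $\prod_{j\ne i} T^{(j)}/(T^{(j)}-T^{(i)})$. I would expand $e_{m}^\perp \widetilde H_{\mu^{(i)}}$ via the Pieri-type / plethystic rule — most efficiently through the Garsia–Haiman–Tesler plethystic formula for $\widetilde H_\mu$, which writes $\widetilde H_\mu$ as a sum over ways to peel off rows or over fillings and makes the action of $e_m^\perp$ transparent. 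Applying $e_{n-m}^\perp$ to $\I_{\mu^{(1)},\dots,\mu^{(k)}}$ leaves a sum indexed by the common sub-partitions of all the $\mu^{(i)}$ of size $m+1$; for $m<k-1$ no such common shape can be reached (since the $\mu^{(i)}$ are $k$ distinct one-box extensions of $\mu$, their pairwise intersections have codimension $2$ in $\mu$, so removing fewer than $k-1$ boxes cannot land in all of them simultaneously), and the weighted sum of coefficients telescopes to $0$ by a partial-fractions identity of the form $\sum_i \prod_{j\ne i} \frac{T^{(j)}}{T^{(j)}-T^{(i)}} = 1$ together with its vanishing higher analogues. That same partial-fractions bookkeeping, carried one degree further, isolates the single common shape $\bigcap_{i=1}^k \mu^{(i)}$ (of size $n+1-k\cdot 1$ removed, i.e.\ $n-(k-1)$ boxes removed... more precisely $|\bigcap \mu^{(i)}| = n+1-k$) and yields (b): the right-hand side collapses to $T_{\bigcap \mu^{(i)}}$ times the Frobenius series that the Garsia–Haiman–Tesler identity recognizes as $\nabla e_{k-1}$, using Haiman's theorem $\grFrob(\DR_{k-1}) = \nabla e_{k-1}$.

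For part (c), which I expect to be the substantive one, the goal is to identify $\nabla e_{k-1}$ with the combinatorial side $D_{k-1}[X;q,t]=\sum_{(\pi,w)\in\Park_{k-1}} t^{\area(\pi)} q^{\dinv(\pi,w)} F_{\iDes(w)}$. Rather than invoking the shuffle theorem (the paper advertises a new proof), I would route through the intersection polynomials themselves: first establish a \emph{fermionic / lightning-bolt formula} for $e_{n+1-k}^\perp \I_{\mu^{(1)},\dots,\mu^{(k)}}$ directly from the HHL-type formula for $\I$ proved in \cite{KLO22} (for $k=2$) and its multi-partition generalization, expressing it as a positive sum over fillings of a ``lightning bolt'' skew-type region with statistics $\inv$ and $\maj$. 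Then, via the column exchange rule, I would put these fillings in weight-preserving bijection with parking functions of length $k-1$, matching $(\maj,\inv)$ on fillings with $(\area,\dinv)$ on $\Park_{k-1}$ and $\iDes$ with $\iDes$. Combined with part (b), this simultaneously proves $D_{k-1}=\nabla e_{k-1}$, i.e.\ the shuffle theorem, as a corollary.

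The main obstacle is the combinatorial bijection in part (c): reconciling the $(\maj,\inv)$ statistics that come naturally out of the HHL-style fermionic formula for the lightning-bolt fillings with the $(\area,\dinv)$ statistics on parking functions. These statistic pairs are known to be genuinely different in flavor — $\maj$ is a ``major-index'' sum read along columns while $\area$ counts boxes below a Dyck path, and $\inv$ vs.\ $\dinv$ differ by the usual $\zeta$-type transformation — so the bijection will need the column exchange rule as its engine, and verifying that it intertwines all three statistics ($\area\leftrightarrow\maj$, $\dinv\leftrightarrow\inv$, $\iDes\leftrightarrow\iDes$) simultaneously, while respecting the recursive ``peeling'' structure inherited from $e_{n+1-k}^\perp$, is where the real work lies. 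A secondary but nontrivial point is proving shape-independence cleanly: one must show the lightning-bolt fermionic formula depends only on $k$ and not on the ambient $\mu$, which I would handle by an explicit ``sliding'' argument on the fillings showing that changing $\mu$ induces a trivial relabeling.

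Secondarily, I would double-check the degree bookkeeping throughout — that $e_{n+1-k}^\perp$ is exactly the operator that lands on the common intersection shape — and make sure the partial-fraction identities are stated in the generality needed (for $k$ parameters $T^{(1)},\dots,T^{(k)}$, one needs $\sum_i \prod_{j\ne i} T^{(j)}/(T^{(j)}-T^{(i)}) = 1$ and the vanishing of the analogous sums of lower-degree symmetric functions in the $T^{(i)}$), since all three parts ultimately rest on them.
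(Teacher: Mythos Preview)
Your high-level toolkit (Garsia--Haiman--Tesler plethystic formula, partial-fraction identities, column exchange, lightning-bolt combinatorics, a fermionic formula) matches the paper's, but the mechanisms you describe for both (a) and (c) diverge from what actually works, and in (c) there is a genuine gap.

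\textbf{Parts (a) and (b).} The vanishing is not a ``common sub-partition'' argument. Applying $e_{n-m}^\perp$ to $\widetilde H_{\mu^{(i)}}$ does not naturally produce a sum over sub-shapes of $\mu^{(i)}$; rather, the paper expands $e_{n-m}^\perp \widetilde H_{\mu^{(i)}}$ in the Macdonald basis $\{\widetilde H_\lambda\}_{\lambda\vdash m}$ via the GHT formula and observes that the coefficient of each $\widetilde H_\lambda$ is a \emph{polynomial of degree $m$ in the single variable $z_i=T_{\mu^{(i)}}/T_\mu$}. The intersection polynomial then becomes $\sum_i P(z_i)/\prod_{j\ne i}(z_j-z_i)$ times a factor independent of $i$, and this vanishes whenever $\deg P<k-1$ by the elementary Lagrange-interpolation identity. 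Part (b) is the boundary case $m=k-1$: the same identity extracts the leading coefficient in $z_i$, which one computes via Macdonald--Koornwinder reciprocity to be $M\Pi_\lambda B_\lambda$, and the resulting sum over $\lambda\vdash k-1$ is exactly Haiman's explicit formula for $\nabla e_{k-1}$. Your shape-counting heuristic (``cannot land in all of them simultaneously'') is not the mechanism.

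\textbf{Part (c).} Two gaps. First, you invoke an ``HHL-type formula for $\I_{\mu^{(1)},\dots,\mu^{(k)}}$'' generalizing the $k=2$ result of \cite{KLO22}; no such formula is available for general $k$, and the paper explicitly flags finding one as very challenging. Second, the paper does \emph{not} construct a direct bijection between lightning-bolt fillings and parking functions matching $(\maj,\inv)$ with $(\area,\dinv)$. Instead it proves two independent fermionic formulas --- one for $\mathfrak F_\beta(D_{k-1})$ via modified labeled Dyck paths (Section~4), one for $\mathfrak F_{(n+1-k,\beta)}(\I)$ via the lightning-bolt computation (Section~6) --- and shows both equal the same algebraic expression $\sum_M t^{\cdots}q^{\cdots}\LB(M)$, a product of $q$-binomials. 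Crucially, the Section~6 computation is only carried out for the \emph{specific} shape $\mu=\overline\delta_k$ (the augmented staircase): shape independence is not proved separately by a sliding argument as you propose, but is imported wholesale from part (b) and used as the first reduction step. Without that reduction the lightning-bolt analysis you envision would have to work uniformly in $\mu$, which is substantially harder.
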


\begin{corollary}[Shuffle theorem \cite{CM18}]
We have $\nabla e_n=D_{n}[X;q,t]$.
\end{corollary}

We refer to the equation in Theorem~\ref{thm: main theorem} (a) as the \emph{vanishing identity}. The second and the third points give miraculous relationships between the Macdonald intersection polynomial $\I_{\mu^{(1)},\dots,\mu^{(k)}}[X;q,t]$, the character $\nabla e_{k-1}$ of the diagonal coinvariant algebra, and the shuffle formula $D_{k-1}[X;q,t]$. On the one hand, this suggests that there might be a representation-theoretic background to directly connect the intersection of Garsia--Haiman modules and the diagonal coinvariant algebra. On the other hand, our findings imply that finding an explicit combinatorial formula for the Macdonald intersection polynomials is very challenging.

We also study the specialization of Macdonald intersection polynomials at $q=t=1$. In \cite{BG99}, Bergeron and Garsia showed that the `dimension' of the Macdonald intersection polynomial is $n!/k$:
\begin{equation}\label{eq: mac int dimension}
     \langle \I_{\mu^{(1)},\dots,\mu^{(k)}}[X;1,1],e_{(1^n)}\rangle=\frac{n!}{k}
\end{equation}
which is compatible with the $n!/k$--conjecture. In this paper, we give the first explicit description of the Macdonald intersection polynomial at $q=t=1$.

Recall that the modified Macdonald polynomial specialized at $q=t=1$ gives
\[
    \widetilde{H}_\mu[X;1,1]=h_{(1^n)}
\]
where $n$ is the size of a partition $\mu$. As an analogue of the above equation, in the author's previous paper, it was shown that for a partition $\mu\vdash n+1$, and distinct partitions $\mu^{(1)},\mu^{(2)}\subseteq\mu$ such that $|\mu/\mu^{(1)}|=|\mu/\mu^{(2)}|=1$, we have
\[
    \I_{\mu^{(1)},\mu^{(2)}}[X;1,1] = h_{(2,1^{n-2})}.
\]
Our result generalizes these identities to arbitrary $k$.
\begin{thm}\label{thm: h-expansion, Kreweras} For a partition $\mu\vdash n+1$, let $\mu^{(1)},\dots,\mu^{(k)}\subseteq \mu$ be $k$ distinct partitions such that \[|\mu/\mu^{(1)}|=\cdots=|\mu/\mu^{(k)}|=1.\] Then we have
\[
    \I_{\mu^{(1)},\dots,\mu^{(k)}}[X;1,1]=\sum_{\lambda\vdash k-1}(-1)^{k-1-\ell(\lambda)}\Krew(\lambda+(1^{\ell(\lambda)}))h_{\lambda+(1^{n+1-k})},
\]
where $\Krew(\lambda)$ is the Kreweras number indexed by a partition $\lambda$. See Table~\ref{tab: Kreweras theorem} for the example. The definition of the Kreweras number is given in Section~\ref{Sec: Macdonald intersection polynomials at q=t=1}.
\end{thm}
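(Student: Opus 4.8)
\emph{The plan.}
The coefficients $\prod_{j\neq i}T^{(j)}/(T^{(j)}-T^{(i)})$ in the definition of $\I_{\mu^{(1)},\dots,\mu^{(k)}}[X;q,t]$ each acquire a pole of order $k-1$ at $q=t=1$ (all $T^{(i)}\to 1$ there), so one cannot substitute $q=t=1$ term by term — this is precisely why the naive guess $\widetilde H_\mu[X;1,1]=h_{(1^n)}$ fails once $k\geq 2$. Writing $r\coloneqq n+1-k$ and $m\coloneqq k-1$, I would argue in three stages: (i) show that $\I_{\mu^{(1)},\dots,\mu^{(k)}}[X;q,t]$ is regular at $q=t=1$ and that its value there has the restricted shape $\sum_{\lambda\vdash m}d_\lambda\,h_{\lambda+(1^r)}$ for some scalars $d_\lambda$; (ii) pin down the $d_\lambda$ using Theorem~\ref{thm: main theorem}(b); (iii) evaluate $\nabla e_m$ at $q=t=1$ and rewrite the answer in the $h$-basis.

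\emph{Stage (i): the shape, and the main obstacle.}
The vanishing identity Theorem~\ref{thm: main theorem}(a), read at $q=t=1$, only tells us that $\I_{\mu^{(1)},\dots,\mu^{(k)}}[X;1,1]$ lies in the span of the $s_\nu$ with at most $r$ rows, a space far larger than the span of the $h_{\lambda+(1^r)}$, $\lambda\vdash m$; the precise shape has to be extracted from an explicit combinatorial model. I would use the lightning-bolt (fermionic) formula for $\I_{\mu^{(1)},\dots,\mu^{(k)}}[X;q,t]$ proved earlier in the paper: specializing it at $q=t=1$ annihilates all $q,t$-weights and leaves a $\mathbb{Z}$-linear combination of the $h_{\lambda+(1^r)}$, morally because the underlying objects factor into $r$ ``inert'' cells contributing $h_1^{\,r}$ together with a size-$m$ active region. (A second route is induction on $k$ through the divided-difference recursion
\[
\I_{\mu^{(1)},\dots,\mu^{(k)}}=\frac{T^{(1)}T^{(k)}}{T^{(k)}-T^{(1)}}\;\nabla^{-1}\!\left(\I_{\mu^{(1)},\dots,\mu^{(k-1)}}-\I_{\mu^{(2)},\dots,\mu^{(k)}}\right),
\]
which one derives from $\I_{\mu^{(1)},\dots,\mu^{(k)}}=\sum_i x_i^{k-1}\widetilde H_{\mu^{(i)}}/\prod_{j\neq i}(x_i-x_j)$ with $x_i\coloneqq T_\mu/T^{(i)}$, together with $x_i\widetilde H_{\mu^{(i)}}=T_\mu\nabla^{-1}\widetilde H_{\mu^{(i)}}$; here, since the two lower intersection polynomials agree at $q=t=1$, one must track their first-order Taylor behaviour, with the $k=2$ base case supplied by \cite{KLO22}.) Either way I would record the elementary identity $e_r^\perp h_{\lambda+(1^r)}=h_\lambda$ for $\lambda\vdash m$ — immediate from $e_j^\perp h_c=\delta_{j0}h_c+\delta_{j1}h_{c-1}$ and the coproduct rule for $e_r^\perp$ — which in particular shows the $h_{\lambda+(1^r)}$ are linearly independent, so the $d_\lambda$ of Stage~(i) are unambiguous.

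\emph{Stages (ii)--(iii): identifying the coefficients.}
Apply $e_r^\perp=e_{n+1-k}^\perp$ to $\I_{\mu^{(1)},\dots,\mu^{(k)}}[X;1,1]$: by Theorem~\ref{thm: main theorem}(b), whose left-hand side is regular at $q=t=1$ by Stage~(i) and whose monomial prefactor $T_{\bigcap_{i=1}^k\mu^{(i)}}$ specializes to $1$, and using $e_r^\perp h_{\lambda+(1^r)}=h_\lambda$, this gives $\sum_{\lambda\vdash m}d_\lambda\,h_\lambda=\nabla e_m\big|_{q=t=1}$. It remains to compute the right-hand side. By the shuffle theorem — available here by combining parts (b) and (c) of Theorem~\ref{thm: main theorem} — we have $\nabla e_m|_{q=t=1}=D_m[X;1,1]=\sum_{(\pi,w)\in\Park_m}F_{\iDes(w)}$; summing first over the labelings $w$ of a fixed Dyck path $\pi$ collapses the inner sum to $e_{\lambda(\pi)}$, where $\lambda(\pi)$ is the partition of the vertical-run lengths of $\pi$ (this is the $q=1$ specialization of the relevant LLT polynomial, whose constituents are the vertical strips sitting in the columns of $\pi$), and grouping Dyck paths by $\lambda(\pi)$ yields
\[
\nabla e_m\big|_{q=t=1}=\sum_{\lambda\vdash m}\Krew(\lambda)\,e_\lambda ,
\]
since the number of size-$m$ Dyck paths with vertical-run partition $\lambda$ is the Kreweras number $\Krew(\lambda)$ (this is also Haiman's description of the ungraded character of $\DR_m$). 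Finally, expanding each $e_a$ via $e_a=\sum_{\alpha\models a}(-1)^{a-\ell(\alpha)}h_\alpha$, the required coefficient comparison reduces to the finite identity
\[
[h_\lambda]\Bigl(\,\sum_{\mu\vdash m}\Krew(\mu)\,e_\mu\Bigr)=(-1)^{m-\ell(\lambda)}\,\Krew\!\bigl(\lambda+(1^{\ell(\lambda)})\bigr)\qquad(\lambda\vdash m),
\]
an identity among Kreweras numbers of noncrossing partitions provable by a routine generating-function argument. Combining, $d_\lambda=(-1)^{k-1-\ell(\lambda)}\Krew(\lambda+(1^{\ell(\lambda)}))$, which with Stage~(i) is exactly the claimed formula.

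\emph{Where the difficulty lies.}
Everything downstream of Stage~(i) is bookkeeping that combines Theorem~\ref{thm: main theorem} with standard facts about $\nabla e_m$ at $q=t=1$, Dyck paths, and Kreweras numbers. The genuine content is Stage~(i): showing that the singular sum defining $\I_{\mu^{(1)},\dots,\mu^{(k)}}$ is not only regular at $q=t=1$ but lands in the small subspace spanned by the $h_{\lambda+(1^r)}$ — which is what forces an appeal to the explicit lightning-bolt/fermionic description of the Macdonald intersection polynomials rather than to the vanishing identity alone.
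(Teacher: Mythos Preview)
Your Stages (ii)--(iii) match the paper's argument: apply $e_r^\perp$ with $r=n+1-k$, invoke Theorem~\ref{thm: main theorem}(c), and then identify $D_{k-1}[X;1,1]$ in the $h$-basis via the known $e$-expansion $\sum_\lambda\Krew(\lambda)e_\lambda$ followed by the $e\to h$ transition. The paper carries out this last step with \cite{ER91}'s brick tabloids and an explicit bijection on Dyck paths (Lemma~\ref{lemma: h-expansion for Nabla e_n}) rather than a generating-function identity, but that difference is cosmetic.

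The gap is in Stage~(i). You are right that the vanishing identity only yields $[h_\nu]\I[X;1,1]=0$ for $\ell(\nu)>r$, and that a separate argument is needed both for $\ell(\nu)<r$ and for regularity at $q=t=1$. But the paper does \emph{not} use the lightning-bolt formula here---Proposition~\ref{prop: main key sf to shuffle} only controls pieces of $e_{n+1-k}^\perp\I$, not of $\I$ itself, so it would not give the shape you want---nor does it resort to a divided-difference recursion with first-order Taylor analysis. The actual device is much lighter: the result \cite[Proposition~1.1]{GHQR19} that
\[
\widetilde H_\mu[X;q,1]=\sum_{\nu\vdash n}(q-1)^{\,n-\ell(\nu)}A_{\mu,\nu}(q)\,h_\nu,\qquad A_{\mu,\nu}\in\mathbb{Z}[q].
\]
At $t=1$ the $T^{(i)}$ become distinct powers of $q$ (distinct removable corners lie in distinct columns), so each coefficient $\prod_{j\ne i}T^{(j)}/(T^{(j)}-T^{(i)})$ has a pole of order exactly $k-1$ at $q=1$. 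Thus $[h_\nu]\I[X;q,1]$ is $(q-1)^{\,n-\ell(\nu)-(k-1)}$ times something regular at $q=1$: it vanishes there whenever $\ell(\nu)<r$ and is regular when $\ell(\nu)=r$. This single observation simultaneously supplies the missing half of the shape constraint and the regularity you flag, without any of the heavier machinery you propose.
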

We also recover \eqref{eq: mac int dimension} from Theorem \ref{thm: h-expansion, Kreweras} (Corollary \ref{cor: n!/k}).
\begin{table}[h]
    \centering
        \begin{tabular}{|c|c|}
        \hline
        & $\I_{\mu^{(1)},\dots,\mu^{(k)}}[X;1,1]$ \\\hline
        $k=2$ & $h_{21\cdots1}$ \\\hline
        $k=3$ & $2h_{221\cdots1}-h_{31\cdots1}$ \\\hline
        $k=4$ & $5h_{2221\cdots1}-5_{321\cdots1}+h_{41\cdots1}$ \\\hline
        $k=5$ & $14h_{22221\cdots1}-21h_{3221\cdots1}+3h_{331\cdots1}+6h_{421\cdots1}-h_{51\cdots1}$ \\\hline
        $k=6$ &
        $42h_{222221\cdots1} - 84h_{32221\cdots1} + 28h_{3321\cdots1} + 28h_{4221\cdots1} - 7h_{431\cdots1} - 7h_{521\cdots1} + h_{61\cdots1}$\\\hline
        \end{tabular}
    \caption{The $h$-expansions of $\I_{\mu^{(1)},\dots,\mu^{(k)}}[X;1,1]$ up to $k=6$.}
    \label{tab: Kreweras theorem}
\end{table}
\subsection{Organization}
This paper is structured as follows. We commence with Section~\ref{Sec: Preliminaries}, laying out the essential preliminary concepts and background information. In Section~\ref{Sec: Vanishing identity, Macdonald intersection polynomials and nabla}, we review the plethystic formula for the Macdonald polynomials, studied by Garsia, Haiman, and Tesler \cite{GHT99}. By utilizing their formula, we proceed to establish Theorem~\ref{thm: main theorem} (a) (vanishing identity). Furthermore, we demonstrate Theorem~\ref{thm: main theorem} (b), which proves shape independence and establishes a connection between Macdonald intersection polynomials and the characters of the diagonal coinvariant algebras. Sections~\ref{Sec: New fermionic formula}, ~\ref{Sec: generalized Macdonald polynomials} and ~\ref{Sec: proof of main theorem} are dedicated to proving Theorem \ref{thm: main theorem} (c). Section~\ref{Sec: New fermionic formula} provides some background on the shuffle formula $D_n[X;q,t]$. Then we present a fermionic formula, which we call a lightning bolt formula (Theorem~\ref{thm: fermionic formula}) due to Haglund--Haiman--Loehr--Remmel--Ulyanov. 
Moving forward, in Section~\ref{Sec: generalized Macdonald polynomials}, we explore generalized Macdonald polynomials for filled diagrams, and the column exchange rule introduced in \cite{KLO22}. 
Section~\ref{Sec: proof of main theorem} proves the lightning bolt formula for Macdonald intersection polynomials which is our key discovery that finalizes the proof of Theorem~\ref{thm: main theorem} (c).
In Section~\ref{Sec: Macdonald intersection polynomials at q=t=1}, we investigate the specialization of Macdonald intersection polynomials at $q=t=1$, and present an $h$-expansion formula (Theorem~\ref{thm: h-expansion, Kreweras}) that incorporates the Kreweras numbers. 
Section~\ref{Sec: Future questions}  presents a few remarks and discusses future questions.
Finally, Appendix~\ref{Sec: Appendix} includes a compilation of technical lemmas.

\section{Preliminaries}\label{Sec: Preliminaries}
\subsection{Partitions}

Let $n$ be a positive integer. A \emph{composition} of $n$ is a sequence $\mu=(\mu_1,\dots,\mu_\ell)$ of positive integers that sum to $n$. We denote $\ell(\mu)$ to be the length of $\mu$, number of its parts, and $|\mu|:=\sum_{i}\mu_i$ to be its size. We say that a composition $\mu$ is a \emph{partition} if its parts are weakly decreasing. We use the notation $\mu\models n$ to indicate that $\mu$ is a composition of $n$, and $\mu\vdash n$ to indicate that $\mu$ is a partition of $n$. We use \emph{Young diagrams} in French notation to display a partition as
\[
    \mu=\{(i,j)\in \mathbb{Z}_+\times \mathbb{Z}_+ : j\le \mu_i\},
\]
where each element of this set represents a \emph{cell} in the diagram. The \emph{conjugate partition} of $\mu$, denoted by $\mu'=(\mu'_1,\mu'_2,\dots)$, is obtained by reflecting the Young diagram of $\mu$ along the diagonal $y=x$. %The \emph{dominance order} $\trianglerighteq$ is the partial order on partitions of $n$, defined as follows:
%\begin{equation*}
%    \lambda\trianglerighteq\mu \quad \text{ if }\quad \sum_{i=1}^k \lambda_i \geq \sum_{i=1}^k \mu_i \quad\text{ for all } k.
%\end{equation*}

For a cell $u$ in a partition $\mu$, we define the \emph{arm} (\emph{coarm}) of $u$, denoted by $\arm_\mu(u)$ ($\coarm_\mu(u)$), as the number of cells strictly to the right (left) of $u$ in the same row. Similarly, we define the \emph{leg} (\emph{coleg}) of $u$, denoted by $\leg_\mu(u)$ ($\coleg_\mu(u)$), as the number of cells strictly above (below) $u$ in the same column. For example, for a partition $\mu=(5,4,3,1)$ and a cell $u=(2,2)$, we have $\arm_\mu(u)=2, \coarm_\mu(u)=1, \leg_\mu(u)=1$ and $\coleg(u)_\mu=1$ as displayed in the figure below.
\[
    \ytableausetup{boxsize=2.5em, boxframe=0.1em}
    \begin{ytableau}
    (4,1)  \\
    (3,1) & (3,2) & (3,3) \\
    (2,1) & (2,2) & (2,3) & (2,4)\\
    (1,1) & (1,2) & (1,3) & (1,4) & (1,5)
    \end{ytableau}\qquad\qquad    \begin{ytableau}
      \\
    & \ell &  \\
    a' & *(red) u & a & a\\
    &  \ell' &  &  &
    \end{ytableau}
\]

\subsection{Symmetric functions}
Let $\mathbf{Sym}$ be the graded ring of symmetric functions in infinite variables $X=x_1,x_2,\dots$ over the ground field $\mathbb{C}(q,t)$. We use the notations from \cite{Mac88} for families of symmetric functions indexed by a partition: $m_\lambda$ for the \emph{monomial symmetric function}, $h_\lambda$ for the \emph{homogeneous symmetric function}, $e_\lambda$ for the \emph{elementary symmetric function}, $p_\lambda$ for the \emph{power sum symmetric function}, $s_\lambda$ for the \emph{Schur function}, and $J_\lambda$ for the \emph{integral form Macdonald polynomial} (or \emph{Macdonald $J$-polynomial}). The (Hall) inner product on symmetric functions, denoted by $\langle -,-\rangle$, is defined such that
\begin{equation*}
    \langle s_\lambda, s_\mu \rangle = \delta_{\lambda,\mu}.
\end{equation*}

The \emph{modified Macdonald polynomials} are defined using a plethystic substitution as follows:
\[
    \widetilde{H}_\mu[X;q,t] = t^{n(\mu)}J_\mu\left[\dfrac{X}{1-t^{-1}};q,t^{-1}\right].
\]
where $n(\mu)=\sum_i (i-1)\mu_i$. Each of the above families of symmetric functions form a basis for $\mathbf{Sym}$.

\subsection{Quasisymmetric functions}
Let $\mathbf{QSym}$ be the graded ring of quasisymmetric functions in infinite variables $X=x_1,x_2,\dots$ over the ground field $\mathbb{C}(q,t)$.  Given a composition $\alpha=(\alpha_1,\alpha_2,\dots,\alpha_\ell)\models n$, the \emph{monomial quasisymmetric function} $M_\alpha[X]$ is defined by  
\[
    M_\alpha[X] \coloneqq \sum_{i_1<i_2<\cdots<i_\ell} x_{i_1}^{\alpha_1}x_{i_2}^{\alpha_2}\cdots x_{i_\ell}^{\alpha_\ell},
\]
and the (Gessel's) \emph{fundamental quasisymmetric function} \cite{Ges84} $F_\alpha[X]$ is defined by
\begin{equation*}
    F_\alpha[X] \coloneqq \sum_{\beta\le\alpha}M_\beta,
\end{equation*}
where $\beta\le \alpha$ means we can obtain $\alpha$ by adding consecutive parts of $\beta$. Each of the above two families forms a basis for $\mathbf{QSym}$.

Compositions of $n$ are in one-to-one correspondence with the subsets of $[n-1]$. We will denote the natural bijection $\Comp$ from $2^{[n-1]}$ to compositions of $n$ defined by
\begin{align*}
\Comp:\{s_1<s_2<\cdots<s_\ell\} \mapsto (s_1,s_2-s_1,\dots,s_\ell-s_{\ell-1},n-s_\ell), \qquad \Comp(\emptyset)=(n).
\end{align*}
The inverse of $\Comp$ is denoted by $\Set$. Under this correspondence, for a subset $S\subseteq[n-1]$, we write $F_{n,S}$ as shorthand for $F_{\Comp(S)}$. If $n$ is clear from the context, we denote $F_{S}$ for $F_{n,S}$. Given a permutation $w\in \mathfrak{S}_n$, we will usually associate a fundamental quasisymmetric function $F_{\iDes(w)}$, where 
\begin{equation*}
\iDes(w)\coloneqq\Des(w^{-1})=\{i:w^{-1}_i>w^{-1}_{i+1}\}.
\end{equation*}

We end this section with a useful identity which will be used in Section~\ref{Sec: proof of main theorem}. For $n\ge1$, define a linear operator $E_n^\perp$ on $\mathbf{QSym}$ by
\begin{align}E_{n}^\perp F_\alpha=\begin{cases}
    F_{\beta^-} \quad \text{if} \quad \alpha = (1^{(n-1)},\beta)\label{eq: qsym perp}\\
    0 \quad \text{otherwise},
    \end{cases}
\end{align}
where $\beta^-$ is the composition obtained by subtracting one from the first component of $\beta$ if $\beta_1>1$, otherwise $\beta^{-}=(\beta_2,\dots)$. It is worth noting that for a composition $\beta=(b_1,\dots,b_r)$ there are two compositions
\begin{equation*}
    \alpha^{(1)}=(\underbrace{1,\dots,1}_{n-1},b_1+1,b_2,\dots,b_r), \qquad \alpha^{(2)}=(\underbrace{1,\dots,1}_{n},b_1,\dots,b_r)  
\end{equation*}
such that $E_{n}^\perp F_{\alpha^{(1)}}=E_{n}^\perp F_{\alpha^{(2)}}=F_{\beta}$. Then the skewing operator $e_n^\perp$ on symmetric functions is related to $E_n^\perp$ as follows.
\begin{proposition}\cite{GKLLRT95}\label{prop: en perp gessel fundamental quasi}
For a symmetric function $f$, we have
\[
    e_n^\perp f = E_n^\perp f,
\]
where $f$ on the right-hand side is considered as a quasisymmetric function.
\end{proposition}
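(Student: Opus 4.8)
\textbf{Proof proposal for Proposition~\ref{prop: en perp gessel fundamental quasi}.}

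The plan is to exploit the compatibility of the Hall inner product on $\mathbf{Sym}$ with the standard pairing between $\mathbf{QSym}$ and the ring of noncommutative symmetric functions $\mathbf{NSym}$, following \cite{GKLLRT95}. First I would recall that for a symmetric function $f$ and any symmetric function $g$, the skewing operator is characterized by $\langle e_n^\perp f, g\rangle = \langle f, e_n g\rangle$, and that when $f$ is regarded inside $\mathbf{QSym}$ one has the adjointness $\langle f, h \rangle = \langle f, h \rangle_{\mathbf{QSym}\times\mathbf{NSym}}$ for $h$ in the subring $\mathbf{Sym}$, where on the right we use the canonical dual bases: the fundamental quasisymmetric functions $F_\alpha$ are dual to the ribbon basis $R_\alpha$ of $\mathbf{NSym}$. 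Since $e_n = s_{(1^n)}$ corresponds under $\mathbf{Sym}\hookrightarrow\mathbf{NSym}$ to the ribbon $R_{(1^n)}$, the operator ``skew by $e_n$'' becomes, by adjointness, the operator on $\mathbf{QSym}$ dual to multiplication by $R_{(1^n)}$ in $\mathbf{NSym}$.

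Next I would compute that dual operator explicitly on the $F$-basis. Multiplication in the ribbon basis of $\mathbf{NSym}$ satisfies $R_\gamma \cdot R_\delta = R_{\gamma\cdot\delta} + R_{\gamma\rhd\delta}$, where $\gamma\cdot\delta$ is concatenation and $\gamma\rhd\delta$ is the near-concatenation (adding the first part of $\delta$ to the last part of $\gamma$). Iterating, $R_{(1^n)}\cdot R_\beta$ expands as a sum of exactly two ribbons, namely $R_{(1^n,\beta_1,\beta_2,\dots)}$ and $R_{(1^{n-1},\beta_1+1,\beta_2,\dots)}$ — precisely the two compositions $\alpha^{(2)}$ and $\alpha^{(1)}$ appearing in the displayed remark before the proposition. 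Dualizing, the coefficient extraction shows that the operator dual to multiplication by $R_{(1^n)}$ sends $F_\alpha$ to $F_\beta$ when $\alpha$ is one of these two shapes (with $\beta$ obtained by stripping the $n-1$ leading $1$'s and, in the $\alpha^{(1)}$ case, decrementing the resulting first part), and to $0$ otherwise. This is exactly the definition of $E_n^\perp$ in \eqref{eq: qsym perp}, so $e_n^\perp f = E_n^\perp f$ for every symmetric $f$.

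I would present this as two short lemmas: (i) the adjointness statement identifying $e_n^\perp$ on $\mathbf{Sym}\subseteq\mathbf{QSym}$ with the $\mathbf{NSym}$-dual of $R_{(1^n)}\cdot(-)$, and (ii) the ribbon multiplication rule $R_{(1^n)}\cdot R_\beta = R_{\alpha^{(1)}} + R_{\alpha^{(2)}}$. The bulk of the work, and the only place where care is needed, is lemma (i): one must check that the inclusion $\mathbf{Sym}\hookrightarrow\mathbf{QSym}$ and the surjection $\mathbf{NSym}\twoheadrightarrow\mathbf{Sym}$ are adjoint with respect to the relevant pairings, so that skewing by an element of $\mathbf{Sym}$ inside $\mathbf{QSym}$ is genuinely computed by the ribbon multiplication in $\mathbf{NSym}$ — this is the content of the duality between $\mathbf{QSym}$ and $\mathbf{NSym}$ established in \cite{GKLLRT95}, and the main obstacle is simply assembling it correctly rather than any hard computation. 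Lemma (ii) is a routine induction on $n$ using $R_\gamma R_{(1)} = R_{\gamma\cdot(1)} + R_{\gamma\rhd(1)}$.
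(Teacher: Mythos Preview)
Your argument is correct. Note, however, that the paper does not actually prove this proposition: it simply records it with a citation to \cite{GKLLRT95}, and the remark following it already indicates that $E_n^\perp$ is defined as the adjoint of left multiplication by the noncommutative elementary $E_n$ in $\mathbf{NSym}$. Your proposal is therefore not so much an alternative as a fleshing-out of what the citation is pointing to: the adjointness of $\mathbf{Sym}\hookrightarrow\mathbf{QSym}$ and $\mathbf{NSym}\twoheadrightarrow\mathbf{Sym}$ with respect to the two pairings, combined with the ribbon product rule $R_\gamma R_\delta = R_{\gamma\cdot\delta}+R_{\gamma\rhd\delta}$, yields exactly the two compositions $\alpha^{(1)},\alpha^{(2)}$ displayed before the proposition. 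Two small comments: your ``iterating'' remark in lemma~(ii) is unnecessary, since the ribbon product rule applied once to $R_{(1^n)}\cdot R_\beta$ already gives the two terms; and the key step in lemma~(i) is the identity $\langle \iota(f),G\rangle_{\mathbf{QSym}\times\mathbf{NSym}}=\langle f,\pi(G)\rangle_{\mathbf{Sym}}$ for all $G\in\mathbf{NSym}$, which is what lets you test $E_n^\perp\iota(f)$ against arbitrary $G$ rather than only against lifts of symmetric functions.
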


For the rest of the paper, we abuse the notation and refer to $E_n^{\perp}f$ as $e_n^{\perp}f$ for $f\in\mathbf{QSym}$.
\begin{rmk}
The ring $\mathbf{NSym}$ of \emph{noncommutative symmetric functions} is the dual of the ring $\mathbf{QSym}$ of quasisymmetric functions with respect to (extended) Hall inner product. The \emph{elementary noncommutative symmetric functions} $E_\alpha$ form a basis for $\mathbf{NSym}$. In addition, for a partition $\lambda$, $E_\lambda$ descends to the elementary symmetric function $e_\lambda$ via natural surjection from $\mathbf{NSym}$ onto $\mathbf{Sym}$. The operator $E_n^\perp$ is the adjoint operator of multiplying $E_n$ with respect to the Hall inner product. Although there is a choice of $E_n$ which descends to $e_n$, regardless of choice, Proposition~\ref{prop: en perp gessel fundamental quasi} holds. For our purpose, we have chosen the elementary noncommutative symmetric functions $E_n$.
\end{rmk}

\section{Proof of Theorem~\ref{thm: main theorem} (a) and (b)}\label{Sec: Vanishing identity, Macdonald intersection polynomials and nabla}
In this section, we prove two results concerning Macdonald intersection polynomials. The first is Theorem~\ref{thm: main theorem} (a), the vanishing identity. The second is Theorem~\ref{thm: main theorem} (b), which is an identity between the Macdonald intersection polynomials $\I_{\mu^{(1)},\dots,\mu^{(k)}}[X;q,t]$ and $\nabla e_{k-1}$. This, in particular, gives the shape independence, which asserts that 
\[
\dfrac{1}{T_{\bigcap_{i=1}^k \mu^{(i)}}}e_{n+1-k}^\perp\I_{\mu^{(1)},\dots,\mu^{(k)}}[X;q,t]
\]
does not depend on partitions $\mu^{(1)}, \dots, \mu^{(k)}$, and $\mu$.

\subsection{Plethystic formula for the Macdonald polynomials}
We first review the plethystic formula for Macdonald polynomials studied by Garsia, Haiman, and Tesler in \cite{GT96, GHT99}. To do so, we introduce some relevant notation and basic facts. Let $g[t_1,t_2,\dots]$ be a Laurent series in the variables $t_1,t_2,\dots$. If a symmetric function $f$ is expressed as the formal power series
\[
    f = \hat{f}[p_1,p_2,\dots],
\]
then the \emph{plethystic substitution} of $g$ into $f$ is
\[
    f[g] = \hat{f}[p_1,p_2,\dots]|_{p_k\mapsto g[t_1^k,t_2^k,\dots]}.
\]
We will use brackets to denote plethystic substitutions. We use two different minus signs $-$ and $\epsilon$ to denote
\[
    p_k[-X]=-p_k[X], \qquad \text{ and } \qquad p_k[\epsilon X] = (-1)^k p_k[X].
\]
It is worth noting that, for a symmetric function $f$, we have
\[
    \omega f [X] = f[-\epsilon X],
\]
where $\omega$ is the involution on $\mathbf{Sym}$ sending $p_\lambda \rightarrow (-1)^{|\lambda|-\ell(\lambda)}p_\lambda$.
We denote the biexponent generator for a given partition $\mu$ by $B_\mu(q,t)$, which can be expressed as $$B_\mu(q,t) := \sum_{c\in\mu} q^{\coarm_\mu(c)}t^{\coleg_\mu(c)}.$$ Additionally, we define $M := (1-q)(1-t)$ and $D_\mu =D_\mu(q,t) := MB_\mu(q,t) - 1$.

Following the notation of \cite{GT96}, the $*$-inner product, denoted by $\langle -, - \rangle_*$, is defined such that 
$$\langle p_{\rho},p_{\nu}\rangle_* = \begin{cases} (-1)^{|\rho|-\ell(\rho)} z_\rho \prod_i (1-q^{\rho_i})(1-t^{\rho_i}) & \text{ if } \rho=\nu \\ 0 & \text{ otherwise,}\end{cases}$$ where $z_\rho$ is the integer that makes $n!/z_\rho$ the number of permutations of cycle type $\rho$. It is straightforward to check $\langle f,g\rangle_*=\langle \omega(f),\omega(g)\rangle_*$ for any symmetric functions $f$ and $g$. The modified Macdonald polynomials form an orthogonal basis with respect to the $*$-inner product \cite[Theorem 1.1]{GT96}. In particular, we have
\begin{equation}\label{Eq: orthogonality of Mac w.r.t <>*}
    \langle\widetilde{H}_\mu ,\widetilde{H}_\lambda\rangle_*=\begin{cases}
    \tilde{h}_\mu(q,t)\tilde{h}'_\mu(q,t) &\text{ if } \mu = \lambda \\
    0, &\text{ otherwise,}
    \end{cases}
\end{equation}
where
\[
    \tilde{h}_\mu(q,t) =\prod_{c\in\mu}(q^{\arm_\mu(c)}-t^{\leg_\mu(c)+1}) \qquad \text{and} \qquad \tilde{h}'_\mu(q,t)=\prod_{c\in\mu}(t^{\leg_\mu(c)}- q^{\arm_\mu(c)+1}).
\]

The $*$-inner product has several useful properties, one of which is \cite[Proposition 1.8]{GHT99}: For two symmetric functions $P$ and $Q$ of degree $n$, we have
\begin{equation}\label{Eq: *inner}
    \langle P[X],Q[X] \rangle_* = \langle P[MX], \omega Q[X]\rangle,    
\end{equation}
where $\langle-,-\rangle$ is the usual Hall inner product. 

Garsia, Haiman, and Tesler provided a general result for the $*$-inner product with modified Macdonald polynomials, which extends the plethystic formula for the $(q,t)$-Kostka polynomials due to Garsia and Tesler \cite{GT96}. To state their result, it is helpful to introduce the notation $g^*[X] := g[X/M]$ for a symmetric function $g[X]$.
The main theorem of their work can be presented as follows. 
\begin{thm}\cite[Theorem I.2]{GHT99}\label{thm: GHT main}
For a symmetric function $f$, define $\Pi'_f$ by
\[
    \Pi'_f[X;q,t] \coloneqq \nabla^{-1} f[X-\epsilon].
\]
Then, for every partition $\mu \vdash n \geq m$ we have
\[
\langle e^*_{n-m}f,\widetilde{H}_\mu\rangle_* = \Pi'_f[D_\mu;q,t].
\]
\end{thm}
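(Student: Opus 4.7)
My plan is to prove the identity by converting the $*$-inner product to an ordinary Hall pairing via \eqref{Eq: *inner}, and then evaluating the resulting pairing using the classical plethystic evaluation formula for $\widetilde{H}_\mu$ at $D_\mu$. The argument is best organized via a generating function in an auxiliary variable $z$.

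\textbf{Step 1 (Reduction by bilinearity).} Since both sides are $\mathbb{C}(q,t)$-linear in $f$, it suffices to verify the identity on a basis of $\mathbf{Sym}$. I would choose $f=\widetilde{H}_\nu$ with $\nu\vdash m$: the basis $\{\widetilde{H}_\nu\}$ is $*$-orthogonal by \eqref{Eq: orthogonality of Mac w.r.t <>*}, and $\nabla^{-1}\widetilde{H}_\nu = T_\nu^{-1}\widetilde{H}_\nu$, making $\Pi'_{\widetilde{H}_\nu}[X;q,t]=T_\nu^{-1}\widetilde{H}_\nu[X-\epsilon]$ explicit on the right-hand side.

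\textbf{Step 2 (Generating function and \eqref{Eq: *inner}).} I package the operators $e_k^*$ via the generating series $\mathcal{E}^*(z;X):=\sum_{k\ge 0} z^{k}e_{k}^{*}[X]$. Since $\mathcal{E}^*(z;X)[MX]=\sum_k z^k e_k[X]=\prod_i(1+zx_i)$, pairing with $\widetilde{H}_\mu$ and applying \eqref{Eq: *inner} yields
\[
    \sum_{k\ge 0} z^{k}\langle e_{k}^{*}f,\widetilde{H}_\mu\rangle_{*} \;=\; \Bigl\langle f[MX]\cdot \prod_{i}(1+zx_{i}),\;\omega\widetilde{H}_\mu[X]\Bigr\rangle.
\]
Invoking the standard Cauchy adjunction (multiplication by a plethystic exponential is adjoint to plethystic translation in the second factor), the right-hand side becomes $\langle f[MX],\;\omega\widetilde{H}_\mu[X-\epsilon z]\rangle=\langle f[MX],\;\widetilde{H}_\mu[z-\epsilon X]\rangle$, where I have used $\omega\widetilde{H}_\mu[Y]=\widetilde{H}_\mu[-\epsilon Y]$ together with $\epsilon^2=1$.

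\textbf{Step 3 (Evaluation at $D_\mu$).} What remains is to extract the coefficient of $z^{n-m}$ from $\langle f[MX],\widetilde{H}_\mu[z-\epsilon X]\rangle$ and identify it with $\Pi'_f[D_\mu]$. The key tool is the plethystic evaluation lemma for $\widetilde{H}_\mu$: a pairing of the form $\langle g[MX],\widetilde{H}_\mu[X]\rangle$ reduces, up to an explicit normalization involving $T_\mu$, to evaluating $g$ at the alphabet $B_\mu(q,t)$ (equivalently at $D_\mu=MB_\mu(q,t)-1$). Substituting the plethystic shift $z-\epsilon X$ and tracking the $z$-expansion, the coefficient of $z^{n-m}$ matches $(\nabla^{-1}f[X-\epsilon])|_{X=D_\mu}=\Pi'_f[D_\mu;q,t]$: the translation by $-\epsilon z$ accounts for the substitution $X\mapsto X-\epsilon$ inside $\Pi'_f$, while the factor $T_\mu^{-1}$ absorbed from the normalization supplies the $\nabla^{-1}$ eigenvalue.

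\textbf{Main obstacle.} The decisive technical input is the plethystic evaluation lemma in Step 3, which replaces a Hall pairing against $\widetilde{H}_\mu$ by an explicit substitution at $D_\mu$. This lemma encodes the eigenvalue of the Macdonald operators on $\widetilde{H}_\mu$ and is the heart of the Garsia--Haiman--Tesler framework; isolating it is essentially the full content of the theorem for $f=s_\lambda$ and already requires the main identities of Macdonald theory. A secondary but unforgiving challenge is sign-bookkeeping: the various $\epsilon$'s coming from $\omega$ and from the generating-function translation must be tracked exactly, since any slip will misalign the $T_\mu^{-1}$-factor and hence the $\nabla^{-1}$.
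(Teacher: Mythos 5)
First, a point of reference: the paper does not prove this statement at all. It is imported verbatim as Theorem I.2 of \cite{GHT99} and used as a black box, so there is no internal argument to compare your proposal against; it has to stand on its own as a proof of the cited result.

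On its own terms, your Steps 1 and 2 are sound. Reducing to $f=\widetilde{H}_\nu$ by bilinearity, packaging the $e_k^*$ into a series in $z$, applying \eqref{Eq: *inner}, and using the Cauchy adjunction to land on $\langle f[MX],\widetilde{H}_\mu[z-\epsilon X]\rangle$ are all correct manipulations (including the sign bookkeeping via $\epsilon^2=1$), and the interpretation $\Pi'_{\widetilde{H}_\nu}=T_\nu^{-1}\widetilde{H}_\nu[X-\epsilon]$ matches how the paper later uses the theorem. The genuine gap is Step 3. The ``plethystic evaluation lemma'' you invoke is not only unproven but false as stated: for $g$ of degree $n$, \eqref{Eq: *inner} gives $\langle g[MX],\widetilde{H}_\mu[X]\rangle=\langle g,\omega\widetilde{H}_\mu\rangle_*=\langle \omega g,\widetilde{H}_\mu\rangle_*$, so by \eqref{Eq: orthogonality of Mac w.r.t <>*} this functional annihilates $\omega\widetilde{H}_\nu$ for every $\nu\neq\mu$ with $\nu\vdash n$. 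Evaluation at a fixed alphabet (be it $B_\mu$, $MB_\mu$, or $D_\mu$) does not have this property; already for $n=2$ one computes $(\omega\widetilde{H}_{(1,1)})[MB_{(2)}]=-q^2(1+t)(1-t)^2(1-q^2)\neq 0$ while $\langle(\omega\widetilde{H}_{(1,1)})[MX],\widetilde{H}_{(2)}\rangle=0$. The actual engine behind the theorem is the generalized Macdonald--Koornwinder reciprocity (Theorem 3.3 of \cite{GHT99}, which the present paper cites separately as a distinct ingredient); it is what evaluates the pairings $\langle\widetilde{H}_\nu[MX],\,\cdot\,\rangle$ against the pieces of $\widetilde{H}_\mu[z-\epsilon X]$ and produces the $\widetilde{H}_\nu[D_\mu-\epsilon]$ on the right-hand side. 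Nothing in your sketch supplies this, and you concede as much when you write that isolating the lemma ``is essentially the full content of the theorem.'' As written, the proposal reduces the theorem to an unproved, and in fact misstated, claim of comparable depth, so it does not constitute a proof.
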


\subsection{Proof of Theorem~\ref{thm: main theorem} (a) and (b)}

Let $\mu$ be a partition of $n$. By applying \eqref{Eq: *inner}, and Theorem~\ref{thm: GHT main}, we have
\begin{align*}
    \Pi'_{f}[D_\mu;q,t] &= \langle e_{n-m}f[MX], \omega \widetilde{H}_\mu \rangle=\langle f[MX], e_{n-m}^\perp \omega\widetilde{H}_\mu \rangle\\
    &=\langle  f[X], \omega(e_{n-m}^\perp \omega \widetilde{H}_\mu) \rangle_*= \langle \omega f[X], e_{n-m}^\perp \omega \widetilde{H}_\mu \rangle_*.
\end{align*}

For a partition $\lambda\vdash m$, we let $f$ in the above equation be $\widetilde{H}_\lambda$. We apply the well-known symmetry of modified Macdonald polynomials (e.g., Theorem 1.5 in \cite{GT96}), which is
\[
    \omega\widetilde{H}_\mu=T_\mu \rev(\widetilde{H}_\mu),
\]
where $\rev$ is an involution on $\mathbf{Sym}$ defined by $q,t$-reversal:
\[
    \rev(f) = f|_{q\mapsto q^{-1}, t\mapsto t^{-1}}.
\]
Then we have
\begin{equation}\label{eq: before lemma 3.2}
\Pi'_{\widetilde{H}_\lambda}[D_\mu;q,t] 
    = \langle \omega \widetilde{H}_\lambda[X], e_{n-m}^\perp \omega \widetilde{H}_\mu \rangle_*
    = \langle T_\lambda \rev(\widetilde{H}_\lambda),T_\mu e^\perp_{n-m} \rev(\widetilde{H}_\mu)\rangle_*
    = T_\mu T_\lambda\langle  \rev(\widetilde{H}_\lambda), e^\perp_{n-m} \rev(\widetilde{H}_\mu)\rangle_*.
\end{equation}
To extract the operator $\rev$ outside of the $*$-inner product, we need the following lemma.

\begin{lem}\label{lem: rev}
For two symmetric functions $f,g \in \mathbf{Sym}$ of degree $m$, we have
\[ 
    \langle \rev(f), \rev(g) \rangle_*= (qt)^m \rev\left(\langle f, g \rangle_* \right) 
\]

\end{lem}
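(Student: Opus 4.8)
The statement is a symmetry identity relating the $*$-inner product of $q,t$-reversed symmetric functions to the $q,t$-reversal of the $*$-inner product. Since both sides are bilinear in $f$ and $g$, it suffices to verify the identity on a basis of $\mathbf{Sym}$ in degree $m$, and the power sum basis $\{p_\rho : \rho \vdash m\}$ is the natural choice because the $*$-inner product is diagonal there. So the plan is to reduce to $f = p_\rho$ and $g = p_\nu$ and compute both sides explicitly.

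\textbf{Key steps.} First, by bilinearity, reduce to the case $f = p_\rho$, $g = p_\nu$ with $\rho, \nu \vdash m$. If $\rho \neq \nu$, both sides vanish: the left side because $\rev(p_\rho) = p_\rho$ (the power sums are fixed by $q,t$-reversal since they have no $q,t$ coefficients) and $\langle p_\rho, p_\nu \rangle_* = 0$, and the right side because $\langle p_\rho, p_\nu \rangle_* = 0$ forces $\rev(0) = 0$. Second, in the case $\rho = \nu$, use $\rev(p_\rho) = p_\rho$ again to rewrite the left side as $\langle p_\rho, p_\rho \rangle_*$, and compute using the defining formula
\[
    \langle p_\rho, p_\rho \rangle_* = (-1)^{|\rho| - \ell(\rho)} z_\rho \prod_i (1-q^{\rho_i})(1-t^{\rho_i}).
\]
Third, compute the right side: apply $\rev$ to the above, which sends $q \mapsto q^{-1}$, $t \mapsto t^{-1}$, giving
\[
    \rev\left(\langle p_\rho, p_\rho \rangle_*\right) = (-1)^{|\rho|-\ell(\rho)} z_\rho \prod_i (1-q^{-\rho_i})(1-t^{-\rho_i}).
\]
Finally, compare: each factor satisfies $(1-q^{-\rho_i})(1-t^{-\rho_i}) = q^{-\rho_i} t^{-\rho_i}(q^{\rho_i}-1)(t^{\rho_i}-1) = q^{-\rho_i}t^{-\rho_i}(1-q^{\rho_i})(1-t^{\rho_i})$, and multiplying over $i$ produces an overall factor $\prod_i q^{-\rho_i} t^{-\rho_i} = q^{-m} t^{-m} = (qt)^{-m}$. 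Hence $\rev(\langle p_\rho, p_\rho\rangle_*) = (qt)^{-m} \langle p_\rho, p_\rho \rangle_*$, which rearranges to $\langle p_\rho, p_\rho\rangle_* = (qt)^m \rev(\langle p_\rho, p_\rho\rangle_*) = (qt)^m \rev(\langle \rev(p_\rho), \rev(p_\rho)\rangle_*)$—wait, more directly, $\langle \rev(p_\rho), \rev(p_\rho)\rangle_* = \langle p_\rho, p_\rho \rangle_* = (qt)^m \rev(\langle p_\rho, p_\rho\rangle_*)$, which is exactly the claim.

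\textbf{Main obstacle.} There is no serious obstacle here; the proof is a direct computation once one reduces to the power sum basis. The only point requiring a moment of care is the bookkeeping of signs and the exponent of $qt$: one must confirm that the sign $(-1)^{|\rho|-\ell(\rho)}$ and the factor $z_\rho$ are genuinely fixed by $\rev$ (they contain no $q$ or $t$), and that extracting $q^{-\rho_i}t^{-\rho_i}$ from each of the $\ell(\rho)$ factors yields precisely $(qt)^{-\sum_i \rho_i} = (qt)^{-m}$ and not some other power. I would also remark that the reduction to power sums is legitimate even though the scalars live in $\mathbb{C}(q,t)$, since $\rev$ and $\langle -, - \rangle_*$ are both $\mathbb{C}(q,t)$-sesquilinear/compatible in the appropriate sense—concretely, $\rev$ is a ring automorphism of $\mathbb{C}(q,t)$ extended to act on $\mathbf{Sym}$, so $\rev(c f) = \rev(c)\rev(f)$, and one checks the basis identity absorbs these scalar twists correctly.
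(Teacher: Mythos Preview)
Your proposal is correct and follows essentially the same route as the paper: both reduce by bilinearity to the power sum basis, use that $\rev(p_\rho)=p_\rho$, and then factor $(1-q^{-\rho_i})(1-t^{-\rho_i}) = (qt)^{-\rho_i}(1-q^{\rho_i})(1-t^{\rho_i})$ to extract the overall $(qt)^{-m}$. Your closing remark about how $\rev$ interacts with $\mathbb{C}(q,t)$-scalars is a legitimate subtlety the paper glosses over, but the reduction is valid exactly as you outline.
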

\begin{proof}
It suffices to show the claim for $p_{\rho},p_{\nu}$ for partitions $\rho,\nu\vdash m$. We have
\begin{align*}
    \langle \rev(p_{\rho}), \rev(p_\nu) \rangle_* &= \langle p_\rho, p_\nu \rangle_*
    = (-1)^{|\rho|-\ell(\rho)}z_\rho \delta_{\rho,\nu} \prod_{i\ge 1}(1-q^{\rho_i})(1-t^{\rho_i})
    \\ &= (qt)^m (-1)^{|\rho|-\ell(\rho)}z_\rho \delta_{\rho,\nu} \prod_{i\ge 1}(1-q^{-\rho_i})(1-t^{-\rho_i}) 
    = (qt)^m \rev\left(\langle p_\rho,p_\nu\rangle_* \right).
\end{align*}
\end{proof}

Applying Lemma \ref{lem: rev} to \eqref{eq: before lemma 3.2} gives

\begin{equation*}
    \dfrac{1}{(qt)^mT_\mu T_\lambda}\langle \widetilde{H}_\lambda, e_{n-m}^\perp \widetilde{H}_\mu\rangle_*= \rev\left(\Pi'_{\widetilde{H}_\lambda}[D_\mu;q,t]\right).
\end{equation*}
By using the orthogonality relation \eqref{Eq: orthogonality of Mac w.r.t <>*}, we see that
\begin{equation}\label{Eq: e perp to Mac}
    e_{n-m}^\perp \widetilde{H}_\mu
    = (qt)^m T_\mu \sum_{\lambda \vdash m} \rev\left(\Pi'_{\widetilde{H}_\lambda}[D_\mu;q,t]\right)\dfrac{T_\lambda \widetilde{H}_\lambda}{\tilde{h}_\lambda\tilde{h}'_\lambda}.
\end{equation}

Now, let $\mu\vdash n+1$ be a partition and $\mu^{(1)},\dots,\mu^{(k)}\subseteq \mu$ be $k$ distinct partitions such that 
\[
    |\mu\setminus\mu^{(1)}|=\cdots=|\mu\setminus\mu^{(k)}|=1.
\]
Then the Macdonald intersection polynomial can be written as
\[
    \I_{\mu^{(1)},\dots,\mu^{(k)}}[X;q,t] =\sum_{i=1}^k \prod_{j\neq i}\left(\frac{z_j}{z_j-z_i}\right) \widetilde{H}_{\mu^{(i)}},
\]
where we denote $z_i=T_{\mu^{(i)}}/T_\mu$. Note that $T_{\mu^{(i)}}=z_iT_\mu$ and $T_{\cap_{i=1}^k \mu^{(i)}}=z_1\cdots z_k T_\mu$. Then using \eqref{Eq: e perp to Mac} we can show that $e^\perp_{n-m}\I_{\mu^{(1)},\dots,\mu^{(k)}}[X;q,t]$ is equal to
\begin{align}
    &(qt)^m {z_1\cdots z_k T_\mu} \sum_{\lambda \vdash m} \left(\sum_{i=1}^k \frac{\rev\left(\Pi'_{\widetilde{H}_\lambda}[D_{\mu^{(i)}};q,t]\right)}{\prod_{j\neq i}(z_j-z_i)} \right)\dfrac{T_\lambda\widetilde{H}_\lambda}{\tilde{h}_\lambda\tilde{h}'_\lambda}\nonumber\\
    &=(qt)^m T_{\cap_{i=1}^k \mu^{(i)}} \sum_{\lambda \vdash m} \left(\sum_{i=1}^k \frac{\rev\left(\Pi'_{\widetilde{H}_\lambda}[D_{\mu^{(i)}};q,t]\right)}{\prod_{j\neq i}(z_j-z_i)} \right)\dfrac{T_\lambda\widetilde{H}_\lambda}{\tilde{h}_\lambda\tilde{h}'_\lambda}\label{Eq: e perp Mac intersection}.
\end{align}

We will make use of a well-known equality, which we state as a lemma.
\begin{lem}\label{Lemma: sum prod 1/(x_i-x_j)}
Let $z_1,\ldots,z_k$ be variables. Then, for a polynomial $g(z)$ (in one variable $z$) of degree less than $k-1$, we have
$$\sum_{i=1}^k \dfrac{g(z_i)}{\prod_{j \neq i} (z_j-z_i)}=0.$$
If $g(z)$ is a polynomial of degree $k-1$ with leading term $az^{k-1}$, then
$$\sum_{i=1}^k \dfrac{g(z_i)}{\prod_{j \neq i} (z_j-z_i)}=(-1)^{k-1}a.$$
\end{lem}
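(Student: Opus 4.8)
The statement is the classical Lagrange-interpolation identity for divided differences, so the plan is to recognize both parts as evaluations of the standard divided-difference formula
\[
    g[z_1,\dots,z_k] = \sum_{i=1}^k \frac{g(z_i)}{\prod_{j\neq i}(z_i-z_j)},
\]
and then absorb the sign $(-1)^{k-1}$ coming from the reversed product $\prod_{j\neq i}(z_j-z_i) = (-1)^{k-1}\prod_{j\neq i}(z_i-z_j)$. First I would set $c_i \coloneqq \prod_{j\neq i}(z_j - z_i)$ and consider the rational function $\sum_{i=1}^k g(z_i)/c_i$; since the $z_i$ are treated as formal variables, it suffices to prove the identity as a polynomial identity in $z_1,\dots,z_k$ after clearing denominators, i.e. to show
\[
    \sum_{i=1}^k g(z_i)\prod_{j\neq i, \, j'\neq i,\, j<j'}(z_j-z_{j'})^{\pm}\ \text{(appropriately signed)} = \text{(the claimed constant)}\cdot \prod_{j<j'}(z_j-z_{j'}).
\]

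The cleanest route avoids that bookkeeping entirely: I would instead argue by the theory of the Vandermonde determinant. Consider the $k\times k$ matrix obtained from the Vandermonde matrix $(z_i^{\,r-1})_{1\le i,r\le k}$ by replacing its last column $(z_i^{k-1})_i$ with the column $(g(z_i))_i$. Expanding the determinant of this matrix along the last column gives $\sum_i g(z_i)\,(-1)^{i+k}\,V_i$, where $V_i$ is the Vandermonde determinant on the variables with $z_i$ omitted; a short computation identifies $(-1)^{i+k}V_i$ with $V_k / \prod_{j\neq i}(z_i - z_j)$, where $V_k = \prod_{j<j'}(z_j - z_{j'})$ is the full Vandermonde determinant. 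On the other hand, if $g$ has degree $\le k-1$, write $g(z) = \sum_{r=0}^{k-1} a_r z^r$ and expand the modified determinant by linearity in the last column: each term with $r \le k-2$ contributes a determinant with two equal columns, hence vanishes, and only the $a_{k-1} z^{k-1}$ term survives, contributing $a_{k-1} V_k$. Dividing through by $V_k$ (legitimate since the $z_i$ are distinct indeterminates) yields
\[
    \sum_{i=1}^k \frac{g(z_i)}{\prod_{j\neq i}(z_i - z_j)} = a_{k-1},
\]
with the convention $a_{k-1}=0$ when $\deg g < k-1$. Finally I would replace $\prod_{j\neq i}(z_i - z_j)$ by $(-1)^{k-1}\prod_{j\neq i}(z_j - z_i)$ to get the factor $(-1)^{k-1}$ in front, giving exactly the two cases in the statement (constant $0$ for $\deg g < k-1$, and $(-1)^{k-1}a$ for $\deg g = k-1$ with leading coefficient $a$).

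Since every step is elementary linear algebra, I do not anticipate a genuine obstacle; the only thing to be careful about is the sign and orientation conventions — precisely which of $\prod(z_i - z_j)$ versus $\prod(z_j - z_i)$ appears, and matching the cofactor sign $(-1)^{i+k}$ to the omitted-variable Vandermonde — so I would state these conventions explicitly at the start and verify the $k=2$ case by hand as a sanity check.
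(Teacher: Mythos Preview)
Your argument is correct and is essentially the classical Vandermonde-determinant proof of the Lagrange interpolation identity. The paper's proof is organized differently: it defers to two appendix lemmas, one (Lemma~\ref{Lemma: degree less than k-1 vanishes}) handling the vanishing for $\deg g<k-1$ by clearing denominators and showing the resulting polynomial is divisible by the full Vandermonde yet has degree too small to be nonzero, and another (Lemma~\ref{Lemma: monomial symmetric sum}) whose base case is exactly your cofactor expansion of the Vandermonde determinant along the last column. So the underlying mechanism---cofactor expansion plus a degree/rank argument---is the same, but you package both parts into a single modified-Vandermonde computation, which is tidier for this univariate statement. The paper's Lemma~\ref{Lemma: degree less than k-1 vanishes} is stated in greater generality (for polynomials $g_i(z_1,\dots,z_k)$ that agree pairwise on the diagonals $z_i=z_j$, rather than just $g(z_i)$ for a one-variable $g$), and that extra generality is actually used later in the paper, which explains why the authors did not take your shortcut here.
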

\begin{proof}
    By Lemma~\ref{Lemma: degree less than k-1 vanishes} and Lemma~\ref{Lemma: monomial symmetric sum}. 
\end{proof}

\begin{proof}[Proof of Theorem~\ref{thm: main theorem} (a) (vanishing identity )]
Consider the expression in \eqref{Eq: e perp Mac intersection} and assume $m<k-1$. Then, the term
\begin{align*}
    \rev\left(\Pi'_{\widetilde{H}_\lambda}[D_\mu^{(i)};q,t])\right)&=
    \rev\left(\nabla^{-1}\widetilde{H}_\lambda\left[D_\mu-\epsilon-Mz_i^{-1};q,t\right]\right)
    \\
    &=T_\lambda \widetilde{H}_\lambda \left[D_\mu(q^{-1},t^{-1}) -\epsilon - \dfrac{M}{qt}z_i;q^{-1},t^{-1}\right]
\end{align*}
is a polynomial in $z_i$ of degree $m$ which is less than $k-1$. As a consequence, using Lemma~\ref{Lemma: sum prod 1/(x_i-x_j)}, we have that \eqref{Eq: e perp Mac intersection} vanishes. This completes the proof.
\end{proof}

\begin{proof}[Proof of Theorem~\ref{thm: main theorem} (b)]
In \eqref{Eq: e perp Mac intersection}, let $m=k-1$. Then 
\[
    \rev\left(\Pi'_{\widetilde{H}_\lambda}[D_\mu;q,t])\right)=T_\lambda \widetilde{H}_\lambda \left[D_\mu(q^{-1},t^{-1}) -\epsilon - \dfrac{M}{qt}z_i;q^{-1},t^{-1}\right]
\]
is a polynomial of degree $k-1$ in $z_i$. Moreover, the leading term is equal to 
\begin{align*}
T_\lambda \widetilde{H}_\lambda\left[-\dfrac{M}{qt};q^{-1},t^{-1}\right] &= \omega \widetilde{H}_\lambda\left[-\dfrac{M}{qt};q,t\right]
\\&=(-qt)^{1-k} \widetilde{H}_\lambda\left[M;q,t\right]
\\&=(-qt)^{1-k} M \Pi_\lambda B_\lambda
\end{align*}
where $\Pi_{\lambda}$ is a constant associated to a partition $\lambda$ defined as
\begin{equation*}
    \Pi_{\lambda}:=\prod_{c\in \lambda\setminus\{(1,1)\}}(1-q^{\coarm_{\lambda}(c)}t^{\coleg_{\lambda}(c)}).
\end{equation*} Here, the last equality can be obtained from the generalized Macdonald-Koornwinder reciprocity \cite[Theorem 3.3]{GHT99} when $\mu =(1)$. Using Lemma~\ref{Lemma: sum prod 1/(x_i-x_j)}, \eqref{Eq: e perp Mac intersection} becomes
\begin{align*}
    e_{n+1-k}^\perp\I_{\mu^{(1)},\dots,\mu^{(k)}}[X;q,t] &= (qt)^{k-1}T_{\cap_{i=1}^k \mu^{(i)}} \sum_{\lambda \vdash k-1} \left(\sum_{i=1}^k \prod_{j\neq i}\frac{1}{z_j-z_i} \rev\left(\Pi'_{\widetilde{H}_\lambda}[D_\mu;q,t])\right)\right)\dfrac{T_\lambda\widetilde{H}_\lambda}{\tilde{h}_\lambda\tilde{h}'_\lambda}
    \\&=T_{\cap_{i=1}^k \mu^{(i)}} \sum_{\lambda \vdash k-1} \dfrac{M \Pi_\lambda B_\lambda T_\lambda \widetilde{H}_\lambda}{\tilde{h}_\lambda\tilde{h}'_\lambda}=T_{\cap_{i=1}^k \mu^{(i)}} \nabla e_{k-1}
\end{align*}
where the last equality is from \cite[Theorem 3.2]{Hai02}. 
\end{proof}

\section{The fermionic formula for the shuffle formula}\label{Sec: New fermionic formula}
\subsection{Fermionic formulas}
Several important families of $q$-polynomials that arise in combinatorics and representation theory can be expressed using \emph{fermionic formulas}. These formulas involve a sum of products of $q$-multinomials and nonnegative powers of $q$. The origin of these formulas can be traced back to the Bethe ansatz \cite{Bet93} and $q$-Kostka polynomial $K_{\lambda,\mu}(q)$ \cite{KKR88}. Similarly, we refer to a fermionic formula for the sums of products of $q$-multinomials and nonnegative powers of $q$ and $t$.

Garsia and Haiman \cite{GH96Catalan} introduced a rational function $C_n(q,t)$ called the \emph{$q,t$-Catalan numbers} and conjectured that $C_n(q,t)$ is the Hilbert series of the diagonal coinvariant alternants, or $\langle D_n[X;q,t], e_n\rangle$. This number $C_n(q,t)$ specializes to the well-known $q$-Catalan numbers:
\[
    q^{\binom{n}{2}}C_n(q,1/q) = C^{(1)}_n(q) \text{ and } C_n(1,q)=C_n(q,1)=C^{(2)}_n(q),
\]
where $C^{(1)}_n(q)$ is the major-index generating function over Dyck words, and $C^{(2)}_n(q)$ is the area generating function for Dyck paths, justifying the name $q,t$-Catalan numbers. Haglund \cite{Hag03} gave a fermionic formula for $C_n(q,t)$:
\begin{equation}\label{eq: fermionic formula for q,t-Catalan}
    C_n(q,t) =  \sum_{\alpha \models n}{t^{\sum_{i=1}^{\ell(\alpha)}(i-1)\alpha_i}q^{\sum_{i=1}^{\ell(\alpha)} \binom{\alpha_i}{2} } \prod_{1\le i\le \ell(\alpha)-1} \binom{\alpha_i + \alpha_{i+1} -1}{\alpha_{i+1}}}_q.
\end{equation} 

Meanwhile, Haglund and Loehr \cite{HL05} conjectured a fermionic formula for the Hilbert series of the diagonal coinvariant algebra, or $\mathcal{H}_n(q,t)=\langle D_n[X;q,t], e_{(1^n)} \rangle$. This conjecture was eventually proved in \cite{CM18, CO18}, yielding the following formula:
\begin{equation}\label{eq: fermionic formula for Hilbert polynomial}
\mathcal{H}_n(q,t) = \sum_{\sigma \in \mathfrak{S}_n}t^{\maj(\sigma)}\prod_{i=1}^{n} [u_i(\sigma)-1]_q,
\end{equation}
where $\maj(\sigma)=\sum_{i\in \Des(\sigma)}i$, and $u_i$ is defined as follows. For a permutation $\sigma$, define a sequence $\sigma'$ by setting $\sigma'_i=\sigma_i$ for $1\le i \le n$ and $\sigma'_{n+1}=0$. For each $1 \le i \le n$, $u_i(\sigma)$ is the length of the longest consecutive sequence $\sigma'_{i}\dots\sigma'_{j}$ satisfying either
\begin{itemize}
    \item $\sigma'_{i}\dots\sigma'_{j}$ has no descent, or
    \item $\sigma'_{i}\dots\sigma'_{j}$ has one descent and $\sigma'_i>\sigma'_j$.
\end{itemize} 
In Section~\ref{subsec: 4.2}, we review a generalization (Theorem~\ref{thm: fermionic formula}) of these two fermionic formulas \eqref{eq: fermionic formula for q,t-Catalan} and \eqref{eq: fermionic formula for Hilbert polynomial}.

\subsection{Modified labeled Dyck paths and the shuffle formula}\label{subsec: 4.2}

The shuffle theorem concerns a combinatorial formula $D_n[X;q,t]$ for $\nabla e_n$ as a generating function over parking functions. Several equivalent formulations of $D_n[X;q,t]$ exist in the literature. For our purpose, we will use the version presented in \cite[Theorem 14]{LN14} employing the concept of \emph{modified labeled Dyck paths}, which Haglund and Loehr have first investigated in \cite{HL05}.

\begin{definition}\label{def: mld definition} A Dyck path of length $n$ is a lattice path from $(0,0)$ to $(n,n)$ that does not go below the line $y=x$. We denote a Dyck path by a sequence of $E$ and $N$, where $E$ is an east step and $N$ is a north step. We say that a Dyck path has a \emph{corner} at $(i,j)$ if there is an east step from $(i-1,j)$ to $(i,j)$ followed by a north step from $(i,j)$ to $(i,j+1)$. Given a Dyck path $\pi$ and a permutation $w$, a \emph{labeled Dyck path} $L(\pi,w)$ is a Dyck path $\pi$ with its $i$-th east step and $i$-th north step are labeled by $w_{n+1-i}$. For example, on the left in Figure~\ref{fig: mld example nonexample}, $\pi=NNNENNENEEEE$ and $w=326514$, therefore we have
\begin{equation*}
L(\pi,w)=((N,4),(N,1),(N,5),(E,4),(N,6),(N,2),(E,1),(N,3),(E,5),(E,6),(E,2),(E,3)).    
\end{equation*}
The permutation $w$ can be obtained from $L(\pi,w)$ by reading labels of the north steps from top to bottom (or equivalently, by reading labels of the east steps from right to left). We call a pair $(\pi,w)$ is a \emph{modified labeled Dyck path} if for each corner of $\pi$ at $(i,j)$, the label of the $i$-th east step is less than the label of the $(j+1)$-th north step in $L(\pi,w)$. We present a modified labeled Dyck path $(\pi,w)$ by a Dyck path $\pi$ and a permutation $w$ assigned to the diagonal $y=x$ from top to bottom.

The set of modified labeled Dyck paths of length $n$ will be denoted as $\MLD(n)$. We call a cell by $(i, j)$-box if $i$ and $j$ are the $x-$ and $y-$ coordinates of
its upper-right corner. Then for a pair $(\pi,w)$ of a Dyck path $\pi$ of length $n$ and a permutation $w$ of length $n$, the quantity $\area'(\pi,w)$ is defined as the number of pairs $(i,j)$ such that $(i,j)$-box lies between $\pi$ and the diagonal $y=x$ and the label of the $i$-th east step is less than the label of the $j$-th north step in $L(\pi,w)$.

Given a Dyck path $\pi$ of length $n$, we define its \emph{bounce path} as follows. Begin at $(0,0)$ and move northward until the beginning of an east step is encountered. Then, travel eastward until the diagonal $y=x$ is reached. From this point, repeat the process by traveling northward until the next east step is encountered, then moving eastward until the diagonal $y=x$ is reached. Continue this manner until we reach $(n,n)$. The \emph{bounce vector} $\overrightarrow{b}(\pi)=(\alpha_1,\alpha_2,\dots)$ associated to the Dyck path is a composition of $n$, obtained by reading the lengths of the individual bounces of the path from bottom to top. Then we define $\bounce(\pi)$ to be $\sum_{i=1}^{\ell(\alpha)}(i-1)\alpha_i$.
\end{definition}

\begin{figure}[h]\centering
\begin{tikzpicture}[scale=1]

\draw[-] (0,0) -- (6,0);
\draw[-] (0,1) -- (6,1);
\draw[-] (0,2) -- (6,2);
\draw[-] (0,3) -- (6,3);
\draw[-] (0,4) -- (6,4);
\draw[-] (0,5) -- (6,5);
\draw[-] (0,6) -- (6,6);

\draw[-] (0,0) -- (0,6);
\draw[-] (1,0) -- (1,6);
\draw[-] (2,0) -- (2,6);
\draw[-] (3,0) -- (3,6);
\draw[-] (4,0) -- (4,6);
\draw[-] (5,0) -- (5,6);
\draw[-] (6,0) -- (6,6);

\filldraw [red] (0.5,2.5) circle (2pt);
\filldraw [red] (1.5,3.5) circle (2pt);
\filldraw [red] (1.5,2.5) circle (2pt);
\filldraw [red] (1.5,4.5) circle (2pt);
\filldraw [red] (2.5,3.5) circle (2pt);
\filldraw [red] (4.5,5.5) circle (2pt);

\draw[green] (0,0) -- (6,6);

\draw[blue, very thick] (0,0) -- (0,3);
\draw[blue, very thick] (0,3) -- (1,3);
\draw[blue, very thick] (1,3) -- (1,5);
\draw[blue, very thick] (1,5) -- (2,5);
\draw[blue, very thick] (2,5) -- (2,6);
\draw[blue, very thick] (2,6) -- (6,6);

\filldraw[black] (0.5,0.75) circle (0.000001pt) node[anchor=north] {$4$};
\filldraw[black] (1.5,1.75) circle (0.000001pt) node[anchor=north] {$1$};
\filldraw[black] (2.5,2.75) circle (0.000001pt) node[anchor=north] {$5$};
\filldraw[black] (3.5,3.75) circle (0.000001pt) node[anchor=north] {$6$};
\filldraw[black] (4.5,4.75) circle (0.000001pt) node[anchor=north] {$2$};
\filldraw[black] (5.5,5.75) circle (0.000001pt) node[anchor=north] {$3$};
\filldraw[black] (0.5,3.75) circle (0.000001pt) node[anchor=north] {$<$};
\filldraw[black] (1.5,5.75) circle (0.000001pt) node[anchor=north] {$<$};

\begin{scope}[shift={(8,0)}]

\draw[-] (0,0) -- (6,0);
\draw[-] (0,1) -- (6,1);
\draw[-] (0,2) -- (6,2);
\draw[-] (0,3) -- (6,3);
\draw[-] (0,4) -- (6,4);
\draw[-] (0,5) -- (6,5);
\draw[-] (0,6) -- (6,6);

\draw[-] (0,0) -- (0,6);
\draw[-] (1,0) -- (1,6);
\draw[-] (2,0) -- (2,6);
\draw[-] (3,0) -- (3,6);
\draw[-] (4,0) -- (4,6);
\draw[-] (5,0) -- (5,6);
\draw[-] (6,0) -- (6,6);

\draw[green] (0,0) -- (6,6);

\draw[blue, very thick] (0,0) -- (0,3);
\draw[blue, very thick] (0,3) -- (1,3);
\draw[blue, very thick] (1,3) -- (1,5);
\draw[blue, very thick] (1,5) -- (2,5);
\draw[blue, very thick] (2,5) -- (2,6);
\draw[blue, very thick] (2,6) -- (6,6);

\filldraw[black] (0.5,0.75) circle (0.000001pt) node[anchor=north] {$5$};
\filldraw[black] (1.5,1.75) circle (0.000001pt) node[anchor=north] {$1$};
\filldraw[black] (2.5,2.75) circle (0.000001pt) node[anchor=north] {$4$};
\filldraw[black] (3.5,3.75) circle (0.000001pt) node[anchor=north] {$3$};
\filldraw[black] (4.5,4.75) circle (0.000001pt) node[anchor=north] {$6$};
\filldraw[black] (5.5,5.75) circle (0.000001pt) node[anchor=north] {$2$};
\filldraw[black] (0.5,3.75) circle (0.000001pt) node[anchor=north] {$\nless$};
\filldraw[black] (1.5,5.75) circle (0.000001pt) node[anchor=north] {$<$};

\end{scope}     

\end{tikzpicture}
\caption{An example and a nonexample of modified labeled Dyck path.} 
\label{fig: mld example nonexample}
\end{figure}
\begin{example}
On the left in Figure~\ref{fig: mld example nonexample}, a modified labeled Dyck path is depicted. On the other hand, the labeled Dyck path on the right in Figure~\ref{fig: mld example nonexample} is not a modified labeled Dyck path because the label of the east step at the first corner is 5, which is greater than the label of the north step, which is 3.

Let $(\pi,w)$ be the modified labeled Dyck path shown on the left in Figure~\ref{fig: mld example nonexample}. The bounce vector associated with $\pi$ is $\overrightarrow{b}(\pi)=(3,3)$, and $\bounce(\pi)=3$. Additionally, six boxes (indicated with dots on the left in Figure \ref{fig: mld example nonexample}) are located between the Dyck path and the main diagonal, where the label below the box is smaller than the label to its right. Therefore, we have $\area'(\pi,w)=6$.
\end{example}

The following theorem reformulates the shuffle formula $D_n[X;q,t]$ in terms of modified labeled Dyck paths:
\begin{thm}\label{thm: Bounce formulation shuffle}\cite[Theorem 14]{LN14} We have
\begin{equation*}
    D_n[X;q,t]=\sum_{(\pi,w)\in \MLD(n)}t^{\bounce(\pi)}q^{\area'(\pi,w)}F_{\iDes(w)}.
\end{equation*}
In other words, the shuffle theorem is equivalent to the identity:
\begin{equation*}
\nabla e_n=\sum_{(\pi,w)\in \MLD(n)}t^{\bounce(\pi)}q^{\area'(\pi,w)}F_{\iDes(w)}.
\end{equation*}
\end{thm}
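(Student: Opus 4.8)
The plan is to prove Theorem~\ref{thm: Bounce formulation shuffle} by a statistic-preserving bijection between parking functions and modified labeled Dyck paths. Since $\{F_\alpha\}$ is a basis of $\mathbf{QSym}$ and both sides of the asserted identity are $\mathbb{N}[q,t]$-linear combinations of the quasisymmetric functions $F_{\iDes(w)}$, it suffices to build a bijection $\Phi\colon\Park_n\to\MLD(n)$, written $\Phi(\pi,w)=(\pi',w')$, with
\[
  \area(\pi)=\bounce(\pi'),\qquad \dinv(\pi,w)=\area'(\pi',w'),\qquad \iDes(w)=\iDes(w').
\]

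First I would dispose of the underlying unlabeled paths. The map $\pi\mapsto\pi'$ will be Haglund's zeta map $\zeta$ (a form of the sweep map): record the diagonals of the north steps of $\pi$ as a word, and build $\pi'=\zeta(\pi)$ from the sorted word by turning each maximal run of equal entries into one bounce of the bounce path. The classical facts $\area(\zeta(\pi))=\dinv(\pi)$ and $\bounce(\zeta(\pi))=\area(\pi)$ — the ones that exchange the two forms of the $q,t$-Catalan number — then supply the first required identity and the unlabeled shadow of the second (up to replacing $\zeta$ by its inverse, according to one's convention).

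Next I would transport the labels. The sweep map visits the north steps of $\pi$ in increasing order of their diagonal (the number of complete cells to their left in the same row), breaking ties by bottom-to-top order; carrying each label along this visiting order and re-reading it off the north steps of $\pi'$ from top to bottom produces $w'$, and for the standard reading-word conventions this yields $w'=w$, hence $\iDes(w)=\iDes(w')$. One must also check that $(\pi',w')$ meets the corner condition defining $\MLD(n)$: a corner of $\pi'$ encodes two consecutive cars of $\pi$ on one diagonal with the outer car larger — exactly a ``primary'' diagonal-inversion pair — so the east-step label is forced to be smaller than the north-step label above it, as required.

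The technical core is the identity $\dinv(\pi,w)=\area'(\pi',w')$. On the left, $\dinv$ counts pairs of cars on the same diagonal with the northwest car smaller together with pairs on adjacent diagonals with the southeast car smaller; on the right, $\area'$ counts boxes strictly between $\pi'$ and $y=x$ whose north-step (row) label exceeds their east-step (column) label. I would match the two families pair-by-pair by tracking each $\dinv$ pair through $\zeta$: a same-diagonal pair becomes a pair of adjacent rows of $\pi'$ and contributes one box of the region below $\pi'$, while an adjacent-diagonal pair becomes a pair of rows separated by a single bounce step and contributes a box one column further right; verifying that the label inequalities correspond and that every box below $\pi'$ is hit exactly once is the heart of the argument. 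This is the labeled refinement of Haglund's proof that $\zeta$ interchanges $(\dinv,\area)$ with $(\area,\bounce)$, and I would adapt that argument, following \cite{HL05, LN14}. Finally $\Phi$ is a bijection because $\zeta$ is invertible on Dyck paths and the label transport is governed by a fixed permutation of the rows, so $\Phi^{-1}$ applies $\zeta^{-1}$ and carries the labels back. (Alternatively one could bypass the bijection by showing both sides obey the same recursion obtained by conditioning on the bounce vector, but $\Phi$ is the more transparent route, and I expect the $\dinv\leftrightarrow\area'$ verification to be the main obstacle in either case.)
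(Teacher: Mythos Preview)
The paper does not prove this theorem: it is stated as \cite[Theorem~14]{LN14} and used as a black box, so there is no in-paper argument to compare your proposal against. Your sketch is essentially the approach of the cited references \cite{HL05, LN14}: push labels through Haglund's $\zeta$ map and verify that $(\area,\dinv)$ becomes $(\bounce,\area')$.

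Two points where your sketch is imprecise. First, the assertion ``this yields $w'=w$'' only holds once the reading-word conventions are aligned: on the parking-function side $w$ is the \emph{diagonal reading word} (cars read along diagonals from outermost to innermost, bottom to top within a diagonal), and it is this word that becomes the top-to-bottom north-step reading of $\pi'$ under $\zeta$. You should make that identification explicit rather than asserting it. Second, the corner condition on $(\pi',w')\in\MLD(n)$ does not correspond to a $\dinv$ pair as you wrote; under $\zeta^{-1}$ a corner of $\pi'$ comes from two cars in the \emph{same column} of the original parking function $\pi$, and the east-label $<$ north-label inequality is exactly the column-increasing condition in the definition of $\Park_n$. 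With these corrections your outline matches the literature argument, and the pair-by-pair matching of $\dinv$ contributions with $\area'$ boxes (your ``technical core'') is indeed the substance of the proof in \cite{HL05,LN14}.
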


\subsection{The fermionic formula}

For a (quasi)symmetric function $f$ of degree $n$, and a composition $\beta \models n$, we denote $[F_{\beta}](f)$ for the coefficient of $F_{\beta}$ in the expansion of $f$ into fundamental quasisymmetric functions. Then the following lemma is direct.

\begin{lem}\label{lem: mathfrakF = <e, >}
For a composition $\beta=(\beta_1,\beta_2,\dots)$ of $n$, let $e_\beta=e_{\beta_1}e_{\beta_2}\cdots$ be the elementary symmetric function. Then for a symmetric function $f$ of homogeneous degree $n$, we have
\[
    \langle f, e_{\beta}\rangle= \sum_{\substack{\alpha \models n \\ [n-1]\setminus \Set(\beta) \subseteq \Set(\alpha)  }}[F_\alpha](f),
\]
where the map $\Set$ is the bijection between compositions of $n$ and $2^{[n-1]}$.
\end{lem}
\begin{proof}
This follows from subsequently applying Proposition \ref{prop: en perp gessel fundamental quasi}.
\end{proof}

It is easy to check that given a permutation $w\in \mathfrak{S}_n$, we have 
\begin{equation}\label{eq: easy inference}
    \langle F_{\Des(w)}),e_{\beta}\rangle=1 \text{ if and only if } \{i: w_{i}<w_{i+1}\}\subseteq \Set(\beta).
\end{equation}

Note that \eqref{eq: fermionic formula for q,t-Catalan} and \eqref{eq: fermionic formula for Hilbert polynomial} give fermionic formulas for $\langle D_n[X;q,t],e_{n}\rangle$ and  $\langle D_n[X;q,t],e_{(1^n)}\rangle$, respectively. We will present a fermionic formula for $\langle D_{n}[X;q,t],e_{\beta}\rangle$ for general compositions $\beta \models n$, which essentially follow from \cite[Equation (63)]{HHLRU05}. With that purpose in mind, we introduce some notations. 

For positive integers $i,j$, the \emph{lightning bolt set} $\LLB(i,j)$ is the set of positions that are weakly above $(i,j-1)$ together with the positions that are weakly below $(i,j)$:
\[
    \LLB(i,j) \coloneqq \{(a,j-1):a\le i\} \cup \{(a,j):a\ge i\}.
\]
Here, we used matrix coordinates for \((i, j)\). Illustratively, the set $\LLB(i,j)$ can be represented as in Figure~\ref{fig:the lightning bolt set}.
\begin{figure}[h]
    \centering
    \begin{tikzpicture}
    \draw[line width=2pt, line cap=round, dash pattern=on 0pt off 1cm](0,0) grid (1,6);
    \draw (1,3)node[below]{$(i,j)$}; 
    \draw (-1,3)node{$\cdots$};
    \draw (2,3)node{$\cdots$};
    \begin{scope}[on background layer]
    \draw[yellow!60, line width=6pt * 1.5] (1,3) -- (1, -0.5    );
    \draw[yellow!60, line width=6pt * 1.5] (-0.15, 3) -- (1.15,3);
    \draw[yellow!60, line width=6pt * 1.5] (0,3) -- (0, 6.5);
    \end{scope}
        \end{tikzpicture}
    \caption{The lightning bolt set $\LLB(i,j)$.}
    \label{fig:the lightning bolt set}
\end{figure}

We denote the set of nonnegative integer matrices by $\Matoo=\bigcup_{1\le r,\ell}{\Mat^{\ge0}_{r\times\ell}}$, where ${\Mat^{\ge0}_{r\times\ell}}$ is the set of nonnegative integer matrices with $r$ rows and $\ell$ columns. A row sum vector (respectively column sum vector) of a matrix $M$ is denoted by $\csum(M)$ (respectively $\rsum(M$)), and let $|M|$ be the sum of entries of $M$. For example, the matrix
\[
M=\begin{bmatrix}
a & b \\
c & d 
\end{bmatrix}
\]
gives $\csum(M)=(a+b,c+d)$, $\rsum(M)=(a+c,b+d)$ and $|M|=  a+b+c+d$. For a matrix $M\in\Mat^{\ge0}_{r\times\ell}$ and $1\le i \le r, 2 \le j \le \ell$, we define $\LB(M;(i,j))$ as
\[
    \LB(M;(i,j)) \coloneqq \sum_{(a,b)\in \LLB(i,j)}M_{a,b}.
\]
  We also define $\LB(M)$ as follows:
\[
    \LB(M) \coloneqq \binom{M_{1,1}+\dots+M_{r,1}}{M_{1,1},\dots,M_{r,1}}_q \prod_{\substack{1\le i \le r\\ 2\le j \le \ell}} \binom{\LB(M;(i,j))-1}{M_{i,j}}_q,
\]
where the first term on the right is the $q$-multinomial coefficient and the second term is the product of $q$-binomial coefficients. We regard $\binom{-1}{0}_q=\binom{0}{0}_q=1$. The following basic property of $\LB(M)$ is worth noting.
\begin{lem}\label{lem: basic property of lbm}
    For a matrix $M$ such that $\LB(M)\neq0$, there exists a composition $\alpha$ such that $\rsum(M)=(\alpha,0,\dots,0)$. For such $M$, let $M'$ be the matrix obtained from $M$ by restricting to the first $\ell(\alpha)$ columns. Then we have $\LB(M)=\LB(M')$.
\end{lem}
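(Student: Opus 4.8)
The plan is to analyze when the individual $q$-binomial factors in the definition of $\LB(M)$ can be nonzero, and to propagate the consequences column by column from right to left. Recall $\binom{a}{b}_q = 0$ exactly when $b > a$ (for $a,b \ge 0$), and we have adopted the conventions $\binom{-1}{0}_q = \binom{0}{0}_q = 1$. So for $\LB(M) \ne 0$ we need, for every $1 \le i \le r$ and $2 \le j \le \ell$, that $M_{i,j} \le \LB(M;(i,j)) - 1$, i.e.\ $M_{i,j} < \LB(M;(i,j))$, unless $M_{i,j} = 0$ and $\LB(M;(i,j)) \in \{0\}$ (the degenerate case giving $\binom{-1}{0}_q$ or $\binom{0}{0}_q$). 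The key observation is the following: if column $j$ of $M$ is entirely zero, then for each $i$, $\LB(M;(i,j+1)) = \sum_{(a,b) \in \LLB(i,j+1)} M_{a,b}$ counts entries in column $j$ (weakly above row $i$) and column $j+1$ (weakly below row $i$); but we will instead look at column $j+1$ via its own lightning bolt sets, which reach down into column $j$. Concretely, I will show: \emph{if column $j$ is zero, then column $j+1$ is zero}. Indeed, $\LLB(i, j+1)$ consists of positions $(a,j)$ with $a \le i$ together with positions $(a,j+1)$ with $a \ge i$. Taking $i = r$ (the bottom row), $\LLB(r, j+1) = \{(a,j) : a \le r\} \cup \{(r, j+1)\}$, so $\LB(M;(r,j+1)) = \bigl(\sum_{a} M_{a,j}\bigr) + M_{r,j+1} = M_{r,j+1}$ using that column $j$ vanishes. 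The factor $\binom{\LB(M;(r,j+1)) - 1}{M_{r,j+1}}_q = \binom{M_{r,j+1}-1}{M_{r,j+1}}_q$ is zero unless $M_{r,j+1} = 0$. Proceeding up the column by decreasing $i$: once $M_{a,j+1} = 0$ for all $a > i$ and column $j$ is zero, $\LB(M;(i,j+1)) = \sum_{a \le i} M_{a,j} + \sum_{a \ge i} M_{a,j+1} = M_{i,j+1}$, so the same binomial $\binom{M_{i,j+1}-1}{M_{i,j+1}}_q$ forces $M_{i,j+1} = 0$. Hence column $j+1$ is zero.

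Next I would handle the first column. The factor involving column $1$ is the $q$-multinomial $\binom{M_{1,1} + \cdots + M_{r,1}}{M_{1,1}, \ldots, M_{r,1}}_q$, which is always nonzero (it is a genuine $q$-multinomial coefficient with nonnegative arguments), so column $1$ places no constraint by itself. Define $\alpha$ to be the composition recording the column sums of $M$ up to (and including) the last nonzero column. Formally, let $j_0$ be the largest index with column $j_0$ of $M$ nonzero; by the claim above (contrapositive: if column $j$ is nonzero then column $j-1$ is nonzero, applied repeatedly), every column $1, \ldots, j_0$ is nonzero, so none of these column sums is $0$, while columns $j_0 + 1, \ldots, \ell$ are all zero. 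Thus $\rsum(M) = (c_1, \ldots, c_{j_0}, 0, \ldots, 0)$ with each $c_j > 0$; set $\alpha = (c_1, \ldots, c_{j_0})$, a genuine composition, and $\ell(\alpha) = j_0$. (Edge case: if $M$ is the zero matrix, then $\LB(M) = \binom{0}{}_q \cdot 1 = 1 \ne 0$ vacuously; one can take $\alpha$ empty, or note the statement is trivially interpreted — I would mention this briefly, or exclude it if the paper's conventions make $\Matoo$ consist of matrices with at least one positive entry.)

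Finally I would verify $\LB(M) = \LB(M')$, where $M'$ is the restriction of $M$ to its first $\ell(\alpha) = j_0$ columns. The $q$-multinomial factor depends only on column $1$, which is unchanged. For the product of $q$-binomials, the factors with $2 \le j \le j_0$ involve only positions $(a,b)$ with $b \le j_0$ (since $\LLB(i,j)$ touches columns $j-1$ and $j$, both $\le j_0$), so they are literally the same in $M$ and $M'$. The only thing to check is that the discarded factors, those with $j_0 + 1 \le j \le \ell$, all equal $1$: for such $j$, column $j$ is zero so $M_{i,j} = 0$, and $\LB(M;(i,j))$ is a sum of entries from columns $j-1$ and $j$ — if $j = j_0 + 1$ this can be positive (it sees column $j_0$), giving a factor $\binom{\LB(M;(i,j_0+1)) - 1}{0}_q = 1$; if $j > j_0 + 1$ both columns $j-1, j$ are zero so $\LB(M;(i,j)) = 0$ and the factor is $\binom{-1}{0}_q = 1$. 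In all cases the discarded factors contribute $1$, so $\LB(M) = \LB(M')$.

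The main obstacle is purely bookkeeping: being careful with the degenerate conventions $\binom{-1}{0}_q = \binom{0}{0}_q = 1$ so that the "if column $j$ is zero then column $j+1$ is zero" induction is airtight (in particular that the inductive step genuinely forces each $M_{i,j+1} = 0$ rather than stalling on a $\binom{0}{0}_q$), and handling the boundary column $j = j_0 + 1$ correctly, since its lightning bolt sets still reach into the last nonzero column $j_0$. There is no deep idea here — once the column-to-column vanishing propagation is established, everything else is immediate — so I would keep the write-up short.
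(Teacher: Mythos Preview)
Your proposal is correct and follows essentially the same approach as the paper. The paper's proof is slightly terser: rather than inducting from $i=r$ upward to show that a zero column $j$ forces column $j+1$ to be zero, it directly picks the maximal $i$ with $M_{i,j+1}\neq 0$ and reads off $\LB(M;(i,j+1))=M_{i,j+1}$, yielding the vanishing factor $\binom{M_{i,j+1}-1}{M_{i,j+1}}_q=0$ in one stroke; for the second part it simply remarks that appending a zero column to the right does not change $\LB(-)$, whereas you spell out the case split $j=j_0+1$ versus $j>j_0+1$ explicitly.
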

\begin{proof}
    Assume there exists $j$ such that $j$-th column of $M$ is a zero column while $(j+1)$-th column is not a zero column. Let $i$ be the maximal integer such that $M_{i,j+1}\neq0$. Then we have 
    $\LB(M;(i,j+1))=M_{i,j+1}$ thus $\binom{M_{i,j+1}-1}{M_{i,j+1}}_q=0$. We conclude $\LB(M)=0$. Therefore, to have $\LB(M)\neq0$, zero columns of $M$ must be on the right of non-zero columns of $M$ implying $\rsum(M)=(\alpha,0,\dots,0)$ for some composition $\alpha$.

   Note that $M$ can be obtained from $M'$ by appending zero columns to the right. It is easy to check that appending a zero column to the right does not change the value $\LB(-)$.
\end{proof}
We now state the fermionic formula for \(\langle D_n[X;q,t],e_{\beta}\rangle\) of Haglund, Haiman, Loehr, Remmel, and Ulyanov \cite{HHLRU05}. Their original formulation uses the concept of a `\(\beta\)-shuffle', whereas we employ a different formulation. Spefically, \ \(\langle D_n[X;q,t],e_{\beta}\rangle\) is expressed as a sum of products of \(\LB(M)\) and nonnegative powers of \(q\) and \(t\) over all nonnegative integer matrices \(M\) of size \(\ell(\beta) \times \ell(\alpha)\) that satisfy certain constraints.
\begin{thm}\cite{HHLRU05}\label{thm: fermionic formula}
For a composition $\beta\models n$, we have
\[
 \langle D_n[X;q,t],e_{\beta}\rangle = \sum_{\alpha \models n} t^{\sum_{i=1}^{\ell(\alpha)}(i-1)\alpha_i} \sum_{\substack{M \in \Matoo \\ \csum(M)=\beta \\ \rsum(M)=\alpha}}  q^{\sum_{i,j}\binom{M_{i,j}}{2}  } \LB(M).
\]
\end{thm}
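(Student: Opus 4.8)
The plan is to work directly with the modified labeled Dyck path formulation of $D_n[X;q,t]$ given in Theorem~\ref{thm: Bounce formulation shuffle}, apply $\mathfrak{F}_\beta$, and then reorganize the resulting sum over $\MLD(n)$ by grouping paths according to combinatorial data that naturally produces a nonnegative integer matrix $M$ with $\csum(M)=\beta$ and $\rsum(M)=\alpha$ the bounce vector. Concretely, I first expand
\[
    \mathfrak{F}_\beta\left(D_n[X;q,t]\right)
    = \sum_{(\pi,w)\in\MLD(n)} t^{\bounce(\pi)} q^{\area'(\pi,w)}\,\mathfrak{F}_\beta\left(F_{\iDes(w)}\right),
\]
and use \eqref{eq: easy inference} to see that $\mathfrak{F}_\beta(F_{\iDes(w)})$ is the indicator that the ascent set of $w^{-1}$ (equivalently, the appropriate descent-type condition on $w$) is contained in $\Set(\beta)$. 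Since the labels on the diagonal are read top to bottom, this condition partitions the label alphabet $\{1,\dots,n\}$ into $\ell(\beta)$ consecutive blocks of sizes $\beta_1,\beta_2,\dots$, so that within each block the labels must appear on the diagonal in increasing order from top to bottom, and there is no constraint between blocks. Thus the surviving $(\pi,w)$ are exactly those whose diagonal word is a shuffle of $\ell(\beta)$ increasing runs of prescribed lengths $\beta_1,\dots,\beta_{\ell(\beta)}$.

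Next I fix a Dyck path $\pi$ with bounce vector $\overrightarrow{b}(\pi)=\alpha$; the $t$-power is then $t^{\sum_i (i-1)\alpha_i}$, matching the outer $t$-factor in the statement. The bounce path decomposes the region under $\pi$ into $\ell(\alpha)$ horizontal "bounce strips," the $j$-th strip containing $\alpha_j$ columns. The key bookkeeping step is: for a valid $(\pi,w)$, let $M_{i,j}$ be the number of labels lying in block $i$ (of $\beta$) whose corresponding north step sits in bounce strip $j$. This gives $\csum(M)=\beta$ and $\rsum(M)=\alpha$ by construction. I then need to show that, summing $q^{\area'(\pi,w)}$ over all ways to choose the labels and the path $\pi$ consistent with a fixed matrix $M$, one obtains exactly $q^{\sum_{i,j}\binom{M_{i,j}}{2}}\LB(M)$. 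The $\binom{M_{i,j}}{2}$ contribution comes from the forced increasing order of same-block labels within a single strip (a standard "inversions within a run" count), while the $q$-multinomial $\binom{M_{1,1}+\dots+M_{r,1}}{M_{1,1},\dots,M_{r,1}}_q$ records the interleaving of the different blocks' labels in the first strip, and each $q$-binomial $\binom{\LB(M;(i,j))-1}{M_{i,j}}_q$ records, strip by strip reading upward, the choice of where the $M_{i,j}$ new block-$i$ labels go relative to the labels already placed — the lightning bolt quantity $\LB(M;(i,j))$ being precisely the number of relevant "slots" available, with the $-1$ accounting for the fact that the bottom of each bounce is pinned to the diagonal. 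I expect to verify this by an explicit bijection between $\MLD(n)$ paths with fixed $M$ and certain chains of multiset insertions, tracking $\area'$ at each insertion.

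The main obstacle will be the last step: proving that the statistic $\area'(\pi,w)$ decomposes, strip by strip, into the claimed product of $q$-multinomial and $q$-binomial generating functions, with the lightning bolt set $\LLB(i,j)$ giving exactly the right count of available positions. This requires a careful analysis of which $(i,j)$-boxes below $\pi$ and above the diagonal contribute to $\area'$ — in particular, how the "label below is less than label to the right" condition interacts both with the increasing order within a block and with the bounce structure. The cleanest route is probably an inductive argument that peels off the topmost bounce strip: one shows that removing the top strip (deleting the corresponding labels and the corresponding portion of $\pi$) reduces $n$, changes $\beta$ and $\alpha$ in a controlled way, multiplies $\area'$ by the expected $q$-factor coming from the top row of $M$, and shifts the $t$-statistic by $\sum_i \alpha_i$ times the number of remaining strips; then the recursion for $\LB(M)$ in terms of $\LB(M')$ (cf.\ Lemma~\ref{lem: basic property of lbm} for the analogous column-restriction statement) matches term by term. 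Specializing $\beta=(n)$ recovers \eqref{eq: fermionic formula for q,t-Catalan} (only one block, $M$ a single row, $\LB(M)$ reduces to the product of $q$-binomials over consecutive bounce lengths) and $\beta=(1^n)$ recovers \eqref{eq: fermionic formula for Hilbert polynomial}, which provides a useful sanity check on the combinatorial set-up.
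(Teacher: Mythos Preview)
Your proposal is essentially correct and follows the same architecture as the paper: start from the bounce formulation (Theorem~\ref{thm: Bounce formulation shuffle}), translate $\mathfrak{F}_\beta(F_{\iDes(w)})=1$ into a monotonicity constraint on same-block labels, record the distribution of blocks across bounce strips in a matrix $M$, and then prove that the $q^{\area'}$–generating function over $\MLD(M)$ equals $q^{\sum\binom{M_{i,j}}{2}}\LB(M)$ (this is exactly Proposition~\ref{prop: combinatorial interpretation of LB(M)}).

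Two small points. First, the ordering direction is off: since $\iDes(w)=\Des(w^{-1})$, the condition $\mathfrak{F}_\beta(F_{\iDes(w)})=1$ forces same-block labels to appear in \emph{decreasing} order from top to bottom along the diagonal (see the definition of $\MLD(M)$ and the worked example); this does not affect the structure of your argument but will flip some inequalities. Second, the paper's induction for Proposition~\ref{prop: combinatorial interpretation of LB(M)} is finer than what you sketch: rather than stripping off an entire top bounce strip (an entire column of $M$) at once, it removes a single entry $M_{c,\ell}$ (the topmost nonzero entry in the last column) via a bijection $\Phi:\MLD(M)\to\MLD(M')\times W^0_{s,M_{c,\ell}}$, where the ``starts with $0$'' restriction on the $0$–$1$ word is what produces the $-1$ in $\binom{\LB(M;(c,\ell))-1}{M_{c,\ell}}_q$. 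Your whole-strip peel should also work, but you would then need to establish a product of several $q$-binomials simultaneously; the entry-by-entry induction isolates exactly one $q$-binomial factor per step and keeps the bijection elementary.
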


For the sake of completeness, we provide a proof in Section~\ref{subsec: Proof of fermionic formula} in our formulation. We end this section with an explanation of how \eqref{eq: fermionic formula for q,t-Catalan} and \eqref{eq: fermionic formula for Hilbert polynomial} can be regarded as special cases of the theorem above. Observe that the fermionic formula for $q,t$-Catalan numbers \eqref{eq: fermionic formula for q,t-Catalan} can be derived directly from Theorem~\ref{thm: fermionic formula} by setting $\beta=(n)$. 

To demonstrate that \eqref{eq: fermionic formula for Hilbert polynomial} also follows from Theorem~\ref{thm: fermionic formula}, let $\beta=(1^n)$. In this scenario, the summation is conducted over all nonnegative integer matrices with $n$ rows, each of whose rows contains precisely one nonzero entry, which is 1.

We can establish a correspondence between such matrices $M$ and permutations $\sigma(M)$ as follows: read the row numbers of the ones from top to bottom, starting from the rightmost column and moving leftward. For instance, the matrix
\begin{equation}\label{eq: eg matrix M}
M=\begin{bmatrix}
1 & 0 & 0\\
0 & 1 & 0\\
1 & 0 & 0\\
0 & 0 & 1
\end{bmatrix}
\end{equation}
corresponds to the permutation $\sigma(M)=4213$. For a permutation $\sigma\in\mathfrak{S}_n$, let $\Des(\sigma)=\{d_1<\dots<d_{\ell}\}$. Then for $1\leq i \leq \ell+1$, we define a word $\sigma^{(i)}\coloneqq\sigma_{d_{i-1}+1}\cdots \sigma_{d_{i}}$ where we regard $d_0=0$ and $d_{\ell+1}=n$. In other words, $\sigma^{(i)}$ is the $i$-th increasing run of $\sigma$. For example, for $\sigma=4213$, we have $\sigma^{(1)}=4$, $\sigma^{(2)}=2$, $\sigma^{(3)}=13$. Now we define an $n\times (\ell+1)$ matrix $M(\sigma)$ by letting its entries be defined as follows:
\[
    M(\sigma)_{i,j}:=
    \begin{cases}
    1&\text{ if } i\text{ appears in } \sigma^{(\ell+2-j)}\\
    0&\text{ otherwise}.
    \end{cases}
\]
It is straightforward to see that \(M(4213)\) is the matrix \(M\) given in \eqref{eq: eg matrix M}.
We claim that $M(\sigma)$ is the unique matrix $M$ with $\LB(M)\neq 0$ among matrices satisfying $\sigma(M)=\sigma$. To see this, suppose $M'$ is a matrix with $\sigma(M')=\sigma$ and $M'\neq M(\sigma)$. Then, there exists \(\sigma^{(r)}\) such that there are distinct \(i, i' \in \sigma^{(r)}\) and distinct \(j, j'\) where \(M_{i,j} = 1\) and \(M_{i',j'} = 1\). Assume $\sigma^{(i)}$ starts in the $b$-th column and let $a$ be the maximal integer such that $M'_{a,b}=1$. Then we have $M'_{c,d}=0$ for all $(c,d)\in\LLB(a,b)\setminus\{(a,b)\}$. This implies $\binom{\LB(M';(a,b))-1}{M'_{a,b}}=\binom{0}{1}_q=0$, hence $\LB(M')=0$. For example,
\[
M'=\begin{bmatrix}
0&1 & 0 & 0\\
0&0 & 1 & 0\\
1&0 & 0 & 0\\
0&0 & 0 & 1
\end{bmatrix}
\]
yields $\sigma(M')=4213$ and $\sigma^{(3)}=13$ lies in the first and the second columns. We have $\LB(M')=0$ since $\binom{\LB(M';(1,2))-1}{M'_{1,2}}=\binom{0}{1}_q = 0$. 

Let $\sigma$ be a permutation with $\Des(\sigma) = \{d_1<\cdots<d_\ell\}$, and consider $M=M(\sigma)$. For $j>1$ pick any $i$ such that $M_{i,j}=1$ and let $m$ be the corresponding position in $\sigma(M)=\sigma$, i.e., $\sigma_m = i$. Then it is easy to check that $u_m(\sigma)=\LB(M;(i,j))$. Entries equal to 1 in the first column of $M$ correspond to $\sigma^{(\ell+1)}=\sigma_{d_{\ell}+1}\cdots \sigma_{n}$ and for each $d_{\ell}+1\leq m\leq n$, we have $u_m(\sigma)=n+2-m$. Therefore we have 
\begin{equation*}
    \prod_{m=d_{\ell}+1}^{n}[u_m(\sigma)-1]_q=[n-d_{\ell}]_q\cdots[1]_q = [n-d_\ell]_q!
\end{equation*}
which equals to  $\binom{M_{1,1}+\dots+M_{n,1}}{M_{1,1},\dots,M_{n,1}}_q=\binom{n-d_\ell}{1,\dots,1}_q$. This shows that $\LB(M(\sigma))=\prod_{m=1}^{n}[u_m(\sigma)-1]_q$. Since we have
 $\maj(\sigma)=\sum_{i\ge 1} (i-1)\alpha_i$, where $\alpha=\rsum(M(\sigma))$, letting $\beta=(1^n)$ in Theorem~\ref{thm: fermionic formula} gives \eqref{eq: fermionic formula for Hilbert polynomial}.

% \begin{rmk}
% Haglund, Haiman, Loehr, Remmel, and Ulyanov \cite{HHLRU05} investigated fermionic formula for $\langle D_n[X;q,t], e_\eta h_\mu\rangle$ in terms of \emph{$\mu,\eta$-shuffle} permutations, which generalizes both \eqref{eq: fermionic formula for q,t-Catalan} and \eqref{eq: fermionic formula for Hilbert polynomial}. It would be intriguing to explore if there is a direct relationship exists between their formula, as given in equation (63) of \cite{HHLRU05}, and Theorem~\ref{thm: fermionic formula}.
% \end{rmk} 

\subsection{Proof of Theorem~\ref{thm: fermionic formula}}\label{subsec: Proof of fermionic formula}

In this subsection, we only consider  $M\in\Mat^{\geq0}$ without a zero column or a zero row. We provide a combinatorial interpretation of $\LB(M)$ in terms of modified labeled Dyck paths (Proposition~\ref{prop: combinatorial interpretation of LB(M)}). To that end, we fix a matrix $M \in \Mat^{\ge0}_{r\times\ell}$. We associate a vector $m\in \mathbb{Z}_{\ge0}^{r\ell}$ obtained by reading the entries of $M$ from left to right, top to bottom, i.e., $m_{n(i,j)}= M_{i,j}$, where $n(i,j):= \ell (i-1)+j$ for $1 \leq j \leq \ell$. Then we set 
\[
    I(M;(i,j)) := \left[1 + \sum_{a=1}^{n(i,j)-1} m_a, \sum_{a=1}^{n(i,j)} m_a\right].
\]
Now we define $\MLD(M)$ to be the set of all modified labeled Dyck paths $(\pi,w)$ that satisfy the following conditions:
\begin{enumerate}
    \item The bounce vector $\overrightarrow{b}(\pi)=(\alpha_1,\alpha_2,\dots)=\rsum(M)$,
    \item the set of entries of $w$ on the bounce corresponding to column $(\ell-j+1)$ is equal to $\cup_{i=1}^{r} I(M;(i,j))$, \text{ and }
    \item elements of $I(M;(i,j))$ are decreasing in $w$.
\end{enumerate}  

\begin{example}
    Let $M=\left(\begin{bmatrix}
1 & 2\\
2 & 1
\end{bmatrix}
\right)$. Then the associated vector $m = (1,2,2,1)$ and intervals $I(M;(i,j))$ are
\[
    I(M;(1,1))=\{1\}, \quad I(M;(1,2))=\{2,3\}, \quad I(M;(2,1))=\{4,5\}, \quad I(M;(2,2))=\{6\}.
\]
By definition, a modified labeled Dyck path $(\pi,w)\in \MLD(M)$ satisfies the following conditions.
\begin{enumerate}
    \item The bounce vector $\overrightarrow{b}(\pi)=(3,3)$, 
    \item $I(M;(1,1))\cup I(M;(2,1))=\{1,4,5\}=\{w_4,w_5,w_6\}$, and $I(M;(1,2))\cup I(M;(2,2))=\{2,3,6\}=\{w_1,w_2,w_3\}$, and
    \item 4 and 5 are placed in a decreasing order, and 2 and 3 are placed in a decreasing order in $w$.
\end{enumerate}
The modified labeled Dyck path on the left in Figure~\ref{fig: mld example nonexample} belongs to $\MLD(M)$. For the same Dyck path $\pi$, and $w=632541$, one can check that $(\pi,w)\in \MLD(M)$.
\end{example} 

For $(\pi,w)\in \MLD(M)$, we have $\langle F_{\iDes(w)},e_{\beta}\rangle=1$ if $\beta=\csum(M)$. Conversely, given $(\pi,w)\in \MLD(n)$ such that $\langle F_{\iDes(w)},e_{\beta}\rangle=1$ for some $\beta\models n$, we associate a matrix $M$ as follows. Let $\alpha=\overrightarrow{b}(\pi)$ be the bounce vector. We also let $s_j=\sum_{j<b} \alpha_b$ and $t_i=\sum_{b<i} \beta_b$. Then we define $\ell(\beta) \times \ell(\alpha)$ matrix $M$ by letting its entries be defined as follows: 
\begin{equation*}
    M_{i,j}=\left|\left\{w_{s_j+1}, \dots, w_{s_{j-1}}\right\}\ \cap \left[t_{i}+1,t_{i+1}\right]\right|.
\end{equation*}
Note that we have $(\pi,w)\in \MLD(M)$ and $\csum(M)=\beta$. Furthermore, $M$ is the unique matrix with such property. Therefore, together with Theorem~\ref{thm: Bounce formulation shuffle}, we obtain
\begin{align}\label{eq: mld mld m}
    \langle D_{n}[X;q,t],e_{\beta}\rangle=1= \sum_{\substack{M\in\Matoo \\ \csum(M)=\beta}}\sum_{{(\pi,w)\in\MLD(M)}}t^{\bounce(\pi)}q^{\area'(\pi,w)}.
\end{align}

\begin{proposition}\label{prop: combinatorial interpretation of LB(M)}
Let $M\in\Mat^{\ge0}_{r\times\ell}$. Then $\LB(M)$ is a generating function for $\area'$ over all modified labeled Dyck paths $\MLD(M)$:
\begin{equation*}
    \LB(M) = q^{-\sum_{i,j}\binom{M_{i,j}}{2}}\sum_{(\pi,w)\in\MLD(M)} q^{\area'(\pi,w)}.
\end{equation*}
\end{proposition}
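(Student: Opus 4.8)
The plan is to establish the identity by induction on the number of columns $\ell$ of $M$, peeling off the rightmost column at each step and matching the recursive structure of $\LB(M)$ against a recursive decomposition of $\MLD(M)$ with respect to the top bounce of the Dyck path.

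First I would set up the base case $\ell=1$: here $M$ is a single column $(M_{1,1},\dots,M_{r,1})^{\mathsf T}$, the bounce vector is $\overrightarrow{b}(\pi)=(|M|)$ so $\pi$ must be the staircase path with a single bounce, $\area'(\pi,w)$ is determined by the number of inversions among equal-"block" labels, and $\LB(M)=\binom{|M|}{M_{1,1},\dots,M_{r,1}}_q$. The required identity $\binom{|M|}{M_{1,1},\dots,M_{r,1}}_q = q^{-\sum_i \binom{M_{i,1}}{2}}\sum_{(\pi,w)\in\MLD(M)} q^{\area'(\pi,w)}$ should follow from the standard Gaussian-multinomial count: the words $w$ allowed are exactly shuffles of $r$ decreasing blocks of sizes $M_{1,1},\dots,M_{r,1}$, and $\area'$ tracks the inversion statistic on such shuffles up to the fixed internal contribution $\sum_i\binom{M_{i,1}}{2}$ coming from the (forced decreasing) order within each block against itself. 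I would isolate this counting statement as a small sublemma.

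For the inductive step, fix $M\in\Mat^{\ge0}_{r\times\ell}$ with $\ell\ge 2$ and write $\rsum(M)=\alpha=(\alpha_1,\dots,\alpha_\ell)$. Let $M^{\flat}$ be $M$ with its last column deleted; note $\rsum(M^\flat)=(\alpha_1,\dots,\alpha_{\ell-1})$. Combinatorially, a pair $(\pi,w)\in\MLD(M)$ has top bounce of length $\alpha_\ell$, and removing the topmost bounce (the final north run of length $\alpha_\ell$ and the east steps it forces) produces a smaller modified labeled Dyck path $(\pi^\flat,w^\flat)\in\MLD(M^\flat)$ on $|M|-\alpha_\ell$ steps together with the data of (i) which labels $\{w_{s_\ell+1},\dots,w_n\}$ sit in the top bounce — this is recorded by the last column $(M_{1,\ell},\dots,M_{r,\ell})$ and the decreasing-within-$I(M;(i,\ell))$ constraint — and (ii) how these labels interleave with the rest. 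The key point is the change in statistics: $\bounce(\pi)=\bounce(\pi^\flat)+(\ell-1)\alpha_\ell$, matching the factor $t^{(\ell-1)\alpha_\ell}$ hidden in $t^{\sum(i-1)\alpha_i}$, while the change in $\area'$ decomposes as the internal $\binom{M_{i,\ell}}{2}$ contributions plus, for each $j<\ell$ and each pair of boxes, a count governed precisely by $\LB(M;(i,\ell))$. Here I would invoke the lightning-bolt geometry: a new box in the top bounce contributes to $\area'$ exactly when its east-step label (lying in an earlier column, region weakly below $(i,\ell)$) is smaller than its north-step label (in the top bounce, region weakly above $(i,\ell-1)$), and the number of available smaller labels is $\LB(M;(i,\ell))-1$ after accounting for forced relations — this is what produces the $q$-binomial $\binom{\LB(M;(i,\ell))-1}{M_{i,\ell}}_q$. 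Summing the geometric/inversion contributions over all ways to insert $M_{i,\ell}$ decreasing labels gives exactly the product $\prod_i \binom{\LB(M;(i,\ell))-1}{M_{i,\ell}}_q$, so that $\LB(M) = \LB(M^\flat)\cdot\prod_{i=1}^r\binom{\LB(M;(i,\ell))-1}{M_{i,\ell}}_q$; combining with the inductive hypothesis for $M^\flat$ and reconciling the powers of $q$ closes the induction.

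The main obstacle I anticipate is the precise bookkeeping in step (ii): verifying that the $\area'$ increment when inserting the top bounce is, up to the fixed shift $\sum_i\binom{M_{i,\ell}}{2}$, distributed as a product of independent Gaussian binomials indexed by the rows $i$, with "alphabet size" exactly $\LB(M;(i,\ell))-1$. This requires carefully checking (a) that the modified-labeled condition at the new corners imposes no further constraint beyond the decreasing-block condition once $\LB(M)\ne 0$ — one uses Lemma~\ref{lem: basic property of lbm} to dispose of degenerate zero-column cases — and (b) that the boxes created by the top bounce, when scanned column by column, see precisely the labels counted by the lightning-bolt set, so the inversion count separates cleanly across rows. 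I would handle this by introducing an explicit insertion procedure for the labels of the top bounce into the word of $(\pi^\flat,w^\flat)$ and tracking $\area'$ box by box, reducing the claim to the elementary fact that inserting a decreasing run of length $m$ into a linearly ordered alphabet of size $N$, counted by inversions, has generating function $q^{\binom{m}{2}}\binom{N}{m}_q$.
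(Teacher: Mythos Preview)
Your outline is in the right spirit and could be pushed through, but it is organized differently from the paper's proof and, as a result, places the main weight on a step the paper sidesteps entirely. The paper does \emph{not} induct on the number of columns $\ell$; it inducts on $|M|$. At each step it removes a single entry: take $c$ to be the smallest row index with $M_{c,\ell}\neq 0$, set $M'$ equal to $M$ with $M_{c,\ell}$ replaced by $0$ (deleting the last column if it becomes zero), and build an explicit bijection
\[
\Phi:\MLD(M)\longrightarrow \MLD(M')\times W^{0}_{s,M_{c,\ell}},\qquad s=\LB(M;(c,\ell)),
\]
where $W^{0}_{s,m}$ denotes $0$--$1$ words of length $s$ with $m$ ones beginning with $0$, satisfying $\area'(\pi,w)=\area'(\Phi_1(\pi,w))+\inv(\Phi_2(\pi,w))+\binom{M_{c,\ell}}{2}$. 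This yields one factor $\binom{\LB(M;(c,\ell))-1}{M_{c,\ell}}_q$ at a time, and the induction closes immediately.

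Your column-at-a-time scheme is the composite of these single-entry removals, and the product $\prod_i\binom{\LB(M;(i,\ell))-1}{M_{i,\ell}}_q$ you write down is exactly what one gets by iterating the paper's step over the last column (note that removing $(c,\ell)$ with $c<i$ does not affect $\LB(M;(i,\ell))$, so the factors are stable). The risk in your version is the sentence ``the inversion count separates cleanly across rows'': if you try to insert all of the top-bounce labels at once, the blocks $I(M;(i,\ell))$ for different $i$ interact with one another inside $\area'$, and the clean product is not visible until you impose an order on the removals. The paper's choice of $c$ as the \emph{smallest} nonzero row is what makes the adjacency/corner constraint translate into the ``starts with $0$'' condition on the $0$--$1$ word and hence into $\binom{s-1}{M_{c,\ell}}_q$ rather than $\binom{s}{M_{c,\ell}}_q$. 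If you rewrite your inductive step as a sequence of single-entry removals in increasing row order, your argument becomes the paper's; as written, the factorization claim is the place where you would need the same idea anyway.
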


\begin{proof}
Let $\ell = 1$. If $(\pi, w) \in \MLD(M)$ then the bounce vector of $\pi$ is of length 1, so $\pi$ must be the full Dyck path, i.e., $\pi = N \cdots N E \cdots E$. For a permutation $w \in \mathfrak{S}_{|M|}$, $(\pi, w) \in \MLD(M)$ if and only if the elements in $I(M; (i, 1))$ are in decreasing order in $w$ for all $i$. Then there is a natural bijection
\[
    \phi: \MLD(M) \rightarrow W_{M_{1,1}, M_{2,1}, \dots, M_{r,1}},
\]
where $W_{M_{1,1}, M_{2,1}, \dots, M_{r,1}}$ is the set of words of consisting of $M_{i,1}$-many $i$'s, and $\phi(\pi,w)$ is defined as the word obtained from the permutation $w$ by replacing entries in the interval $I(M; (i, 1))$ with $i$. Then, note that 
\begin{equation}\label{eq: area = sum binom + inv}
    \area'(\pi, w) = \inv(w) = \sum_{i=1}^r \binom{M_{i,1}}{2} + \inv(\phi(\pi,w)).
\end{equation}
One can see this by dividing the contributions to $\inv(w)$ into two cases. The first case is the pair of entries in the same interval $I(M; (i, 1))$ for some $i$, contributing $\sum_{i=1}^r \binom{M_{i,1}}{2}$. The second case is the pair of integers in different intervals, $I(M; (i, 1))$ and $I(M; (i', 1))$, which are out of order. The contribution of the latter case is given by the usual inversions of $\phi(\pi,w)$. By \eqref{eq: area = sum binom + inv}, we have
\[
  \sum_{(\pi, w) \in \MLD(M)} q^{\area'(\pi, w)} = q^{\sum_{i=1}^r \binom{M_{i,1}}{2}} \binom{M_{1,1} + \dots + M_{r,1}}{M_{1,1}, \dots, M_{r,1}}_q = q^{\sum_{i=1}^r \binom{M_{i,1}}{2}} \LB(M).
\]
Here, we used the fact that the inversion generating function of $W_{M_{1,1}, M_{2,1}, \dots, M_{r,1}}$ is the $q$-multinomial coefficient $\binom{M_{1,1} + \dots + M_{r,1}}{M_{1,1}, \dots, M_{r,1}}_q$.

Suppose $\ell>1$, and let us proceed by induction on the sum $n=|M|$. We are dealing with $M$ without a zero column so the base case $|M|=1$ has been covered. Let $c$ be the smallest index $i$ such that $M_{i,\ell}\neq0$, and let $M'$ be the matrix obtained from $M$ by replacing $M_{c,\ell}$ with zero. If the rightmost column of $M'$ is a zero column, we simply erase that column.

Let $s=\LB(M;(c,\ell))$ and let $W^{0}_{s,M_{c,\ell}}$ be the set of 0-1 sequence of length $s$ with $M_{c,\ell}$-many ones starting with a zero. Suppose we construct a bijection
\begin{align*}
    \Phi:\MLD(M) &\rightarrow \MLD(M')\times W^{0}_{s,M_{c,\ell}}\\
    (\pi,w) &\mapsto (\Phi_1(\pi,w), \Phi_2(\pi,w))
\end{align*}
such that 
\begin{equation}\label{Eq: area' = area' + inv}
\area'(\pi,w)=\area'\left(\Phi_1(\pi,w)\right) + \inv\left(\Phi_2(\pi,w)\right) + \binom{M_{c,\ell}}{2}.
\end{equation}
Then we have
\begin{align*}
    \sum_{(\pi,w)\in\MLD(M)} q^{\area'(\pi,w)} & =
    q^{\binom{M_{c,\ell}}{2}}
    \sum_{(\pi,w)\in\MLD(M')} q^{\area'(\pi,w)}
    \sum_{\sigma\in W^{0}_{s,M_{c,\ell}}}q^{\inv(\sigma)}\\
    &=q^{\sum_{i,j}\binom{M_{i,j}}{2}}\LB(M') \binom{\LB(M;(c,\ell))-1}{M_{c,\ell}}_q\\
    &=q^{\sum_{i,j}\binom{M_{i,j}}{2}}\LB(M).
    \end{align*}
Here, the first equality follows from \eqref{Eq: area' = area' + inv}. The second equality follows from the induction hypothesis, and the last equality follows from the definition of $\LB(M)$. Therefore, it suffices to construct such a bijection $\Phi$ satisfying \eqref{Eq: area' = area' + inv}. 

Given a modified labeled Dyck path $(\pi,w)\in\MLD(M)$, recall that each step in $\pi$ has an associated label which we encoded in $L(\pi,w)$ (see Definition \ref{def: mld definition}). Let $\rsum(M)=\alpha$ then $\pi$ must pass through specific points $X_0$ and $X_1$ given by
\begin{equation}\label{eq: starting and ending point of P(pi,w)}
    X_{0} = \left(\sum_{1\le j \le \ell-2}\alpha_j,\sum_{1\le j \le \ell-1}\alpha_j\right)
    \quad \text{and} \quad    
    X_{1} = \left(\sum_{1\le j \le \ell-1}\alpha_j,n\right).
\end{equation}
In other words, the subpath from $X_0$ to $X_1$ corresponds to the last bounce of $\pi$ and we let $P(\pi,w)$ to be a subsequence of $L(\pi,w)$ for this subpath denoted by $P(\pi,w)=(P_1,\dots,P_{\alpha_{\ell-1}+\alpha_{\ell}})$. If $P_j=(E,a)$ for some label $a$ then we have $a\in\bigcup_{i=1}^r I(M;(i,\ell-1))$ and if $P_j=(N,a)$ we have  $a\in\bigcup_{i=1}^r I(M;(i,\ell))$. Define sets $A$, $B$ and $C$ given as
\begin{align*}
     A&=\{(E,a): a\in\bigcup_{i=1}^c I(M;(i,\ell-1))\}\cup \{(N,a): a\in\bigcup_{i=c+1}^r I(M;(i,\ell))\}\\
     B&=\{(N,a): a\in I(M;(c,\ell))\}\\
      C&=\{(E,a): a\in\bigcup_{i=c+1}^r I(M;(i,\ell-1))\}.
\end{align*}
Then each $P_i$ belongs to exactly one of $A$, $B$ or $C$. We define $\Phi_1(\pi,w)$ to be $(\pi',w')$ whose $L(\pi',w')$ is obtained from $L(\pi,w)$ by deleting $P_i$'s in $B$ and replacing each label $\gamma\in\bigcup_{c<i\le r,1\le j \le \ell}I(M;(i,j))$ by $\gamma-M_{c,\ell}$ to standardize the labels. From $P(\pi,w)$ we replace $P_i$'s in $A$ by 0, $P_i$'s in $B$ by 1 and delete $P_i$'s in $C$. The resulting 0-1 sequence is defined to be $\Phi_2(\pi,w)$. 

Given that $(N,a)\in A$ and $(N,b)\in B$ imply $a>b$, it follows that $(\pi',w')$ remains a modified labeled Dyck path. Let $j$ be the minimal number such that $P_j$ is in $A$ or $B$. If $P_j \in B$ then $j>1$ since $P_1$ must be of the form $(E,a)$. Thus, $P_{j-1}\in C$, which would violate the condition for a modified labeled Dyck path. Therefore $P_j\in A$ implying that $\Phi_2(\pi,w)$ starts with 0. This confirms that $\Phi$ is well-defined. 

Now we describe an inverse map from $(\pi',w')\in \MLD(M')$ and $v\in  W^{0}_{s,M_{c,\ell}}$. As before, let $P(\pi,w')=(P'_1,\dots)$ to be a subsequence of $L(\pi',w')$ corresponding to the last bounce. The following observation is crucial:
\begin{itemize}
    \item Given a sequence $\bar{v}$ of $0$, $1$ and $2$ where there are no consecutive 2 and 1 (in this order), we can uniquely determine $\bar{v}$ if we know the sequences $v$ (obtained from $\bar{v}$ by deleting 2's) and $v'$ (obtained from $\bar{v}$ by deleting 1's). 
\end{itemize}
Using this observation, proceed as follows:
\begin{enumerate}
    \item Generate a 0-2 Sequence: From \(P(\pi', w')\), create a 0-2 sequence \(v'\) by replacing \(P'_i\)s in
    \[
    C' = \{(E, a) \mid a \in \bigcup_{i=c+1}^r I(M'; (i, \ell-1))\}
    \]
    with 2 and replacing \(P'_i\)s not in \(C'\) with 0.
    
    \item Construct \(\bar{v}\): Using the above observation, combine \(v\) and \(v'\) to reconstruct \(\bar{v}\), which consists of 0, 1, and 2.
    
    \item Insert Elements: Insert elements of \(B\) into the sequence \(P(\pi', w')\) according to the positions of 1s in \(\bar{v}\).
    
    \item Standardize Labels: Shift the labels in \(L(\pi', w')\) properly to standardize them, yielding \(L(\pi, w)\) for some \((\pi, w) \in \MLD(M)\).
\end{enumerate}

Thus, we have demonstrated that \(\Phi\) is a bijection. The equation \(\eqref{Eq: area' = area' + inv}\) follows straightforwardly from the construction.

\end{proof}

\begin{example}\label{Ex: the bijection Phi}
    Let $(\pi,w)$ be the modified labeled Dyck path on the left in Figure~\ref{fig: mld example nonexample} in $\MLD\left(\begin{bmatrix}  1 & 2\\2 & 1\\ \end{bmatrix}\right).$ Or equivalently, we could describe $(\pi,w)$ as a modified labeled Dyck path such that 
    \[
        L(\pi,w)=((N,4),(N,1),(N,5),(E,4),(N,6),(N,2),(E,1),(N,3),(E,5),(E,6),(E,2),(E,3)),
    \]
    where its $i$-th component presents the $i$-th labeled step. 
    
    Let $\Phi$ be the bijection $\Phi:\MLD\left(\begin{bmatrix}
        1 & 2 \\ 2 & 1
    \end{bmatrix}\right) \rightarrow \MLD\left(\begin{bmatrix}
        1 & 0 \\ 2 & 1
    \end{bmatrix}\right)\times W^{0}_{4,2}$ defined in the proof of Proposition~\ref{prop: combinatorial interpretation of LB(M)}. First, by deleting steps labeled with numbers in the 
    interval $I(M;(1,2))=\{2,3\}$ gives 
    \[
    ((N,4),(N,1),(N,5),(E,4),(N,6),(E,1),(E,5),(E,6)),
    \]
    or equivalently
    
    \begin{center}
    \begin{tikzpicture}[scale=1]
    
    \draw[-] (0,0) -- (4,0);
    \draw[-] (0,1) -- (4,1);
    \draw[-] (0,2) -- (4,2);
    \draw[-] (0,3) -- (4,3);
    \draw[-] (0,4) -- (4,4);
    
    \draw[-] (0,0) -- (0,4);
    \draw[-] (1,0) -- (1,4);
    \draw[-] (2,0) -- (2,4);
    \draw[-] (3,0) -- (3,4);
    \draw[-] (4,0) -- (4,4);

    \filldraw [red] (0.5,2.5) circle (2pt);
    \filldraw [red] (1.5,2.5) circle (2pt);
    \filldraw [red] (1.5,3.5) circle (2pt);
    \filldraw [red] (2.5,3.5) circle (2pt);

    \draw[green] (0,0) -- (4,4);
    
    \draw[blue, very thick] (0,0) -- (0,3);
    \draw[blue, very thick] (0,3) -- (1,3);
    \draw[blue, very thick] (1,3) -- (1,4);
    \draw[blue, very thick] (1,4) -- (4,4);
    
    \filldraw[black] (0.5,0.75) circle (0.000001pt) node[anchor=north] {$4$};
    \filldraw[black] (1.5,1.75) circle (0.000001pt) node[anchor=north] {$1$};
    \filldraw[black] (2.5,2.75) circle (0.000001pt) node[anchor=north] {$5$};
    \filldraw[black] (3.5,3.75) circle (0.000001pt) node[anchor=north] {$6$};
    \end{tikzpicture}.
    \end{center}
    Then, by replacing each label $\gamma\in I(M;(2,1))\cup I(M;(2,2)) = \{4,5,6\}$ by $\gamma-M_{c,\ell}=\gamma-2$, we have the following modified labeled Dyck path $\Phi_1(\pi,w)$

    \begin{center}
    \begin{tikzpicture}[scale=1]
    
    \draw[-] (0,0) -- (4,0);
    \draw[-] (0,1) -- (4,1);
    \draw[-] (0,2) -- (4,2);
    \draw[-] (0,3) -- (4,3);
    \draw[-] (0,4) -- (4,4);
    
    \draw[-] (0,0) -- (0,4);
    \draw[-] (1,0) -- (1,4);
    \draw[-] (2,0) -- (2,4);
    \draw[-] (3,0) -- (3,4);
    \draw[-] (4,0) -- (4,4);
    
    \filldraw [red] (0.5,2.5) circle (2pt);
    \filldraw [red] (1.5,2.5) circle (2pt);
    \filldraw [red] (1.5,3.5) circle (2pt);
    \filldraw [red] (2.5,3.5) circle (2pt);
    
    \draw[green] (0,0) -- (4,4);
    
    \draw[blue, very thick] (0,0) -- (0,3);
    \draw[blue, very thick] (0,3) -- (1,3);
    \draw[blue, very thick] (1,3) -- (1,4);
    \draw[blue, very thick] (1,4) -- (4,4);
    
    \filldraw[black] (0.5,0.75) circle (0.000001pt) node[anchor=north] {$2$};
    \filldraw[black] (1.5,1.75) circle (0.000001pt) node[anchor=north] {$1$};
    \filldraw[black] (2.5,2.75) circle (0.000001pt) node[anchor=north] {$3$};
    \filldraw[black] (3.5,3.75) circle (0.000001pt) node[anchor=north] {$4$};
    \end{tikzpicture},
    \end{center}
    which is in $\MLD\left(\begin{bmatrix}  1 & 0\\2 & 1\\ \end{bmatrix} \right)$.
    
    Let $\alpha=\rsum(M)=(3,3)$. The labeled subpath of $L(\pi,w)$ starting from 
    \[
    \left(\sum_{1\le j \le \ell-2}\alpha_j,\sum_{1\le j \le \ell-1}\alpha_j\right)=(0,3)
    \]
    and ending at 
    \[
    \left(\sum_{1\le j \le \ell-1}\alpha_j,n\right)=(3,6)
    \]
    is $P(\pi,w)=((E,4),(N,6),(N,2),(E,1),(N,3),(E,5))$. We get $\sigma(\pi,w)=462135$ by reading its labels. The 0--1 word obtained from $\sigma(\pi,w)$ by replacing numbers in 
    \[
    \bigcup_{(a,b)\in\LB(1,2)\setminus\{(1,2)\}} I(M;(a,b))=I(M;(1,1))\cup I(M;(2,2))=\{1,6\}
    \]
    with zeros, numbers in $I(M;(c,\ell))=\{2,3\}$ with ones, and deleting the other numbers, is $\Phi_2\left(\pi,w\right)=0101$. We can also check that
    \[
        \area'(\pi,w) = 6 = 4 + 1 + 1 = \area'(\Phi_1(\pi,w)) + \inv(\Phi_2(\pi,w)) + \binom{M_{1,2}}{2}.
    \]
    \end{example}

\begin{proof}[Proof of Theorem~\ref{thm: fermionic formula}]
Partitioning the sum on the right-hand side of \eqref{eq: mld mld m} by the bounce vector, we have
\begin{align*}
\langle D_n[X;q,t],e_{\beta}\rangle &=\sum_{\alpha \models n} t^{\sum_{i=1}^{\ell(\alpha)}(i-1)\alpha_i} \sum_{\substack{M \in \Matoo \\ \rsum(M)=\alpha \\ \csum(M)=\beta }} \sum_{(\pi,w)\in\MLD(M)} 
q^{\area'(\pi,w)}\\&=\sum_{\alpha \models n} t^{\sum_{i=1}^{\ell(\alpha)}(i-1)\alpha_i} \sum_{\substack{M \in \Matoo \\ \rsum(M)=\alpha \\ \csum(M)=\beta }}  q^{\sum_{i,j}\binom{M_{i,j}}{2}}  \LB(M),
\end{align*}
where the last equality follows from Proposition~\ref{prop: combinatorial interpretation of LB(M)}.
\end{proof}

\section{Macdonald polynomials for general 
 diagram and column exchange rule}\label{Sec: generalized Macdonald polynomials}
\subsection{A generalization of modified Macdonald polynomials}\label{subsec: generalization of Macdonald}

In \cite{KLO22}, the authors introduce a generalized version of the modified Macdonald polynomials. A diagram $D$ consists of a collection of cells, with a specific cell at the bottom of each column referred to as a \emph{bottom cell}. For a diagram $D$, the size $|D|$ of $D$ is the total number of cells it contains. In this paper, we only consider diagrams that are connected and finite. As in the partition case, we index rows of $D$ from the bottom and columns of $D$ from the left. Denote the cell of $D$ in the $i$-th row and $j$-th column by $(i,j)$. We represent a diagram $D$ by
\[
    D = [D^{(1)},D^{(2)},\dots],
\]
where $D^{(j)}$ denotes the set of $i$ such that $(i,j)$ is a cell of $D$. Throughout the paper, we assume that each column $D^{(j)}$ of a diagram $D$ forms an interval. 

\begin{figure}[h]
    \centering
    \begin{ytableau}
     \none & \none & q^2 & q^3 \\
     $ $ & t & qt & \\
     \none & $ $ & $ $ 
    \end{ytableau}
    \caption{An example of a filled diagram.}
    \label{fig:my_label}
\end{figure}

Let $\mathbb{F}$ be a field that includes $\mathbb{C}(q,t)$, the field of rational functions in $q$ and $t$. A \emph{filled diagram} $(D,f)$ consists of a diagram $D$ and a filling function
\[
    f:D\setminus\{\text{bottom cells of $D$}\}\rightarrow \mathbb{F}
\] 
which assigns a scalar from $\mathbb{F}$ to each cell of $D$ excluding the bottom cells. We represent a filled diagram $(D,f)$ by displaying the corresponding values of $f$ for each cell in the diagram, see Figure~\ref{fig:my_label} for the example. For a subdiagram $E$ of $D$, we consider a sub-filled diagram $(E,f\vert_{E})$ of $(D,f)$ where the filling $f\vert_{E}$ is inherited from $f$.

To establish a total order $N_D:D\rightarrow [|D|]$ on cells in $D$, we order them from left to right in each row and proceed row by row from top to bottom. We define two functions, $\inv_D:\mathfrak{S}\rightarrow \mathbb{F}$ and $\maj_{(D,f)}\rightarrow \mathbb{F}$, in the spirit of the functions $\inv_\mu$ and $\maj_\mu$ defined in \cite{HHL05}. For cells $u=(i,j)$ and $v=(i',j')$ of $D$, we say that a pair $(u,v)$ is an \emph{attacking pair} if either 
\begin{center}
    (1) $i=i'$ and $j<j'$ $\quad$ or$\quad$ (2) $i=i'+1$ and $j>j'$. 
\end{center}
For a permutation $w\in\mathfrak{S}_{|D|}$ we consider a pair $(u,v)$ of cells in $D$ to be an \emph{inversion pair} of $w$ if $(u,v)$ is an attacking pair and $w_{N_D(u)}>w_{N_D(v)}$. We denote the set of inversion pairs of $w$ by $\Inv_D(w)$ and define $\inv_D(w)$ to be 
\[
    \inv_D(w) \coloneqq q^{|\Inv_D(w)|}.
\]
We say that a cell $(i,j)$ in $D$ is a \emph{descent} of $w$ if $w_{N_D(i,j)}>w_{N_D(i-1,j)}$. We denote the set of descents of $w$ by $\Des_D(w)$ and define $\maj_{(D,f)}(w)$ as the product of $f(u)$ over all cells $u$ which are descents of $w$, i.e.,
\[
    \maj_{(D,f)}(w) \coloneqq \prod_{u\in\Des_D(w)} f(u).
\]
Finally, we define $\stat_{(D,f)}(w)$ by
\[
    \stat_{(D,f)}(w) \coloneqq \inv_D(w)\maj_{(D,f)}(w).
\]
Now, we define the modified Macdonald polynomial of a filled diagram $(D,f)$.

\begin{definition}\label{Def: generalized modified Macdoanld}
Let $(D,f)$ be a filled diagram of size $n$. The \emph{(generalized) modified Macdonald polynomials} of $(D,f)$ is defined by
\[
    \widetilde{H}_{(D,f)}[X]\coloneqq\sum_{w\in \mathfrak{S}_{|D|}} \stat_{(D,f)}(w)F_{\iDes(w)}.
\]
\end{definition}

If we define the \emph{standard filling} $f^{\st}_\mu$ of a partition $\mu$ by assigning $f^{\st}_\mu(u)=q^{-\arm_\mu(u)}t^{\leg_\mu(u)+1}$ to each cell $u$ in $\mu$, then the modified Macdonald polynomial of the filled diagram $(\mu,f^{\st}_\mu)$ is the usual modified Macdonald polynomial $\widetilde{H}_\mu$ by the celebrated Haglund--Haiman--Loehr formula \cite{HHL05}:
\[
    \widetilde{H}_{(\mu,f^{\st}_\mu)}[X] = \widetilde{H}_\mu[X;q,t].
\]

\subsection{Column exchange rule and cycling}

There can be two distinct filled diagrams $(D,f)$ and $(D',f')$ with the same modified Macdonald polynomials. In this subsection, we review such examples studied in \cite{KLO22}. The first one is the \emph{column exchange rule}. We first recall some terminology.

\begin{definition}\label{def: the operator S}
    For positive integers $n>m$,  We define $\mathcal{V}(n,m)$ to be the set of all filled diagrams $(\mu,f_{\mu})$ such that   $\mu=[[n],[m]]$ and a filling $f_\mu$ on it satisfies the following condition:
    \begin{equation}\label{eq: column exchange condition}
   f_\mu(i,1) = q^{-1}f_\mu(m+1,1) f_\mu(i,2), \quad \text{for  $1<i\leq m$}.  
    \end{equation}
     For $(\mu,f_{\mu})\in \mathcal{V}(n,m)$, we let $S(\mu,f_{\mu})$ to be the filled diagram $(\lambda,f_{\lambda})$ such that $\lambda=[[m],[n]]$ and:
    \begin{align*}
        &f_\lambda(i,2)=f_\mu(i,1), \hspace{30mm}\text{ for  $i>m+1$}\\
        &f_\lambda(m+1,2)=q^{-1}f_\mu(m+1,1)\\
        &f_\lambda(m+1,2) f_\lambda(i,1) = f_\lambda(i,2)=f_\mu(i,1) , \text{ for  $1<i\leq m$}.  \nonumber
    \end{align*}
See Figure~\ref{fig: generic filled diagrams column exchange lemma} for the example.

\begin{figure}[h]
\[
    \ytableausetup{boxsize=3em,aligntableaux=center}
    (\mu,f_{\mu}) = \begin{ytableau}
     b_{1} \\
     \vdots  \\
     b_{n-m-1} \\
    q\alpha  \\
    \alpha a_1 & a_1 \\
    \vdots & \vdots \\
    \alpha  a_{m-1} & a_{m-1} \\
     & 
    \end{ytableau}
    \qquad \qquad S(\mu,f_{\mu} ) = \begin{ytableau}
     \none &b_{1} \\
     \none &\vdots  \\
     \none &b_{n-m-1} \\
    \none & \alpha  \\
    a_1 & \alpha a_1 \\
    \vdots & \vdots \\
     a_{m-1} & \alpha a_{m-1} \\
     & 
    \end{ytableau}
\]
\caption{The filled diagram $(\mu,f_{\mu})\in\mathcal{V}(n,m)$ is of the form 
 in the left figure for some $a_i$'s, $b_i$'s and $\alpha$ in $\mathbb{F}$. The right figure shows the corresponding $S(\mu,f_{\mu})$.}\label{fig: generic filled diagrams column exchange lemma}
\end{figure}

Let $(D,f)$ be a filled diagram such that the restriction to the $j$ and $j+1$-th columns is in $\mathcal{V}(n,m)$ for some $n>m$. We define $S_j(D,f)$ to be the filled diagram obtained from $(D,f)$ by applying the map $S$ to the two columns ($j$ and $j+1$-th columns). Clearly, the map $S_j$ is injective, so we may consider $S_j^{-1}(D,f)$ if exists. Given two filled diagrams $(D,f)$ and $(D',f')$, we write $(D,f)\cong (D',f')$ if they are related by a sequence of operators of the form $S_j$ or $S_j^{-1}$.

\end{definition}

\begin{proposition}\cite[Proposition 4.3, rephrased]{KLO22}\label{lem: column exchange rephrased} Let $(D,f)$ and $(D',f')$ be filled diagrams of size $n$ such that $(D,f)\cong (D',f')$. We let $P_{i}$ be the set of $N_{D}(u)$ for the cells $u$ on the $i$-th row of $D$. Then there is a bijection $\phi:\mathfrak{S}_n\rightarrow\mathfrak{S}_n$ satisfying the following three conditions:
\begin{align*}
    (\phi1)\quad& \stat_{(D,f)}(w) = \stat_{(D',f')}(\phi(w)), \\
    (\phi2)\quad& \iDes(w) = \iDes(\phi(w)), \text{ and}\\
    (\phi3)\quad& \{w_{j}: j \in P_i\} = \{\phi(w)_{j}: j \in P_i\} \text{ for all $i$.}
\end{align*}
\end{proposition}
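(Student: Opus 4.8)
The plan is to reduce the statement to the single transposition case: it suffices to construct $\phi$ when $(D',f') = S_j(D,f)$ for some $j$, since a general $(D,f)\cong(D',f')$ is obtained by a sequence of $S_j^{\pm1}$ moves, and the desired bijection for a composite is the composite of the bijections (conditions $(\phi1)$--$(\phi3)$ are all transitive). So fix $(D,f)$ whose restriction to columns $j,j+1$ lies in $\mathcal{V}(n,m)$, and write $(D',f') = S_j(D,f)$. Using the total order $N_D$ (left-to-right within each row, top-to-bottom), the cells outside columns $j$ and $j+1$ occupy the same relative positions in $D$ and $D'$; what changes is that the column of height $n$ and the column of height $m$ swap places, so the $N$-labels of the cells in those two columns get permuted by a fixed permutation $\tau$ determined only by the shapes $[[n],[m]]$ versus $[[m],[n]]$. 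I would \emph{define} $\phi(w) := w\circ\tau^{-1}$ on the relevant index set (and identity elsewhere), i.e. $\phi(w)$ is the filling of $D'$ that places in each cell the value $w$ placed in the corresponding cell of $D$ under the column-swap identification.

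With this definition, condition $(\phi3)$ is essentially immediate: the column swap permutes cells \emph{within} rows (row $i$ of $D$ maps to row $i$ of $D'$ for $i>m+1$ only one column survives, and for $i\le m$ the two cells in row $i$ just exchange columns), so the multiset of entries in each row is preserved. Condition $(\phi2)$, $\iDes(w)=\iDes(\phi(w))$, requires that $\phi$ does not change the descent set of the \emph{reading word}; here I would invoke the fact (used throughout \cite{KLO22}) that the reading word, hence $\iDes$, depends only on the sequence of row-content multisets read top to bottom, not on the horizontal arrangement within a row — or, more carefully, I would check that $\tau$ can be realized as a product of transpositions each swapping two $N$-adjacent cells that are \emph{not} in the same row and hence whose swap cannot create or destroy an inversion in $w^{-1}$ at consecutive values; this is the routine-but-delicate bookkeeping step.

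The heart of the matter — and the step I expect to be the main obstacle — is condition $(\phi1)$: $\stat_{(D,f)}(w) = \stat_{(D',f')}(\phi(w))$. This is where the precise form of the filling $f_\lambda$ in Definition~\ref{def: the operator S} and the compatibility condition~\eqref{eq: column exchange condition} are used. One must show that $\inv_D(w)\cdot\maj_{(D,f)}(w) = \inv_{D'}(\phi(w))\cdot\maj_{(D',f')}(\phi(w))$. Neither factor is individually preserved: moving the tall column from the left to the right of the short column changes which pairs of cells are attacking (an attacking pair of type (2), "$i=i'+1$ and $j>j'$", can become non-attacking or vice versa when the columns swap), and it changes which cells are descents and what $f$-values those descents carry. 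The strategy is to partition the attacking pairs and descent cells into three groups: those entirely outside columns $j,j+1$ (contribute identically), those with one leg outside (one checks the contribution is unchanged because the outside cell and its row-partner relationship is preserved by $\phi$), and those entirely inside columns $j,j+1$. For the last group one does an explicit local computation on the $\mathcal{V}(n,m)$-configuration: comparing $\Inv$ and $\Des$ of $w$ restricted to the two columns against those of $\phi(w)$, and using $f_\mu(i,1)=q^{-1}f_\mu(m+1,1)f_\mu(i,2)$ together with the defining relations for $f_\lambda$ to see that the $q$-power lost (or gained) in $\inv$ is exactly compensated by the change in the product of $f$-values over descents. This local identity is precisely the content of \cite[Proposition 4.3]{KLO22}, so in the write-up I would either cite it directly or, if a self-contained argument is wanted, carry out this three-part decomposition and the explicit two-column computation, flagging the two-column computation as the genuine work.
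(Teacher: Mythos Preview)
Your reduction to a single $S_j$ is fine, and condition $(\phi3)$ would indeed follow from any bijection that only permutes entries within rows. The problem is the bijection itself: defining $\phi(w)=w\circ\tau^{-1}$, where $\tau$ is the fixed position-permutation induced by the column swap, does \emph{not} satisfy $(\phi1)$ or $(\phi2)$.

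Take the smallest case $\mathcal{V}(2,1)$: $D=[[2],[1]]$ with filling $f(2,1)=q\alpha$, and $D'=S(D)=[[1],[2]]$ with $f'(2,2)=\alpha$. Under $N_D$ the cells are $(2,1)\mapsto1$, $(1,1)\mapsto2$, $(1,2)\mapsto3$, and similarly for $D'$. Your $\tau$ swaps positions $2$ and $3$. Now let $w=123$. One computes $\stat_{(D,f)}(123)=1$ and $\iDes(123)=\emptyset$, whereas $\phi(w)=132$ gives $\stat_{(D',f')}(132)=q$ and $\iDes(132)=\{2\}$. Both $(\phi1)$ and $(\phi2)$ fail. (The alternative ``identity'' identification, keeping row-$1$ cells fixed, fails too: for $w=213$ one gets $\stat_{(D,f)}(213)=q\alpha$ but $\stat_{(D',f')}(213)=q$.) In this example the correct bijection fixes $123,132,312,321$ and swaps $213\leftrightarrow231$; it depends on the \emph{values} in the filling, not just on a fixed relabeling of cells.

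Two specific claims in your write-up are incorrect and should be retracted. First, $\iDes(w)$ does \emph{not} depend only on the row-multisets: if $i$ and $i{+}1$ sit in the same row, their left/right order determines whether $i\in\iDes(w)$. Second, your $\tau$ is a product of transpositions of $N$-adjacent cells that \emph{are} in the same row (for each row $\le m$), contrary to what you assert.

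The actual bijection in \cite{KLO22} is of Carlitz/HHL type: one processes the two columns cell by cell and, depending on the relative order of the three values in a local $2\times 1$ or $2\times 2$ window, either swaps the two entries in the current row or not. The compatibility relation \eqref{eq: column exchange condition} is exactly what makes the local $\inv$--$\maj$ trade-off balance in each case. Your final paragraph gestures at the right bookkeeping for $(\phi1)$, but the point is that this bookkeeping cannot be done with a value-independent $\phi$; the case analysis is what \emph{defines} $\phi$, not a verification after the fact. Since the paper itself simply cites \cite[Proposition~4.3]{KLO22} without reproving it, you should either do the same or reconstruct the value-dependent local swap and carry out the case analysis.
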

With the assumptions in Proposition \ref{lem: column exchange rephrased}, conditions $(\phi1)$ and $(\phi2)$ are enough to imply $\widetilde{H}_{(D,f)}[X]=\widetilde{H}_{(D',f')}[X]$. However, we need a more refined identity \eqref{eq: refined identity} coming from the condition $(\phi3)$ that will be used in the proof of Lemma \ref{lem: focus on the bottom k-1 rows} and Lemma \ref{lem: symmetric function!}. We define some terminologies to state \eqref{eq: refined identity}.

\begin{definition}
For diagram $D$ and a composition $\alpha=(\alpha_1,\dots,\alpha_{r})$ of size less than or equal to $|D|$, we define $\mathrm{OP}(D;\alpha)$ to be the set of tuples $(A_1,\dots,A_{r})$ of disjoint subsets $A_i\subseteq D$ such that $|A_i|=\alpha_i$. For a tuple $(A_1, \dots, A_{r})\in \mathrm{OP}(D;\alpha)$, let $B_i=\{N_D(u): u\in A_i\}$ where $N_D$ is the total order on cells of $D$ defined in Section~\ref{subsec: generalization of Macdonald}. We define $\sigma(D;(A_1, \dots, A_{r}))$ to be the permutation obtained by first listing elements $[|D|]\setminus\cup_{i=1}^{r}B_i$ in decreasing order, followed by elements in $B_1$ in decreasing order, then elements in $B_2$ in decreasing order, and so on.

For instance, let $D = [\{1,2,3,4,5\},\{1,2,3,4\},\{2,3\}]$ be a diagram and 
\[
    (A_1,A_2,A_3)=(\{(2,2),(3,1),(3,3) \},\{(2,1),(3,2)\}, \{ (2,3),(4,2),(5,1)\})\in \mathrm{OP}(D;(3,2,3)).
\]
Pictorially, $(A_1,A_2,A_3)$ is displayed by labeling cells in $A_i$ by $i$ as on the left in Figure~\ref{fig: example of P(D,alpha)}.
\begin{figure}
    \[
    \begin{ytableau}
    3 \\
      & 3 \\
    1 & 2 & 1 \\
    2 & 1 & 3\\
    &
    \end{ytableau} \qquad
    \begin{ytableau}
    1 \\
    2 & 3 \\
    4 & 5 & 6 \\
    7 & 8 & 9\\
    10 & 11
    \end{ytableau}
    \]
\caption{An element of $\mathrm{OP}(D,(3,2,3))$ and a natural ordering $N_{D}$ of $D$.}
\label{fig: example of P(D,alpha)}
\end{figure}
According to the total order $N_D$ (given on the right in Figure~\ref{fig: example of P(D,alpha)}), the permutation associated to $(A_1,A_2,A_3)$ is
\begin{equation*}
    \sigma(D;(A_1,A_2,A_3))=(11,10,2,\underbrace{8,6,4}_{3},\underbrace{7,5}_{2},\underbrace{9,3,1}_{3}).
\end{equation*}
\end{definition}

By \eqref{eq: easy inference}, letting $m=|D|-\sum_{i=1}^{r}\alpha_i$, we have 
\begin{equation}\label{eq: 11}\langle\widetilde{H}_{(D,f)}[X]),e_{(m,\alpha)}\rangle=\sum_{L\in \mathrm{OP}(D;\alpha)}\stat_{(D,f)}(\sigma(D;L)^{-1}).
\end{equation}

Now we further refine the set $\mathrm{OP}(D;\alpha)$.

\begin{definition}
Let $D$ be a diagram with $\ell$ rows. For $L=(A_1,\dots,A_r)\in \mathrm{OP}(D;\alpha)$, we define an $r\times\ell$ matrix $\type(D;L)$ by letting its entries be defined as follows:
\begin{align*}
\type(D;L)_{i,j}=\text{the number of cells in the $j$-th row of $D$ that are in $A_i$}.
\end{align*}
Now we define $\mathbf{OP}(D;M)$ to be the set of tuples $L$ such that $\type(D;L)=M$. Note that $\csum(\type(D;L))=\alpha$, if $L\in \mathrm{OP}(D;\alpha)$. With this notion, we partition $\mathrm{OP}(D;\alpha)$ into the following union
\[
    \mathrm{OP}(D;\alpha)=\bigcup_{\csum(M)=\alpha}\mathbf{OP}(D;M).
\]
\end{definition}

 Given $(D,f)\cong(D',f')$, let $\phi$ be the map in Proposition \ref{lem: column exchange rephrased}. By the conditions $(\phi1)$ and $(\phi2)$, $\phi$ gives a stat-preserving bijection between 
\[
    \{\sigma(D;L)^{-1}:L\in \mathrm{OP}(D;\alpha)\} \qquad \text{   and   } \qquad \{\sigma(D';L)^{-1}:L\in \mathrm{OP}(D';\alpha)\}.
\]
Moreover, the condition $(\phi3)$ implies that if $\phi(\sigma(D;L)^{-1})=\sigma(D';L')^{-1}$, we have $\type(D;L)=\type(D';L')$. In other words, $\phi$ gives a bijection between 
\[
    \{\sigma(D;L)^{-1}:L\in\mathbf{OP}(D;M)\} \qquad\text{   and   }\qquad \{\sigma(D';L)^{-1}:L\in\mathbf{OP}(D';M)\}.
\]
We conclude that for any $M\in\Mat^{\geq0}$
\begin{equation}\label{eq: refined identity}
\sum_{L\in\mathbf{OP}(D;M)}\stat_{(D,f)}(\sigma(D;L)^{-1})=\sum_{L\in\mathbf{OP}(D';M)}\stat_{(D',f')}(\sigma(D';L)^{-1}).
\end{equation}

We end this section by recalling the \emph{cycling rule}. Given a diagram $D=[D^{(1)},\dots,D^{(\ell)}]$ consider a diagram
\[
    D'=[D^{(2)},\dots,D^{(\ell)},D^{(1)}+1],
\]
where $I+1=\{a+1:a\in I\}$ for an interval $I$. In other words, $D'$ is a diagram obtained by moving the leftmost column of $D$ to the end of the right and placing it one cell up. Cells in $D$ and $D'$ are naturally in bijection. For a filling $f$ of $D$ we can associate a filling $f'$ of $D'$ inherited from $f$ via this natural bijection. Then we define $\cycling (D,f)\coloneqq(D',f')$. Then the following lemma is well-known and its proof is straightforward (See \cite[Lemma 3.6]{KLO22} for example)

\begin{lem}\label{lem: cycling}(Cycling rule)
Let $(D,f)$ be a filled diagram and $(D',f')=\cycling(D,f)$. Then we have
\[
    \widetilde{H}_{(D,f)}[X] = \widetilde{H}_{(D',f')}[X].
\]
\end{lem}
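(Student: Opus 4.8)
\textbf{Plan of proof for Lemma~\ref{lem: cycling}.}
The plan is to exhibit an explicit stat-preserving bijection $\psi\colon\mathfrak{S}_{|D|}\to\mathfrak{S}_{|D'|}$ (here $|D|=|D'|=:n$) that also preserves the inverse descent set, so that matching terms in
\[
    \widetilde{H}_{(D,f)}[X]=\sum_{w\in\mathfrak{S}_n}\stat_{(D,f)}(w)F_{\iDes(w)}
\]
and its $D'$-counterpart gives the claimed equality. The natural candidate is the permutation induced by the total orders $N_D$ and $N_{D'}$: if $N_D(u)=a$ for a cell $u$, and $u$ corresponds to the cell $u'$ of $D'$ under the obvious bijection (same column content, leftmost column shifted to the right and raised one row), then $N_{D'}(u')$ is some fixed permutation $\tau\in\mathfrak{S}_n$ of the labels, and we set $\psi(w)=w\circ\tau^{-1}$ (equivalently, relabel the entries of $w$ by position through $\tau$). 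Since $\psi$ only permutes \emph{positions}, not \emph{values}, we get $\iDes(\psi(w))=\iDes(w)$ immediately; this settles $(\phi2)$-type compatibility for free.

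The first real step is to verify that $\psi$ preserves $\inv_D$, i.e.\ that $|\Inv_D(w)|=|\Inv_{D'}(\psi(w))|$ for all $w$. Here one checks that the notion of \emph{attacking pair} is stable under cycling: within a single column the set of attacking pairs is clearly preserved (columns move rigidly); between the old leftmost column $D^{(1)}$ and the old second column $D^{(2)}$, type-(1) attacking pairs (same row, $D^{(1)}$ left of $D^{(2)}$) and type-(2) attacking pairs (the $D^{(1)}$-cell one row above a $D^{(2)}$-cell) need to be compared against what happens after $D^{(1)}{+}1$ is placed at the far right; between $D^{(\ell)}$ and the new last column $D^{(1)}{+}1$ one does the analogous check. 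The key point is that shifting $D^{(1)}$ up by exactly one cell is precisely what is needed to turn the two kinds of attacking relations involving the moved column into the two kinds of attacking relations it participates in at its new location, with the inequality on $w$-values transported correctly by $\psi$. This is the place where the "one cell up" in the definition of cycling is used, and it is the main (though still routine) obstacle: one has to be careful with the cell at the top/bottom of the moved column, which loses or gains an attacking partner, and confirm that the count still matches.

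The second step is to verify that $\psi$ preserves $\maj_{(D,f)}$. Since $f'$ is defined to be inherited from $f$ through the same cell bijection, it suffices to show that $u$ is a descent of $w$ in $D$ if and only if the corresponding cell $u'$ is a descent of $\psi(w)$ in $D'$; then the product $\prod_{u\in\Des_D(w)}f(u)$ equals $\prod_{u'\in\Des_{D'}(\psi(w))}f'(u')$ termwise. A cell is a descent when its $w$-value exceeds that of the cell directly below it in the same column; for all cells not in the moved column this is transported verbatim, and for the moved column $D^{(1)}$ one simply notes that the vertical adjacency relation inside that column is unchanged by translating the whole column (the bottom cell has no cell below it in either position and contributes nothing to $\maj$). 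Combining the two steps gives $\stat_{(D,f)}(w)=\stat_{(D',f')}(\psi(w))$, and since $F_{\iDes(w)}=F_{\iDes(\psi(w))}$ and $\psi$ is a bijection, reindexing the sum yields $\widetilde{H}_{(D,f)}[X]=\widetilde{H}_{(D',f')}[X]$. (Alternatively, one can cite the same computation verbatim from \cite[Lemma 3.6]{KLO22}, which is what the statement already points to; the argument above is exactly the content of that reference.)
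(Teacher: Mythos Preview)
Your approach is correct and is exactly the standard argument; the paper itself does not reprove the lemma but simply defers to \cite[Lemma~3.6]{KLO22}, which carries out the same position-permutation bijection you describe.

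One small comment: your caveat that ``the cell at the top/bottom of the moved column loses or gains an attacking partner'' is unnecessary.  If you carry out the matching carefully you will find that the correspondence between attacking pairs in $D$ involving the first column and attacking pairs in $D'$ involving the new last column is an \emph{exact} bijection, with type-(1) pairs becoming type-(2) pairs and vice versa; no boundary correction is needed.  Concretely, the $D$-pair $\bigl((i,1),(i,j')\bigr)$ (type~(1), $j'>1$) corresponds to the $D'$-pair $\bigl((i{+}1,\ell),(i,j'{-}1)\bigr)$ (type~(2)), and the $D$-pair $\bigl((i,j),(i{-}1,1)\bigr)$ (type~(2), $j>1$) corresponds to the $D'$-pair $\bigl((i,j{-}1),(i,\ell)\bigr)$ (type~(1)); one checks these two maps are mutually inverse and cover all attacking pairs touching the moved column.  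So the $\inv$ step is cleaner than you suggest.
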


\section{Lightning bolt formula for Macdonald intersection polynomials}\label{Sec: proof of main theorem}\label{Sec: Proof of main theorem}
In this section, we present the \emph{lightning bolt formula} for Macdonald intersection polynomials, which plays a key role in the proof of Theorem~\ref{thm: main theorem} (c). Thanks to the shape independence stated in Theorem~\ref{thm: main theorem}~(b), it is sufficient to compute $e^{\perp}_{n+1-k}\I_{\mu^{(1)},\dots,\mu^{(k)}}$ for any tuple of distinct partitions $(\mu^{(1)},\dots,\mu^{(k)})$ where there exists a partition $\mu \vdash n+1$ satisfying $|\mu\backslash \mu^{(i)}|=1$. In this section, we fix $\mu$ as the augmented staircase $\overline{\delta}_k\coloneqq(\underbrace{k,\dots,k}_{k},k-1,\dots,1)$ and $\mu^{(i)}=\overline{\delta}_k\backslash c_i$, where $c_1,\dots,c_k$ represent removable corners of $\overline{\delta}_k$ indexed from top to bottom.

Here, we outline the contents of this section. In Section~\ref{subsec: deforming the filled diagram}, we apply the column exchange rule to $(\mu^{(i)},f^{\st}_{\mu^{(i)}})$ to obtain a filled diagram $\left(\overline{\delta}_{k,i}, \overline{f}_{k,i}\right)$.
In Section~\ref{subsec: lightning bolt formula for Macdonald intersection polynomials}, we present the lightning bolt formula for the Macdonald intersection polynomials (Proposition~\ref{prop: main key sf to shuffle}), which serves as a key component in proving Theorem~\ref{thm: main theorem} (c).
In Section~\ref{subsec: Proof of Proposition main key to SF to shuffle}, we demonstrate the reduction of Proposition~\ref{prop: main key sf to shuffle} to Proposition~\ref{prop: toward lb tilde}. Section~\ref{subsec: proof of Proposition 6.18} provides a proof of Proposition~\ref{prop: toward lb tilde}, which finalizes the proof of Theorem~\ref{thm: main theorem} (c).

\subsection{Deforming the filled diagram $(\mu^{(i)},f^{\st}_{\mu^{(i)}})$}\label{subsec: deforming the filled diagram}
In this subsection, we introduce a filled diagram $(\overline{\delta}_{k,i},\overline{f}_{k,i})$ that possesses the property 
\[
\widetilde{H}_{(\mu^{(i)},f^{\st}_{\mu^{(i)}})}[X]=\widetilde{H}_{(\overline{\delta}_{k,i},\overline{f}_{k,i})}[X].
\]
For the computation of $\I_{\mu^{(1)},\dots,\mu^{(k)}}$, we will utilize $(\overline{\delta}_{k,i},\overline{f}_{k,i})$ in place of $(\mu^{(i)},f^{\st}_{\mu^{(i)}})$.

\begin{definition}\label{def: deforming the standard filled diagram}
We define
\begin{equation*}
(\overline{\delta}_{k,i},\overline{f}_{k,i})\coloneqq\cycling\left(S_1\dots S_{i-1}(\mu^{(i)},f^{\st}_{\mu^{(i)}})\right).
\end{equation*}
Refer to Figure \ref{fig: delta k i process} for an illustrative example. In other words, we perform the following procedure: we shift the $i$-th column of $(\mu^{(i)},f^{\st}_{\mu^{(i)}})$ all the way to the left by applying $S_1\cdots S_{i-1}$, and then we take the first column (previously the $i$-th column) and move it to the far right using the operator $\cycling$. It can be verified that the local condition \eqref{eq: column exchange condition} is always satisfied during the application of the operators $S_1\cdots S_{i-1}$. For a detailed proof, we refer to \cite[Lemma 5.2]{KLO22}.

\end{definition}

\begin{figure}[h] 
\[
    \begin{ytableau}
    t \\
    q^{-1}t^2 & t \\
    q^{-1}t^3 & t^2 \\
    q^{-3}t^4 & q^{-2}t^3 & q^{-1}t & t\\
    q^{-3}t^5 & q^{-2}t^4 & q^{-1}t^2 & t^2\\
    q^{-3}t^6 & q^{-2}t^5 & q^{-1}t^3 & t^3\\
    &&&
    \end{ytableau}
    \qquad 
    \begin{ytableau}
    \none & t  \\
    \none & q^{-1}t^2 & t \\
    \none & q^{-2}t^3 & q^{-1}t^2 \\
    q^{-1}t & q^{-3}t^4 & q^{-2}t^3 & t\\
    q^{-1}t^2 & q^{-3}t^5 & q^{-2}t^4 & t^2\\
    q^{-1}t^3 & q^{-3}t^6 & q^{-2}t^5 & t^3\\
    &&&
    \end{ytableau}
    \qquad
    \begin{ytableau}
    t  \\
    q^{-1}t^2 & t \\
    q^{-2}t^3 & q^{-1}t^2 & \none & q^{-1}t\\
    q^{-3}t^4 & q^{-2}t^3 & t & q^{-1}t^2\\
    q^{-3}t^5 & q^{-2}t^4 & t^2 & q^{-1}t^3\\
    q^{-3}t^6 & q^{-2}t^5 & t^3 & \\
    &&
    \end{ytableau}
\]
\caption{The figure on the left depicts the filled diagram $\left(\mu^{(3)},f^{\st}_{\mu^{(3)}}\right)$, while the one in the center represents the filled diagram $S_1 S_2\left(\mu^{(3)},f^{\st}_{\mu^{(3)}}\right)$. Finally, the figure on the right illustrates the filled diagram $\left(\overline{\delta}_{4,3}, \overline{f}_{4,3}\right)$, which is obtained by applying the operator $\cycling$ to the figure on the center.}
\label{fig: delta k i process}
\end{figure}
\begin{lem}\label{lem: deformed partitions filling description} We have $\overline{\delta}_{k,i}=[I_1,I_2,\dots,I_k]$, where $I_j$ is the interval given by
\[
    I_j = \begin{cases}
        [2k-j] &\text{if } j<i \\
        [2k-j-1] &\text{if } i\le j<k \\
        [2,2k-i] &\text{if } j=k.
    \end{cases}
\]
The filling $\overline{f}_{k,i}$ on $\overline{\delta}_{k,i}$ is given by
\[
    \overline{f}_{k,i}(a,b) =
    \begin{cases}
        q^{b-k}t^{2k-a-b+1}&\text{if } a\le k, b<i\\
        q^{b-k+1}t^{2k-a-b}&\text{if } a\le k, i\le b< k\\
        q^{i-k} t^{2k-i-a+1}&\text{if } a\le k, b=k\\
        q^{a+b-2k}t^{2k-a-b+1}&\text{if } a>k, b<i\\
        q^{a+b-2k+1}t^{2k-a-b}&\text{if } a>k,i\le b <k\\
        q^{i+a-2k-1}t^{2k-i-a+1}&\text{if } a>k, b=k.
    \end{cases}
\]
\end{lem}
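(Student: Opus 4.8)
\textbf{Proof proposal for Lemma~\ref{lem: deformed partitions filling description}.}

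The plan is to track carefully what each of the three operations in Definition~\ref{def: deforming the standard filled diagram} does to the shape and to the filling, proceeding in the order $(\mu^{(i)},f^{\st}_{\mu^{(i)}})\xrightarrow{S_1\cdots S_{i-1}}\xrightarrow{\cycling}(\overline{\delta}_{k,i},\overline{f}_{k,i})$. First I would record the starting data explicitly. Since $\mu=\overline\delta_k=(k^k,k-1,\dots,1)$ and $\mu^{(i)}=\overline\delta_k\setminus c_i$ where $c_i$ is the $i$-th removable corner from the top, the column-interval description of $\mu^{(i)}$ is immediate: columns $1,\dots,i-1$ have height $2k-j$, columns $i,\dots,k-1$ have height $2k-j-1$ (the corner removed from column $i$ shortens exactly columns $i$ through... actually only column $i$ loses a cell at height $2k-i$; but since $c_i$ is a corner of the staircase, removing it makes column $i$ have the same height as it would in the smaller staircase — here one must be slightly careful and just read off $\mu^{(i)}$ directly), and the standard filling values $f^{\st}_{\mu^{(i)}}(a,b)=q^{-\arm(a,b)}t^{\leg(a,b)+1}$ are computed from the arm and leg in $\mu^{(i)}$. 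This gives the "left figure" in Figure~\ref{fig: delta k i process}.

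Next I would analyze the repeated column exchange $S_1\cdots S_{i-1}$. The key point is that $S_j$ swaps columns $j$ and $j+1$ (replacing the taller-left configuration by taller-right), and by Definition~\ref{def: the operator S} the new filling in the swapped columns is obtained by the explicit rules there, with the scalar $\alpha$ equal to $q^{-1}$ times the filling of the bottom-most cell above the shorter column. One checks by induction on the number of applied $S_j$'s that after $S_1\cdots S_{m}$ the first column has become the (shifted-down) image of the original $i$-th column sitting in position $1$, while columns $2,\dots,m+1$ are the original columns $1,\dots,m$ with their fillings divided through by the appropriate power of the ``$\alpha$'' scalars. Concretely I would verify that $\alpha$ at each step is a pure power of $t$ (times $q^{-1}$), which is why the $q$-exponents in columns $<i$ of $\overline{f}_{k,i}$ match those of $f^{\st}$ up to a uniform shift while the $t$-exponents get re-indexed. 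This is exactly the content of \cite[Lemma 5.2]{KLO22}, which already guarantees that the local condition \eqref{eq: column exchange condition} holds throughout; so the combinatorial bookkeeping is the only new work, and it is routine but must be done row-by-row, splitting on $a\le k$ versus $a>k$ (because the staircase part behaves differently from the rectangular part, changing the arm/leg formulas). Finally, applying $\cycling$ moves the (former $i$-th) first column to the far right and shifts it up by one, which turns the interval $[2k-i-1]$ into $[2,2k-i]$ and multiplies that column's filling by the bookkeeping factor coming from the one-cell vertical shift; this produces the stated formula for column $k$ and the ``center'' and ``right'' figures in Figure~\ref{fig: delta k i process}.

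I expect the main obstacle to be the careful case analysis for the filling $\overline f_{k,i}(a,b)$: there are six cases in the statement, coming from the $2\times 3$ split (row in staircase part vs.\ rectangular part) $\times$ (column before $i$, column in $[i,k-1]$, column $k$), and each requires chasing how arm/leg in $\mu^{(i)}$ transform under the $S_j$'s and then under $\cycling$. The cleanest way to manage this is to prove the column-exchange step as a standalone sublemma: show that if a filled diagram has columns with fillings of the form $q^{\ast}t^{\ast}$ with exponents given by affine-linear functions of $(a,b)$ satisfying the compatibility \eqref{eq: column exchange condition}, then $S_j$ preserves this affine-linear form with a predictable change of coefficients, and simply iterate. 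Once that sublemma is in place, the lemma follows by substituting $j=i-1$ and reading off the exponents, and the $\cycling$ step is a one-line coordinate change $(a,b)\mapsto(a-1,\text{last column})$ on that one column. Verifying the boundary cases — the bottom cells (which carry no filling value) and the transition between the $a\le k$ and $a>k$ regimes — against the small example $k=4$, $i=3$ in Figure~\ref{fig: delta k i process} provides a useful consistency check at the end.
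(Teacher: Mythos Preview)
Your outline---compute $f^{\st}_{\mu^{(i)}}$, apply $S_1\cdots S_{i-1}$, apply $\cycling$---matches the paper's proof, but you are overcomplicating the middle step and misdescribing two of the operations.

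First, re-read Definition~\ref{def: the operator S} and its accompanying figure: the operator $S$ literally swaps the two columns' filling values in rows $2,\dots,m$ (column~1's entries $\alpha a_\bullet$ move to column~2, column~2's entries $a_\bullet$ move to column~1), leaves rows $>m+1$ untouched as they ride along with the tall column, and the \emph{only} value that actually changes is the single cell at height $m{+}1$, which goes from $q\alpha$ to $\alpha$. So your claim that the shifted columns have their fillings ``divided through by the appropriate power of the $\alpha$ scalars'' is wrong: nothing is divided by $\alpha$. Since the short column being pushed left always has height $2k-i-1$, the only change at every step occurs in row $2k-i$, and the paper's proof states exactly this: ``the only changes of the filling occur in the $(2k-i)$-th row by multiplying $q^{-1}$.'' This one sentence replaces your proposed affine-linear sublemma entirely; once you write down $f^{\st}_{\mu^{(i)}}$ explicitly (which the paper does in five cases), the intermediate filled diagram $S_1\cdots S_{i-1}(\mu^{(i)},f^{\st}_{\mu^{(i)}})$ is obtained by the cyclic column shift $(i,1,2,\dots,i-1,i+1,\dots,k)$ together with a factor of $q^{-1}$ on row $2k-i$.

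Second, $\cycling$ does not ``multiply that column's filling by a bookkeeping factor'': by its definition it moves the first column to the far right, shifts it up by one cell, and carries the filling values along unchanged via the natural cell bijection. There is no arithmetic at all in that step. With these two corrections the six-case formula for $\overline f_{k,i}$ drops out from the paper's intermediate description of $(D,f)$ by the coordinate change $(a,b)\mapsto(a,b-1)$ for $b>1$ and $(a,1)\mapsto(a+1,k)$, and no further case analysis is needed.
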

\begin{proof}
    The standard filling of $\mu^{(i)}$ is given by
    \[
        f^{\st}_{\mu^{(i)}}(a,b)=
        \begin{cases}
            q^{b-k}t^{2k-a-b+1}&\text{if } a\le k, b\neq i\\
            q^{i-k}t^{2k-a-i}&\text{if } a\le k, b= i\\
            q^{a+b-2k}t^{2k-a-b+1}&\text{if } k<a<2k-i, b\neq i\text{ or }a>2k-i\\
            q^{a+i-2k}t^{2k-a-i}&\text{if } a>k, b=i\\ 
            q^{b-i+1}t^{i-b+1}&\text{if } a=2k-i.
        \end{cases}
    \]
    While applying the operator $S_1\dots S_{i-1}$ the only changes of the filling occur in the $(2k-i)$-th row by multiplying $q^{-1}$. Denoting $(D,f)=S_1\dots S_{i-1}\left(\mu^{(i)},f^{\st}_{\mu^{(i)}}\right)$, we have
    \[
    D = [[2k-i-1],[2k-1],\dots,[2k-i+1],[2k-i-1],\dots,[k]],
    \]
    and
    \[
        f(a,b)=
        \begin{cases}
            q^{i-k} t^{2k-a-i}&\text{if } a\le k, b=1\\
            q^{b-k-1}t^{2k-a-b+2}&\text{if } a\le k, 1<b\le i\\
            q^{b-k}t^{2k-a-b+1}&\text{if } a\le k, b>i\\
            q^{a+i-2k}t^{2k-a-i}&\text{if } a>k, b=1\\
            q^{a+b-2k-1}t^{2k-a-b+2}&\text{if } a>k, 1<b\le i\\
            q^{a+b-2k}t^{2k-a-b+1}&\text{if } a>k, b>i.
        \end{cases}
    \]
Applying the operator $\cycling$ completes the proof.
\end{proof}

\begin{rmk}
    Let $\lambda$ be a partition and $(D,f)$ be a filled diagram satisfying $(D,f)\cong(\lambda,f_{\lambda}^{\st})$. Then \cite[Theorem 5.1.1]{HHL08} implies 
    $\widetilde{H}_{(D,f)}[X]=\widetilde{H}_{(\lambda,f^{\st}_{\lambda})}[X]$. In particular, letting  $(D,f)=S_{1}\dots S_{i-1}(\mu^{(i)},f^{\st}_{\mu^{(i)}})$ we obtain 
    \[
    \widetilde{H}_{(D,f)}[X]=\widetilde{H}_{(\mu^{(i)},f^{\st}_{\mu^{(i)}})}[X]
    \]
    without using Proposition \ref{lem: column exchange rephrased}. However \cite[Theorem 5.1.1]{HHL08} is not enough for our purpose later as we deal with $z$-deformed filled diagrams in Section~\ref{subsec: Proof of Proposition main key to SF to shuffle}. Moreover, we need a more refined identity \eqref{eq: refined identity}.
\end{rmk}

\subsection{Lightning bolt formula for Macdonald intersection polynomials}\label{subsec: lightning bolt formula for Macdonald intersection polynomials}
In this subsection, we state the lightning bolt formula for Macdonald intersection polynomials (Proposition~\ref{prop: main key sf to shuffle}) which essentially proves Theorem~\ref{thm: main theorem} (c).

By \eqref{eq: 11}, for a composition $\beta\models k-1$ we have
\begin{align}\label{eq: Sf coefficient equ}
\langle e^{\perp}_{n+1-k}\I_{\mu^{(1)},\dots,\mu^{(k)}}[X;q,t],e_{\beta}\rangle&=\sum_{i=1}^{k} \left(\prod_{j\neq i}\dfrac{T_{\mu^{(j)}}}{T_{\mu^{(j)}}-T_{\mu^{(i)}}}\right)\sum_{L\in \mathrm{OP}(\overline{\delta}_{k,i};\beta)}\stat_{(\overline{\delta}_{k,i},\overline{f}_{k,i})}\left(\sigma(\overline{\delta}_{k,i};L)^{-1}\right)\\ &=\sum_{\substack{M\in \Mat^{\ge0}_{r\times(2k-1)} \\ \csum(M)=\beta}}\sum_{i=1}^{k} \left(\prod_{j\neq i}\dfrac{T_{\mu^{(j)}}}{T_{\mu^{(j)}}-T_{\mu^{(i)}}}\right)\sum_{L\in \mathbf{OP}(\overline{\delta}_{k,i};M)}\stat_{(\overline{\delta}_{k,i},\overline{f}_{k,i})}\left(\sigma(\overline{\delta}_{k,i};L)^{-1}\right)  \nonumber
\end{align}

Now we present the remarkable observation, which we refer to as the "lightning bolt formula" for Macdonald intersection polynomials. This formula establishes a connection between the Macdonald intersection polynomial and the fermionic formula for the shuffle formula (Theorem~\ref{thm: fermionic formula}). The subsequent subsections are devoted to a proof of Proposition \ref{prop: main key sf to shuffle}.
\begin{proposition}\label{prop: main key sf to shuffle}
For $M\in \Mat^{\ge0}_{r\times(2k-1)} $ such that $|M|=k-1$, denoting $\alpha=\rsum(M)$, we have 
\begin{align*}    
\sum_{i=1}^{k} \left(\prod_{j\neq i}\dfrac{T_{\mu^{(j)}}}{T_{\mu^{(j)}}-T_{\mu^{(i)}}}\right)\sum_{L\in \mathbf{OP}(\overline{\delta}_{k,i};M)}\stat_{(\overline{\delta}_{k,i},\overline{f}_{k,i})}\left(\sigma(\overline{\delta}_{k,i};L)^{-1}\right)=T_{\bigcap_{i=1}^k \mu^{(i)}}\left(t^{\sum_{i=1}^{\ell(\alpha)}(i-1)\alpha_i}q^{\sum_{i,j} \binom{M_{i,j}}{2}}\right) \LB(M).
\end{align*}

\end{proposition}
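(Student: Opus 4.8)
The plan is to prove this in two steps, matching the two remaining subsections: a reduction of the weighted sum over $i$ to a single ``leading coefficient'' statement about a one‑parameter deformation of the filled diagrams $(\overline{\delta}_{k,i},\overline f_{k,i})$, and then the combinatorial evaluation of that statement against $\LB(M)$. Together with \eqref{eq: Sf coefficient equ} and Theorem~\ref{thm: fermionic formula}, the proposition then gives $\mathfrak{F}_\beta\bigl(e^\perp_{n+1-k}\I_{\mu^{(1)},\dots,\mu^{(k)}}\bigr)=T_{\bigcap_i\mu^{(i)}}\,\mathfrak{F}_\beta\bigl(D_{k-1}[X;q,t]\bigr)$ for every $\beta\models k-1$, which is Theorem~\ref{thm: main theorem}(c) since a quasisymmetric function is determined by these values.

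First I would use that the dependence of $\overline f_{k,i}$ on the removed corner is very mild. Writing $z_i:=T_{\mu^{(i)}}/T_\mu$ --- so that the coefficients in the proposition are $\prod_{j\ne i}\tfrac{z_j}{z_j-z_i}$ and $T_{\bigcap_i\mu^{(i)}}=z_1\cdots z_k\,T_\mu$ --- Lemma~\ref{lem: deformed partitions filling description} shows that, after dividing each column by a monomial in $q,t$ independent of $i$, the only remaining $i$-dependence of $\overline f_{k,i}$ is through the scalar $z_i$ attached to the cells of the last column (together with the combinatorial choice of which columns to the left are ``long''). I would therefore introduce a formal variable $z$ and a $z$-deformed filled diagram $(\overline{\delta}_{k,i},\overline f^{\,z}_{k,i})$ specializing to $(\overline{\delta}_{k,i},\overline f_{k,i})$ at $z=z_i$, and use the column-exchange rule in the refined form of Proposition~\ref{lem: column exchange rephrased} --- hence the refined identity \eqref{eq: refined identity} --- together with the cycling rule (Lemma~\ref{lem: cycling}) to show that $ P_M(z):=\sum_{L\in\mathbf{OP}(\overline{\delta}_{k,i};M)}\stat_{(\overline{\delta}_{k,i},\overline f^{\,z}_{k,i})}\bigl(\sigma(\overline{\delta}_{k,i};L)^{-1}\bigr) $ is, up to an overall monomial in $z$, a polynomial over $\QQ(q,t)$ that is independent of $i$, whose value at $z=z_i$ is precisely the inner sum attached to the $i$-th summand of the proposition. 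The left-hand side is then the divided-difference expression $\sum_{i=1}^k\bigl(\prod_{j\ne i}\tfrac{z_j}{z_j-z_i}\bigr)P_M(z_i)$; evaluating the auxiliary sums $\sum_i\bigl(\prod_{j\ne i}\tfrac{z_j}{z_j-z_i}\bigr)z_i^m$ via Lemma~\ref{Lemma: sum prod 1/(x_i-x_j)} collapses this to a single coefficient of $P_M$ (times $z_1\cdots z_k$), reducing the proposition to: \emph{$P_M(z)$ has the expected degree in $z$, its lower-order coefficients contribute trivially to the divided difference, and the relevant coefficient equals $(-1)^{k-1}T_\mu\,t^{\sum_i(i-1)\alpha_i}q^{\sum_{i,j}\binom{M_{i,j}}{2}}\LB(M)$}. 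This is Proposition~\ref{prop: toward lb tilde}, which I would phrase in terms of a $z$-refinement $\widetilde\LB_M(z)$ of $\LB(M)$.

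The second step proves this refined statement by analyzing $\mathbf{OP}(\overline{\delta}_{k,i};M)$ directly: for each $L$ one determines $w=\sigma(\overline{\delta}_{k,i};L)^{-1}$ and reads $\inv_{\overline{\delta}_{k,i}}(w)$ and $\maj_{(\overline{\delta}_{k,i},\overline f^{\,z}_{k,i})}(w)$ off the attacking-pair and descent structure of the deformed staircase, the power of $z$ being the number of descents in the last column. The degree control and the vanishing of the low-order coefficients should follow from the fact that the extreme-$z$-degree contributions force the occupied cells of the last column, and their interactions with the columns to the left, into the lightning-bolt sets $\LLB(\cdot,\cdot)$; extracting the relevant coefficient is then a $q$-enumeration of these ``lightning-bolt configurations'', which I would organize exactly as in Proposition~\ref{prop: combinatorial interpretation of LB(M)} (and its $\ell=1$ base case), producing the $q$-multinomial times product of $q$-binomials that is $\LB(M)$, with the factor $q^{\sum_{i,j}\binom{M_{i,j}}{2}}$ coming from the within-interval decreasing orderings, the factor $t^{\sum_i(i-1)\alpha_i}$ from the $\maj$ of the non-lightning-bolt cells (matching the bounce statistic in Theorem~\ref{thm: Bounce formulation shuffle}), and the prefactor $(-1)^{k-1}T_\mu$ from the monomials pulled out of $\overline f^{\,z}_{k,i}$ and the sign in Lemma~\ref{Lemma: sum prod 1/(x_i-x_j)}.

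The hard part will be the second step --- concretely, the \emph{vanishing} half of Proposition~\ref{prop: toward lb tilde}. Controlling the $z$-degree and killing the low-order coefficients is an instance of the ``science fiction'' vanishing phenomenon (as in Theorem~\ref{thm: main theorem}(a)), now taking place on filled diagrams; unlike the leading-coefficient computation it does not reduce to bookkeeping of known statistics, and I expect it to require a sign-reversing involution on the relevant ordered set partitions, or a telescoping along the column-exchange orbit. A second delicate point is the setup of the $z$-deformation itself: one must check that the local hypothesis \eqref{eq: column exchange condition} for the column-exchange rule survives passage to the deformed filling and that the resulting $P_M(z)$ is genuinely independent of $i$, even though the underlying shapes $\overline{\delta}_{k,i}$ differ with $i$.
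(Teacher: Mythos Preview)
Your overall strategy --- a $z$-deformation, extraction of a leading coefficient via Lemma~\ref{Lemma: sum prod 1/(x_i-x_j)}, and a combinatorial identification with $\LB(M)$ culminating in a sign-reversing involution --- is the same shape as the paper's. But two of your technical choices will not work as stated, and the second reduction step is missing an essential intermediate.

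\textbf{The single-variable deformation is inadequate.} You propose a one-parameter $z$-deformation and claim $P_M(z)$ is a polynomial in $z$, independent of $i$, whose value at $z=z_i$ is the $i$-th inner sum. This is not achievable: the filling $\overline f_{k,i}$ depends on $i$ not only through the value $z_i=T_{\mu^{(i)}}/T_\mu$ in the last column but also through \emph{which} of the other $z_j$'s appear in columns $1,\dots,k-1$ (cf.\ Lemma~\ref{lem: deformed partitions filling description}). The paper instead introduces a \emph{multi}-variable deformation $\overline f^{\mathbf z}_{k,i}$ in formal $z_1,\dots,z_k$, so that the $i$-th inner sum is a function $P_i(z_1,\dots,z_k)$ with the crucial property $P_i=P_j$ when $z_i=z_j$; this is exactly the hypothesis of Lemma~\ref{Lemma: degree less than k-1 vanishes}, which is what actually replaces your appeal to Lemma~\ref{Lemma: sum prod 1/(x_i-x_j)}. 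After factoring out $\stat(w_0)$ (Lemma~\ref{lem: longest permutation stat}) and passing to the complementary statistic $\overline\stat$, the paper separates the staircase part above row $k$ (Lemma~\ref{lem: focus on the bottom k-1 rows}, where the column-exchange identity \eqref{eq: refined identity} gives genuine $i$-independence) from the rectangular part $\overline R_k$. On $\overline R_k$ the residual $i$-dependence is $z_i^{c_k}\cdot\eta_{k,i}\bigl(\prod_j z_j^{c_j}\bigr)$; to collapse the sum over $i$ one then needs the \emph{symmetry} Lemma~\ref{lem: symmetric function!} together with Corollary~\ref{Cor: symmetric polynomial version of Lemma A3}, not just ``evaluate a fixed polynomial at $k$ points''.

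\textbf{The role of $\widetilde\LB$ is different from what you describe.} You refer to Proposition~\ref{prop: toward lb tilde} as a ``$z$-refinement $\widetilde\LB_M(z)$''; it is not. The combinatorial evaluation on the \emph{full} rectangle $R_k$ (after the involution $\tau_{R_k}$, which you correctly anticipate) lands on $\widetilde\LB(M)$, a variant of $\LB(M)$ with $\binom{\LB(M;(i,j))}{M_{i,j}}_q$ in place of $\binom{\LB(M;(i,j))-1}{M_{i,j}}_q$. The passage $\widetilde\LB\Rightarrow\LB$ is a separate inclusion--exclusion over \emph{Fibonacci matrices} (Lemma~\ref{Lem: recurrence tilde LV and LV}), mirrored on the combinatorial side by the reduction $\overline R_k\to R_k$ that peels off the last column (Lemma~\ref{lem: bar Rk to Rk reduction}). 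Your plan omits this $\overline R_k/R_k$ step entirely, so the direct identification with $\LB(M)$ that you expect in step two will not materialize.

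In short: replace the one-variable $z$ by the multi-variable $(z_1,\dots,z_k)$; use $P_i=P_j$ at $z_i=z_j$ plus the symmetry lemma, not ``$P_M$ independent of $i$''; and insert the Fibonacci/$\widetilde\LB$ layer between the rectangular computation and $\LB(M)$. Your expectation that the hardest step is a sign-reversing involution is correct --- that is exactly $\tau_{R_k}$ and Lemma~\ref{Lem: fixed points = tilde lb}.
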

We end this subsection with a proof of Theorem~\ref{thm: main theorem} (c) assuming Proposition~\ref{prop: main key sf to shuffle}.
\begin{proof}[Proof of Theorem~\ref{thm: main theorem} (c)]
For any $\beta\models k-1$, by Proposition \ref{prop: main key sf to shuffle} and \eqref{eq: Sf coefficient equ}, we have 
\begin{equation*}
    \langle\dfrac{1}{T_{\bigcap_{i=1}^k \mu^{(i)}}}e_{n+1-k}^\perp\left(\I_{\mu^{(1)},\dots,\mu^{(k)}}[X;q,t]\right),e_{\beta}\rangle=\sum_{\alpha }\sum_{\substack{M\in \Mat^{\ge0}_{\ell(\beta)\times(2k-1)}\\ \rsum(M)=\alpha \\ \csum(M)=\beta}}\left(t^{\sum_{i=1}^{\ell(\alpha)}(i-1)\alpha_i}q^{\sum_{i,j} \binom{M_{i,j}}{2}}\right) \LB(M),
\end{equation*}
where the sum on the right-hand side is over vectors $\alpha$ of length $2k-1$ whose entries are nonnegative integers.
By applying Lemma~\ref{lem: basic property of lbm}, we can deduce that if $\rsum(M)$ is not in the form of $(\alpha',0,\dots,0)$ for a composition $\alpha'\models k-1$, then $\LB(M)=0$. Consequently, our focus narrows down to matrices where $\rsum(M)$ takes the form $(\alpha',0,\dots,0)$ for some $\alpha'\models k-1$. Using Theorem~\ref{thm: fermionic formula}, we can finally conclude that
\begin{equation*}
\langle\dfrac{1}{T_{\bigcap_{i=1}^k \mu^{(i)}}}e_{n+1-k}^\perp\left(\I_{\mu^{(1)},\dots,\mu^{(k)}}[X;q,t]\right),e_{\beta}\rangle=\langle D_{k-1}[X;q,t],e_{\beta}\rangle
\end{equation*}
for any $\beta\models k-1$, which is enough to deduce
\begin{equation*}
\dfrac{1}{T_{\bigcap_{i=1}^k \mu^{(i)}}}e_{n+1-k}^\perp\left(\I_{\mu^{(1)},\dots,\mu^{(k)}}[X;q,t]\right)=D_{k-1}[X;q,t].
\end{equation*}
\end{proof}

Our current objective is to prove Proposition~\ref{prop: main key sf to shuffle}. In Section~\ref{subsec: Proof of Proposition main key to SF to shuffle}, we will demonstrate that it is sufficient to establish Proposition~\ref{prop: toward lb tilde}, the proof of which will be provided in Section~\ref{subsec: proof of Proposition 6.18}.

\subsection{Reducing to the rectangular case}\label{subsec: Proof of Proposition main key to SF to shuffle}

We first introduce a $z$-deformation $(\overline{\delta}_{k,i},\overline{f}^{\mathbf{z}}_{k,i})$ of the filled diagram $(\overline{\delta}_{k,i},\overline{f}_{k,i})$, where the filling $\overline{f}_{k,i}^{\textbf{z}}$ on $\overline{\delta}_{k,i}$ is defined as follows: 
    \begin{equation}\label{Eq: definition of z-deformed f_k,i}
        \overline{f}_{k,i}^{\textbf{z}}(a,b) =
        \begin{cases}
            q z_{2k+1-a}/z_b &\text{if } b<i \\ 
            q z_{2k+1-a}/z_{b+1} &\text{if } i\le b<k\\
            q z_{2k+1-a}/z_i &\text{if } a\le k, b=k\\
            z_{2k+1-a}/z_i &\text{if } a>k, b=k.
        \end{cases}
    \end{equation}
Figure~\ref{Fig: z-deformed filled diagram} shows $(\overline{\delta}_{4,3},\overline{f}_{4,3}^\textbf{z})$ which is a $z$-deformation of $(\overline{\delta}_{4,3},\overline{f}_{4,3})$ shown on the right of Figure \ref{fig: delta k i process}.
\begin{figure}[h]
    \centering
    \begin{ytableau}
    q\dfrac{z_2}{z_1}  \\
    q\dfrac{z_3}{z_1} & q\dfrac{z_3}{z_2}\\
    q\dfrac{z_4}{z_1} & q\dfrac{z_4}{z_2} & \none & \dfrac{z_4}{z_3}\\
    q\dfrac{z_5}{z_1} & q\dfrac{z_5}{z_2} & q\dfrac{z_5}{z_4} & q\dfrac{z_5}{z_3}\\
    q\dfrac{z_6}{z_1} & q\dfrac{z_6}{z_2} & q\dfrac{z_6}{z_4} & q\dfrac{z_6}{z_3}\\
    q\dfrac{z_7}{z_1} & q\dfrac{z_7}{z_2} & q\dfrac{z_7}{z_4} & \\
    &&
    \end{ytableau}
    \caption{A $z$-deformed filled diagram $(\overline{\delta}_{4,3},\overline{f}_{4,3}^\textbf{z})$.}
    \label{Fig: z-deformed filled diagram}
\end{figure}

By Lemma~\ref{lem: deformed partitions filling description}, it is direct to see that substituting 
\begin{equation}\label{eq: z_i values}
    z_j = 
    \begin{cases}
        q^{1-j}t^{j-1}&\text{if } j\le k\\
        q^{-k}t^{j-1}&\text{if } j > k
    \end{cases}
\end{equation}
recovers $(\overline{\delta}_{k,i}\overline{f}_{k,i})$. We now introduce $z$-deformed version of Proposition~\ref{prop: main key sf to shuffle}. 

\begin{proposition}\label{prop: main key sf to shuffle z-deformed}
For $M\in \Mat^{\ge0}_{r\times(2k-1)} $ such that $|M|=k-1$, denoting $\alpha=\rsum(M)$, we have
\begin{align*}    
\sum_{i=1}^{k} \left(\prod_{j\neq i}\dfrac{z_j}{z_j-z_i}\right)\sum_{L\in \mathbf{OP}(\overline{\delta}_{k,i};M)}\stat_{(\overline{\delta}_{k,i},\overline{f}^{\mathbf{z}}_{k,i})}\left(\sigma(\overline{\delta}_{k,i};L)^{-1}\right)=q^{\frac{4k^3-6k^2-k}{3}+\sum_{i,j}\binom{M_{i,j}}{2}}\left(\frac{z_k\prod_{i=1}^{k-1}z_{2k-i}^{k-\alpha_i}}{z_{2k-1}\prod_{i=1}^{k-1}z_i^{2k-2i-1} }\right) \LB(M).
\end{align*}
\end{proposition}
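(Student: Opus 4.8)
The plan is to prove the $z$-deformed statement (Proposition~\ref{prop: main key sf to shuffle z-deformed}), since the un-deformed Proposition~\ref{prop: main key sf to shuffle} follows by specializing $z_j$ as in \eqref{eq: z_i values} and checking the resulting monomial prefactor matches $T_{\bigcap_i\mu^{(i)}}(t^{\sum(i-1)\alpha_i}q^{\sum\binom{M_{i,j}}{2}})$; this last comparison is a routine (if lengthy) bookkeeping computation using $\overline\delta_k$ and the description of $\bigcap_i\mu^{(i)}$. So I focus on the $z$-deformed identity. The key structural idea is that the left-hand side is a rational function in the $z_j$ whose only possible poles are simple, along the hyperplanes $z_i = z_j$, and whose "size" (degree) is controlled; so it suffices to (a) show the apparent poles are not actually there, hence the LHS is a Laurent polynomial, (b) pin down the LHS by evaluating it at enough specializations, or better, (c) reduce via the column-exchange/cycling machinery of Section~\ref{Sec: generalized Macdonald polynomials} to a single ``rectangular'' diagram where the sum over $i$ collapses.

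Concretely, the first step is to use the refined column-exchange identity \eqref{eq: refined identity} together with the cycling rule (Lemma~\ref{lem: cycling}) on the $z$-deformed filled diagrams $(\overline\delta_{k,i},\overline f^{\mathbf z}_{k,i})$. These diagrams are all obtained from a common $z$-deformation of the standard filled staircase by the operators $S_1\cdots S_{i-1}$ and a cycling, so for a fixed matrix $M$ the inner sums $\sum_{L\in\mathbf{OP}(\overline\delta_{k,i};M)}\stat(\sigma^{-1})$ for different $i$ are related by the stat-preserving, $\type$-preserving bijection $\phi$ of Proposition~\ref{lem: column exchange rephrased}. The goal here is to rewrite each term $i$ as a ``local'' contribution living on $2k-1$ rows, in which the dependence on $i$ enters only through the column into which the distinguished removable-corner column has been slid. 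Then the sum $\sum_{i=1}^k \big(\prod_{j\neq i} z_j/(z_j-z_i)\big)(\text{term}_i)$ has the shape to which Lemma~\ref{Lemma: sum prod 1/(x_i-x_j)} (or a Lagrange-interpolation refinement of it) applies: the $z_i$-dependence of each $\text{term}_i$, once the common monomial is factored out, is polynomial of controlled degree, so the alternating sum extracts a single leading coefficient. The output of that extraction should be exactly a single $q$-multinomial times a product of $q$-binomials, i.e.\ $\LB(M)$.

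The second step — and what I expect to be the main obstacle — is the precise bookkeeping of the inversion and major-index statistics $\stat_{(\overline\delta_{k,i},\overline f^{\mathbf z}_{k,i})}(\sigma(\overline\delta_{k,i};L)^{-1})$ as $L$ ranges over $\mathbf{OP}(\overline\delta_{k,i};M)$, and showing that summing $q^{|\Inv|}$ over this set (with the descent-monomial weight $\maj_{(D,f)}$ factored out as a pure $z$-monomial) yields $q^{\sum_{i,j}\binom{M_{i,j}}{2}}$ times a product of $q$-binomial coefficients of exactly the lightning-bolt shape $\binom{\LB(M;(i,j))-1}{M_{i,j}}_q$. This is where the combinatorics of the ``lightning bolt set'' $\LLB(i,j)$ enters on the diagram side: the cells of $\overline\delta_{k,i}$ that can attack a given block of $A$'s, after the column slide, are precisely those indexed by a lightning bolt, which is why the binomial $\binom{\LB(M;(i,j))-1}{M_{i,j}}_q$ appears. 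I would handle this by an induction on $|M|$ (equivalently on $k$) mirroring the induction in the proof of Proposition~\ref{prop: combinatorial interpretation of LB(M)}: peel off the top-right nonzero entry of $M$, build a bijection $\mathbf{OP}(\overline\delta_{k,i};M)\to \mathbf{OP}(\overline\delta_{k,i};M')\times W^0_{s,\,M_{c,\ell}}$ compatible with the $z$-deformed $\stat$, and use $\sum_{\sigma\in W^0_{s,j}}q^{\inv(\sigma)}=\binom{s-1}{j}_q$.

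The final step is to assemble the pieces: after the bijective reduction the inner sum over $\mathbf{OP}(\overline\delta_{k,i};M)$ becomes (monomial in $z$) $\times\,q^{(\text{shift})}\LB(M)$ with the $z$-monomial depending on $i$ in a transparent way; plug this into $\sum_i\prod_{j\neq i}z_j/(z_j-z_i)$, apply Lemma~\ref{Lemma: sum prod 1/(x_i-x_j)} to isolate the coefficient of the top power of $z_i$, and verify that the surviving prefactor equals $q^{\frac{4k^3-6k^2-k}{3}+\sum\binom{M_{i,j}}{2}}\,\frac{z_k\prod_{i=1}^{k-1}z_{2k-i}^{\,k-\alpha_i}}{z_{2k-1}\prod_{i=1}^{k-1}z_i^{\,2k-2i-1}}$. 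The cubic exponent $\frac{4k^3-6k^2-k}{3}$ should emerge as a closed form for $\sum_{(a,b)}$ of the exponents of $q$ coming from the $q$-power part of $\overline f^{\mathbf z}_{k,i}$ summed over the descent cells forced in the ``minimal'' configuration, i.e.\ a $\sum_{a}\binom{a}{2}$-type sum over the $2k-1$ rows; I would verify it directly for small $k$ and then confirm the general formula by the same induction. The auxiliary counting identities needed for the Lagrange-interpolation step are exactly the ones quarantined in Appendix~\ref{Sec: Appendix} (Lemma~\ref{Lemma: degree less than k-1 vanishes}, Lemma~\ref{Lemma: monomial symmetric sum}), so those may be cited rather than reproved.
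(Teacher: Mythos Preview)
Your high-level architecture is close to the paper's: use column-exchange/cycling invariance to relate the terms indexed by $i$, apply a Lagrange-interpolation identity (Lemma~\ref{Lemma: sum prod 1/(x_i-x_j)} and its refinements in Appendix~\ref{subsec: Folklore lemmas}) to collapse the sum over $i$, and match the surviving combinatorial factor to $\LB(M)$. But your second and final steps contain a genuine gap.

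The claim that the descent weight $\maj_{(D,f)}$ ``factors out as a pure $z$-monomial'' is wrong: $\maj_{(\overline\delta_{k,i},\overline f^{\mathbf z}_{k,i})}(\sigma(L)^{-1})$ depends on which cells are descents, hence on $L$, not just on $M$. Consequently the inner sum $\sum_{L\in\mathbf{OP}(\overline\delta_{k,i};M)}\stat(\cdots)$ does \emph{not} equal $(\text{$z$-monomial})\times q^{\text{shift}}\LB(M)$ for fixed $i$, and the direct $\stat$-compatible bijection $\mathbf{OP}(\overline\delta_{k,i};M)\to\mathbf{OP}(\overline\delta_{k,i};M')\times W^0_{s,M_{c,\ell}}$ you propose cannot exist as stated. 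The paper resolves this through three additional layers that are absent from your plan: (i) passing to the complementary statistic $\overline\stat$ and restricting to \emph{reduced} $L$ --- those for which the descent pattern above the chosen cells is forced --- justified by a degree-count vanishing argument (Lemma~\ref{lem: sf degree less k-1 vanishes}, Lemma~\ref{lem: focus on the bottom k-1 rows}); only for reduced $L$ does the major-index contribution become the predictable $z$-monomial; (ii) stripping the last column of $\overline R_k$ via a sum over Fibonacci matrices, which replaces $\LB$ by the variant $\widetilde\LB$ (Lemma~\ref{lem: bar Rk to Rk reduction}, Proposition~\ref{prop: toward lb tilde}); (iii) on the rectangular $R_k$, establishing symmetry in $z_1,\dots,z_{k-1}$ (Lemma~\ref{lem: symmetric function!}), applying Corollary~\ref{Cor: symmetric polynomial version of Lemma A3}, and then a sign-reversing involution $\tau$ (Definition~\ref{def: involution}, Lemma~\ref{Lem: fixed points = tilde lb}) to reduce to packed configurations whose generating function is $\widetilde\LB(M)$; finally $\LB$ is recovered from $\widetilde\LB$ via the Fibonacci inclusion-exclusion (Lemma~\ref{Lem: recurrence tilde LV and LV}). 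Each of these pieces addresses a concrete obstruction to the direct approach you sketch.
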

On the right-hand side, we may only consider $\alpha$ such that $\alpha_\ell=0$ for $\ell\geq k$ (otherwise $\LB(M)=0$), and a simple computation shows
\begin{align*}
    \frac{T_{\mu^{(i)}}}{T_{\mu^{(j)}}}=\frac{z_i}{z_j},\qquad q^{\frac{4k^3-6k^2-k}{3}}\left(\frac{z_k\prod_{i=1}^{k-1}z_{2k-i}^{k-\alpha_i}}{z_{2k-1}\prod_{i=1}^{k-1}z_i^{2k-2i-1} }\right)=T_{\bigcap_{i=1}^k \mu^{(i)}}\left(t^{\sum_{i=1}^{k-1}(i-1)\alpha_i}\right)
\end{align*}
under the specialization \eqref{eq: z_i values}. Therefore Proposition \ref{prop: main key sf to shuffle} follows from Proposition \ref{prop: main key sf to shuffle z-deformed}. We will reduce Proposition~\ref{prop: main key sf to shuffle z-deformed} to Proposition~\ref{prop: toward lb tilde}  which analyzes the rectangular case. Here is the outline of the strategy.

\begin{enumerate}
    \item We first establish the equivalence between Proposition~\ref{prop: main key sf to shuffle z-deformed} and \eqref{eq: z deformed prop rephrased} by introducing a complementary statistic $\overline{\stat}$ (Definition~\ref{def: combinatorial description of stat}).
    \item  We can restrict our attention to the almost rectangular shape $\overline{R}_{k}$ in the first $k$ rows (shown on the left in Figure~\ref{Fig: filled diagrams R_k, R_k bar}), utilizing Lemma~\ref{lem: sf degree less k-1 vanishes} and Lemma~\ref{lem: focus on the bottom k-1 rows}. This reduction transforms \eqref{eq: z deformed prop rephrased} into \eqref{eq: z deformed prop rephrased2}.
    \item We establish a connection between the rectangular shape $R_k$ (on the right in Figure~\ref{Fig: filled diagrams R_k, R_k bar}) and $\overline{R_k}$ in Lemma~\ref{lem: bar Rk to Rk reduction}. 
    \item We present Proposition~\ref{prop: toward lb tilde} and demonstrate that when combined with Lemma~\ref{lem: bar Rk to Rk reduction} and Lemma~\ref{Lem: recurrence tilde LV and LV}, it completes the proof of Proposition~\ref{prop: main key sf to shuffle z-deformed}.

\end{enumerate}

\begin{definition}\label{def: combinatorial description of stat}
Given a filled diagram $(D,f)$, let $L=(A_1,\dots,A_r)\in \mathbf{OP}(D;M)$ for some $M\in\Mat^{\geq0}$. We set $A_0=D\setminus \bigcup_{i=1}^r A_i$, so that for every cell $u\in D$, there exists unique $\ell$ such that $u\in A_{\ell}$. We define $\overline{\Inv}_D(L)$ as the set of attacking pairs $(u,v)$ that do not belong to $\Inv_D(\sigma(D;L)^{-1})$. In other words, $\overline{\Inv}_D(L)$ consists of pairs $(u,v)$ of cells in $D$ such that $(u,v)$ is an attacking pair, and $u$ belongs to $A_{\ell}$ while $v$ belongs to $A_{\ell'}$ with $ \ell<\ell'$. We define $\overline{\inv}_D(L)$ as the product of $q$ over all pairs $(u,v)\in \overline{\Inv}_D(L)$.

Next, we define $\overline{\Des}_D(L)$ as the set of cells in $D$ that are not bottom cells and are not in $\Des_D(\sigma(D;L)^{-1})$. In other words, $\overline{\Des}_D(L)$ consists of cells $(i,j)$ in $D$ such that $(i,j)\in A_{\ell}$ and $(i-1,j)\in A_{\ell'}$ for $\ell<\ell'$, where $A_0$ is defined as before. We let $\overline{\maj}_{(D,f)}(L)$ denote the product of $f(u)$ over all cells $u\in \overline{\Des}_D(L)$. Finally, we define $\overline{\stat}_{(D,f)}$ as follows:
\[
    \overline{\stat}_{(D,f)}(L) \coloneqq \overline{\inv}_{D}(L)\overline{\maj}_{(D,f)}(L)
\]
By definition, we have
\begin{equation*}
\stat_{(D,f)}(\sigma(D;L)^{-1})=\frac{\stat_{(D,f)}(w_0)}{\overline{\stat}_{(D,f)}(L)}=\frac{\stat_{(D,f)}(w_0)}{\overline{\inv}_D(L)\overline{\maj}_{D,f}(L)},
\end{equation*}
where $w_0$ is the longest permutation in $\mathfrak{S}_{|D|}$.
\end{definition}

\begin{lem}\label{lem: longest permutation stat}
For each $1\le i\le k$,
    \begin{equation*}
    \stat_{(\overline{\delta}_{k,i},\overline{f}_{k,i}^{\textbf{z}})}(w_0)= q^{\frac{4k^3-3k^2-4k}{3}}\dfrac{z_i\prod_{j=1}^{k-1}z_{2k-j}^k}{z_{2k-1}\prod_{j=1}^{k-1}z_j^{2k-2j}}
    \end{equation*}
\end{lem}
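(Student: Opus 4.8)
\textbf{Proof plan for Lemma~\ref{lem: longest permutation stat}.}

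The plan is to compute $\stat_{(\overline{\delta}_{k,i},\overline{f}_{k,i}^{\textbf{z}})}(w_0)$ directly from its definition as the product $\inv_{\overline{\delta}_{k,i}}(w_0)\maj_{(\overline{\delta}_{k,i},\overline{f}_{k,i}^{\textbf{z}})}(w_0)$, exploiting the fact that for the longest permutation $w_0$ every attacking pair is an inversion pair and every non-bottom cell is a descent. Concretely, $\inv_{\overline{\delta}_{k,i}}(w_0) = q^{N}$ where $N$ is the total number of attacking pairs in the diagram $\overline{\delta}_{k,i}$, and $\maj_{(\overline{\delta}_{k,i},\overline{f}_{k,i}^{\textbf{z}})}(w_0) = \prod_{u} \overline{f}_{k,i}^{\textbf{z}}(u)$, the product taken over all non-bottom cells $u$. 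So the whole computation splits into two independent pieces: a count of attacking pairs (which produces the power of $q$ coming from $\inv$), and a product of the $z$-deformed filling entries over all non-bottom cells (which produces the remaining $z$-monomial, together with an extra power of $q$ coming from the leading $q$'s in the first three cases of \eqref{Eq: definition of z-deformed f_k,i}).

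First I would fix the combinatorial data: by Lemma~\ref{lem: deformed partitions filling description}, $\overline{\delta}_{k,i}=[I_1,\dots,I_k]$ with $|I_j| = 2k-j$ for $j<i$, $|I_j| = 2k-j-1$ for $i\le j < k$, and $|I_k| = 2k-i-1$ (as an interval $[2,2k-i]$). Then I would count attacking pairs row by row: within each row of length $\ell$ there are $\binom{\ell}{2}$ horizontal attacking pairs, and between row $a$ and row $a+1$ the number of ``descending'' attacking pairs (type (2): $i=i'+1$, $j>j'$) is determined by the shapes of the two consecutive rows. Summing these over all rows of $\overline{\delta}_{k,i}$ gives $N$; I expect this to come out independent of $i$ up to the bookkeeping forced by the shifted top of the last column, ultimately yielding the exponent tied to $\tfrac{4k^3-3k^2-4k}{3}$ together with part of the contribution. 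Next I would evaluate the product of the filling: group the non-bottom cells by the four cases of \eqref{Eq: definition of z-deformed f_k,i}. Cells with $b<i$ or $i\le b<k$ each contribute a leading $q$ and a factor $z_{2k+1-a}/z_{b \text{ or } b+1}$; cells in column $k$ contribute $z_{2k+1-a}/z_i$ (with a leading $q$ for $a\le k$, none for $a>k$). Collecting numerator factors $z_{2k+1-a}$ over all non-bottom cells in each column, and denominator factors, one obtains $\tfrac{z_i \prod_{i=1}^{k-1} z_{2k-i}^k}{z_{2k-1}\prod_{i=1}^{k-1} z_i^{2k-2i}}$ after cancellation; the accumulated leading $q$'s merge with the $\inv$-count to give the claimed total power of $q$.

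The main obstacle I anticipate is the careful bookkeeping in the last column: because $I_k = [2,2k-i]$ starts at row $2$ rather than row $1$, the ``bottom cell'' of that column sits at $(2,k)$ rather than $(1,k)$, so the set of non-bottom cells in column $k$ and the attacking pairs straddling the jump between columns $k-1$ and $k$ (and between rows) must be handled with extra care; the presence of $z_i$ (rather than a fixed index) in the numerator of the final answer is precisely the trace of this column. A secondary nuisance is verifying that the cubic-in-$k$ exponent of $q$ assembles correctly from the two sources (the pure attacking-pair count and the leading $q$'s in the filling); I would confirm this by checking small cases $k=2,3$ against the closed form, and then give the general telescoping sum. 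Once these two pieces are assembled and multiplied, the stated identity follows.
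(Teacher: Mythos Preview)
Your proposal is correct and follows essentially the same approach as the paper: split $\stat(w_0)$ into $\inv(w_0)\cdot\maj(w_0)$, observe that for $w_0$ every attacking pair is an inversion and every non-bottom cell is a descent, then compute the attacking-pair count and the product of all filling values separately. The paper organizes the attacking-pair count slightly differently (for each cell $u$ it counts the number of $v$ with $(u,v)$ attacking, rather than your row-by-row scheme), but this is just a different bookkeeping of the same sum; it records the intermediate decomposition $\maj(w_0)=q^{\frac{3k^2-5k}{2}+i-1}\cdot(\text{$z$-monomial})$ and $\inv(w_0)=q^{\frac{8k^3-15k^2+7k}{6}+1-i}$, confirming your expectation that the $i$-dependence cancels in the final $q$-exponent.
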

for the longest permutation $w_0$.
\begin{proof}
For simplicity, let $(D,f)=(\overline{\delta}_{k,i},\overline{f}_{k,i})$. We claim that
\begin{equation}\label{Eq: maj_D(w_0)} 
    \maj_{(D,f)}(w_0) = q^{\frac{3k^2-5k}{2} + i-1}\dfrac{z_i\prod_{j=1}^{k-1}z_{2k-j}^k}{z_{2k-1}\prod_{j=1}^{k-1}z_j^{2k-2j}}
\end{equation}
and 
\begin{equation}\label{Eq: inv_D(w_0)}
    \inv_{D}(w_0) = q^{\frac{8k^3-15k^2+7k}{6}+1-i},
\end{equation}
which completes the proof.

The first claim \eqref{Eq: maj_D(w_0)} is direct from definition of $\overline{f}^{\mathbb{z}}_{k,i}$ in \eqref{Eq: definition of z-deformed f_k,i} and by the fact that $\maj_{(D,f)}(w_0)=\prod_{c\in D}\overline{f}^{\mathbb{z}}_{k,i}(c)$. To see \eqref{Eq: inv_D(w_0)}, note that for given cell $u=(a,b)$, the number of pair $(u,v)$ in attacking position in $D$ is
\[
    \begin{cases}
       k-1 &\text{ if } 1<a\le k, 1\le b \le k\\
       k-b-1 &\text{ if } a=1, 1\le b<k\\
       2k-a-1 &\text{ if } k<a\le 2k-i, 1\le b\le 2k-a-1 \text{ or } ,2k-i<a<2k, 1\le b\le 2k-a,\\
       2k-a &\text{ if } k<a\le2k-i, b=k.\\
    \end{cases}
\]
Therefore, the total number of such attacking pairs, i.e., the total number of inversion pairs of $w_0$, is 
\[
    k(k-1)^2 + \sum_{1\le b <k}(k-b-1) + \sum_{k<a\le2k-i}(2k-a-1)^2 + \sum_{2k-i<a<2k}(2k-a-1)(2k-a) + \sum_{k<a\le 2k-i}(2k-a)  =  \frac{8k^3-15k^2+7k}{6}+1-i.
\]
\end{proof}

By the above lemma, the term $(\prod_{j\neq i}z_j)\stat_{(\overline{\delta}_{k,i},\overline{f}_{k,i}^{\textbf{z}})}(w_0)$ is independent of $i$, thus the left-hand side of Proposition \ref{prop: main key sf to shuffle z-deformed} becomes 
\begin{align*}
 q^{\frac{4k^3-3k^2-4k}{3}}\dfrac{z_k\prod_{j=1}^{k-1}z_{2k-j}^k}{z_{2k-1}\left(\prod_{j=1}^{k-1}z_j^{2k-2j-1}\right) }\sum_{i=1}^{k} \left(\prod_{j\neq i}\dfrac{1}{z_j-z_i}\right)\sum_{L\in \mathbf{OP}(\overline{\delta}_{k,i};M)}\frac{1}{\overline{\stat}_{(\overline{\delta}_{k,i},\overline{f}^{\mathbf{z}}_{k,i})}(L)}.
\end{align*}

Therefore, after the cancellation, Proposition \ref{prop: main key sf to shuffle z-deformed} is equivalent to the following: For $M\in \Mat^{\ge0}_{r\times(2k-1)} $ such that $|M|=k-1$, denoting $\alpha=\rsum(M)$, we have
\begin{equation}\label{eq: z deformed prop rephrased}
    \sum_{i=1}^{k} \left(\prod_{j\neq i}\dfrac{1}{z_j-z_i}\right)\sum_{L\in \mathbf{OP}(\overline{\delta}_{k,i};M)}\frac{1}{\overline{\stat}_{(\overline{\delta}_{k,i},\overline{f}^{\mathbf{z}}_{k,i})}(L)}=\left(\frac{q^{\sum_{i,j}\binom{M_{i,j}}{2}}}{q^{k(k-1)}\prod_{i=1}^{k-1}z_{2k-i}^{\alpha_i}}\right)\LB(M).
\end{equation}

Now we analyze \eqref{eq: z deformed prop rephrased}. Before that, we define several terminologies and prove preparatory lemmas (Lemma~\ref{lem: sf degree less k-1 vanishes} and \ref{lem: focus on the bottom k-1 rows}). 

For a diagram $D$, note that $\mathbf{OP}(D;M)$ makes sense if $M$ has $\ell$ columns where $\ell$ is the number of rows in $D$. For a matrix $M$ with $\ell'$ columns such that $\ell'<\ell$, we consider $\overline{M}$ obtained by appending $(\ell-\ell')$ many zero columns to $M$. From now on, abusing the notation, we refer to $\mathbf{OP}(D;\overline{M})$ as $\mathbf{OP}(D;M)$.

Let $D$ be a diagram and $E$ be a subdiagram of $D$. Given $L=(A_1,\dots,A_r)\in \mathbf{OP}(D;M)$, we define $L \vert_E$ as $(A_1 \cap E,\dots,A_r \cap E)$.

\begin{definition}\label{def: R_k and R_k bar}
    We define the diagram $\overline{R}_k$ to be a subdiagram of $\overline{\delta}_{k,i}$ obtained by restricting to the first $k$ rows and the diagram $R_k$ to be the subdiagram of $\overline{R}_k$ obtained by restricting to the first $(k-1)$ columns. In other words, $\overline{R}_k=[I_1,I_2,\dots,I_k]$ and $R_k=[I_1,I_2,\dots,I_{k-1}]$, where $I_j$ is the interval given by
    \[
        I_j = \begin{cases}
            [k] & \text{if } j<k\\
            [2,k] & \text{if } j=k.
        \end{cases}
    \]
    We define $(\overline{R}_{k},\overline{g}^{\mathbf{z}}_{k,i})$ and $(R_{k},g_{k,i}^{\mathbf{z}})$ to be corresponding sub-filled diagrams of $(\overline{\delta}_{k,i},\overline{f}^{\mathbf{z}}_{k,i})$.

\end{definition}
To describe the filling $\overline{g}^{\mathbf{z}}_{k,i}$, it is convenient to introduce the following operator.
\begin{definition}\label{def: operator eta}
For a function $P(z_1,\dots,z_{k-1})$ of variables $z_1,\dots,z_{k-1}$, let $\eta_{k,i}$ be the operator that shifts the indices of variables by
\begin{equation*}
    \eta_{k,i}\left(P(z_1,\dots,z_{k-1})\right)\coloneqq P(z_1,\dots,z_{i-1},z_{i+1},\dots,z_{k}).
\end{equation*}
Using the operator $\eta_{k,i}$, we can write the filling $\overline{g}^{\mathbf{z}}_{k,i}$ of $\overline{R}_k$ by
\[
    \overline{g}^{\mathbf{z}}_{k,i}(a,b)= \begin{cases}
        \eta_{k,i}(q\frac{z_{2k+1-a}}{z_b}) &\text{if } b<k \\
        q\frac{z_{2k+1-a}}{z_i} &\text{if } b=k.
    \end{cases}
\]
\end{definition}
For example, recall the filled diagram $(\overline{\delta}_{4,3},\overline{f}_{4,3}^\textbf{z})$ in Figure~\ref{Fig: z-deformed filled diagram}. Taking the sub-filled diagram by restricting to the first 4 rows, we obtain the filled diagram $(\overline{R}_{4},\overline{g}^{\mathbf{z}}_{4,3})$ (on the left in Figure~\ref{Fig: filled diagrams R_k, R_k bar}). Again by restricting to the first 3 columns, we obtain the filled diagram $({R}_{4},g^{\mathbf{z}}_{4,3})$ (on the right in Figure~\ref{Fig: filled diagrams R_k, R_k bar}).
\begin{figure}[h]
    \centering
    \begin{ytableau}
    q\dfrac{z_5}{z_1} & q\dfrac{z_5}{z_2} & q\dfrac{z_5}{z_4} & q\dfrac{z_5}{z_3}\\
    q\dfrac{z_6}{z_1} & q\dfrac{z_6}{z_2} & q\dfrac{z_6}{z_4} & q\dfrac{z_6}{z_3}\\
    q\dfrac{z_7}{z_1} & q\dfrac{z_7}{z_2} & q\dfrac{z_7}{z_4} & \\
    &&
    \end{ytableau}
    \qquad 
    \begin{ytableau}
    q\dfrac{z_5}{z_1} & q\dfrac{z_5}{z_2} & q\dfrac{z_5}{z_4}\\
    q\dfrac{z_6}{z_1} & q\dfrac{z_6}{z_2} & q\dfrac{z_6}{z_4}\\
    q\dfrac{z_7}{z_1} & q\dfrac{z_7}{z_2} & q\dfrac{z_7}{z_4}\\
    &&
    \end{ytableau}
    \caption{The filled diagrams $(\overline{R}_{4},\overline{g}^{\mathbf{z}}_{4,3})$ and $({R}_{4},{g}_{4,3}^{\mathbf{z}})$.}
    \label{Fig: filled diagrams R_k, R_k bar}
\end{figure}

\begin{definition}\label{def: rk rkbar def}
Let $D$ be a diagram which is $\overline{R}_k$ or $R_k$ and consider $L=(A_1,\dots,A_r)\in \mathbf{OP}(D;M)$, where $ M\in\Mat^{\ge0}_{r\times(k-1)}$. We define a vector $\gamma(D;L)$ of length $k-1$ , whose $i$-th component is the number of cells $c\in\bigcup_{1\le j \le r}A_j$ that belong to the $i$-th row of $D$ and the cell $c'$ right above $c$ does not belong to $\overline{\Des}_D(L)$. In other words, $c\in A_j$ and $c'\in A_{j'}$ for $j \leq j'$. It is worth noting that there is no cell $c\in\bigcup_{1\le j \le r}A_j$ that is in the $k$-th row of $D$ as the matrix $M$ has $(k-1)$ columns. We say that $L$ is \emph{reduced} with respect to $D$ if $\gamma(D;L)=0$. The set of reduced elements in $\mathbf{OP}(D;M)$ is denoted by $\mathbf{OP}^{\mathrm{red}}(D;M)$. 

For instance, consider the diagram $D=\overline{R}_4$ and $L$ and $L'$ depicted  in Figure~\ref{Fig: A reduced tuple and a non-reduced tuple}. In this example, $L$ is reduced, while $L'$ is not reduced since $\gamma(D;L')=(1,1,0)$ (the cells contributing to $\gamma(D;L')$ are shaded in gray).
\end{definition}

The following two lemmas will be used to transform \eqref{eq: z deformed prop rephrased} to \eqref{eq: z deformed prop rephrased2}, which only concerns the reduced elements in $\mathbf{OP}(D;M)$.

\begin{figure}[h]
    \centering
    $L=$\begin{ytableau}
    $ $ &  &  &  \\
    $ $ & 2 & 1 & 2 \\
    $ $ & 3 &  & 4 \\
    1 &  & 2
    \end{ytableau}
    \qquad 
    $L'=\begin{ytableau}
    $ $ &  &  &  \\
    2 & {3} &  & 2 \\
    $ $ & *(gray!30)2 & 1 & \\
    1 &  & *(gray!30)1
    \end{ytableau}$
    \caption{On the left shows $L$ which is reduced, while $L'$ on the right is not reduced.}
    \label{Fig: A reduced tuple and a non-reduced tuple}
\end{figure}

\begin{lem}\label{lem: sf degree less k-1 vanishes}
Let $M\in\Mat^{\ge0}_{r\times(k-1)}$ and $\gamma$ be a vector of nonnegative integers of length $k-1$. Furthermore, we assume that $|M|-|\gamma|<k-1$. Then we have 
\begin{equation*}
    \sum_{i=1}^{k}\left(\prod_{j\neq i}\frac{1}{z_j-z_i}\right)\sum_{\substack{L\in \mathbf{OP}(\overline{R}_k;M)\\ \gamma(\overline{R}_k;L)=\gamma}}\frac{1}{\overline{\stat}_{(\overline{R}_k,\overline{g}^{\mathbf{z}}_{k,i})}(L)}=0.
\end{equation*}
\end{lem}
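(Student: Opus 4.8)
The statement is a vanishing identity of the same flavor as Theorem~\ref{thm: main theorem} (a): a sum of the form $\sum_i \left(\prod_{j\ne i}\frac{1}{z_j - z_i}\right) g(z_i)$ vanishes when $g$ is a polynomial of low degree. So the plan is to fix $M$ and $\gamma$ with $|M| - |\gamma| < k-1$ and show that, after summing over all $L \in \mathbf{OP}(\overline{R}_k; M)$ with $\gamma(\overline{R}_k; L) = \gamma$, the quantity

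This is what I want to establish:
\[
    G(z_i) \coloneqq \sum_{\substack{L\in \mathbf{OP}(\overline{R}_k;M)\\ \gamma(\overline{R}_k;L)=\gamma}}\frac{1}{\overline{\stat}_{(\overline{R}_k,\overline{g}^{\mathbf{z}}_{k,i})}(L)}
\]
is, as a function of the index $i$, the evaluation at a single variable of a \emph{polynomial of degree at most $|M| - |\gamma|$ in that variable}, after clearing an $i$-independent denominator; then Lemma~\ref{Lemma: sum prod 1/(x_i-x_j)} finishes the job. The key point is that $\overline{\stat}_{(\overline{R}_k,\overline{g}^{\mathbf{z}}_{k,i})}(L)$ depends on $i$ only through the operator $\eta_{k,i}$ applied to the filling, and through the special column $b=k$ whose entries are $q z_{2k+1-a}/z_i$.

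\textbf{Key steps.} First I would unwind $\overline{\stat}_{(\overline{R}_k,\overline{g}^{\mathbf{z}}_{k,i})}(L) = \overline{\inv}_{\overline{R}_k}(L)\,\overline{\maj}_{(\overline{R}_k,\overline{g}^{\mathbf{z}}_{k,i})}(L)$ using Definition~\ref{def: combinatorial description of stat}. The factor $\overline{\inv}_{\overline{R}_k}(L)$ is a power of $q$ independent of $i$, so it contributes only to the $i$-independent prefactor. The factor $\overline{\maj}_{(\overline{R}_k,\overline{g}^{\mathbf{z}}_{k,i})}(L) = \prod_{u\in \overline{\Des}_{\overline{R}_k}(L)}\overline{g}^{\mathbf{z}}_{k,i}(u)$ is a product of entries of the filling; by Definition~\ref{def: operator eta}, each such entry is either $\eta_{k,i}(q z_{2k+1-a}/z_b)$ for some $b<k$ or $q z_{2k+1-a}/z_i$ for $b=k$. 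Crucially, $z_{2k+1-a}$ has index $>k$ (since $a\le k$), so it is \emph{untouched} by $\eta_{k,i}$, and its reciprocal can also be pulled into the $i$-independent prefactor (after checking that the multiset of row-indices $a$ occurring in $\overline{\Des}$ is the same across the sum — which it is not pointwise, but the total count is controlled). The genuinely $i$-dependent part of $1/\overline{\maj}$ is therefore a monomial in $z_1,\dots,z_{k}$ obtained from $\prod_{u} z_{\text{col}(u)}$ (or $z_i$ when $u$ is in column $k$), with the indices shuffled by $\eta_{k,i}$.

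The second step is the degree count. The number of cells in $\overline{\Des}_{\overline{R}_k}(L)$ that lie in a non-bottom position and belong to some $A_\ell$ with the cell below in a strictly-earlier block — this is exactly the complementary-descent structure — is bounded by $|M|$ minus the number of cells counted by $\gamma$; more precisely the cells of $\bigcup_j A_j$ in rows $1,\dots,k-1$ that do \emph{not} contribute to $\gamma$ are the ones that create a factor of $z_{\text{something}}$ in $1/\overline{\maj}$, and there are $|M| - |\gamma|$ of them. Tracking where these indices land under $\eta_{k,i}$: an index $b < i$ stays $b$, an index $b \ge i$ becomes $b+1$, and a column-$k$ cell gives $z_i$ itself. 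In every case, $1/\overline{g}^{\mathbf z}_{k,i}(u)$ contributes a factor that is a \emph{polynomial of degree $\le 1$ in $z_i$} once we multiply through by the $i$-independent Vandermonde-type denominator $\prod_{b\ne i} z_b$ appearing in $\prod_{j\ne i}\frac{1}{z_j-z_i}$; hence $\prod_{b\ne i}z_b \cdot G(z_i)$ is a polynomial of degree $\le |M| - |\gamma| < k-1$ in $z_i$. Then $\sum_{i=1}^k \frac{1}{\prod_{j\ne i}(z_j-z_i)}\cdot\frac{1}{\prod_{b\ne i}z_b}\cdot\big(\prod_{b\ne i}z_b\cdot G(z_i)\big)$ — after absorbing the $\frac{1}{\prod_{b\ne i}z_b}$ into the polynomial (which raises degree by at most... no: better to keep it as is and note $z_j - z_i$ and $z_j$ give the right homogeneity) — reduces to $\sum_i \frac{\widetilde{G}(z_i)}{\prod_{j\ne i}(z_j - z_i)}$ with $\deg \widetilde{G} < k-1$, and Lemma~\ref{Lemma: sum prod 1/(x_i-x_j)} gives $0$.

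\textbf{Main obstacle.} The delicate point is showing that the $i$-dependence of $1/\overline{\stat}_{(\overline{R}_k,\overline{g}^{\mathbf z}_{k,i})}(L)$, \emph{summed over all $L$ with the prescribed $M$ and $\gamma$}, assembles into a single polynomial in $z_i$ of the claimed degree — rather than merely a rational function or a sum of evaluations at $z_i$ of different polynomials. The subtlety is that different $L$'s have different $\overline{\Des}$-sets, so the row-indices $a$ (and hence the $z_{2k+1-a}$ factors that I want to be $i$-independent prefactors) vary with $L$. The resolution is that, for fixed $M$ and fixed $\gamma$, the multiset of pairs (row $a$, the "type" of the descent cell, i.e. whether its column is $<k$ or $=k$) is actually determined: $\gamma$ records exactly the non-descents among the potentially-descending cells of $\bigcup_j A_j$, so the total count $|M|-|\gamma|$ and their row-distribution is rigid enough. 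I would need Lemma~\ref{lem:enperp F coefficients correspondence}-style bookkeeping, or more likely a direct combinatorial argument analyzing $\overline{\Des}_{\overline{R}_k}(L)$ column by column, to make this rigid. Once the row-multiset is pinned down, the $z_{2k+1-a}$ factors and the $\overline{\inv}$ power of $q$ factor out uniformly, the operator $\eta_{k,i}$ acts in a controlled way on the remaining denominator-monomial, and the degree bound and Lemma~\ref{Lemma: sum prod 1/(x_i-x_j)} close the argument exactly as in the proof of Theorem~\ref{thm: main theorem} (a).
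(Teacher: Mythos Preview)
Your overall strategy---bound the degree of the $i$-dependent part and apply a vanishing lemma---is exactly right, and your observation that the row-multiset of cells in $\overline{\Des}_{\overline{R}_k}(L)$ is determined by $M$ and $\gamma$ (so that the $z_{2k-j}$ factors assemble into an $i$-independent denominator $\prod_{j=1}^{k-1} z_{2k-j}^{\alpha_j-\gamma_j}$ with $\alpha=\rsum(M)$) is the correct first step.

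The gap is in how you finish. You try to force the numerator to be the evaluation at $z_i$ of a \emph{single} polynomial $\widetilde{G}(z)$ in one variable, and you propose to pin this down by arguing that ``the column-type of each descent cell is determined by $M$ and $\gamma$.'' That claim is false: different $L$'s with the same $M$ and $\gamma$ can have their $\overline{\Des}$ cells in different columns, so the $z_\ell$-monomial (with $\ell\in\{1,\dots,k\}$) genuinely varies with $L$. What you get after clearing the $z_{2k-j}$ denominator is a polynomial $P_i(z_1,\dots,z_k)$ of total degree $|M|-|\gamma|<k-1$ in the variables $z_1,\dots,z_k$, but $P_i$ is \emph{not} an evaluation of one fixed polynomial at $z_i$; the index $i$ permutes the roles of the variables through $\eta_{k,i}$. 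So Lemma~\ref{Lemma: sum prod 1/(x_i-x_j)} does not apply directly.

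The paper's fix is much simpler than the bookkeeping you propose: use the more general Lemma~\ref{Lemma: degree less than k-1 vanishes} instead. That lemma allows the $g_i$ to be polynomials in all of $z_1,\dots,z_k$, of degree $<k-1$, provided they satisfy the symmetry $g_i|_{z_i=z_j}=g_j|_{z_i=z_j}$. Here that symmetry is immediate: when $z_i=z_j$ the fillings $\overline{g}^{\mathbf z}_{k,i}$ and $\overline{g}^{\mathbf z}_{k,j}$ literally coincide (compare their definitions in Definition~\ref{def: operator eta}), hence $P_i=P_j$ on that locus. This one observation replaces your entire ``main obstacle'' paragraph.
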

\begin{proof}
    Let $\alpha=\rsum(M)$ and $L\in \mathbf{OP}(\overline{R}_k;M)$ such that $\gamma(\overline{R}_k;L)=\gamma$.  For each $1\leq j\leq k-1$, there are exactly $(\alpha_j-\gamma_j)$ many cells in the $(j+1)$-th of $\overline{R}_k$ that are in $\overline{\Des}_{\overline{R}_k}(L)$. Note that if $c$ is the cell in the $(j+1)$-th row of $\overline{R}_k$, we have $\overline{g}^{\mathbf{z}}_{k,i}(c)=q\frac{z_{2k-j}}{z_{\ell}}$ for some $1\leq \ell\leq k$. Therefore we can write 
    \begin{equation}\label{Eq: stat bar = P/z}
        \sum_{\substack{L\in \mathbf{OP}(\overline{R}_k;M) \\ \gamma(\overline{R}_k;L)=\gamma}}\frac{1}{\overline{\stat}_{(\overline{R}_k,\overline{g}_{k,i})}(L)}=\frac{P_i(z_1,\dots,z_k)}{\prod_{j=1}^{k-1}z_{2k-j}^{\alpha_j-\gamma_j}}
    \end{equation}
    for some polynomial $P_i(z_1,\dots,z_k)$ of degree $\sum(\alpha_i-\gamma_i)=|M|-|\gamma|<k-1$. As we have $P_i=P_j$ when $z_i=z_j$ due to the description of fillings $\overline{g}^{\mathbf{z}}_{k,i}$ and $\overline{g}^{\mathbf{z}}_{k,j}$, Lemma~\ref{Lemma: degree less than k-1 vanishes} finishes the proof.
\end{proof}

\begin{lem}\label{lem: focus on the bottom k-1 rows}
    Let $M\in \Mat^{\ge0}_{r\times(2k-1)}$ and denote $\rsum(M)=\alpha$. If there exists $\ell\leq k-1$ such that 
    $\alpha_{\ell}=0$ and $\sum_{i=1}^{\ell-1}\alpha_i<k-1$, we have
  \begin{equation}\label{eq: lem 6.13 eq}
    \sum_{i=1}^{k}\left(\prod_{j\neq i}\frac{1}{z_j-z_i}\right)\sum_{\substack{L\in \mathbf{OP}(\overline{\delta}_{k,i};M)}}\frac{1}{\overline{\stat}_{(\overline{\delta}_{k,i},\overline{f}^{\mathbf{z}}_{k,i})}(L)}=0.
\end{equation}  
\end{lem}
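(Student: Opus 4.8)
The plan is to use the empty row $\ell$ as a cut that decouples $\overline{\delta}_{k,i}$ into a bottom piece, the rows $1,\dots,\ell$, and a top piece, the rows $>\ell$; then to reduce the bottom piece to the situation of Lemma~\ref{lem: sf degree less k-1 vanishes}, and to dispose of the top piece using the column exchange rule. The vanishing in \eqref{eq: lem 6.13 eq} will then follow from the elementary identity of Lemma~\ref{Lemma: degree less than k-1 vanishes}.

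First I would establish the factorization. Fix $L=(A_1,\dots,A_r)\in\mathbf{OP}(\overline{\delta}_{k,i};M)$ and set $A_0=\overline{\delta}_{k,i}\setminus\bigcup_{j\ge1}A_j$. Since $\rsum(M)_\ell=\alpha_\ell=0$, every cell of the $\ell$-th row lies in $A_0$. By Definition~\ref{def: combinatorial description of stat}, a pair contributing to $\overline{\Inv}_{\overline{\delta}_{k,i}}(L)$ only involves cells in one row or in two consecutive rows, with the upper cell in some $A_m$ and the lower cell in some $A_{m'}$ with $m<m'$; likewise a cell $(a,b)$ contributing to $\overline{\Des}_{\overline{\delta}_{k,i}}(L)$ requires $(a,b)\in A_m$ and $(a-1,b)\in A_{m'}$ with $m<m'$. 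Because the $\ell$-th row is entirely in $A_0$, no attacking pair and no descent cell can involve both a cell in a row $\le\ell$ and a cell in a row $>\ell$. Consequently $\overline{\inv}_{\overline{\delta}_{k,i}}$ and $\overline{\maj}_{(\overline{\delta}_{k,i},\overline{f}^{\mathbf z}_{k,i})}$ each split as a product of a contribution from the rows $\le\ell$ and a contribution from the rows $>\ell$. Since $\ell\le k-1$, the sub-filled diagram of $(\overline{\delta}_{k,i},\overline{f}^{\mathbf z}_{k,i})$ on rows $1,\dots,\ell$ is exactly the bottom $\ell$ rows of $(\overline{R}_k,\overline{g}^{\mathbf z}_{k,i})$, which I denote $(\overline{R}_k^{\le\ell},\overline{g}^{\mathbf z}_{k,i})$; writing $M^{\downarrow}$ and $M^{\uparrow}$ for the restrictions of $M$ to its first $\ell$ columns and to the remaining columns, the map $L\mapsto(L|_{\mathrm{rows}\le\ell},L|_{\mathrm{rows}>\ell})$ is a bijection, and hence
\[
\sum_{L\in\mathbf{OP}(\overline{\delta}_{k,i};M)}\frac{1}{\overline{\stat}_{(\overline{\delta}_{k,i},\overline{f}^{\mathbf z}_{k,i})}(L)}
= S^{\downarrow}_i\cdot S^{\uparrow}_i,
\qquad
S^{\downarrow}_i:=\sum_{L\in\mathbf{OP}(\overline{R}_k^{\le\ell};M^{\downarrow})}\frac{1}{\overline{\stat}_{(\overline{R}_k^{\le\ell},\overline{g}^{\mathbf z}_{k,i})}(L)},
\]
with $S^{\uparrow}_i$ the analogous sum over the rows $>\ell$.

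Next I would analyze $S^{\downarrow}_i$ exactly as in the proof of Lemma~\ref{lem: sf degree less k-1 vanishes}, but carried out on the smaller diagram $\overline{R}_k^{\le\ell}$. In $\overline{R}_k^{\le\ell}$ the value $\overline{g}^{\mathbf z}_{k,i}(a,b)$ has numerator $z_{2k+1-a}$ with $2k+1-a>k$ and denominator one of $z_1,\dots,z_k$, so clearing denominators gives
\[
S^{\downarrow}_i=\frac{P_i(z_1,\dots,z_{2k-1})}{\prod_{j=1}^{\ell-1}z_{2k-j}^{\,\alpha_j}},
\]
where the degree of $P_i$ in the variables $z_1,\dots,z_k$ is at most the largest number of cells of $\overline{\Des}$ occurring in the rows $2,\dots,\ell$; since each such cell lies directly above a chosen cell of a row $<\ell$, this degree is at most $\sum_{j=1}^{\ell-1}\alpha_j$, which is $<k-1$ by hypothesis. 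Exactly as in Lemma~\ref{lem: sf degree less k-1 vanishes}, the fillings $\overline{g}^{\mathbf z}_{k,i}$ and $\overline{g}^{\mathbf z}_{k,j}$ agree on $\overline{R}_k^{\le\ell}$ after identifying $z_i=z_j$, and hence $P_i|_{z_i=z_j}=P_j|_{z_i=z_j}$.

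Finally I would remove the top factor $S^{\uparrow}_i$ from the alternating sum. The idea is to apply the column exchange rule to $(\overline{\delta}_{k,i},\overline{f}^{\mathbf z}_{k,i})$, invoking the refined identity \eqref{eq: refined identity} so that each sum $\sum_L\overline{\stat}^{-1}$ is preserved, in order to bring the rows above row $\ell$ into a form that can be pulled out of the alternating sum over $i$; concretely I expect $S^{\uparrow}_i$ to reduce to a rational function $S^{\uparrow}$ that is independent of $i$. Granting this, and setting $D=\prod_{j=1}^{\ell-1}z_{2k-j}^{\alpha_j}$, the left-hand side of \eqref{eq: lem 6.13 eq} becomes $\dfrac{S^{\uparrow}}{D}\sum_{i=1}^{k}\Bigl(\prod_{j\ne i}\dfrac{1}{z_j-z_i}\Bigr)P_i$, which vanishes by Lemma~\ref{Lemma: degree less than k-1 vanishes} since $\deg_{z_1,\dots,z_k}P_i<k-1$ and $P_i|_{z_i=z_j}=P_j|_{z_i=z_j}$. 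I expect this last reduction to be the main obstacle: the top part of $\overline{\delta}_{k,i}$ is genuinely non-rectangular and staircase-shaped, so one cannot merely permute columns, and certifying that the rows above the empty row contribute something independent of $i$ requires an explicit analysis of how the exchanges $S_1\cdots S_{i-1}$ and the cycling operator move the deformation variables $z_1,\dots,z_{2k-1}$, combined with a further use of the bottom-heavy degree bookkeeping above.
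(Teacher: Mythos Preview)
Your overall plan coincides with the paper's: factor across the empty row $\ell$, then kill the bottom piece by the degree argument of Lemma~\ref{lem: sf degree less k-1 vanishes} (or Lemma~\ref{Lemma: degree less than k-1 vanishes}) after pulling out the top piece as an $i$-independent common factor.

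The gap is exactly where you flagged it: you have not shown that $S^{\uparrow}_i$ is independent of $i$, and your closing paragraph only speculates. This is not as hard as you fear, and in particular it does not require any ``bottom-heavy degree bookkeeping'' on the staircase. The paper's device is the following. Let $(D^{(2)}_i,f^{(2)}_i)$ be the sub-filled diagram on rows $>\ell$. Unwinding Definition~\ref{def: deforming the standard filled diagram} on those rows (i.e.\ applying $S_i^{-1}\cdots S_{k-1}^{-1}$, which is legitimate since the local condition~\eqref{eq: column exchange condition} is checked as in \cite[Lemma~5.2]{KLO22}), one lands on a filled diagram that no longer depends on $i$; hence $(D^{(2)}_i,f^{(2)}_i)\cong(D^{(2)}_j,f^{(2)}_j)$ for all $i,j$. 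Now two consequences of this $\cong$ are needed. First, the refined identity~\eqref{eq: refined identity} gives
\[
\sum_{L\in\mathbf{OP}(D^{(2)}_i;M^{\uparrow})}\stat_{(D^{(2)}_i,f^{(2)}_i)}\bigl(\sigma(D^{(2)}_i;L)^{-1}\bigr)
=\sum_{L\in\mathbf{OP}(D^{(2)}_j;M^{\uparrow})}\stat_{(D^{(2)}_j,f^{(2)}_j)}\bigl(\sigma(D^{(2)}_j;L)^{-1}\bigr).
\]
Second, $\widetilde H_{(D^{(2)}_i,f^{(2)}_i)}=\widetilde H_{(D^{(2)}_j,f^{(2)}_j)}$, and extracting the coefficient of $F_{(1^{|D^{(2)}_i|})}$ yields $\stat_{(D^{(2)}_i,f^{(2)}_i)}(w_0)=\stat_{(D^{(2)}_j,f^{(2)}_j)}(w_0)$. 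Since $\overline{\stat}(L)=\stat(w_0)/\stat(\sigma(L)^{-1})$ (Definition~\ref{def: combinatorial description of stat}), dividing the first identity by this common $\stat(w_0)$ shows $S^{\uparrow}_i=S^{\uparrow}_j$. With this in hand, your last displayed line goes through, and you may either invoke Lemma~\ref{Lemma: degree less than k-1 vanishes} on the $P_i$ as you wrote, or equivalently quote Lemma~\ref{lem: sf degree less k-1 vanishes} directly on $(\overline{R}_k,\overline{g}^{\mathbf z}_{k,i})$ with $M^{\downarrow}$, since adding the empty rows $\ell+1,\dots,k$ does not alter $\mathbf{OP}$ nor $\overline{\stat}$.
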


\begin{proof} 
Let $(D^{(1)},f^{(1)}_i)$ be the sub-filled diagram of $(\overline{\delta}_{k,i},\overline{f}^{\mathbf{z}}_{k,i})$ obtained by restricting to the first $\ell$ rows and $(D^{(2)}_i,f^{(2)}_i)$ be the remaining sub-filled diagram from the $(\ell+1)$-th row and above. Note that the shape $D^{(1)}$ is independent of $i$. For $L=(A_1,\dots,A_r)\in \mathbf{OP}(\overline{\delta}_{k,i};M)$, there is no cell in the $\ell$-th row that is in $\cup_{t=1}^{r}A_t$ as $\alpha_{\ell}=0$. Therefore we have 
\begin{align*}
    \overline{\Inv}_{\overline{\delta}_{k,i}}(L)=\overline{\Inv}_{D^{(1)}}(L\vert_{D^{(1)}}) \cup \overline{\Inv}_{D^{(2)}_i}(L\vert_{D^{(2)}_i}) \qquad \overline{\Des}_{\overline{\delta}_{k,i}}(L)=\overline{\Des}_{D^{(1)}}(L\vert_{D^{(1)}}) \cup \overline{\Des}_{D^{(2)}_i}(L\vert_{D^{(2)}_i}),
\end{align*}
which gives $\overline{\stat}_{(\overline{\delta}_{k,i},\overline{f}^{\mathbf{z}}_{k,i})}(L)=\overline{\stat}_{(D^{(1)},f^{(1)}_i)}(L\vert_{D^{(1)}})\overline{\stat}_{(D^{(2)}_i,f^{(2)}_i)}(L\vert_{D^{(2)}_i})$.
Note that we have $\type(D^{(1)};L\vert_{D^{(1)}})=M^{(1)}$, and $\type(D^{(2)}_i;L\vert_{D^{(2)}_i})=M^{(2)}$. Here, $M^{(1)}$ represents the sub-matrix of $M$ obtained by restricting it to the first $\ell$ columns. Similarly, $M^{(2)}$ is the sub-matrix of $M$ obtained by restricting it from the $(\ell+1)$-th column to the $(2k-1)$-th column. We conclude
\begin{align*}    \sum_{L\in \mathbf{OP}(\overline{\delta}_{k,i};M)}\frac{1}{\overline{\stat}_{(\overline{\delta}_{k,i},\overline{f}^{\mathbf{z}}_{k,i})}(L)}
&=\sum_{L^{(1)}\in \mathbf{OP}(D^{(1)};M^{(1)})}\frac{1}{ \overline{\stat}_{(D^{(1)},f^{(1)}_i)}(L^{(1)})}\sum_{L^{(2)}\in \mathbf{OP}(D^{(2)}_i;M^{(2)})}\frac{1}{ \overline{\stat}_{(D^{(2)}_i,f^{(2)}_i)}(L^{(2)})}.
\end{align*}

For $1\leq i,j\leq k$, we have 
\begin{equation*}
    S_{i}^{-1}\dots S_{k-1}^{-1}(D_{i}^{(2)},f_{i}^{(2)})=S_{j}^{-1}\dots S_{k-1}^{-1}(D_{j}^{(2)},f_{j}^{(2)})
\end{equation*}
which gives $(D_{i}^{(2)},f_{i}^{(2)})\cong (D_{j}^{(2)},f_{j}^{(2)})$. From the identity $\widetilde{H}_{(D_{i}^{(2)},f_{i}^{(2)})}[X]=\widetilde{H}_{(D_{j}^{(2)},f_{j}^{(2)})}[X]$, taking the coefficient of $F_{(1,\dots,1)}$ gives
\begin{equation}\label{eq: temp1}
    \stat_{(D_{i}^{(2)},f_{i}^{(2)})}(w_0)=\stat_{(D_{j}^{(2)},f_{j}^{(2)})}(w_0)
\end{equation}
where $w_0$ is the longest permutation of the size $|D^{(2)}_i|$. We also have 
\begin{equation}\label{eq: temp2}
    \sum_{L\in\mathbf{OP}(D^{(2)}_i;M^{(2)})}\stat_{(D_{i}^{(2)},f_{i}^{(2)})}(\sigma(D^{(2)}_i;L)^{-1})=\sum_{L\in\mathbf{OP}(D^{(2)}_j;M^{(2)})}\stat_{(D_{j}^{(2)},f_{j}^{(2)})}(\sigma(D^{(2)}_j;L)^{-1})
\end{equation}
by \eqref{eq: refined identity}. 
% By \eqref{eq: temp1} and \eqref{eq: temp2}, we deduce the term $\sum_{L^{(2)}\in \mathbf{OP}(D^{(2)}_i;M^{(2)})}\frac{1}{ \overline{\stat}_{(D^{(2)}_i,f^{(2)}_i)}(L^{(2)})}$ is independent of $i$  and denote it with $X$.

% We conclude
%     \begin{align*}    
% \sum_{i=1}^{k}\left(\prod_{j\neq i}\frac{1}{z_j-z_i}\right)\sum_{\substack{L\in \mathbf{OP}(\overline{\delta}_{k,i};M)}}\frac{1}{\overline{\stat}_{(\overline{\delta}_{k,i},\overline{f}^{\mathbf{z}}_{k,i})}(L)}&=X\sum_{i=1}^{k}\left(\prod_{j\neq i}\frac{1}{z_j-z_i}\right)\sum_{L^{(1)}\in \mathbf{OP}(D^{(1)};M^{(1)})}\frac{1}{ \overline{\stat}_{(D^{(1)},f^{(1)}_i)}(L^{(1)})}\\&=X\sum_{i=1}^{k}\left(\prod_{j\neq i}\frac{1}{z_j-z_i}\right)\sum_{L^{(1)}\in \mathbf{OP}(\overline{R}_k;M^{(1)})}\frac{1}{ \overline{\stat}_{(\overline{R}_k,\overline{g}_{k,i})}(L^{(1)})}.
% \end{align*}   
% The right-hand side vanished, because since $|M^{(1)}|=\sum_{i=1}^{\ell}\alpha_i<k-1$, Lemma~\ref{lem: sf degree less k-1 vanishes} implies 
% \[
%     \sum_{i=1}^{k}\left(\prod_{j\neq i}\frac{1}{z_j-z_i}\right)\sum_{L^{(1)}\in \mathbf{OP}(\overline{R}_k;M^{(1)})}\frac{1}{ \overline{\stat}_{(\overline{R}_k,\overline{g}_{k,i})}(L^{(1)})} = 0.
% \]

By \eqref{eq: temp1} and \eqref{eq: temp2}, we deduce the term $\sum_{L^{(2)}\in \mathbf{OP}(D^{(2)}_i;M^{(2)})}\frac{1}{ \overline{\stat}_{(D^{(2)}_i,f^{(2)}_i)}(L^{(2)})}$ is independent of $i$, i.e. for $1\le i \le k$, we have
\[
    \sum_{L^{(2)}\in \mathbf{OP}(D^{(2)}_1;M^{(2)})}\frac{1}{ \overline{\stat}_{(D^{(2)}_1,f^{(2)}_1)}(L^{(2)})} = \sum_{L^{(2)}\in \mathbf{OP}(D^{(2)}_i;M^{(2)})}\frac{1}{ \overline{\stat}_{(D^{(2)}_i,f^{(2)}_i)}(L^{(2)})}.
\]
Therefore, we conclude that the left-hand side of \eqref{eq: lem 6.13 eq} is equal to 
\begin{align*}    
\sum_{L^{(2)}\in \mathbf{OP}(D^{(2)}_1;M^{(2)})}\frac{1}{ \overline{\stat}_{(D^{(2)}_1,f^{(2)}_1)}(L^{(2)})}\sum_{i=1}^{k}\left(\prod_{j\neq i}\frac{1}{z_j-z_i}\right)\sum_{L^{(1)}\in \mathbf{OP}(D^{(1)};M^{(1)})}\frac{1}{ \overline{\stat}_{(D^{(1)},f^{(1)}_i)}(L^{(1)})}.
\end{align*}
Finally, we have
\[
    \sum_{i=1}^{k}\left(\prod_{j\neq i}\frac{1}{z_j-z_i}\right)\sum_{L^{(1)}\in \mathbf{OP}(D^{(1)};M^{(1)})}\frac{1}{ \overline{\stat}_{(D^{(1)},f^{(1)}_i)}(L^{(1)})}= \sum_{i=1}^{k}\left(\prod_{j\neq i}\frac{1}{z_j-z_i}\right)\sum_{L^{(1)}\in \mathbf{OP}(\overline{R}_k;M^{(1)})}\frac{1}{ \overline{\stat}_{(\overline{R}_k,\overline{g}_{k,i})}(L^{(1)})} = 0,
\]
where first equality is obvious, and the last equality follows from Lemma~\ref{lem: sf degree less k-1 vanishes} since $|M^{(1)}|=\sum_{i=1}^{\ell}\alpha_i<k-1$.
\end{proof}

For $M\in \Mat^{\ge0}_{r\times(2k-1)}$ such that $|M|=k-1$, let $\alpha=\rsum(M)$. If there exists $j>k-1$ such that $\alpha_j>0$, there is $\ell\leq k-1$ with $\alpha_{\ell}=0$ since $\sum_{i=1}^{2k-1}\alpha_i=k-1$. In that case the left-hand side of \eqref{eq: z deformed prop rephrased} vanishes by Lemma~\ref{lem: focus on the bottom k-1 rows} and the right-hand side vanishes by  Lemma~\ref{lem: basic property of lbm}. Therefore we may only consider $M$ such that the $j$-th column is a zero column for all $j>k-1$. For such $M$,  we trivially have 
\begin{equation*}
\overline{\stat}_{(\overline{\delta}_{k,i},\overline{f}_{k,i}^{\mathbf{z}})}(L)=\overline{\stat}_{(\overline{R}_{k},\overline{g}_{k,i}^{\mathbf{z}})}(L\vert_{\overline{R}_{k}})
\end{equation*}
for $L\in \mathbf{OP}(\overline{\delta}_{k,i};M)$. Therefore, to show \eqref{eq: z deformed prop rephrased}, it remains to show the following:  For $M\in \Mat^{\ge0}_{r\times(k-1)}$ such that $|M|=k-1$, denoting $\alpha=\rsum(M)$, we have 
\begin{equation}\label{eq: z deformed prop rephrased11}
    \sum_{i=1}^{k} \left(\prod_{j\neq i}\dfrac{1}{z_j-z_i}\right)\sum_{L\in \mathbf{OP}(\overline{R}_{k};M)}\frac{1}{\overline{\stat}_{(\overline{R}_{k},\overline{g}^{\mathbf{z}}_{k,i})}(L)}=\left(\frac{q^{\sum_{i,j}\binom{M_{i,j}}{2}}}{q^{k(k-1)}\prod_{i=1}^{k-1}z_{2k-i}^{\alpha_i}}\right)\LB(M).
\end{equation}

Applying Lemma~\ref{lem: sf degree less k-1 vanishes} to the left-hand side, \eqref{eq: z deformed prop rephrased11} becomes

\begin{equation}\label{eq: z deformed prop rephrased2}
    \sum_{i=1}^{k} \left(\prod_{j\neq i}\dfrac{1}{z_j-z_i}\right)\sum_{L\in \mathbf{OP}^{\mathrm{red}}(\overline{R}_{k};M)}\frac{1}{\overline{\stat}_{(\overline{R}_{k},\overline{g}^{\mathbf{z}}_{k,i})}(L)}=\left(\frac{q^{\sum_{i,j}\binom{M_{i,j}}{2}}}{q^{k(k-1)}\prod_{i=1}^{k-1}z_{2k-i}^{\alpha_i}}\right)\LB(M).
\end{equation}

To prove \eqref{eq: z deformed prop rephrased2}, we first replace $\overline{\stat}_{(\overline{R}_k,\overline{g}^{\mathbf{z}}_{k,i})}(L)$ in a more simpler term (Lemma~\ref{lem: rectangle stat is simple}) exploiting the following property: for each cell $v$ (other than cells in the top row) of the diagram $\overline{R}_k$ there are precisely $(k-1)$ cells $u$ such that $(u,v)$ is an attacking pair.

\begin{definition}
For a diagram $D$ and $L=(A_1,\dots,A_r)\in \mathbf{OP}(D;M) $ for some matrix $M$, let $N$ be the total number of attacking pairs $(u,v)$ such that $u\in A_j$ and $v\in A_i$ for $1 \leq i\leq j$. We define the partial inversion $\pinv_{D}(L)\coloneqq q^{N}$. Then we define $c(D;L)$ to be a vector whose $i$-th entry is the number of cells in the $i$-th column of $D$ that are in $\cup_{t=1}^{r}A_t$. For example, consider $L$ and $L'$ in Figure \ref{Fig: A reduced tuple and a non-reduced tuple}. We have $c(\overline{R}_4;L)=(1,2,2,2)$ and $c(\overline{R}_4;L')=(2,2,2,1)$. 
\end{definition}

\begin{lem}\label{lem: rectangle stat is simple}
 Let $M \in \Mat^{\ge0}_{r\times(k-1)}$ and for $L\in \mathbf{OP}^{\mathrm{red}}(\overline{R}_k;M)$, denote $\alpha=\rsum(M)$ and $c=c(\overline{R}_k;L)$. We have
\begin{equation*}
    \frac{1}{\overline{\stat}_{(\overline{R}_k,\overline{g}^{\mathbf{z}}_{k,i})}(L)}=\frac{z_i^{c_k}\eta_{k,i}(\prod_{j=1}^{k-1}z_j^{c_j})}{q^{k |M|}\prod_{j=1}^{k-1}z_{2k-j}^{\alpha_j}}\pinv_{\overline{R}_k}(L).
\end{equation*}
\end{lem}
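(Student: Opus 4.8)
The plan is to unpack the definition of $\overline{\stat}_{(\overline{R}_k,\overline{g}^{\mathbf{z}}_{k,i})}(L) = \overline{\inv}_{\overline{R}_k}(L)\,\overline{\maj}_{(\overline{R}_k,\overline{g}^{\mathbf{z}}_{k,i})}(L)$ and match each factor against the right-hand side. First I would handle the $\overline{\maj}$ factor. By Definition~\ref{def: combinatorial description of stat}, $\overline{\Des}_{\overline{R}_k}(L)$ consists of the cells $(a,b)$ (with $a\ge 2$) such that, writing $(a,b)\in A_\ell$ and $(a-1,b)\in A_{\ell'}$, we have $\ell'>\ell$. Since $L$ is \emph{reduced}, $\gamma(\overline{R}_k;L)=0$, which says exactly that for every cell $c\in\bigcup_j A_j$ lying in row $a$ with $a\le k-1$, the cell directly above it lies in some $A_{\ell'}$ with $\ell'>\ell$ (using the convention $A_0 = \overline{R}_k\setminus\bigcup A_j$, and noting $(a-1,b)$ being in $A_0$ counts as $\ell'=0<\ell$ only if $(a,b)\in A_0$ too — but $(a,b)\in\bigcup_j A_j$ forces $\ell\ge 1$, so $\ell'=0$ would violate reducedness). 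Hence for a reduced $L$, the cells contributing to $\overline{\maj}$ are precisely the cells directly above the cells of $\bigcup_j A_j$; the number of such contributing cells in row $a+1$ is the number of cells of $\bigcup_j A_j$ in row $a$, i.e. $\alpha_a$. The filling value on such a cell in row $a+1$ of $\overline{R}_k$ is $\overline{g}^{\mathbf{z}}_{k,i}(a+1,b) = \eta_{k,i}(q z_{2k-a}/z_b)$ if $b<k$ and $q z_{2k-a}/z_i$ if $b=k$. Multiplying over all such cells, the numerators contribute $q^{\sum_a \alpha_a}\prod_{a=1}^{k-1} z_{2k-a}^{\alpha_a} = q^{|M|}\prod_{j=1}^{k-1} z_{2k-j}^{\alpha_j}$, and the denominators contribute $\prod$ over all cells $c\in\bigcup_j A_j$ of the variable $z$ indexed by the column of $c$ (after applying $\eta_{k,i}$, and with $z_i$ substituted for the $k$-th column) — this is exactly $z_i^{c_k}\eta_{k,i}\!\left(\prod_{j=1}^{k-1} z_j^{c_j}\right)$ by the definition of $c = c(\overline{R}_k;L)$. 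So $\overline{\maj}_{(\overline{R}_k,\overline{g}^{\mathbf{z}}_{k,i})}(L) = q^{|M|}\prod_{j=1}^{k-1} z_{2k-j}^{\alpha_j} \big/ \big(z_i^{c_k}\eta_{k,i}(\prod_{j} z_j^{c_j})\big)$.

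Next I would handle the $\overline{\inv}$ factor, which is the purely $q$-part. Here I want to show $\overline{\inv}_{\overline{R}_k}(L) = q^{(k-1)|M|}/\pinv_{\overline{R}_k}(L)$. The key structural fact about $\overline{R}_k$ to exploit is the one flagged just before the lemma: every cell $v$ of $\overline{R}_k$ that is not in the top row is the second coordinate of exactly $k-1$ attacking pairs $(u,v)$ — indeed $v=(a,b)$ with $a<k$ is attacked by the $k-1$ other cells $(a,b')$ with $b'\ne b$ in its own row $[k]$ (contributing $k-1$ when $b$ ranges over a full row of length $k$), and attacking pairs of type (2) from the row above contribute to cells in the row above, not to $v$; a careful count using the rectangular shape $[k],\dots,[k],[2,k]$ gives exactly $k-1$ per non-top cell. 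Thus if $c=(a,b)\in\bigcup_j A_j$, then among the $k-1$ attacking pairs $(u,c)$, each one is counted either in $\overline{\inv}_{\overline{R}_k}(L)$ (if $u\in A_{\ell'}$, $c\in A_\ell$, $\ell'<\ell$) or in $\pinv_{\overline{R}_k}(L)$ (if $u\in A_{\ell'}$, $c\in A_\ell$, $\ell'\ge\ell$), and these two cases are complementary — so the exponents of $q$ coming from pairs whose \emph{second} coordinate lies in $\bigcup_j A_j$ add up to $k-1$ per such cell, totalling $(k-1)|M|$. It remains to see that every attacking pair contributing to either $\overline{\Inv}$ or to $\pinv$ has its second coordinate in $\bigcup_j A_j$: an attacking pair $(u,v)$ with $v\in A_0$ has $\ell'>0=\ell$ impossible for $\overline{\Inv}$ (that needs $\ell<\ell'$ with $v\in A_{\ell'}$, sorry — re-examining: $\overline{\Inv}$ needs $u\in A_\ell$, $v\in A_{\ell'}$, $\ell<\ell'$, so $v\in A_0$ forces $\ell'=0$, impossible since $\ell<0$ can't happen; and $\pinv$ needs $u\in A_j$, $v\in A_i$, $i\le j$, so $v\in A_0$ forces $i=0\le j$ which \emph{is} possible). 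Hmm — so I need the slightly more careful bookkeeping: pairs with $v\in A_0$ contribute to $\pinv$ but not to $\overline{\Inv}$, while all $k-1$ attacking pairs at a cell $v\in\bigcup_j A_j$ split as above. Actually the clean statement is: $\overline{\Inv}_{\overline{R}_k}(L) \sqcup \{\text{attacking pairs }(u,v): v\in A_i, u\in A_j, i\le j\} = \{\text{all attacking pairs }(u,v): v\notin\text{top row}\}$ — no. Let me instead argue: for each cell $v\in\bigcup_j A_j$ (say $v\in A_\ell$, $\ell\ge 1$), the $k-1$ attacking pairs $(u,v)$ split according to whether $u\in A_{\ell'}$ with $\ell'<\ell$ (counted by $\overline{\inv}$) or $\ell'\ge \ell$ (counted by $\pinv$, since $\ell\le\ell'$); this accounts for $(k-1)|M|$ pairs total. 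For cells $v\in A_0$ not in the top row, an attacking pair $(u,v)$ is never in $\overline{\Inv}$ (needs $v\in A_{\ell'}$, $\ell'\ge 1$), and is in the $\pinv$-count iff $u\in A_j$ with $0\le j$, i.e. always. So $\overline{\inv}_{\overline{R}_k}(L)\cdot\pinv_{\overline{R}_k}(L) = q^{(k-1)|M| + (\#\text{attacking pairs with }v\in A_0, v\text{ not top row})}$ — that extra term is not zero, so something is off and I should recheck. The resolution: $\pinv$ as defined only counts pairs with $u\in A_j$ \emph{and} $v\in A_i$ for $1\le i\le j$ — wait, re-reading Definition before Lemma~\ref{lem: rectangle stat is simple}: "$u\in A_j$ and $v\in A_i$ for $1\le i\le j$". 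If that lower bound is $1$, then $v\in A_0$ pairs are excluded from $\pinv$ too, and then indeed $\overline{\inv}\cdot\pinv = q^{(k-1)|M|}$ exactly, as every attacking pair $(u,v)$ with $v\in\bigcup_{i\ge 1}A_i$ is counted exactly once (in $\overline{\inv}$ if $u$'s block index is smaller, in $\pinv$ if $\ge$), and there are $(k-1)$ such pairs per cell of $\bigcup A_j$. This is the argument I would write, being careful to state the $k-1$-attacking-pairs fact as a small sublemma (or cite the displayed remark preceding the lemma) and to handle the top-row cells of $\overline{R}_k$, which never lie in $\bigcup_j A_j$ since $M$ has $k-1$ columns (so $\gamma$ and $c$ only see rows $1,\dots,k-1$).

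Finally I would combine: $\dfrac{1}{\overline{\stat}_{(\overline{R}_k,\overline{g}^{\mathbf{z}}_{k,i})}(L)} = \dfrac{1}{\overline{\inv}_{\overline{R}_k}(L)}\cdot\dfrac{1}{\overline{\maj}_{(\overline{R}_k,\overline{g}^{\mathbf{z}}_{k,i})}(L)} = \dfrac{\pinv_{\overline{R}_k}(L)}{q^{(k-1)|M|}}\cdot\dfrac{z_i^{c_k}\eta_{k,i}(\prod_{j=1}^{k-1} z_j^{c_j})}{q^{|M|}\prod_{j=1}^{k-1} z_{2k-j}^{\alpha_j}} = \dfrac{z_i^{c_k}\eta_{k,i}(\prod_{j=1}^{k-1} z_j^{c_j})}{q^{k|M|}\prod_{j=1}^{k-1} z_{2k-j}^{\alpha_j}}\,\pinv_{\overline{R}_k}(L)$, which is the claim. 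The main obstacle is the bookkeeping in the $\overline{\inv}$/$\pinv$ step: getting the complementarity exactly right (including the correct treatment of the $A_0$ cells and the top row of $\overline{R}_k$), and verifying cleanly that each non-top cell of $\overline{R}_k$ is indeed the target of precisely $k-1$ attacking pairs given the specific shape $[[k],\dots,[k],[2,k]]$ — the rows are full of length $k$ except the last column is truncated at the bottom, so one must check the count is unaffected by that truncation (a cell $(a,b)$ with $a\le k-1$, $b\le k-1$ is attacked by $(a,b')$ for $b'\ne b$ giving $k-1$ from its own row since row $a$ is $[k]$, full length; and type-(2) attacks go upward so don't land on it — consistent). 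Once that is pinned down, the rest is direct substitution from Definition~\ref{def: operator eta} and the definitions of $\gamma$, $c$, $\alpha=\rsum(M)$, $|M|$.
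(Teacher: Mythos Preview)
Your approach is correct and matches the paper's proof exactly: compute $\overline{\maj}$ using reducedness to identify $\overline{\Des}_{\overline{R}_k}(L)$ as precisely the cells sitting directly above $\bigcup_t A_t$, and compute $\overline{\inv}$ by the complementarity $\overline{\inv}_{\overline{R}_k}(L)\cdot\pinv_{\overline{R}_k}(L)=q^{(k-1)|M|}$ coming from the fact that each cell $v\in\bigcup_t A_t$ is the second coordinate of exactly $k-1$ attacking pairs. Two small slips to clean up in a final write-up: the inequality in your $\gamma=0$ unpacking is reversed (reducedness says the cell \emph{above} $c\in A_\ell$ lies in $A_{\ell'}$ with $\ell'<\ell$, not $\ell'>\ell$), and your attacking-pair count should combine the $b-1$ type-(1) pairs in the same row with the $k-b$ type-(2) pairs from the row above, rather than ``$b'\ne b$ in the same row''; both cases (including $v=(a,k)$ and $v=(1,b)$) then give $k-1$ as claimed.
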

\begin{proof}
Denote $L=(A_1,\dots,A_r)$, then for each cell $v\in \cup_{t=1}^{r}A_t$, there are exactly $(k-1)$ cells $u$ such that $(u,v)$ is an attacking pair. Since pairs $(u,v)$ that contribute to $\pinv_{\overline{R}_k}(L)$ are precisely those do not contribute to $\overline{\inv}_{\overline{R}_k}(L)$, we have
\begin{equation*}
    \overline{\inv}_{\overline{R}_k}(L)=\frac{q^{(k-1)|M|}}{\pinv_{\overline{R}_k}(L)}.
\end{equation*}
As $L$ is reduced, every cell that is located right above $v\in \cup_{t=1}^{r}A_t$ is in $\overline{\Des}_{\overline{R}_k}(L)$. From the explicit description of the filling $\overline{g}^{\mathbf{z}}_{k,i}$, we have 
\begin{equation*}
    \overline{\maj}_{(\overline{R}_k,\overline{g}^{\mathbf{z}}_{k,i})}(L)=\frac{q^{|M|}\prod_{j=1}^{k-1}z_{2k-j}^{\alpha_j}}{z_i^{c_k}\eta_{k,i}(\prod_{j=1}^{k-1}z_j^{c_j})}.
\end{equation*}

\end{proof}

\begin{definition}
A matrix $E$ is a \emph{Fibonacci matrix} if it satisfies the following conditions:
\begin{enumerate}
    \item the first column is a zero column. 
    \item in each column, there is at most one nonzero entry, and if there is, the entry is 1.
    \item if $E_{a,j}=E_{b,j+1}=1$, we have $a>b$.
\end{enumerate}
The name "Fibonacci" is justified by the fact that the number of Fibonacci matrices of size $1 \times \ell$ is given by the Fibonacci sequence, i.e., $\{f_{\ell}\}_{\ell\ge1}=1,2,3,5,8,13,\dots$. We denote the set of Fibonacci matrices of size $r\times \ell$ by $\mathrm{Fib}_{r,\ell}$. For example, $\mathrm{Fib}_{2,3}$ has the following six elements,
\[
    \begin{bmatrix}
0 & 0 & 0\\
0 & 0 & 0
\end{bmatrix}, \begin{bmatrix}
0 & 1 & 0\\
0 & 0 & 0
\end{bmatrix}, \begin{bmatrix}
0 & 0 & 1\\
0 & 0 & 0
\end{bmatrix}, \begin{bmatrix}
0 & 0 & 0\\
0 & 1 & 0
\end{bmatrix}, \begin{bmatrix}
0 & 0 & 0\\
0 & 0 & 1
\end{bmatrix}, \text{ and } \begin{bmatrix}
0 & 0 & 1\\
0 & 1 & 0
\end{bmatrix}.
\]

\end{definition}

Now we state a lemma that describes the left-hand side of \eqref{eq: z deformed prop rephrased2} in terms of the filled diagram $(R_k,g_{k,i})$ and the Fibonacci matrices. For a 0-1 matrix $E$, we abuse the notation and denote $\LB(M;E)$ to be the sum of $\LB(M;(i,j))$ over all $(i,j)$ such that  $E_{i,j}=1$,
\[
    \LB(M;E) = \sum_{\substack{(i,j)\\E_{i,j}=1}} \LB(M;(i,j)).
\]

\begin{lem}\label{lem: bar Rk to Rk reduction}
For a matrix $M\in\Mat^{\ge0}_{r\times(k-1)}$ with $\rsum(M)=\alpha$, we have
\begin{align*}
    &\sum_{L\in \mathbf{OP}^{\mathrm{red}}(\overline{R_k};M)}\frac{1}{\overline{\stat}_{(\overline{R}_k,\overline{g}_{k,i})}(L)}\\&=\frac{1}{q^{k |M|}\prod_{j=1}^{k-1}z_{2k-j}^{\alpha_j}}\sum_{E\in \mathrm{Fib}_{r,k-1}}z_i^{|E|}q^{\LB (M;E)-|E|}\left(
    \sum_{L'\in \mathbf{OP}^{\mathrm{red}}(R_k;M-E)}\eta_{k,i}\left(\prod_{j=1}^{k-1}z_j^{c(R_k;L')_j}\right)\pinv_{R_k}(L')\right).
\end{align*}
\end{lem}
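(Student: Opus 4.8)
The plan is to decompose the sum over reduced tuples $L \in \mathbf{OP}^{\mathrm{red}}(\overline{R}_k; M)$ by the behavior of the cells in the last column (column $k$) of $\overline{R}_k$. Recall $\overline{R}_k = [I_1,\dots,I_k]$ with $I_k = [2,k]$, so the last column has $k-1$ cells sitting in rows $2,\dots,k$. A tuple $L \in \mathbf{OP}^{\mathrm{red}}(\overline{R}_k; M)$ restricts to a tuple $L' = L|_{R_k} \in \mathbf{OP}(R_k; M')$ where $M'$ records the cells of $L$ in the first $k-1$ columns; the cells of $L$ in the last column are extra data. Since $\type(\overline{R}_k; L) = M$, the column sums force $M' = M - E$ where $E$ is the $r \times (k-1)$ matrix recording, for each row, whether the last-column cell of $\overline{R}_k$ in that row is occupied (so $E$ is $0$-$1$). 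I would first check that the reducedness constraint on $L$ together with the modified-labeled/$\overline{\Des}$ conditions forces $E$ to be a \emph{Fibonacci matrix}: the first column of $E$ being zero comes from the fact that the last column of $\overline{R}_k$ has no cell in row $1$ (its interval is $[2,k]$), hence row $1$ of $M$ only gets contributions from $R_k$; the at-most-one-nonzero-per-column condition and the staircase condition $E_{a,j}=E_{b,j+1}=1 \Rightarrow a>b$ should come from combining reducedness of $L$ with the structure of $\mathbf{OP}^{\mathrm{red}}(R_k; M-E)$ and the requirement that $M-E$ still has nonnegative entries whose row sums are a composition — this is the place where the lightning bolt geometry $\LLB(i,j)$ enters.

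**Next I would** track how $\overline{\stat}_{(\overline{R}_k, \overline{g}^{\mathbf{z}}_{k,i})}(L)$ factors across this decomposition. By Lemma~\ref{lem: rectangle stat is simple} applied on $\overline{R}_k$, and a parallel computation on $R_k$, one has $1/\overline{\stat}_{(\overline{R}_k,\overline{g}_{k,i})}(L) = \frac{z_i^{c_k}\,\eta_{k,i}(\prod_j z_j^{c_j})}{q^{k|M|}\prod_j z_{2k-j}^{\alpha_j}}\,\pinv_{\overline{R}_k}(L)$. The numerator $z_i^{c_k}$ exactly accounts for the $\maj$-contributions of the last-column cells via the filling value $\overline{g}^{\mathbf{z}}_{k,i}(a,k) = q z_{2k+1-a}/z_i$, and comparing $c(\overline{R}_k;L)$ with $c(R_k;L')$ shows $c_k = |E|$ while the first $k-1$ entries of $c(\overline{R}_k;L)$ agree with $c(R_k;L')$ (the extra occupied cells are all in column $k$). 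So the $z$-monomial splits as $z_i^{|E|} \cdot \eta_{k,i}(\prod_{j=1}^{k-1} z_j^{c(R_k;L')_j})$ up to the common denominator $q^{k|M|}\prod z_{2k-j}^{\alpha_j}$. The remaining task is to show $\pinv_{\overline{R}_k}(L) = q^{\LB(M;E) - |E|}\,\pinv_{R_k}(L')$, i.e. that the attacking pairs involving a last-column occupied cell contribute exactly $q^{\LB(M;E)-|E|}$. An occupied cell $u$ in position $(a,k)$ of $\overline{R}_k$ is attacked by the $k-1$ cells in the lightning bolt set governing it; the cells that contribute to $\pinv$ are those occupied cells among the $\LLB$-positions that lie in classes $A_{\ell'}$ with $\ell' \le$ the class of $u$, and a counting argument — using that each column of $\overline{R}_k$ gives a full interval and the reducedness pins down the vertical structure — should give the count $\LB(M;(a,k)) - 1$ per such cell, summed to $\LB(M;E) - |E|$.

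**Then I would** assemble: summing over all $L \in \mathbf{OP}^{\mathrm{red}}(\overline{R}_k; M)$ is the same as summing over $E \in \mathrm{Fib}_{r,k-1}$ and then over $L' \in \mathbf{OP}^{\mathrm{red}}(R_k; M-E)$ (here I need that every reduced $L'$ on $R_k$ with type $M-E$ extends uniquely to a reduced $L$ on $\overline{R}_k$ with type $M$ for the given $E$ — the last-column cells are completely determined by $E$, which picks exactly which rows are occupied, and reducedness propagates). This yields
\[
\sum_{L\in \mathbf{OP}^{\mathrm{red}}(\overline{R_k};M)}\frac{1}{\overline{\stat}_{(\overline{R}_k,\overline{g}_{k,i})}(L)}
=\frac{1}{q^{k|M|}\prod_{j=1}^{k-1}z_{2k-j}^{\alpha_j}}\sum_{E\in\mathrm{Fib}_{r,k-1}} z_i^{|E|} q^{\LB(M;E)-|E|}\sum_{L'\in\mathbf{OP}^{\mathrm{red}}(R_k;M-E)}\eta_{k,i}\!\left(\prod_{j=1}^{k-1}z_j^{c(R_k;L')_j}\right)\pinv_{R_k}(L'),
\]
which is the claimed identity.

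**The main obstacle** I anticipate is the precise bookkeeping in the $\pinv$ factorization: verifying that the attacking pairs in $\overline{R}_k$ incident to last-column cells account for exactly $q^{\LB(M;E)-|E|}$, and that no cross-terms are double-counted or missed when passing to $R_k$. This requires carefully reconciling three things — the geometric definition of $\LLB(i,j)$, the $A_\ell$-class ordering underlying $\pinv$, and the reducedness hypothesis which forces the "stacked" structure within each column — and checking that the Fibonacci staircase condition $E_{a,j}=E_{b,j+1}=1 \Rightarrow a>b$ is exactly what makes the $\LB(M;E)$ sum well-defined (i.e. no $q$-binomial $\binom{-1}{1}_q = 0$ obstruction sneaks in). A secondary subtlety is confirming the unique-extension claim $\mathbf{OP}^{\mathrm{red}}(\overline{R}_k;M) \leftrightarrow \bigsqcup_E \mathbf{OP}^{\mathrm{red}}(R_k; M-E)$, in particular that reducedness of $L'$ on $R_k$ is equivalent to reducedness of its extension $L$ on $\overline{R}_k$ once $E$ is fixed.
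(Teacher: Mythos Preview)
Your proposal is correct and follows essentially the same route as the paper: associate to each reduced $L$ on $\overline{R}_k$ the matrix $E$ recording which last-column cells are occupied, observe $E$ is Fibonacci, restrict to $L'=L|_{R_k}\in\mathbf{OP}^{\mathrm{red}}(R_k;M-E)$, invoke Lemma~\ref{lem: rectangle stat is simple} to split off the $z$-monomial, and verify $\pinv_{\overline{R}_k}(L)=q^{\LB(M;E)-|E|}\pinv_{R_k}(L')$ by counting the attacking pairs incident to column $k$.

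One small clarification: the Fibonacci property of $E$ is more direct than you suggest. The at-most-one-nonzero-per-column condition holds simply because the single cell $(j,k)$ lies in at most one $A_i$; and the staircase condition $E_{a,j}=E_{b,j+1}=1\Rightarrow a>b$ is exactly reducedness applied to the pair of cells $(j,k)\in A_a$ and $(j+1,k)\in A_b$. Neither the nonnegativity of $M-E$ (which is automatic, since $(M-E)_{i,j}=\type(R_k;L')_{i,j}\ge 0$) nor the lightning bolt geometry is needed here; the lightning bolt set enters only in the $\pinv$ count, where for $v=(j,k)\in A_i$ the contributing pairs are $(u,v)$ with $u$ in row $j$ and class $\ge i$, together with $(v,w)$ with $w$ in row $j-1$ and class in $[1,i]$, giving $\sum_{a\ge i}M_{a,j}+\sum_{b\le i}M_{b,j-1}-1=\LB(M;(i,j))-1$ after using the Fibonacci condition on column $j-1$ of $E$. (Your index ``$\LB(M;(a,k))$'' should read $\LB(M;(i,j))$ with $(i,j)$ a matrix index, not a diagram cell.)
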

\begin{proof}
    For a given $L=(A_1,A_2,\dots,A_r)\in\boldOP^{\rred}(\overline{R_k};M)$, we associate a  $r\times(k-1)$ matrix E by letting $E_{i,j}=1$ if the cell $(j,k)$ is in $A_i$, otherwise $E_{i,j}=0$. The diagram $\overline{R}_k$ does not have a cell $(1,k)$ thus the first column of $E$ is a zero column. Moreover, since $L$ is reduced, we conclude that $E$ is the Fibonacci matrix. It is direct to see that  $L'=L\vert_{R_k}\in \mathbf{OP}^{\mathrm{red}}(R_k;M-E)$. We claim that 
    \begin{equation}\label{eq: additional pinv}
    \pinv_{\overline{R}_k}(L)=q^{\LB(M;E)-|E|}\pinv_{R_k}(L').
    \end{equation}
    Pick an entry of $E$ such that $E_{i,j}=1$, then $(j,k)\in A_i$. Let us denote $v=(j,k)$. For a cell $u\neq v$, the pair $(u,v)$ contributes to $\pinv_{\overline{R}_k}(L)$ if and only if either
    \begin{itemize}
        \item if $u$ is in the $j$-th row and $u\in A_{i'}$ for $i'\geq i$, 
        \item if $u$ is in the $(j-1)$-th row such that $u\in A_{i'}$ for $i\geq i'$. 
    \end{itemize}
    We conclude that for each $E_{i,j}=1$, there are total $\LB(M;(i,j))-1$ pairs that give an additional contribution to $\pinv_{\overline{R}_k}(L)$ and \eqref{eq: additional pinv} follows.

    It is straightforward to check that $c(R_k;L')_j=c(\overline{R}_k;L)_j$ for $1\leq j\leq k-1$ and $c(\overline{R}_k;L)_k=|E|$. Together with Lemma~\ref{lem: rectangle stat is simple} and \eqref{eq: additional pinv},
    partitioning $\boldOP(\overline{R_k};M)$ with respect to the associated Fibonacci matrix $E$ completes the proof. 
\end{proof}

Using Lemma~\ref{lem: bar Rk to Rk reduction}, we reduce \eqref{eq: z deformed prop rephrased2} to the following: For $M\in \Mat^{\ge0}_{r\times(k-1)}$ such that $|M|=k-1$, we have 
\begin{equation}\label{eq: z deformed prop rephrased3}
    \sum_{E\in\mathrm{Fib}_{r,k-1}}q^{\LB (M;E)-|E|}\left(\sum_{i=1}^{k}\left(\prod_{j\neq i}\frac{1}{z_j-z_i}\right)z_i^{|E|}\sum_{L\in \mathbf{OP}^{\mathrm{red}}(R_k;M-E)}\eta_{k,i}\left(\prod_{j=1}^{k-1}z_j^{c(R_k;L)_j}\right)\pinv_{R_k}(L)\right)=q^{\sum_{i,j}\binom{M_{i,j}}{2}}\LB(M).
\end{equation}

Now we state Proposition \ref{prop: toward lb tilde}, which is the last step of proof of Proposition~\ref{prop: main key sf to shuffle}. The proof of Proposition \ref{prop: toward lb tilde} will be given in Section~\ref{subsec: proof of Proposition 6.18}. Before the statement, we introduce $\widetilde{\LB}(M)$ which slightly modifies $\LB(M)$. For a matrix $M\in\Mat^{\ge0}_{r\times\ell}$, we define 
\[
    \widetilde{\LB}(M)\coloneqq \binom{\sum_{1\le a\le r}M_{a,1}} {M_{1,1},M_{2,1},\dots,M_{r,1}}_q\prod_{\substack{1\le i \le r\\2\leq j\le \ell}}\binom{\LB(M;(i,j))}{M_{i,j}}_q.
\]

\begin{proposition}\label{prop: toward lb tilde}
For $M\in \Mat^{\ge0}_{r\times(k-1)}$ such that $|M|\leq k-1$, we have 

\begin{align*}
    \sum_{i=1}^{k}\left(\prod_{j\neq i}\frac{1}{z_j-z_i}\right)z_i^{k-1-|M|}\sum_{L\in \mathbf{OP}^{\mathrm{red}}(R_k;M)}\eta_{k,i}\left(\prod_{j=1}^{k-1}z_j^{c(R_k;L)_j}\right)\pinv_{R_k}(L)=(-1)^{k-1-|M|}q^{\sum_{i,j}\binom{M_{i,j}}{2}} \widetilde{\LB}(M).
\end{align*}
\end{proposition}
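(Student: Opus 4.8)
Write $d=|M|$ and set
\[
H(z_1,\dots,z_{k-1})\;\coloneqq\;\sum_{L\in\mathbf{OP}^{\mathrm{red}}(R_k;M)}\pinv_{R_k}(L)\prod_{j=1}^{k-1}z_j^{\,c(R_k;L)_j},
\]
so that the left-hand side of Proposition~\ref{prop: toward lb tilde} equals $\sum_{i=1}^{k}\bigl(\prod_{j\neq i}(z_j-z_i)^{-1}\bigr)\,z_i^{\,k-1-d}\,\eta_{k,i}(H)$. The plan is threefold: (i) show $H$ is a \emph{symmetric} homogeneous polynomial of degree $d$ in $z_1,\dots,z_{k-1}$; (ii) use this with Lemma~\ref{Lemma: sum prod 1/(x_i-x_j)} to collapse the outer sum over $i$ to a single numerical specialization of $H$; (iii) evaluate that specialization and match it to $q^{\sum_{i,j}\binom{M_{i,j}}{2}}\widetilde{\LB}(M)$. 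For step (i), the point is that reducedness is a condition internal to the columns of $R_k$ while $\type(R_k;L)=M$ only constrains the rows; hence permuting the columns of $R_k$ is a bijection of $\mathbf{OP}^{\mathrm{red}}(R_k;M)$ onto itself and permutes $z_1,\dots,z_{k-1}$ accordingly. Although $\pinv_{R_k}$ is not preserved by an individual column transposition, the $\pinv$-weighted sum over all placements of the marked cells into a fixed pair of adjacent columns is unchanged when those two columns are swapped — this is the same cancellation that makes $\binom{M_{1,1}+\dots+M_{r,1}}{M_{1,1},\dots,M_{r,1}}_q$ symmetric in its lower arguments — and I would prove it by the corresponding involution. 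This is routine but notation-heavy and is a natural candidate for the Appendix.

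\textbf{Collapsing the outer sum.} Given symmetry of $H$, for each $i$ one expands $e_m(z_1,\dots,\widehat{z_i},\dots,z_k)=\sum_{\ell\ge0}(-z_i)^{\ell}e_{m-\ell}(z_1,\dots,z_k)$ to write $\eta_{k,i}(H)=\sum_{\ell=0}^{d}z_i^{\,\ell}\,Q_\ell(z_1,\dots,z_k)$, where each $Q_\ell$ is symmetric of degree $d-\ell$ in $z_1,\dots,z_k$ and, crucially, is independent of $i$. Then $z_i^{\,k-1-d}\eta_{k,i}(H)=\sum_\ell z_i^{\,k-1-d+\ell}Q_\ell$, and Lemma~\ref{Lemma: sum prod 1/(x_i-x_j)} kills every summand with $\ell<d$ and evaluates the $\ell=d$ summand to $(-1)^{k-1}Q_d$. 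A short computation with the same expansion gives $Q_d=(-1)^{d}\phi(H)$, where $\phi\colon\mathbf{Sym}\to\mathbb{C}(q)$ is the algebra homomorphism determined by $\phi(e_m)=1$ for all $m$; equivalently $\phi(H)=\langle H,e_d\rangle=[s_{(1^{d})}]\,H$, the coefficient of $h_1^{\,d}$ in the complete-homogeneous expansion of $H$. Hence the left-hand side of Proposition~\ref{prop: toward lb tilde} equals $(-1)^{k-1-d}\,[s_{(1^{d})}]\,H$, and the Proposition reduces to the Schur-coefficient identity $[s_{(1^{d})}]H=q^{\sum_{i,j}\binom{M_{i,j}}{2}}\widetilde{\LB}(M)$. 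The base case $d=0$ is immediate: $H=1$, $\phi(H)=1=\widetilde{\LB}(M)$ for $M=0$, and $\sum_i z_i^{k-1}/\prod_{j\neq i}(z_j-z_i)=(-1)^{k-1}$ by Lemma~\ref{Lemma: sum prod 1/(x_i-x_j)}.

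\textbf{Matching with $\widetilde{\LB}$.} It remains to compute $[s_{(1^{d})}]H$. I would build $R_k$ one row at a time from the bottom and track how $\pinv$, together with the reducedness condition, distributes over the rows of $R_k$: the marked cells of the bottom nonzero row contribute the $q$-multinomial $\binom{M_{1,1}+\dots+M_{r,1}}{M_{1,1},\dots,M_{r,1}}_q$ (an inversion count of a word of colours), and adjoining the colour-$i$ cells of row $j$ contributes the factor $\binom{\LB(M;(i,j))}{M_{i,j}}_q$, where the relevant ``window'' is exactly the lightning-bolt set $\LLB(i,j)$ — cells of colours $\le i$ in row $j-1$ together with cells of colours $\ge i$ in row $j$; the passage from weak inversions to plain $q$-binomials is absorbed into the prefactor $q^{\sum_{i,j}\binom{M_{i,j}}{2}}$. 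Formally this is an induction on the number of nonzero rows of $M$ (equivalently on $d$), driven by the $q$-Pascal recurrence $\binom{m}{a}_q=q^{a}\binom{m-1}{a}_q+\binom{m-1}{a-1}_q$ and its multinomial analogue, with the hypothesis $|M|\le k-1$ ensuring that the fillings needed to realize the relevant coefficient actually fit inside $R_k$.

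\textbf{Main obstacle.} The delicate point is the last step: one must simultaneously account for same-row inversions, adjacent-row inversions, the reducedness constraint, and the nested sums $\LB(M;(i,j))$ hidden inside $\widetilde{\LB}(M)$, and observe that they assemble into $\widetilde{\LB}(M)$ rather than $\LB(M)$ — the difference between $\binom{\LB(M;(i,j))-1}{M_{i,j}}_q$ and $\binom{\LB(M;(i,j))}{M_{i,j}}_q$ is precisely the effect of the reducedness ``$-1$'' that was already spent in the reduction of Lemma~\ref{lem: bar Rk to Rk reduction}. By contrast, the symmetry of $H$ and the term-by-term vanishing in the Lagrange sum are routine, and can be relegated to the Appendix.
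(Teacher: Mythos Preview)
Your three-step skeleton matches the paper's: establish symmetry of $H$, collapse the Lagrange sum, then identify the resulting scalar with $q^{\sum\binom{M_{i,j}}{2}}\widetilde{\LB}(M)$. Step (ii) is correct and is exactly the content of Corollary~\ref{Cor: symmetric polynomial version of Lemma A3}; your reformulation via $\phi(e_m)=1$ and $\phi(H)=[s_{(1^d)}]H=[h_{1^d}]H$ is an equivalent way to say the same thing.

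Step (i) is not as routine as you suggest. The paper (Lemma~\ref{lem: symmetric function!}) does \emph{not} argue by a direct column-swap involution on $\mathbf{OP}^{\mathrm{red}}(R_k;M)$. Instead it lifts $(R_k,g^{\mathbf{z}}_{k,k})$ into the larger filled diagram $(\overline{\delta}_{k,k},\overline{f}^{\mathbf{z}}_{k,k})$, applies the column exchange operator $S_1$ there via the refined identity~\eqref{eq: refined identity}, and then extracts the reduced part by reading off the coefficient of $\prod z_{2k-j}^{-\alpha_j}$. Your proposed bijection (``swap adjacent columns and observe that the $\pinv$-weighted sum is unchanged'') must handle attacking pairs that straddle two adjacent \emph{rows}, not just the same-row inversions underlying $q$-multinomial symmetry; with the reducedness condition tying each marked cell to the one above it in the same column, it is not clear that a naive column transposition can be made $\pinv$-preserving. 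If you can make that direct argument work it would be a pleasant simplification, but calling it routine undersells the difficulty.

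Step (iii) has a genuine gap. After step (ii) the quantity you must compute is
\[
\phi(H)=(-1)^{d}\sum_{\alpha\models d}(-1)^{\ell(\alpha)}[z^{\alpha}](H)
=\sum_{L\in\mathbf{OP}^{\mathrm{red,packed}}(R_k;M)}(-1)^{d-\ell(L)}\pinv_{R_k}(L),
\]
a \emph{signed} sum over packed reduced $L$ (this is the paper's \eqref{eq: before involution}). Your description --- the bottom row gives the $q$-multinomial, each higher row contributes $\binom{\LB(M;(i,j))}{M_{i,j}}_q$ --- reads as a row-by-row factorization of a \emph{positive} $\pinv$-generating function, and says nothing about the sign $(-1)^{\ell(L)}$. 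The paper disposes of these signs by an explicit sign-reversing involution $\tau$ (Definitions~\ref{def: packed}--\ref{def: involution}) which merges or splits adjacent nonempty columns of a packed $L$; the $\tau$-fixed points are exactly those $L$ with at most one marked cell per column, which are then encoded as biwords obeying a lightning-bolt constraint. Only \emph{after} this cancellation does the row-by-row factorization you describe become valid (Lemma~\ref{Lem: fixed points = tilde lb}, proved by a bijection $\widetilde{\Phi}$ parallel to the $\Phi$ of Proposition~\ref{prop: combinatorial interpretation of LB(M)}). Without something playing the role of $\tau$, your induction on the rows of $M$ cannot start: the full set $\mathbf{OP}^{\mathrm{red,packed}}(R_k;M)$ does not factor into independent row contributions, and the identity you are after is false if you drop the signs.
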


We end this subsection with a proof of Proposition~\ref{prop: main key sf to shuffle}, assuming Proposition~\ref{prop: toward lb tilde}.

\begin{proof}[Proof of Proposition~\ref{prop: main key sf to shuffle}]
We have shown that Proposition~\ref{prop: main key sf to shuffle z-deformed} specializes to Proposition~\ref{prop: main key sf to shuffle}, and is equivalent to \eqref{eq: z deformed prop rephrased3}. Applying Proposition~\ref{prop: toward lb tilde}, the left hand side of \eqref{eq: z deformed prop rephrased3} becomes
\begin{align*}
    &\sum_{E\in\mathrm{Fib}_{r,k-1}}(-1)^{k-1-|M-E|}q^{\LB (M;E)+\sum_{i,j}\binom{M_{i,j}-E_{i,j}}{2}-|E|} \widetilde{\LB}(M-E)\\&=\sum_{E\in\mathrm{Fib}_{r,k-1}}(-1)^{|E|}q^{\LB (M;E)-\sum_{(i,j):E_{i,j}=1}M_{i,j} +\sum_{i,j}\binom{M_{i,j}}{2}} \widetilde{\LB}(M-E).
\end{align*}
By Lemma~\ref{Lem: recurrence tilde LV and LV}, this equals to $q^{\sum_{i,j}\binom{M_{i,j}}{2}} \LB(M),$
which is the right hand side of \eqref{eq: z deformed prop rephrased3}. 
\end{proof}

\subsection{Proof of Proposition~\ref{prop: toward lb tilde}}\label{subsec: proof of Proposition 6.18}
Now, all that remains is to prove Proposition~\ref{prop: toward lb tilde}. We outline our strategy briefly. First, we establish the symmetry in Lemma~\ref{lem: symmetric function!}, which allows us to utilize Corollary~\ref{Cor: symmetric polynomial version of Lemma A3}. Then we can rewrite the left-hand side of Proposition~\ref{prop: toward lb tilde} as a signed sum of $\pinv_{R_k}(L)$ over specific elements called `packed' in $\mathbf{OP}(R_k;M)$ (Definition~\ref{def: packed}), as shown in \eqref{eq: before involution}. Next, we introduce a sign-reversing involution $\tau_{R_k}$ (Definition~\ref{def: involution}). Finally, we interpret the generating function of $\pinv_{R_k}(L)$ over the fixed points of $\tau_{R_k}$ as $\widetilde{\LB}(M)$ (Lemma~\ref{Lem: fixed points = tilde lb}), which completes the proof.

\begin{lem}\label{lem: symmetric function!}
For $M\in \Mat^{\ge0}_{r\times(k-1)}$, the following term
\begin{equation*}
    \sum_{L\in \mathbf{OP}^{\mathrm{red}}(R_k;M)}\left(\prod_{j=1}^{k-1}z_j^{c(R_k;L)_j}\right)\pinv_{R_k}(L)
\end{equation*} is symmetric in variables $z_1,\dots,z_{k-1}$.
\end{lem}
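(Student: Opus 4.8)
The plan is to prove symmetry by exhibiting, for each adjacent transposition $s_m = (m, m+1)$ with $1 \le m \le k-2$, a bijection on $\mathbf{OP}^{\mathrm{red}}(R_k; M)$ that swaps the roles of columns $m$ and $m+1$ of $R_k$, preserves $\pinv_{R_k}$, and sends $c(R_k;L)$ to $s_m \cdot c(R_k;L)$ (i.e. swaps the $m$-th and $(m+1)$-st entries of the column-count vector). Since the monomials $z_j^{c_j}$ then get permuted by $s_m$ and $\pinv$ is unchanged, the whole sum is invariant under $s_m$; as the $s_m$ generate $\mathfrak{S}_{k-1}$, the sum is symmetric in $z_1,\dots,z_{k-1}$.

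First I would set up the local picture: columns $m$ and $m+1$ of $R_k$ are both the full interval $[k]$ (both $m, m+1 \le k-1$), so the relevant sub-filled diagram is a $2$-column rectangle with $k$ rows. An element $L = (A_1,\dots,A_r) \in \mathbf{OP}^{\mathrm{red}}(R_k;M)$ assigns to each cell a ``color'' in $\{0,1,\dots,r\}$, with color $0$ meaning ``unused'' (in $A_0 = R_k \setminus \bigcup A_t$), subject to: the number of colored (nonzero) cells in row $j+1$ used in column $\le$ pattern is controlled by $M$, and the reducedness condition says no colored cell has a weakly-larger-or-equal colored cell directly above it — equivalently, in each column the colored cells form a pattern where colors strictly increase going up among colored cells and a colored cell never sits directly below a cell of color $\ge$ its own. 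I would define the candidate bijection $\rho_m$ to be the map that, row by row, takes the multiset of colors appearing in positions $(j,m)$ and $(j,m+1)$ and redistributes them between the two columns so as to again satisfy the reduced/attacking constraints — concretely, within each row one either leaves the pair of entries as is or swaps them, the choice being forced by the need to keep $L$ reduced and in $\mathbf{OP}$. The key point is that swapping columns $m$ and $m+1$ exactly exchanges their column counts, so $c \mapsto s_m c$, and I must check $\pinv_{R_k}$ is preserved: attacking pairs within columns $m, m+1$ and between column $m$ and row below in column $m+1$ (and vice versa) are exactly the pairs whose contribution could change, and a careful case analysis of the redistribution shows the total $\pinv$-contribution from these pairs is symmetric under the swap.

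Alternatively — and this may be cleaner — I would avoid building an explicit combinatorial bijection and instead argue representation-theoretically via the column exchange rule: the filling $g^{\mathbf{z}}_{k,i}$ of $R_k$ has the special product form $g^{\mathbf{z}}_{k,i}(a,b) = q z_{2k+1-a}/\eta_{k,i}(z_b)$, and one checks that the pair of columns $(m,m+1)$ satisfies the local condition \eqref{eq: column exchange condition} for the operator $S_m$ (after tracking how the $z$'s enter), so that $S_m$ relates $(R_k, g^{\mathbf{z}}_{k,i})$ to a filled diagram with columns $m,m+1$ interchanged and the variables $z_m, z_{m+1}$ swapped. Then the refined identity \eqref{eq: refined identity} applied to the appropriate matrix $M$, combined with Lemma~\ref{lem: rectangle stat is simple} (which rewrites $1/\overline{\stat}$ in terms of $\pinv$ and the column-count monomial), translates directly into the claimed symmetry: $\phi$ preserves the type matrix, hence preserves $\pinv_{R_k}(L)$ up to the expected relabeling, and carries $\prod z_j^{c_j}$ to its $s_m$-image because $S_m$ swaps $z_m \leftrightarrow z_{m+1}$. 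I would need to be careful that the ``reduced'' condition is also preserved under $\phi$ — this follows because $\overline{\Des}$ is preserved in the relevant rows, which is encoded in condition $(\phi3)$.

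The main obstacle I anticipate is the bookkeeping in verifying that the operator $S_m$ (or the explicit redistribution map) is actually applicable and has exactly the intended effect on both $\pinv_{R_k}$ and the column-count vector $c(R_k;L)$ — in particular, checking that reducedness is preserved and that no boundary effects arise from the missing cell $(1,k)$ in $\overline{R}_k$ versus the genuinely rectangular $R_k$ we work with here (for $R_k$ all columns $1,\dots,k-1$ are the full interval $[k]$, so this is cleaner, but one still must confirm the attacking-pair count between adjacent columns behaves symmetrically). I expect that once the local $2$-column model is isolated, the symmetry $z_m \leftrightarrow z_{m+1}$ of the fillings $g^{\mathbf{z}}_{k,i}$ under swapping the two columns is essentially manifest from the formula $g^{\mathbf{z}}_{k,i}(a,b) = q z_{2k+1-a}/\eta_{k,i}(z_b)$, and the real work is packaging \eqref{eq: refined identity} and Lemma~\ref{lem: rectangle stat is simple} correctly.
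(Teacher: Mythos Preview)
Your second approach (via the column exchange rule, the refined identity \eqref{eq: refined identity}, and Lemma~\ref{lem: rectangle stat is simple}) is exactly the route the paper takes, and is correct in outline. The paper works with the specific index $i=k$ (so $\eta_{k,k}$ is the identity and $g^{\mathbf{z}}_{k,k}(a,b)=qz_{2k+1-a}/z_b$), checks that the local condition \eqref{eq: column exchange condition} holds for columns $1,2$ of the larger diagram $(\overline{\delta}_{k,k},\overline{f}^{\mathbf{z}}_{k,k})$ restricted to its first $k-1$ columns, and observes that applying $S_1$ simply swaps $z_1\leftrightarrow z_2$ in the induced filling on $R_k$.

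There is, however, a genuine gap in your treatment of the restriction to \emph{reduced} $L$. You write that reducedness is preserved under $\phi$ ``because $\overline{\Des}$ is preserved in the relevant rows, which is encoded in condition $(\phi3)$.'' But $(\phi3)$ only says that the \emph{multiset of values} in each row is preserved; it does not control which specific cells are descents, and in general a row-multiset-preserving bijection need not preserve $\overline{\Des}$ or $\gamma(R_k;L)$. The paper sidesteps this entirely: it first applies \eqref{eq: refined identity} to the full sum over $\mathbf{OP}(R_k;M)$ (not just the reduced part), obtaining
\[
\sum_{L\in \mathbf{OP}(R_k;M)}\frac{1}{\overline{\stat}_{(R_k,g^{\mathbf{z}}_{k,k})}(L)}
\;=\;
\sum_{L\in \mathbf{OP}(R_k;M)}\frac{1}{\overline{\stat}_{(R_k,g')}(L)},
\]
and then isolates the reduced terms by taking the coefficient of $\prod_{j=1}^{k-1}z_{2k-j}^{-\alpha_j}$ on both sides, using the observation (cf.\ \eqref{Eq: stat bar = P/z}) that an $L$ contributes to this extremal coefficient precisely when every colored cell has its upper neighbor in $\overline{\Des}$, i.e.\ when $L$ is reduced. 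This coefficient-extraction step is the missing ingredient in your argument; once you replace the $(\phi3)$ justification with it, your proof coincides with the paper's.
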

\begin{proof}
We will show the symmetry in variables $z_1,z_2$. The proof for the rest variables follows with the same argument. Let $(D,f)$ be the sub-filled diagram of $(\overline{\delta}_{k,k},\overline{f}_{k,k})$ obtained by restricting to the first $(k-1)$ columns and let $(D',f')=S_1(D,f)$. Then by \eqref{eq: refined identity}, we have
\begin{equation}\label{eq: wath}
    \sum_{L\in \mathbf{OP}(D;M)}\frac{1}{\overline{\stat}_{(D,f)}(L)}=\sum_{L\in \mathbf{OP}(D';M)}\frac{1}{\overline{\stat}_{(D',f')}(L)}.
\end{equation}

Note that $(R_k,g_{k,k})$ is the sub-filled diagram of $(D,f)$ obtained by restricting to the first $k$ rows, and we denote $(R_k,g')$ to be the sub-filled diagram of $(D',f')$ obtained by restricting to the first $k$ rows. Since the matrix $M$ has $(k-1)$ columns, \eqref{eq: wath} becomes 
\begin{equation}\label{eq: wath1}
    \sum_{L\in \mathbf{OP}(R_k;M)}\frac{1}{\overline{\stat}_{(R_k,g_{k,k})}(L)}=\sum_{L\in \mathbf{OP}(R_k;M)}\frac{1}{\overline{\stat}_{(R_k,g')}(L)}.
\end{equation}
Let $\alpha=\rsum(M)$. By \eqref{Eq: stat bar = P/z}, taking the coefficients of $\prod_{j=1}^{k-1}z_{2k-j}^{-\alpha_j}$ of \eqref{eq: wath1} gives
\begin{equation*}
    \sum_{L\in \mathbf{OP}^{\mathrm{red}}(R_k;M)}\frac{1}{\overline{\stat}_{(R_k,g_{k,k})}(L)}=\sum_{L\in \mathbf{OP}^{\mathrm{red}}(R_k;M)}\frac{1}{\overline{\stat}_{(R_k,g')}(L)}.
\end{equation*}
As $g'$ is the filling obtained from $g_{k,k}$ by exchanging the variables $z_1$ and $z_2$, we conclude that 
\begin{equation*}
    \sum_{L\in \mathbf{OP}^{\mathrm{red}}(R_k;M)}\frac{1}{\overline{\stat}_{(R_k,g_{k,k})}(L)}
\end{equation*}
is symmetric in variables $z_1,z_2$. Finally, by the same argument in Lemma~\ref{lem: rectangle stat is simple}, we have
\begin{equation*}
    \frac{1}{\overline{\stat}_{(R_k,g_{k,k})}(L)}=\frac{\prod_{j=1}^{k-1}z_j^{c(R_k;L)_j}}{q^{(k-1) |M|}\prod_{j=1}^{k-1}z_{2k-j}^{\alpha_j}}\pinv_{R_k}(L).
\end{equation*}
for $L\in \mathbf{OP}^{\mathrm{red}}(R_k;M)$ and the proof is complete.
\end{proof}

\begin{definition}\label{def: packed}
For $L\in \mathbf{OP}(R_k;M)$ for some matrix $M$, we say that $L$ is \emph{packed} with respect to $R_k$ if the vector $c(R_k;M)=(\alpha,0,\dots,0)$ for some composition $\alpha$. For such $L$, we let $\ell(L)$ to be $\ell(\alpha)$. We define $\mathbf{OP}^{\mathrm{red,packed}}(R_k;M)$ to be the set of all reduced and packed $L\in \mathbf{OP}(R_k;M)$. 
\end{definition}

\begin{figure}[h]
    \centering
    $L=$ \begin{ytableau}
    $ $ & $ $ & $ $ & $ $ & $ $ \\
    $ $ & $ $ & $ $ & $ $ & $ $ \\
    2 &  &  & & \\
    $ $ & 4  & 1 & 3 &\\
    $ $ &  &  & 4 & \\
    1 &  & 2 & & 
    \end{ytableau}
    \qquad 
    $L'=$ \begin{ytableau}
    $ $ & $ $ & $ $ & $ $ & $ $ \\
    $ $ & $ $ & $ $ & 2 & $ $ \\
    $ $ & 1 &  & {4} &\\
    2 & {3} &  & &\\
    $ $ & 4 &  & &\\
    1 &  & & &
    \end{ytableau}
    \caption{On the left shows $L$ which is packed while $L'$ on the right is not packed.}
    \label{Fig: A packed tuple and a non-packed tuple}
\end{figure}

By Lemma~\ref{lem: symmetric function!} and Corollary~\ref{Cor: symmetric polynomial version of Lemma A3}, Proposition \ref{prop: toward lb tilde} is equivalent to
\begin{align}\label{eq: before involution}
      \sum_{L\in \mathbf{OP}^{\mathrm{red,packed}}(R_k;M)}(-1)^{|M|-\ell(L)}\pinv_{R_K}(L)=q^{\sum_{i,j}\binom{M_{i,j}}{2}} \widetilde{\LB}(M).
\end{align}

From now on we fix $M\in \Mat^{\ge0}_{r\times(k-1)}$ such that $|M|\leq k-1$. We construct a sign-reversing involution on $\mathbf{OP}^{\mathrm{red,packed}}(R_k;M)$. 

\begin{definition}\label{def: involution}
For $L=(A_1,\dots,A_r) \in \mathbf{OP}^{\mathrm{red,packed}}(R_k;M)$, we say that a cell $(a,b)\in \cup_{i=1}^{r}A_i$ is \emph{mergeable} if it satisfies the following conditions:
    \begin{itemize}
        \item if $(a',b)\in \cup_{i=1}^{r}A_i$ then $a'<a$ and there is a cell in  $\cup_{i=1}^{r}A_i$ in the $(b+1)$-th column
        \item if $(a',b+1)\in \cup_{i=1}^{r}A_i$ then $a'>a$
        \item if $(a+1,b+1)\in \cup_{i=1}^{r}A_i$ then $(a+1,b+1)\in A_{\ell}$ and $(a,b)\in A_{\ell'}$ for $\ell<\ell'$.
    \end{itemize}
    We define the cell $(a,b)\in \cup_{i=1}^{r}A_i$ is \emph{splittable} if there exists $(a',b)\in \cup_{i=1}^{r}A_i$ such that $a'>a$.
    We denote the set of mergeable cells with $\Merge(R_k;L)$ and the set of splittable cells with $\Split(R_k;L)$.    
\end{definition}

\begin{definition} We first give a total order on cells in $R_k$ by letting $u=(a,b) <v=(a',b')$ if $b<b'$ or $b=b'$ and $a<a'$.
For $L=(A_1,\dots,A_r) \in \mathbf{OP}^{\mathrm{red,packed}}(R_k;M)$ if there is no mergeable nor splittable cell, then we define $\tau_{R_k}(L)=L$. Otherwise, let $u=(a,b)$ be the minimal (in terms of the total order) cell in $\Merge(R_k;L)\cup \Split(R_k;L)$. We perform the following process to $L$:

\begin{itemize}
    \item if $u\in \Merge(R_k;L)$ then replace each cell $v=(c,d)$ in $\cup_{i=1}^r A_i$ satisfying $u< v$ by $(c,d-1)$.
    \item if $u\in \Split(R_k;L)$ then replace each cell $v=(c,d)$ in $\cup_{i=1}^r A_i$ satisfying $u< v$ by $(c,d+1)$.
\end{itemize}
We define $\tau_{R_k}(L)$ to be the result. We simply write $\tau(L)$ when $k$ is obvious from the context. For the first case, it is straightforward to see $\tau(L)\in \mathbf{OP}^{\mathrm{red,packed}}(R_k;M)$. For the second case, $|\cup_{i=1}^r A_i|=|M|\leq k-1$ and $b$-th column of $R_k$ has at least two cells in $\cup_{i=1}^r A_i$. Since $L$ is packed the largest $d$ such that $d$-th column has a cell in $\cup_{i=1}^r A_i$ is at most $(k-2)$. Therefore $(c,d+1)$ is a cell in $R_k$ and we have $\tau(L)\in \mathbf{OP}^{\mathrm{red,packed}}(R_k;M)$.

\end{definition}

\begin{figure}[h]
    \centering
   
    $L=$ \begin{ytableau} 
    $ $ & $ $ & $ $ & $ $ & $ $ \\
    $ $ & $ $ & $ $ & $ $ & $ $ \\
    $ $ & $ $ & 1 & $ $ & $ $ \\
    1 & $ $ & *(gray!30)2 & $ $ & $ $ \\
    $ $ & *(black)\textcolor{white}{3} & $ $ & $ $ & $ $ \\
    *(gray!30) 2 & $ $ & $ $ & $ $ & $ $ \\
    \end{ytableau}
    \qquad 
    $\tau(L)=$ \begin{ytableau}
    $ $ & $ $ & $ $ & $ $ & $ $ \\
    $ $ & $ $ & $ $ & $ $ & $ $ \\
    $ $ & $ $ & $ $ & 1 & $ $ \\
    $ $ & 1 & $ $ & *(gray!30) 2 & $ $ \\
    $ $ & $ $ & *(black)\textcolor{white}{3} & $ $ & $ $ \\
    *(black)\textcolor{white}{2} & $ $ & $ $ & $ $ & $ $ \\
    \end{ytableau}

    \caption{An example of $L$ and its image $\tau(L)$ is shown. Splittable cells are shaded in gray, and the mergeable cells are shaded in black with letters in white.}
    \label{fig: involution tau}
\end{figure}

It is easy to check that $\tau$ is an involution on the set $\boldOP^{\mathrm{red},\mathrm{packed}}({R_k;M})$. Furthermore, we have
$\pinv(L)=\pinv(\tau(L))$ and $\ell(L)=\ell(\tau(L))\pm 1$ if $L$ is not fixed by $\tau$: $\tau$ is sign-reversing. Moreover if $L=(A_1,\dots,A_r)$ is fixed by $\tau$ then there is at most one cell in $\cup_{i=1}^{r}A_i$ in each column of $R_k$ otherwise $\Split(R_k;L)$ is not empty. We get $\ell(L)=|M|$ for $\tau$-fixed $L$ and therefore, via this involution $\tau$, the left-hand side of \eqref{eq: before involution} becomes the sum over $\tau$-fixed $L\in \boldOP^{\mathrm{red},\mathrm{packed}}({R_k;M})$. The following lemma provides a formula for the sum. 

\begin{lem}\label{Lem: fixed points = tilde lb} For a matrix $M\in\Mat^{\ge0}_{r\times(k-1)}$ such that $|M|\leq k-1$, we have
\[
    \sum_{\substack{L\in\boldOP^{\mathrm{red},\mathrm{packed}}(R_k;M)\\ \tau(L) = L}} {\mathrm{pinv}_{R_k}(L)}=q^{\sum_{i,j}\binom{M_{i,j}}{2}}\widetilde{\LB}(M).
\]
\end{lem}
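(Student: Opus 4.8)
\textbf{Proof plan for Lemma~\ref{Lem: fixed points = tilde lb}.}
The plan is to describe explicitly the $\tau$-fixed points of $\boldOP^{\mathrm{red},\mathrm{packed}}(R_k;M)$ and then compute the $\pinv$ generating function over them column by column. First I would record the structural description of a $\tau$-fixed $L=(A_1,\dots,A_r)$: by the discussion preceding the lemma, each column of $R_k$ contains at most one cell of $\bigcup_i A_i$; since $L$ is packed, the occupied columns are exactly $1,2,\dots,\ell(L)=|M|$; and since $L$ is reduced (and no cell is mergeable), within each occupied column the chosen cell must sit directly above all the cells that were ``used'' to build the lower rows — more precisely, the constraints force the cell in column $j$ to occupy a row position determined by the entries of $M$ in columns $\le j$ together with the choice of which $A_i$ it belongs to. Concretely, a $\tau$-fixed point amounts to a choice, for each column $j$ and each $i$ with $M_{i,j}\ge 1$, of a linear order refining the partial data, which is exactly the data counted by the $q$-multinomial for the first column and the $q$-binomials $\binom{\LB(M;(i,j))}{M_{i,j}}_q$ for $j\ge 2$ (note: $\widetilde{\LB}$ uses $\binom{\LB(M;(i,j))}{M_{i,j}}_q$ rather than $\binom{\LB(M;(i,j))-1}{M_{i,j}}_q$, and the absence of the $-1$ is precisely what distinguishes fixed points from the bijection of Proposition~\ref{prop: combinatorial interpretation of LB(M)}).

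Next I would set up an induction on the number $\ell$ of occupied columns, or equivalently peel off the rightmost occupied column. Write $M = M' + C$ where $C$ is the single nonzero column of $M$ in position $\ell=\ell(L)$ (supported on the rows $i$ with $M_{i,\ell}\neq 0$). Given a $\tau$-fixed $L$ for $M$, restricting to the first $\ell-1$ columns yields a $\tau$-fixed $L'$ for $M'$, and the cells in the last column — together with how the $A_i$-labels interleave with the cells already present in rows $\ell-1$ and $\ell$ — contribute a factor that I would identify with $q^{\sum_i \binom{M_{i,\ell}}{2}}\prod_i \binom{\LB(M;(i,\ell))}{M_{i,\ell}}_q$. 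Here the key computation is tracking how $\pinv_{R_k}$ changes: adding the last column's cells creates new attacking pairs with the cells weakly below in column $\ell$ and weakly above in column $\ell-1$, i.e. exactly with the lightning bolt set $\LLB(i,\ell)$, and the $q$-binomial $\binom{\LB(M;(i,\ell))}{M_{i,\ell}}_q$ records the ways these $M_{i,\ell}$ equal cells can be shuffled among the $\LB(M;(i,\ell))$ ambient positions, while the internal inversions among the $M_{i,\ell}$ cells of a single $A_i$ give the $\binom{M_{i,\ell}}{2}$ exponent (since within one $A_i$ the cells are placed in decreasing order of row in the associated permutation $\sigma$). The base case is the single-column case, where $\pinv_{R_k}$ over reduced packed tuples with a fixed column-sum is the $q$-multinomial $\binom{\sum_a M_{a,1}}{M_{1,1},\dots,M_{r,1}}_q$ up to the factor $q^{\sum_a\binom{M_{a,1}}{2}}$, reproducing the first factor of $\widetilde{\LB}$; this parallels the $\ell=1$ base case in the proof of Proposition~\ref{prop: combinatorial interpretation of LB(M)}.

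The main obstacle I anticipate is the careful verification that a $\tau$-fixed point is genuinely free to place the last-column cells in \emph{all} $\LB(M;(i,\ell))$ ambient positions (rather than $\LB(M;(i,\ell))-1$ as in $\MLD(M)$): one must check that the ``reduced and not mergeable'' condition does not impose the extra exclusion that the modified-labeled-Dyck-path condition imposes, and conversely that mergeability is exactly avoided precisely when the chosen cell is at the top of its column among occupied cells — this is the bookkeeping that pins down the correct $q$-binomial. A secondary subtlety is confirming that $\ell(L)=|M|$ forces every occupied column to contain exactly one cell and that no gaps appear, so that the column-peeling is well-defined; this follows from packedness plus the emptiness of $\Split(R_k;L)$, but should be stated cleanly. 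Once these two points are nailed down, the inductive multiplicativity of both $\pinv_{R_k}$ (which splits as a product over columns, up to the internal $\binom{M_{i,j}}{2}$ terms) and $\widetilde{\LB}$ gives the result immediately.
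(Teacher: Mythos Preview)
Your approach is essentially the paper's: encode $\tau$-fixed points as biwords (one letter per occupied column of $R_k$, recording the row and the block index) and prove the identity by a recursive bijection paralleling Proposition~\ref{prop: combinatorial interpretation of LB(M)}, with the absence of the ``starts with $0$'' constraint on the auxiliary $0$--$1$ word being exactly what replaces $\binom{\LB(M;(i,j))-1}{M_{i,j}}_q$ by $\binom{\LB(M;(i,j))}{M_{i,j}}_q$. The only difference is granularity---the paper inducts on $|M|$ and peels off a single entry $M_{c,\ell}$ (smallest $c$ in the last nonzero column $\ell$ of $M$) rather than the whole column, which makes the product $\prod_i\binom{\LB(M;(i,\ell))}{M_{i,\ell}}_q$ fall out one factor at a time; also beware that your ``$\ell=\ell(L)$'' conflates $\ell(L)=|M|$ (the number of occupied columns of $R_k$) with the index of the last nonzero column of $M$, which are different quantities.
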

\begin{proof}
Firstly, we describe $L=(A_1,\dots,A_r)$ that is fixed by $\tau$. Since there is at most one cell in $\cup_{i=1}^{r}A_i$ in each column of $R_k$ we may identify such $L$ with a biword $\bw(L)$ whose $a$-th letter is $\bw(L)_a =(i,j)$ if the cell $(j,a)\in A_i$. Then $\bw(L)$ has $M_{i,j}$-many $(i,j)$ and denoting $\bw(L) = ((i_1,j_1),(i_2,j_2),\dots)$, for $1 \leq r\leq |M|-1$ satisfies either:
\begin{itemize}
    \item $j_r \ge j_{r+1}$, or
    \item $j_r = j_{r+1}-1$ and $i_r \le i_{r+1}$
\end{itemize}
where two conditions above are equivalent to the condition $\Merge(R_k;L)=\emptyset$. We denote the set of such biwords by $\BW(M)$. Let $\bw(L) = ((i_1,j_1),(i_2,j_2),\dots)$. Under this correspondence, we have $\pinv_{R_{k}}(L)=q^N$, where $N$ is the number of pairs $(x,y)$ of indices satisfying either:
\begin{itemize} 
    \item $j_x=j_y$, $x<y$, and $i_x\ge i_y$, or
    \item $j_x=j_y+1$, $x>y$, and $i_x\ge i_y$.
\end{itemize}
We abuse our notation to denote this by $\pinv(\bw(L))$.

Let $\ell$ be the maximum of $m$ for which the $m$-th column of matrix $M$ contains a non-zero element. If $\ell=1$, the claim is straightforward. Suppose $\ell>1$, and let us proceed by induction on the sum $n=|M|$. The base case $M=O$ is trivial. Without loss of generality, we assume that the rightmost column of $M$ is nonzero. Let $c$ be the smallest index $i$ such that $M_{i,\ell}\neq0$, and let $M'$ be the matrix obtained from $M$ by replacing $M_{c,\ell}$ with zero.

Let $s=\LB(M;(c,\ell))$, and let $W_{s,M_{c,\ell}}$ be the set of 0-1 sequences of length $s$ with $M_{c,\ell}$ ones. Suppose we construct a bijection
\begin{align*}
    \widetilde{\Phi}:\BW(M) &\rightarrow \BW(M')\times W_{s,M_{c,\ell}}\\
    \bw &\mapsto (\widetilde{\Phi}_1(\bw),\widetilde{\Phi}_2(\bw)) 
\end{align*}
such that 
\begin{equation}\label{Eq: pinv = pinv + inv}
\pinv(\bw) = \pinv\left(\Phi_1(\bw)\right) q^{\inv\left(\Phi_2(\bw)\right)}q^{\binom{M_{c,\ell}}{2}}.
\end{equation}
Then we have
\begin{align*}
\sum_{\bw\in \BW(k,M)} {\pinv(\bw)}&=q^{\binom{M_{c,\ell}}{2}}\sum_{\bw \in \BW(k,M')} \pinv(\bw(L))\sum_{\sigma \in W_{s,M_{c,\ell}}}q^{\inv(\sigma)}\\
&=q^{\sum_{i,j}\binom{M_{i,j}}{2}}\widetilde{\LB}(M')\binom{\LB(M;(c,\ell))}{M_{c,\ell}}_q\\
&=q^{\sum_{i,j}\binom{M_{i,j}}{2}}\widetilde{\LB}(M).
\end{align*}
Here, The first equality follows from \eqref{Eq: pinv = pinv + inv} of the bijection $\widetilde{\Phi}$. The second equality follows from the induction hypothesis, and the last equality follows from the definition of $\widetilde{\LB}(M)$. Therefore, it suffices to construct such a bijection $\widetilde{\Phi}$.

Given $((i_1,j_1),(i_2,j_2),\dots)\in\BW(M)$, we define $\widetilde{\Phi}_1(L)$ as the biword obtained by deleting the letters $(c,\ell)$. We define $\widetilde{\Phi}_2(L)$ as the 0--1 word obtained from $L$ by replacing letters in $\LLB(c,\ell)\setminus \{(c,\ell)\}$ with zeros, $(c,\ell)$ with ones, and deleting the others. Finally, we define
\[
    \widetilde{\Phi}(\bw)\coloneqq\left(\widetilde{\Phi}_1(\bw),\widetilde{\Phi}_2(\bw)\right).
\]
An example of the bijection $\widetilde{\Phi}$ is given in Example~\ref{Ex: the bijection tilde Phi}.
 
In $\bw(L)$, the letters $(c,\ell)$ are always right next to the letter $(a,b)\in\{(a,b)\in\LLB(c,\ell)\}$ due to the characterization of $\BW(M)$. This guarantees that we can recover $\widetilde{\Phi}(\bw)$ from $\widetilde{\Phi}_1(\bw)$ and $\widetilde{\Phi}_2(\bw)$. Therefore the map $\widetilde{\Phi}$ is a bijection. Furthermore, the additional contribution to $\pinv(\bw)$ due to cells in the $\ell$-th row that are in $A_c$ is given by $q^{\binom{M_{c,\ell}}{2}}q^{\inv(\widetilde{\Phi}_2(\bw))}$. Therefore, we conclude that \eqref{Eq: pinv = pinv + inv} holds.
\end{proof}

\begin{example}\label{Ex: the bijection tilde Phi}
Let $M=\begin{bmatrix}
1 & 2 & 0 & 0 & 0 & 0\\
2 & 1 & 0 & 0 & 0 & 0
\end{bmatrix}$ and let $L\in\boldOP^{\mathrm{red},\mathrm{packed}}(R_7;M)$ depicted on the left in Figure~\ref{Fig: tilde Phi image}.

\begin{figure}[h]
    \centering
    \begin{ytableau}
    $ $ & $ $ & & $ $ & $ $ & $ $ \\
    $ $ & $ $ & & $ $ & $ $ & $ $ \\
    $ $ & $ $ & &$ $ & $ $ & $ $ \\
    $ $ &  &  & & &\\
    $ $ & $ $  & &$ $ & $ $ &\\
    1 &  & 2 &  &1 &\\
    $ $ & 2 & $ $ & 1 & &2 
    \end{ytableau} \qquad \qquad
    \begin{ytableau}
    $ $ & $ $ & & $ $ & $ $ & $ $ \\
    $ $ & $ $ & & $ $ & $ $ & $ $ \\
    $ $ & $ $ & &$ $ & $ $ & $ $ \\
    $ $ &  &  & & &\\
    $ $ & $ $  & &$ $ & $ $ &\\
    $ $ & 2 &   & & & \\
    2 & $ $ & 1 & 2 & & 
    \end{ytableau}
    \caption{An example of $L\in\boldOP^{\mathrm{red},\mathrm{packed}}(R_7;M)$ and its image $\widetilde{\Phi}(\bw(L))$.}
    \label{Fig: tilde Phi image}
\end{figure}

One can see that there is no splittable or mergeable cell, so $L$ is fixed by the involution $\tau$. We can identify $L$ with 
\[
    \bw(L) = ((1,2),(2,1),(2,2),(1,1),(1,2),(2,1)) \in \BW(M).
\]
Then we have $\pinv(L) = \pinv(\bw(L)) = q^6$.

Let $M'=\begin{bmatrix}
1 & 0 & 0 & 0 & 0 & 0\\
2 & 1 & 0 & 0 & 0 & 0
\end{bmatrix}$. 
By deleting letters $(1,2)$ from $\bw(L)$, we obtain
\[
    \widetilde{\Phi}_1(\bw(L)) = ((2,1),(2,2),(1,1),(2,1))\in\BW(M'),
\]
which is depicted on the right in Figure~\ref{Fig: tilde Phi image}. Then, by replacing letters in $\LLB(1,2)\setminus\{(1,2)\}=\{(1,1),(2,2)\}$ with zeros and the letter $(1,2)$ with ones and deleting others from $\bw(L)$, we obtain 
\[
    \widetilde{\Phi}_2(\bw(L)) = 1001.
\]
By direct calculation, we can check \eqref{Eq: pinv = pinv + inv} for this example as
\[
    \pinv(L) = \pinv(\bw(L)) = q^6 = q^{3}q^{2}q^{1} = \pinv(\widetilde{\Phi}_1(\bw(L)))q^{\inv(\widetilde{\Phi}_2(\bw(L))}q^{\binom{M_{1,2}}{2}}.
\]

\end{example}

\section{Macdonald intersection polynomials at $q=t=1$}\label{Sec: Macdonald intersection polynomials at q=t=1}
In this section, we prove Theorem~\ref{thm: h-expansion, Kreweras}, which gives a combinatorial formula for the (whole) Macdonald intersection polynomials at $q=t=1$. Before providing a proof, we review some backgrounds starting from Kreweras numbers $\Krew(\lambda)$. The \emph{Kreweras number} for a partition $\lambda$ is defined by
\[
    \Krew(\lambda)\coloneqq\dfrac{1}{n+1}\binom{n+1}{n+1-\ell(\lambda),m_1(\lambda),\dots,m_n(\lambda)},
\]
where $n=|\lambda|$ and $m_j(\lambda)$ denotes the multiplicity of $j$ among the parts of $\lambda$. Kreweras \cite{Kre72} originally interpreted $\Krew(\lambda)$ in terms of the number of \emph{noncrossing partitions} of $1,2,\dots,n$. Equivalently, Kreweras numbers count the number of Dyck paths whose lengths of consecutive horizontal steps are listed in $\lambda$. To be more precise, for a Dyck path $\pi$ of size $n$, we associate a composition $\alpha(\pi)$, defined as the composition where the number of consecutive East steps gives each part. For example, for a Dyck path $\pi=NNENEE$, $\alpha(\pi)=(1,2)$. We denote by $\Dyck(\alpha)$ the set of Dyck paths $\pi$ of size $n$ with $\alpha(\pi)=\alpha$. Then the Kreweras number $\Krew(\lambda)$ counts the size of $\bigcup_{\alpha\sim\lambda} \Dyck(\alpha)$, where the union is over all rearrangements $\alpha$ of $\lambda$ \cite{Deu99}. For example, $\Krew(2,1)=\frac{1}{4}\binom{4}{4-2,1,1}=3$, and the set $\Dyck(1,2)\cup\Dyck(2,1)$ consists of the following three Dyck paths.

\begin{figure}[ht]\centering
\begin{tikzpicture}[scale=1]
\begin{scope}[shift={(0,0)}]
\draw[-] (0,0) -- (3,0);
\draw[-] (0,1) -- (3,1);
\draw[-] (0,2) -- (3,2);
\draw[-] (0,3) -- (3,3);

\draw[-] (0,0) -- (0,3);
\draw[-] (1,0) -- (1,3);
\draw[-] (2,0) -- (2,3);
\draw[-] (3,0) -- (3,3);

\draw[green] (0,0) -- (3,3);

\draw[blue, very thick] (0,0) -- (0,1);
\draw[blue, very thick] (0,1) -- (1,1);
\draw[blue, very thick] (1,1) -- (1,3);
\draw[blue, very thick] (1,3) -- (3,3);
\end{scope}

\begin{scope}[shift={(5,0)}]
\draw[-] (0,0) -- (3,0);
\draw[-] (0,1) -- (3,1);
\draw[-] (0,2) -- (3,2);
\draw[-] (0,3) -- (3,3);

\draw[-] (0,0) -- (0,3);
\draw[-] (1,0) -- (1,3);
\draw[-] (2,0) -- (2,3);
\draw[-] (3,0) -- (3,3);

\draw[green] (0,0) -- (3,3);

\draw[blue, very thick] (0,0) -- (0,2);
\draw[blue, very thick] (0,2) -- (1,2);
\draw[blue, very thick] (1,2) -- (1,3);
\draw[blue, very thick] (1,3) -- (3,3);
\end{scope}
\begin{scope}[shift={(10,0)}]
\draw[-] (0,0) -- (3,0);
\draw[-] (0,1) -- (3,1);
\draw[-] (0,2) -- (3,2);
\draw[-] (0,3) -- (3,3);

\draw[-] (0,0) -- (0,3);
\draw[-] (1,0) -- (1,3);
\draw[-] (2,0) -- (2,3);
\draw[-] (3,0) -- (3,3);

\draw[green] (0,0) -- (3,3);

\draw[blue, very thick] (0,0) -- (0,2);
\draw[blue, very thick] (0,2) -- (2,2);
\draw[blue, very thick] (2,2) -- (2,3);
\draw[blue, very thick] (2,3) -- (3,3);
\end{scope}

\end{tikzpicture}
\end{figure}

We now recall the notion of \emph{$\lambda$-brick tabloids}, which appears as the entries of the transition matrix between elementary and homogeneous symmetric functions. For a partition $\lambda\vdash n$ and a composition $\alpha=(\alpha_1,\dots,\alpha_\ell)\models n$, a $\lambda$-brick tabloid of shape $\alpha$ is a tuple of compositions $(\alpha^{(1)},\alpha^{(2)},\dots,\alpha^{(\ell)})$ satisfying
\begin{itemize}
\item $\alpha^{(i)}\models \alpha_i$, and
\item $\sum_{i=1}^\ell m_j(\alpha^{(i)}) = m_j(\lambda)$ for all $j$,
\end{itemize}
where $m_j(\beta)$ is the multiplicity of $j$ in the composition $\beta$. We denote by $B_{\lambda,\alpha}$ for the set of $\lambda$-brick tabloid of shape $\alpha$. For example, \[B_{(2,1,1,1),(3,2)}=\{((2,1),(1,1)),((1,2),(1,1)),((1,1,1),(2))\}.\]
Using $\lambda$-brick tabloids, Egecioglu and Remmel gave a combinatorial formula for the transition matrix for the elementary symmetric functions and homogeneous symmetric functions.
\begin{proposition}\cite{ER91}\label{prop: h-expansion of e}
For a partition $\mu\vdash n$, we have
\[
    e_\mu[X] = \sum_{\lambda\vdash n}(-1)^{n-\ell(\lambda)}|B_{\lambda,\mu}|h_\lambda[X].
\]
\end{proposition}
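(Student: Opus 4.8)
The statement to be proved is Proposition~\ref{prop: h-expansion of e}, the combinatorial expansion of $e_\mu$ into the $h_\lambda$ basis via $\lambda$-brick tabloids. The plan is to establish this by a generating-function argument, exploiting the fact that both families factor as products over parts and that the $h$-to-$e$ transition is classically encoded by the identity $\sum_{n\ge 0} e_n[X]\,(-z)^n = \left(\sum_{n\ge 0} h_n[X]\,z^n\right)^{-1}$ in $\mathbf{Sym}[[z]]$. First I would recall this reciprocity and rewrite it as $H(z)E(-z)=1$, where $H(z)=\sum_{n\ge0}h_n z^n$ and $E(z)=\sum_{n\ge0}e_n z^n$; equivalently $e_n = \sum_{k\ge1}(-1)^{k-1}\sum_{a_1+\dots+a_k=n,\ a_i\ge1} h_{a_1}\cdots h_{a_k}$ interpreting $h_{a_i}$ multiplicatively. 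Expanding the product $h_{a_1}\cdots h_{a_k}$ into $h_\lambda$'s amounts to recording, for each part-size $j$, how many of the $a_i$ equal $j$; a term $h_\lambda$ with $\lambda\vdash n$ thus arises exactly from ordered tuples $(a_1,\dots,a_k)$ that rearrange $\lambda$, each contributing the sign $(-1)^{k-1}=(-1)^{\ell(\lambda)-1}$. That already yields $e_n = \sum_{\lambda\vdash n}(-1)^{n-\ell(\lambda)}\,\big(\#\{\text{orderings of }\lambda\}\big)\,h_\lambda$ after collecting signs (note $(-1)^{\ell(\lambda)-1}$ versus the target sign $(-1)^{n-\ell(\lambda)}$ differ by $(-1)^{n-1}$, which is absorbed once one tracks the substitution $z\mapsto -z$ carefully; I would verify this bookkeeping against the base case $\lambda=(1^n)$ where $|B_{(1^n),(n)}|=1$ and the claimed coefficient is $(-1)^{n-1}$, matching the well-known $e_n=\sum_{\lambda}(-1)^{n-\ell(\lambda)}\tfrac{\ell(\lambda)!}{\prod_j m_j(\lambda)!}\cdots$ only up to the combinatorial refinement below).

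Next I would pass from the single-row case to general $\mu=(\mu_1,\dots,\mu_\ell)$ by multiplicativity: $e_\mu = \prod_{i=1}^{\ell} e_{\mu_i}$, and each $e_{\mu_i}$ expands as above into a signed sum of $h$-products indexed by ordered compositions $\alpha^{(i)}\models \mu_i$. Multiplying these out, a monomial $h_\lambda$ appears precisely once for each tuple $(\alpha^{(1)},\dots,\alpha^{(\ell)})$ with $\alpha^{(i)}\models\mu_i$ and with the combined multiplicities $\sum_i m_j(\alpha^{(i)})=m_j(\lambda)$ for every $j$ — which is exactly the definition of a $\lambda$-brick tabloid of shape $\mu$, i.e.\ an element of $B_{\lambda,\mu}$. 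The accompanying sign is $\prod_i (-1)^{\mu_i-\ell(\alpha^{(i)})} = (-1)^{|\mu|-\sum_i \ell(\alpha^{(i)})} = (-1)^{n-\ell(\lambda)}$, since $\sum_i \ell(\alpha^{(i)}) = \sum_j \sum_i m_j(\alpha^{(i)}) = \sum_j m_j(\lambda) = \ell(\lambda)$. Summing over all brick tabloids gives $e_\mu = \sum_{\lambda\vdash n}(-1)^{n-\ell(\lambda)}|B_{\lambda,\mu}|\,h_\lambda$, as claimed.

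\textbf{Main obstacle.} The genuinely routine part is the algebra; the only real care is needed in the \emph{sign and multiplicity bookkeeping} — reconciling the per-part sign $(-1)^{\mu_i-\ell(\alpha^{(i)})}$ coming from $E(-z)$-style expansions with the global sign $(-1)^{n-\ell(\lambda)}$, and checking that each ordered tuple of compositions is counted exactly once (no overcounting from different orderings producing the same $h_\lambda$ — they genuinely are distinct brick tabloids, since a brick tabloid records the ordered list $(\alpha^{(1)},\dots,\alpha^{(\ell)})$, not just the multiset of bricks). I would anchor the whole computation on the two smallest cases $\mu=(n)$, $\lambda$ arbitrary, and $\mu$ arbitrary, $\lambda=(1^n)$ (where $|B_{(1^n),\mu}|$ counts ordered set partitions of the bricks and the identity reduces to $e_\mu=e_\mu$), to make sure the signs are pinned down correctly. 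An alternative, and perhaps cleaner, route is to cite that the $(h,e)$-transition matrix is (up to sign) the matrix of brick tabloid counts — this is exactly the content of Egecioglu--Remmel \cite{ER91} — in which case the "proof" is merely a pointer; but the self-contained generating-function argument above is short enough to include in full.
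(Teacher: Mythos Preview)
The paper does not give a proof of this proposition at all; it simply cites \cite{ER91} and uses the result as a black box. Your self-contained generating-function argument is correct and is in fact the standard proof: from $H(z)E(-z)=1$ one extracts $e_n=\sum_{\alpha\models n}(-1)^{n-\ell(\alpha)}h_\alpha$, and then multiplicativity over the parts of $\mu$ together with the observation $\sum_i\ell(\alpha^{(i)})=\ell(\lambda)$ gives exactly the brick-tabloid count with the sign $(-1)^{n-\ell(\lambda)}$.

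One cosmetic remark: your single-row paragraph momentarily writes the sign as $(-1)^{k-1}$ and then has to talk yourself out of it. The clean derivation is $(-1)^n e_n=[z^n]E(-z)=[z^n]H(z)^{-1}=\sum_{k\ge0}(-1)^k\sum_{\alpha\models n,\ \ell(\alpha)=k}h_\alpha$, so $e_n=\sum_{\alpha\models n}(-1)^{n-\ell(\alpha)}h_\alpha$ directly, with no residual $(-1)^{n-1}$ to absorb. Your general-$\mu$ paragraph already has the signs right, so this is purely a matter of tidying the exposition, not a gap.
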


Using the previous proposition, the following lemma provides an $h$-expansion for $D_n[X;1,1]$. 
\begin{lem}\label{lemma: h-expansion for Nabla e_n}
For a given positive integer $n$,
\[
    D_n[X;1,1] = \sum_{\lambda\vdash n} (-1)^{n-\ell(\lambda)}\Krew(\lambda+(1^{\ell(\lambda)})) h_\lambda[X].
\]
\end{lem}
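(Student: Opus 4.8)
The statement to prove is an $h$-expansion for $D_n[X;1,1]$, and the natural strategy is to start from a known formula for $\nabla e_n$ evaluated at $q=t=1$ and translate it into the claimed Kreweras-number expansion. Recall that at $q=t=1$ the shuffle formula collapses to a pure enumeration: using Theorem~\ref{thm: Bounce formulation shuffle} (or equivalently the parking-function definition), every statistic power of $q$ and $t$ becomes $1$, so
\[
    D_n[X;1,1] = \sum_{(\pi,w)\in\MLD(n)} F_{\iDes(w)}\Big|_{q=t=1}.
\]
The first step is therefore to organize the sum over modified labeled Dyck paths $(\pi,w)$ by first choosing the underlying Dyck path $\pi$ and then summing over the valid labelings $w$. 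For a fixed Dyck path $\pi$ with East-step composition $\alpha(\pi)=\alpha$, the labelings that make $(\pi,w)$ a modified labeled Dyck path correspond to filling the columns of $\pi$ so that labels strictly increase across each corner; summing $F_{\iDes(w)}$ over all such $w$ yields precisely $e_{\alpha}[X]$ (this is the standard fact that the labelings attached to a Dyck path with horizontal runs $\alpha$ contribute $e_{\alpha}$, since each maximal run of East steps of length $\alpha_i$ forces a strictly decreasing block of labels read bottom-to-top). Hence
\[
    D_n[X;1,1] = \sum_{\alpha\models n} |\Dyck(\alpha)|\, e_{\alpha}[X] = \sum_{\lambda\vdash n}\Bigg(\sum_{\alpha\sim\lambda}|\Dyck(\alpha)|\Bigg) e_{\lambda}[X],
\]
where the last equality groups rearrangements and uses that $e_\alpha$ depends only on the underlying partition; by the characterization of Kreweras numbers recalled just before the lemma (namely $\Krew(\lambda)=\big|\bigcup_{\alpha\sim\lambda}\Dyck(\alpha)\big|$, following Deutsch), the parenthesized sum is exactly $\Krew(\lambda)$. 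So $D_n[X;1,1]=\sum_{\lambda\vdash n}\Krew(\lambda)\,e_\lambda[X]$.

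The second step is to expand each $e_\lambda$ into the $h$-basis using Proposition~\ref{prop: h-expansion of e}, giving
\[
    D_n[X;1,1] = \sum_{\lambda\vdash n}\Krew(\lambda)\sum_{\nu\vdash n}(-1)^{n-\ell(\nu)}|B_{\nu,\lambda}|\,h_\nu[X]
    = \sum_{\nu\vdash n}(-1)^{n-\ell(\nu)}\Bigg(\sum_{\lambda\vdash n}\Krew(\lambda)|B_{\nu,\lambda}|\Bigg) h_\nu[X].
\]
It then remains to identify the coefficient of $h_\nu$ with $(-1)^{n-\ell(\nu)}\Krew(\nu+(1^{\ell(\nu)}))$ — equivalently, to prove the purely combinatorial identity
\[
    \sum_{\lambda\vdash n}\Krew(\lambda)\,|B_{\nu,\lambda}| = \Krew(\nu+(1^{\ell(\nu)}))
\]
for every partition $\nu\vdash n$. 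I expect \emph{this} identity to be the main obstacle; everything before it is bookkeeping. The cleanest way to attack it is bijectively via the Dyck-path interpretation of Kreweras numbers: an element of $\bigcup_{\alpha\sim\lambda}\Dyck(\alpha)$ is a Dyck path whose maximal East runs, as a multiset, equal $\lambda$, while a $\nu$-brick tabloid of shape $\lambda$ records how to subdivide each such run into sub-runs whose multiset is $\nu$. Concatenating these two pieces of data — a Dyck path with run multiset $\lambda$ together with a compatible brick tabloid subdividing the runs into pieces forming $\nu$ — should be put in bijection with Dyck paths counted by $\Krew(\nu+(1^{\ell(\nu)}))$: the extra parts equal to $1$ on the $\nu$-side correspond to the choice, for each brick, of where within its parent run the subdivision cuts occur, realized as inserting a ``length-one'' East run (a new corner) at each boundary. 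Concretely one should produce a map from pairs (Dyck path with East-run multiset $\lambda$, $\nu$-brick tabloid of shape $\lambda$) to Dyck paths with East-run multiset $\nu$ together with $\ell(\nu)$ marked additional unit runs, and check it is a bijection onto $\bigcup_{\beta\sim\nu+(1^{\ell(\nu)})}\Dyck(\beta)$.

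\textbf{Alternative for the last step.} If the bijection turns out to be delicate, a generating-function argument is available: sum the identity against $\prod$ of indeterminates tracking part-multiplicities, i.e. prove the symmetric-function identity $\sum_{\lambda}\Krew(\lambda)e_\lambda = \sum_\nu (-1)^{n-\ell(\nu)}\Krew(\nu+(1^{\ell(\nu)}))h_\nu$ directly by recognizing both sides as $D_n[X;1,1]$ computed two ways — but since the right-hand side is the target, one instead verifies it by a plethystic or exponential-generating-function manipulation of the Kreweras numbers (using that $\sum_n\Krew$-weighted sums over partitions assemble into the Fuss--Catalan/tree generating function $C(x)=1+xC(x)^2$, and that $h\leftrightarrow e$ duality corresponds to the substitution $x\mapsto -x$ up to the sign $(-1)^{n-\ell}$ absorbed into the length grading). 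Either route reduces the lemma to a finite check plus the structural identity above; I would present the bijective proof as primary and expect the construction and injectivity/surjectivity verification to occupy the bulk of the write-up.
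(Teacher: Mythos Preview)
Your proposal is correct and follows essentially the same route as the paper: start from the $e$-expansion $D_n[X;1,1]=\sum_\lambda \Krew(\lambda)\,e_\lambda$ (which the paper simply cites from \cite{HHLRU05} rather than deriving), apply Proposition~\ref{prop: h-expansion of e}, and then prove the identity $\sum_{\mu}\Krew(\mu)\,|B_{\lambda,\mu}|=\Krew(\lambda+(1^{\ell(\lambda)}))$ bijectively. The paper's explicit bijection $\Psi_\lambda$ replaces each East run $E^{\beta_i}$ by the path $P_{\beta^{(i)}}=NE^{\gamma_1+1}NE^{\gamma_2+1}\cdots$ built from the brick tabloid $(\gamma_1,\gamma_2,\dots)=\beta^{(i)}$, which is precisely the ``subdivide-and-augment'' construction you sketched (your intermediate phrasing about ``marked additional unit runs'' is slightly off --- each brick of size $\gamma_j$ becomes a run of size $\gamma_j+1$, not an extra unit run --- but your final target set is correct).
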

\begin{proof}
In \cite[Section 4.1]{HHLRU05}, the authors provide the $e$-expansion for $D_n[X;1,1]$ as
\begin{equation*}
    D_n[X;1,1] =\sum_{\lambda\vdash n} \Krew(\lambda)e_\lambda[X].
\end{equation*}
By Proposition~\ref{prop: h-expansion of e} we have
\[
    D_n[X;1,1]=\sum_{\lambda\vdash n} (-1)^{n-\ell(\lambda)}\left(\sum_{\mu\vdash n} \Krew(\mu)|B_{\lambda,\mu}|\right)h_\lambda
\]
Since we have $|B_{\lambda,\alpha}|=|B_{\lambda,\beta}|$ for a rearrangement $\beta$ of $\alpha$, it suffices to construct a bijection between 
\[
    \bigcup_{\beta\models n} \Dyck(\beta)\times B_{\lambda,\beta}\qquad \text{ and } \qquad \bigcup_{\alpha\sim\lambda+(1^{\ell(\lambda)})}\Dyck(\alpha).
\]

For given a composition $\beta=(\beta_1,\dots,\beta_\ell)$ of $n$, let $\pi\in \Dyck(\beta)$. Then $\pi$ is of the form 
\[N^{h_0}E^{\beta_1}N^{h_1}\cdots E^{\beta_\ell}\]
for some $h_0,\dots,h_{\ell-1}\ge 1$. Let $\tilde{\beta}=(\beta^{(1)},\dots,\beta^{(\ell)})$ be a $\lambda$-brick tabloid of shape $\beta$. We associate a path $P_\gamma$ to each composition $\gamma=(\gamma_1,\dots,\gamma_\ell)$ by
\[
    P_\gamma \coloneqq NE^{\gamma_1+1}NE^{\gamma_2+1}\cdots NE^{\gamma_\ell+1}.
\]
Then we define $\Psi_\lambda\left(\pi,\tilde{\beta}\right)$ to be the Dyck path
\[
    \Psi_\lambda\left(\pi,\tilde{\beta}\right)\coloneqq N^{h_0}P_{\beta^{(1)}}N^{h_1}\cdots P_{\beta^{(\ell)}}.
\]
An example of the bijection $\Psi_\lambda$ is given in Example~\ref{Example: the bijection Psi}.
Conversely, any Dyck path $\pi$ in $\Dyck(\alpha)$ for $\alpha\sim\lambda+(1^{\ell(\lambda)})$ is uniquely written as a form of
\[
\pi = N^{h_0}P_{\beta^{(1)}}N^{h_1}\cdots P_{\beta^{(\ell)}}
\]
for some $h_0,\dots,h_{\ell-1}\ge 1$, a composition $\beta\models n$, and $(\beta^{(1)},\dots,\beta^{(\ell)})\in B_{\lambda,\beta}$. Then the inverse map is given by replacing each $P_{\beta^{(i)}}$ by $E^{\beta_i}$. 
\end{proof}

\begin{example}\label{Example: the bijection Psi}
Let $\beta=(3,4,3)$
and $\lambda= (3,2,1,1,1,1,1)$. Pick an element $\widetilde{\beta} = ((2,1),(1,3),(1,1,1)) \in
B_{\lambda,\beta}.$ 
We have 
\[
    P_{(2,1)} = \begin{tikzpicture}[scale=0.5,baseline={([yshift=-.5ex]current bounding box.center)}]

    \draw[-] (0,0) -- (0,2);
    \draw[-] (1,0) -- (1,2);
    \draw[-] (2,0) -- (2,2);
    \draw[-] (3,0) -- (3,2);
    \draw[-] (4,0) -- (4,2);
    \draw[-] (5,0) -- (5,2);
    
    \draw[-] (0,0) -- (5,0);
    \draw[-] (0,1) -- (5,1);
    \draw[-] (0,2) -- (5,2);
    
    \draw[-,red,very thick] (0,0) -- (0,1);
    \draw[-,red,very thick] (0,1) -- (3,1);
    \draw[-,red,very thick] (3,1) -- (3,2);
    \draw[-,red,very thick] (3,2) -- (5,2);
    \end{tikzpicture}, \quad P_{(1,3)} = \begin{tikzpicture}[scale=0.5,baseline={([yshift=-.5ex]current bounding box.center)}]

    \draw[-] (0,0) -- (0,2);
    \draw[-] (1,0) -- (1,2);
    \draw[-] (2,0) -- (2,2);
    \draw[-] (3,0) -- (3,2);
    \draw[-] (4,0) -- (4,2);
    \draw[-] (5,0) -- (5,2);
    \draw[-] (6,0) -- (6,2);
    
    \draw[-] (0,0) -- (6,0);
    \draw[-] (0,1) -- (6,1);
    \draw[-] (0,2) -- (6,2);

    \draw[-,blue,very thick] (0,0) -- (0,1);
    \draw[-,blue,very thick] (0,1) -- (2,1);
    \draw[-,blue,very thick] (2,1) -- (2,2);
    \draw[-,blue,very thick] (2,2) -- (6,2);
    \end{tikzpicture}, \quad P_{(1,1,1)} =  \begin{tikzpicture}[scale=0.5,baseline={([yshift=-.5ex]current bounding box.center)}]
    \draw[-] (0,0) -- (0,3);
    \draw[-] (1,0) -- (1,3);
    \draw[-] (2,0) -- (2,3);
    \draw[-] (3,0) -- (3,3);
    \draw[-] (4,0) -- (4,3);
    \draw[-] (5,0) -- (5,3);
    \draw[-] (6,0) -- (6,3);
    \draw[-] (0,0) -- (6,0);
    \draw[-] (0,1) -- (6,1);
    \draw[-] (0,2) -- (6,2);
    \draw[-] (0,3) -- (6,3);

    \draw[-,green,very thick] (0,0) -- (0,1);
    \draw[-,green,very thick] (0,1) -- (2,1);
    \draw[-,green,very thick] (2,1) -- (2,2);
    \draw[-,green,very thick] (2,2) -- (4,2);
    \draw[-,green,very thick] (4,2) -- (4,3);
    \draw[-,green,very thick] (4,3) -- (6,3);
    \end{tikzpicture}.
\]
Let $\pi$ be a Dyck path $\pi=N^5E^3N^3E^4N^2E^3$ depicted as below 
\[
    \begin{tikzpicture}[scale=0.5,baseline={([yshift=-.5ex]current bounding box.center)}]

    \draw[-] (0,0) -- (0,10);
    \draw[-] (1,0) -- (1,10);
    \draw[-] (2,0) -- (2,10);
    \draw[-] (3,0) -- (3,10);
    \draw[-] (4,0) -- (4,10);
    \draw[-] (5,0) -- (5,10);
    \draw[-] (6,0) -- (6,10);
    \draw[-] (7,0) -- (7,10);
    \draw[-] (8,0) -- (8,10);
    \draw[-] (9,0) -- (9,10);
    \draw[-] (10,0) -- (10,10);

    \draw[-] (0,0) -- (10,0);
    \draw[-] (0,1) -- (10,1);
    \draw[-] (0,2) -- (10,2);
    \draw[-] (0,3) -- (10,3);
    \draw[-] (0,4) -- (10,4);
    \draw[-] (0,5) -- (10,5);
    \draw[-] (0,6) -- (10,6);
    \draw[-] (0,7) -- (10,7);
    \draw[-] (0,8) -- (10,8);
    \draw[-] (0,9) -- (10,9);
    \draw[-] (0,10) -- (10,10);

    \draw[-] (0,0) -- (10,10);
    
    \draw[-,very thick] (0,0) -- (0,5);
    \draw[-,very thick,red] (0,5) -- (3,5);
    \draw[-,very thick] (3,5) -- (3,8);
    \draw[-,very thick,blue] (3,8) -- (7,8);
    \draw[-,very thick] (7,8) -- (7,10);
    \draw[-,very thick,green] (7,10) -- (10,10);
    \end{tikzpicture}.
\]
By replacing each $E^{\beta_i}$ by $P_{\beta^{(i)}}$, we have
\[
    \Psi_\lambda(\pi)=\begin{tikzpicture}[scale=0.3,baseline={([yshift=-.5ex]current bounding box.center)}]

    \draw[-] (0,0) -- (0,17);
    \draw[-] (1,0) -- (1,17);
    \draw[-] (2,0) -- (2,17);
    \draw[-] (3,0) -- (3,17);
    \draw[-] (4,0) -- (4,17);
    \draw[-] (5,0) -- (5,17);
    \draw[-] (6,0) -- (6,17);
    \draw[-] (7,0) -- (7,17);
    \draw[-] (8,0) -- (8,17);
    \draw[-] (9,0) -- (9,17);
    \draw[-] (10,0) -- (10,17);
    \draw[-] (11,0) -- (11,17);
    \draw[-] (12,0) -- (12,17);
    \draw[-] (13,0) -- (13,17);
    \draw[-] (14,0) -- (14,17);
    \draw[-] (15,0) -- (15,17);
    \draw[-] (16,0) -- (16,17);
    \draw[-] (17,0) -- (17,17);
    
    \draw[-] (0,0) -- (17,0);
    \draw[-] (0,1) -- (17,1);
    \draw[-] (0,2) -- (17,2);
    \draw[-] (0,3) -- (17,3);
    \draw[-] (0,4) -- (17,4);
    \draw[-] (0,5) -- (17,5);
    \draw[-] (0,6) -- (17,6);
    \draw[-] (0,7) -- (17,7);
    \draw[-] (0,8) -- (17,8);
    \draw[-] (0,9) -- (17,9);
    \draw[-] (0,10) -- (17,10);
    \draw[-] (0,11) -- (17,11);
    \draw[-] (0,12) -- (17,12);
    \draw[-] (0,13) -- (17,13);
    \draw[-] (0,14) -- (17,14);
    \draw[-] (0,15) -- (17,15);
    \draw[-] (0,16) -- (17,16);
    \draw[-] (0,17) -- (17,17);

    \draw[-] (0,0) -- (17,17);
    
    \draw[-,very thick] (0,0) -- (0,5);
    \draw[-,very thick,red] (0,5) -- (0,6);    \draw[-,very thick,red] (0,6) -- (3,6);
    \draw[-,very thick,red] (3,6) -- (3,7);
    \draw[-,very thick,red] (3,7) -- (5,7);
    \draw[-,very thick] (5,7) -- (5,10);
    \draw[-,very thick,blue] (5,10) -- (5,11);
    \draw[-,very thick,blue] (5,11) -- (7,11);
    \draw[-,very thick,blue] (7,11) -- (7,12);
    \draw[-,very thick,blue] (7,12) -- (11,12);
    \draw[-,very thick] (11,12) -- (11,14);
    \draw[-,very thick,green] (11,14) -- (11,15);
    \draw[-,very thick,green] (11,15) -- (13,15);
    \draw[-,very thick,green] (13,15) -- (13,16);
    \draw[-,very thick,green] (13,16) -- (15,16);
    \draw[-,very thick,green] (15,16) -- (15,17);
    \draw[-,very thick,green] (15,17) -- (17,17);
    \end{tikzpicture}.
\]
\end{example}

\begin{rmk}
In \cite[Section 4.1]{HHLRU05}, the authors also give the $e$-expansion not only for $D_n[X;1,1]$, but also for $D_n[X;q,1]$ as
\[
     D_n[X;q,1] =\sum_{\pi\in \Dyck(n)} q^{\coarea(\pi)} e_{\alpha(\pi)}[X].
\]
The same bijection $\Psi_\lambda$ gives a $h$-expansion for $D_n[X;q,1]$ as a sum of Dyck paths in $\bigcup_{\alpha\sim\lambda+(1^{\ell(\lambda)})}\Dyck(\alpha)$ with an appropriate $q$--statistic modifying $\coarea$.
\end{rmk}

\begin{proof}[Proof of Theorem~\ref{thm: h-expansion, Kreweras}]
For a symmetric function $f$, let $[h_\lambda]\left(f\right)$ denote the coefficient of $h_\lambda$ in the $h$-expansion of $f$. We may rewrite the statement as
\[
    [h_\nu]\left(\I_{\mu^{(1)},\dots,\mu^{(k)}}[X;1,1]\right)=
    \begin{cases}
    (-1)^{k-1-\ell(\lambda)}\Krew(\lambda +(1^{\ell(\mu)})) &\text{ if } \nu = \lambda+(1^{n+1-k}) \text{ for some } \lambda\vdash k-1\\
    0 &\text{ otherwise}.
    \end{cases}
\]
The proof is threefold according to each condition $\ell(\nu) > n - k + 1$, $\ell(\nu) < n - k + 1$, and $\ell(\nu) = n - k + 1.$

First, suppose $\ell(\nu)>n+1-k$. We proceed by induction on $a=n-\ell(\nu)$. It is useful to recall that for a partition $\nu$ and a positive integer $N$, we have
\begin{equation}\label{eq: e perp homogeneous}
    e^\perp_N h_\nu = \begin{cases}
        0 &\text{ if } \ell(\nu) < N \\
        h_{\nu-(1^N)} &\text{ if } \ell(\nu) = N
    \end{cases}.
\end{equation}
For the base case $a=0$, by the vanishing identity (Theorem \ref{thm: main theorem} (a)) for $m=0$, we have 
\[
e_{n}^\perp \I_{\mu^{(1)},\dots,\mu^{(k)}}[X;q,t] = [h_{(1^n)}]\left(\I_{\mu^{(1)},\dots,\mu^{(k)}}[X;q,t]\right)=0.
\]
Now, suppose we have $[h_\nu]\left(\I_{\mu^{(1)},\dots,\mu^{(k)}}[X;q,t]\right)=0$ for any partition $\nu$ of length $\ell(\lambda)>n-a$. It is equivalent to saying that we can write
\[
\I_{\mu^{(1)},\dots,\mu^{(k)}}[X;1,1] = \sum_{\substack{\nu\vdash n\\ \ell(\nu)\le n-a}} c_\nu h_\nu[X]
\]
for some coefficients $c_\nu$'s. Then the vanishing identity for $m=a<k-1$ implies that
\begin{align*}
    e_{n-a}^\perp \I_{\mu^{(1)},\dots,\mu^{(k)}}[X;1,1] = \sum_{\substack{\nu\vdash n\\ \ell(\nu)= n-a}} c_\nu h_{\nu-(1^{n-a})}[X] = 0,
\end{align*}
thus 
\[
[h_\nu]\left(\I_{\mu^{(1)},\dots,\mu^{(k)}}[X;q,t]\right)=0 \text{ for }\ell(\nu)=n-a.
\]
This proves the claim.

For the proof of the second case, suppose that $\ell(\nu)<n+1-k$. Recall that for a partition $\mu\vdash n$, we have,
\[
    \widetilde{H}_\mu[X;q,1]=\sum_{\nu\vdash n} (q-1)^{n-\ell(\nu)}A_{\mu,\nu}(q)h_\nu[X],
\]
for some $A_{\mu,\nu}(q)\in\mathbb{Z}[q]$ \cite[Proposition 1.1]{GHQR19}. By the definition of $\I_{\mu^{(1)},\dots,\mu^{(k)}}[X;q,t]$, we have,
\[
    [h_\nu]\left(\I_{\mu^{(1)},\dots,\mu^{(k)}}[X;q,1]\right)=
    \sum_{i=1}^{k} \left(\prod_{j\neq i}\dfrac{T^{(j)}}{T^{(j)}-T^{(i)}}\right)(q-1)^{n-\ell(\nu)}A_{\mu^{(i)},\nu}(q),
\]
where we abuse our notation to write $T^{(i)}=T_{\mu^{(i)}}\vert_{t=1}$. Note that $(\prod_{j\neq i}\dfrac{T^{(j)}}{T^{(j)}-T^{(i)}})$ has a pole of degree $k-1$ at $q=1$. Since $n-\ell(\nu)>k-1$, each term 
\[
    \left(\prod_{j\neq i} \dfrac{T^{(j)}}{T^{(j)}-T^{(i)}}\right)(q-1)^{n-\ell(\nu)}
\]
vanishes at $q=1$.

So far, we have proved that $h$-coefficients vanish for $\ell(\nu)\neq n+1-k$:
\[
    \I_{\mu^{(1)},\dots,\mu^{(k)}}[X;1,1] = \sum_{\substack{\nu\vdash n\\ \ell(\nu)= n+1-k}} c_\nu h_\nu[X]
\]
for some coefficients $c_\nu$'s.
Finally, by Theorem~\ref{thm: main theorem} (c) and \eqref{eq: e perp homogeneous} 
\begin{align*}
    e^\perp_{n+1-k}\I_{\mu^{(1)},\dots,\mu^{(k)}}[X;1,1] = \sum_{\substack{\nu\vdash n\\ \ell(\nu)\le n-k+m}} c_\nu h_{\nu-(1^{n+1-k})}[X] = D_{k-1}[X;1,1].
\end{align*}
Then by Lemma~\ref{lemma: h-expansion for Nabla e_n}, we have 
\[
    c_\nu = (-1)^{k-1-\ell(\lambda)}\Krew(\lambda+(1^{n+1-k})),
\]
for the partition $\lambda$ such that $\nu=\lambda+(1^{n+1-k})$.
\end{proof}

In \cite{BG99}, Bergeron and Garsia demonstrated that the `dimension' of the Macdonald intersection polynomial is $n!/k$. More precisely, they proved Corollary~\ref{cor: n!/k}. In Appendix~\ref{sec: n!/k appendix}, we present another proof of Corollary~\ref{cor: n!/k} using Theorem~\ref{thm: h-expansion, Kreweras}.

\begin{corollary}\label{cor: n!/k} With the assumptions in Theorem~\ref{thm: h-expansion, Kreweras}, we have
\begin{equation*}
    \langle\I_{\mu^{(1)},\dots,\mu^{(k)}}[X;1,1],e_{(1^n)}\rangle=n!/k.
\end{equation*}    
\end{corollary}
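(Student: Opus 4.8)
The plan is to evaluate the Hall inner product $\langle \I_{\mu^{(1)},\dots,\mu^{(k)}}[X;1,1], e_{(1^n)}\rangle$ by using the $h$-expansion from Theorem~\ref{thm: h-expansion, Kreweras}. Recall that $\langle h_\lambda, e_{(1^n)}\rangle = \langle h_\lambda, h_1^n\rangle = $ the number of ways to write $(1^n)$ as an ordered refinement compatible with $\lambda$, which is the multinomial coefficient $\binom{n}{\lambda_1,\lambda_2,\dots}$ (equivalently, $n!/\prod_i \lambda_i!$ times the number of distinct rearrangements; the cleanest statement is $\langle h_\lambda, e_{(1^n)}\rangle = \binom{n}{\lambda}$, the number of set partitions of $[n]$ into blocks of sizes given by the parts of $\lambda$, counted with the natural multinomial). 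So pairing the expansion of Theorem~\ref{thm: h-expansion, Kreweras} against $e_{(1^n)}$ reduces the corollary to a purely numerical identity
\[
    \sum_{\lambda\vdash k-1} (-1)^{k-1-\ell(\lambda)} \Krew(\lambda+(1^{\ell(\lambda)})) \binom{n}{\lambda+(1^{n+1-k})} = \frac{n!}{k}.
\]

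The key steps, in order, would be: first, record the value $\langle h_\nu, e_{(1^n)}\rangle$ as a multinomial coefficient and substitute $\nu = \lambda + (1^{n+1-k})$ for $\lambda \vdash k-1$; second, rewrite the Kreweras number $\Krew(\lambda+(1^{\ell(\lambda)}))$ via its combinatorial interpretation given in Section~\ref{Sec: Macdonald intersection polynomials at q=t=1} as a count of Dyck paths (or noncrossing partitions) whose block-size/run-length multiset is $\lambda+(1^{\ell(\lambda)})$; third, interpret the alternating sum combinatorially. The natural route is inclusion–exclusion: the sum over $\lambda$ with signs $(-1)^{k-1-\ell(\lambda)}$ together with the brick-tabloid/composition bookkeeping that already appeared in Lemma~\ref{lemma: h-expansion for Nabla e_n} should collapse, by the standard $e$-to-$h$ sign cancellation, to a count of some explicit combinatorial family of size $n!/k$ — e.g. pairs consisting of a noncrossing partition of $[k]$ (or a parking-function-type object) together with a labeling, the total being $(n-1)!\cdot \frac{1}{k}\binom{2k-2}{k-1}\cdot(\text{something})$ that telescopes to $n!/k$. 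Alternatively, and perhaps more cleanly, one can bypass the numerics entirely: apply $\langle -, e_{(1^n)}\rangle = \langle e_{(1^{n+1-k})}^\perp(-), e_{(1^{k-1})}\rangle$, use Theorem~\ref{thm: main theorem}(b)–(c) to replace $e_{n+1-k}^\perp \I_{\mu^{(1)},\dots,\mu^{(k)}}$ by $T_{\cap \mu^{(i)}}\nabla e_{k-1}$ specialized at $q=t=1$, note $T_{\cap\mu^{(i)}}|_{q=t=1}=1$, and conclude $\langle \I[X;1,1], e_{(1^n)}\rangle = \langle \nabla e_{k-1}|_{q=t=1}, e_{(1^{k-1})}\rangle = \mathcal{H}_{k-1}(1,1) = k^{k-2}\cdot$? — wait, this gives the dimension of $\DR_{k-1}$, which is $k^{k-1}$... no: actually $\langle \nabla e_{k-1}, e_{1^{k-1}}\rangle|_{q=t=1} = \dim \DR_{k-1} = k^{k-2}$, which is \emph{not} $n!/k$. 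So the $\perp$-shortcut must instead pair against $e_{(1^n)}$ directly before skewing, which does not commute past the $e^\perp$ cleanly — hence the honest route is the multinomial identity above, which should be provable by a sign-reversing involution or by recognizing it as a known Kreweras convolution identity.

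The main obstacle will be establishing the numerical identity $\sum_{\lambda\vdash k-1}(-1)^{k-1-\ell(\lambda)}\Krew(\lambda+(1^{\ell(\lambda)}))\binom{n}{\lambda+(1^{n+1-k})} = n!/k$. The difficulty is that the multinomial coefficient depends on $n$, not just on $k$, so one expects to factor out $\binom{n}{k-1}\cdot(n-k+1)!$ or similar and reduce to a $k$-only identity; the residual $k$-only sum $\sum_{\lambda\vdash k-1}(-1)^{k-1-\ell(\lambda)}\Krew(\lambda+(1^{\ell(\lambda)}))\binom{k-1}{\lambda}$ (or a close variant) should equal $(k-1)!/k \cdot(\text{correction})$ and be amenable to the brick-tabloid inclusion–exclusion already set up in the proof of Lemma~\ref{lemma: h-expansion for Nabla e_n}, essentially re-running that bijection $\Psi_\lambda$ and counting fixed multiplicities. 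I would first verify the identity for small $k$ to pin down the exact combinatorial family being counted, then construct the bijection between that family and a transparent set of size $n!/k$ (for instance, cyclic equivalence classes of permutations of $[n]$, or permutations fixed-point-structured mod a $\mathbb{Z}/k$ action).
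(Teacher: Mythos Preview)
Your reduction step is exactly the paper's: pair the $h$-expansion of Theorem~\ref{thm: h-expansion, Kreweras} against $e_{(1^n)}=h_1^n$, use $\langle h_\nu,e_{(1^n)}\rangle=\binom{n}{\nu}$, and arrive at the numerical identity
\[
\sum_{\lambda\vdash k-1}(-1)^{k-1-\ell(\lambda)}\Krew\bigl(\lambda+(1^{\ell(\lambda)})\bigr)\binom{n}{\lambda+(1^{n+1-k})}=\frac{n!}{k}.
\]
You are also right to abandon the $e_{n+1-k}^\perp$ shortcut: skewing by $e_{n+1-k}$ and then pairing with $e_1^{k-1}$ computes $\langle \I[X;1,1],\,e_{n+1-k}\cdot e_1^{k-1}\rangle$, not $\langle \I[X;1,1],\,e_1^{n}\rangle$, so that route genuinely does not reach the target.

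The gap is that you do not actually prove the numerical identity. Your suggestions (sign-reversing involution, brick-tabloid cancellation, re-running $\Psi_\lambda$, or recognizing a Kreweras convolution) remain at the level of heuristics, and the ``factor out and reduce to a $k$-only identity'' idea is not quite right either: the identity genuinely depends on $n$ through the multinomial, but in a controlled way. The paper's route is concrete and rather different from what you sketch. One rewrites the sum over partitions as a sum over compositions $\alpha\models k-1$, uses the explicit product form of the Kreweras number to factor out $n!/k!$, and is left with
\[
\sum_{m=0}^{k-2}(-1)^m\sum_{\substack{\alpha\models k-1\\ \ell(\alpha)=k-1-m}}\frac{(2k-2-m)!}{(k-1-m)!\prod_i(\alpha_i+1)!}=(k-1)!.
\]
The inner sum is identified (via a double inclusion--exclusion on compositions with entries $\geq 2$) with the Ward number $T(k-1,m)$ from OEIS \texttt{A181996}, after which the alternating sum $\sum_m(-1)^mT(k-1,m)=(k-1)!$ follows by a short induction using the Ward recurrence $T(n,k)=(2n-1-k)T(n-1,k)+(n-k)T(n-1,k-1)$. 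None of your proposed combinatorial mechanisms anticipates this Stirling/Ward connection, and without it the identity is not established.
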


\section{Future directions}\label{Sec: Future questions}
\subsection{Geometric proof of Theorem~\ref{thm: main theorem}}
The Macdonald polynomials and the diagonal coinvariant algebra have significant connections to the geometry of Hilbert schemes \cite{Hai01, Hai02} and affine Springer fibers \cite{GKM04, Hik14, CO18, Mel20, CM21}. While the relationship between the combinatorial and geometric aspects of Macdonald polynomials has been extensively studied, understanding our main results in this paper from a geometric perspective remains elusive. We hope that our paper inspires new geometric findings that contribute to the theory of Macdonald polynomials.

\subsection{Combinatorial formula for Macdonald intersection polynomials}
Theorem~\ref{thm: main theorem} (c) shows that the shuffle formula lives inside the Macdonald intersection polyomial. This implies that finding the $F$-expansion of Macdonald intersection polynomials is challenging. In \cite{KLO22}, the authors provide a (positive) combinatorial formula for the monomial and the fundamental quasisymmetric expansion of $\I_{\mu^{(1)},\mu^{(2)}}[X;q,t]$ introducing Butler permutations.

The science fiction conjecture predicts that the Macdonald intersection polynomials $\I_{\mu^{(1)},\dots,\mu^{(k)}}[X;q,t]$ is Schur positive. Nevertheless, we do not know such a Schur positivity result even for $k=2$, which was conjectured by Butler~\cite{But94}. Partial progress in this direction for $k=2$ was made in \cite{KLO22}.

 While we provide an explicit $h$-expansion for $\I_{\mu^{(1)},\dots,\mu^{(k)}}[X;1,1]$ (Theorem~\ref{thm: h-expansion, Kreweras}), we lack a (positive) combinatorial formula for the Schur expansion, even for a monomial expansion. The discovery of such formulas would be intriguing, potentially leading to combinatorial formulas for other specializations like $\I_{\mu^{(1)},\dots,\mu^{(k)}}[X;q,1]$ or even for $\I_{\mu^{(1)},\dots,\mu^{(k)}}[X;q,t]$ in general.

\subsection{Macdonald intersection polynomials for generalized Macdonald polynomials} 
Motivated by geometry and representation theory associated with Macdonald polynomials, several variants of Macdonald polynomials have been introduced. One notable example is the wreath Macdonald polynomial, initially introduced by Haiman in \cite{Hai02} and more recently investigated by Orr, Shimozono, and Wen \cite{Wen19, OSW22}. Additionally, Blasiak, Haiman, Morse, Pun, and Seelinger have defined a generalization of Macdonald polynomials \cite{BHMPS23} based on the theory of Catalanimals \cite{BHMPS21LLT}.

By examining these diverse variants of Macdonald polynomials within the framework of Macdonald intersection polynomials, one might try to expand the realm of the combinatorial theory of Macdonald polynomials, particularly related to the shuffle theorem. Moreover, this investigation has the potential to offer fresh insights into the science fiction conjecture itself.

\appendix
\section{Technical lemmas and the proof of Corollary~\ref{cor: n!/k}}\label{Sec: Appendix}
In this appendix, we present a collection of technical lemmas. Section~\ref{subsec: Inclusion-Exclusion principle for lightningbolts} provides Lemma~\ref{Lem: recurrence tilde LV and LV}, which explores the relationship between lightning bolt formulas $\LB(M)$ and $\widetilde{\LB}(M)$, which is analogous to Lemma~\ref{lem: bar Rk to Rk reduction}.  Section~\ref{subsec: Folklore lemmas} includes several technical lemmas. Notably, Lemma~\ref{Lemma: degree less than k-1 vanishes} is utilized in the proof of Lemma~\ref{Lemma: sum prod 1/(x_i-x_j)} and Lemma~\ref{lem: sf degree less k-1 vanishes}, and Corollary~\ref{Cor: symmetric polynomial version of Lemma A3} is employed to derive \eqref{eq: before involution}. In Section~\ref{sec: n!/k appendix}, we provide a detailed proof of Corollary~\ref{cor: n!/k}.

\subsection{Inclusion-Exclusion principle for lightning bolt formulas}\label{subsec: Inclusion-Exclusion principle for lightningbolts}

The following lemma connects the quantities $\LB(M)$ and $\widetilde{\LB}(M-E)$ for Fibonacci matrices $E$.

\begin{lem}\label{Lem: recurrence tilde LV and LV} Let $M\in\Mat^{\ge0}_{r\times\ell}$. Then we have
\begin{equation*}
    \sum_{E\in \mathrm{Fib}_{r,\ell}}
    (-1)^{|E|}q^{\LB(M;E)-\sum_{(i,j):E_{i,j}=1}M_{i,j}} 
    \widetilde{\LB}(M-E)= \LB(M).
\end{equation*}
\end{lem}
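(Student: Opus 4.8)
The plan is to establish the identity by induction on $|M|$, in direct parallel with the proof of Lemma~\ref{lem: bar Rk to Rk reduction}. The key algebraic observation is that both $\LB(M)$ and $\widetilde{\LB}(M)$ admit ``peeling'' recursions: if $c$ is the smallest index $i$ with $M_{i,\ell'}\neq 0$ for the rightmost nonzero column $\ell'$, and $M'$ is obtained from $M$ by decrementing $M_{c,\ell'}$ by one, then stripping off one cell from position $(c,\ell')$ multiplies $\LB(M')$ (resp.\ $\widetilde{\LB}(M')$) by $\binom{\LB(M;(c,\ell'))-1}{M_{c,\ell'}}_q \big/ \binom{\LB(M';(c,\ell'))-1}{M_{c,\ell'}-1}_q$ (resp.\ the analogous ratio with $\binom{\LB(M;(c,\ell'))}{M_{c,\ell'}}_q$). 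So the strategy is: fix the rightmost nonzero column of $M$, split the sum over $\mathrm{Fib}_{r,\ell}$ according to whether $E$ has its unique $1$ in that last column and, if so, in which row, and reduce each piece to the statement for a strictly smaller matrix.

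More concretely, first I would handle the base cases: when $M = O$ both sides equal $1$ (the only Fibonacci matrix is the zero matrix), and when all nonzero columns of $M$ lie in the first column the left side collapses to $\widetilde{\LB}(M) = \LB(M)$ since $E_{\cdot,1}=0$ forces $E=O$. For the inductive step, let $\ell'$ be the rightmost nonzero column of $M$ (so $\ell'\geq 2$). Partition $\mathrm{Fib}_{r,\ell}$ into those $E$ with $E_{\cdot,\ell'}=0$ and those with $E_{a,\ell'}=1$ for some $a$. For the first class, each such $E$ is a Fibonacci matrix supported in columns $1,\dots,\ell'-1$, and one peels the $(c,\ell')$-cell off $M$ and $M-E$ simultaneously; the peeling factors are governed by $\LB(M;(c,\ell'))$ and $\LB(M-E;(c,\ell'))$, which differ only by a shift controlled by the entries of $E$ inside $\LLB(c,\ell')$ — this is exactly the bookkeeping already carried out in \eqref{eq: additional pinv}. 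For the second class, the condition $E_{a,\ell'}=1$ with the Fibonacci constraint $E_{b,\ell'-1}=1 \Rightarrow a > b$ restricts which sub-Fibonacci-matrices in the remaining columns are allowed, and one peels cell $(a,\ell')$ off both $M$ and $M$; the sign $(-1)^{|E|}$ picks up a factor $-1$ and the power of $q$ picks up $\LB(M;(a,\ell')) - M_{a,\ell'}$, which is precisely what is needed for a $q$-Pascal (i.e.\ $q$-binomial) recursion $\binom{n}{k}_q = \binom{n-1}{k}_q + q^{n-k}\binom{n-1}{k-1}_q$ to close the induction.

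The main obstacle I anticipate is the careful matching of the shift parameters: $\LB(M;(i,j))$ and $\LB(M-E;(i,j))$ are not equal, and one must verify that the exponents $\LB(M;E) - \sum_{(i,j):E_{i,j}=1}M_{i,j}$ reorganize exactly so that, after peeling, the leftover sum over the smaller Fibonacci set has the shifted exponents $\LB(M';E') - \sum M'_{i,j}$ required to invoke the inductive hypothesis for $M'$. This is the same type of lightning-bolt accounting appearing in Lemma~\ref{lem: basic property of lbm} and the proof of Lemma~\ref{Lem: fixed points = tilde lb}, and the cleanest route is probably to prove a one-cell ``peeling lemma'' first — expressing both sides' dependence on the last nonzero entry via a single $q$-binomial identity — and then let the induction run mechanically. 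An alternative, perhaps slicker, approach would be to recognize the alternating sum over $\mathrm{Fib}_{r,\ell}$ as a $q$-analogue of an inclusion–exclusion over ``forbidden'' lightning-bolt positions and to prove the identity column by column using the product structure of $\LB$; but I expect the inductive peeling argument above to be the most self-contained given the tools already developed in the paper.
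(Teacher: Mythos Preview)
Your primary plan (induction on $|M|$ via peeling) is not what the paper does; the paper takes exactly your ``alternative, perhaps slicker'' route. It sets $X_{i,j}=\binom{\LB(M;(i,j))-1}{M_{i,j}}_q$ and $Y_{i,j}=q^{\LB(M;(i,j))-M_{i,j}}\binom{\LB(M;(i,j))-1}{M_{i,j}-1}_q$, uses $q$-Pascal to get $\binom{\LB(M;(i,j))}{M_{i,j}}_q=X_{i,j}+Y_{i,j}$, and verifies that (up to the common first-column $q$-multinomial) $q^{\LB(M;E)-\sum_{E_{i,j}=1}M_{i,j}}\,\widetilde{\LB}(M-E)=\prod_{i,\,j\geq 2}\wt(E;(i,j))$, where $\wt(E;(i,j))$ equals $Y_{i,j}$, $X_{i,j}$, or $X_{i,j}+Y_{i,j}$ according as $E_{i,j}=1$, some other entry of $E$ equal to $1$ lies in $\LLB(i,j)$, or neither. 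Treating the $X_{i,j},Y_{i,j}$ as formal variables and expanding, the coefficient of each monomial $Z_A$ on the left is $\sum_{E\leq E(A)}(-1)^{|E|}$ for a certain Fibonacci matrix $E(A)$ determined by $A$; this vanishes unless $E(A)=O$, which forces $A=\emptyset$, leaving only $\prod X_{i,j}=\LB(M)$ (divided by the multinomial). No induction, no peeling.

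The obstacle you anticipate in the inductive route is real and is precisely why the paper's approach is cleaner. When you peel the cell $(c,\ell')$, the factor $\binom{\LB(M-E;(c,\ell'))}{M_{c,\ell'}}_q$ inside $\widetilde{\LB}(M-E)$ still depends on whether the column-$(\ell'-1)$ entry of $E$ sits at some row $a\leq c$, so it cannot be pulled out of the sum over $E$ uniformly; you are forced into a three-way split (no $1$ in the relevant lightning bolt, a $1$ in column $\ell'-1$ at row $\leq c$, or $E_{c,\ell'}=1$) which, at the level of a single cell, is exactly the trichotomy $X+Y$ / $X$ / $Y$ that the paper exploits globally. Your induction would therefore rediscover the paper's product decomposition one column at a time. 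What the paper's route buys is that the Fibonacci constraints are absorbed wholesale into the condition $E\leq E(A)$, and the alternating sign collapses everything in a single inclusion--exclusion step rather than through iterated case analysis.
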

\begin{proof}
For $j>1$, let $X_{i,j}=\binom{\LB(M;(i,j))-1}{M_{i,j}}_q$ and $Y_{i,j}=q^{\LB(M;(i,j))-M_{i,j}}\binom{\LB(M;(i,j))-1}{M_{i,j}-1}_q$.
Given $E\in \mathrm{Fib}_{r,\ell}$ and $j>1$, we define a weight as follows:
\begin{equation*}
\wt(E;(i,j)) =
\begin{cases}
X_{i,j} & \text{if there exists } (a,b) \in \LB(i,j)\setminus\{(i,j)\} \text{ such that } E_{a,b}=1,\\
Y_{i,j} & \text{if } E_{i,j}=1,\\
X_{i,j}+Y_{i,j} & \text{otherwise}.
\end{cases}
\end{equation*}
Note that $\wt(E;(i,j))$ is well-defined since the first and second conditions cannot occur simultaneously, given that $E$ is a Fibonacci matrix. By applying the $q$-Pascal's identity, we have
\begin{equation*}
\binom{\LB(M;(i,j))}{M_{i,j}}_q=X_{i,j}+Y_{i,j}.
\end{equation*}
Therefore, by letting $M'=M-E$, we obtain
\begin{equation*}
\wt(E;(i,j)) =
\begin{cases}
\binom{\LB(M';(i,j))}{M'{i,j}}_q & \text{if } E_{i,j}=0,\\
q^{\LB(M;(i,j))-M{i,j}}\binom{\LB(M';(i,j))}{M'_{i,j}}_q & \text{if } E_{i,j}=1.
\end{cases}
\end{equation*}
Then we conclude that
\begin{equation*}q^{\LB(M;E)-\sum_{(i,j):E_{i,j}=1}M_{i,j}}
\widetilde{\LB}(M-E)=\binom{M_{1,1}+\dots+M_{r,1}}{M_{1,1},\dots,M_{r,1}}_q \prod_{\substack{1\leq i\leq r \\ 1<j\leq \ell}}\wt(E;(i,j))
\end{equation*}

Furthermore, from the definition of $\LB(M)$, we have
\begin{equation*}\LB(M)=\binom{M_{1,1}+\dots+M_{r,1}}{M_{1,1},\dots,M_{r,1}}_q \prod_{\substack{1\leq i\leq r \\ 1<j\leq \ell}}X_{i,j}.
\end{equation*}

We claim that the equation
\begin{equation}\label{eq: appendix eq1}
\sum_{E\in \mathrm{Fib}{r,\ell}}(-1)^{E}\prod_{\substack{1\leq i\leq r \\ 1<j\leq \ell}}\wt(E;(i,j))=\prod_{\substack{1\leq i\leq r \\ 1<j\leq \ell}}X_{i,j}
\end{equation}
holds true as polynomials in the variables $X_{i,j}$ and $Y_{i,j}$. From now on, we will consider $X_{i,j}$ and $Y_{i,j}$ as indeterminates.

For a $0$-$1$ matrix $A$ of size $r\times\ell$ such that the first column is a zero column, we define a monomial $Z_A=\prod_{\substack{1\leq i\leq r \\ 1<j\leq \ell}}Z_{i,j}$, where $Z_{i,j}=X_{i,j}$ if $A_{i,j}=0$ and $Z_{i,j}=Y_{i,j}$ if $A_{i,j}=1$. Then, for $E\in \mathrm{Fib}_{r,\ell}$, the term $\prod_{\substack{1\leq i\leq r \\ 1<j\leq \ell}}\wt(E;(i,j))$ is the sum of monomials $Z_A$. Moreover, $Z_A$ appears in $\prod_{\substack{1\leq i\leq r \\ 1<j\leq \ell}}\wt(E;(i,j))$ if and only if the following condition is satisfied: For each $(i,j)$ such that $E_{i,j}=1$, we have $A_{i,j}=1$ and $A_{a,b}=0$ for $(a,b)\in \LLB(i,j)\setminus\{(i,j+1)\}$.

Now, we define a Fibonacci matrix $E(A)$ associated with $A$ by letting its entries be defined as follows:
\[
    E(A)_{i,j} =
    \begin{cases}
        1 &\text{ if } A_{i,j}=1 \text{ and } A_{a,b}=0 \text{ for } (a,b) \in \LLB(i,j+1)\setminus\{(i,j)\}\\
        0 &\text{ otherwise}.
    \end{cases}
\]

The monomial $Z_A$ is present in the term $\prod_{\substack{1\leq i\leq r \\ 1<j\leq \ell}}\wt(E;(i,j))$ if and only if $E\leq E(A)$. Here, the $\leq$ denotes the entry-wise comparison. The coefficient of $Z_A$ on the left-hand side of \eqref{eq: appendix eq1} is given by $\sum_{E\leq E(A)}(-1)^{|E|}$, which is zero unless $E(A)$ is a zero matrix. On the other hand, if $A$ has a nonzero entry, the matrix $E(A)$ is not a zero matrix since the right-uppermost position $(i,j)$ where $A_{i,j}=1$ implies that $E(A)_{i,j}=1$. Therefore, only a single monomial $Z_O$ survives on the left-hand side of \eqref{eq: appendix eq1}, where $O$ is the zero matrix,  confirming the validity of the claim.
\end{proof}

\subsection{Folklore lemmas}\label{subsec: Folklore lemmas}

\begin{lem}\label{Lemma: degree less than k-1 vanishes}
    For $1\le i \le k$, let $g_i(z_1,\dots,z_k)$ be the polynomials in $z_1,\dots,z_k$. Suppose that the polynomials $g_i$ are of degree $d$ less than $k-1$ and satisfy the following condition: $g_i=g_j$ if $z_i=z_j$. Then we have
    \[
        \sum_{i=1}^k \dfrac{g_i}{\prod_{j\neq i}(z_j-z_i)} = 0.
    \]
\end{lem}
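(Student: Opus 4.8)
The plan is to reduce this to the standard partial-fraction identity (Lemma~\ref{Lemma: sum prod 1/(x_i-x_j)}) by exploiting the symmetry hypothesis $g_i = g_j$ whenever $z_i = z_j$ to show that the left-hand side is actually a polynomial, and then to bound its degree.

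First I would observe that the expression
\[
    G(z_1,\dots,z_k) \coloneqq \sum_{i=1}^k \frac{g_i}{\prod_{j\neq i}(z_j - z_i)}
\]
is a priori a rational function whose only possible poles occur along the hyperplanes $z_i = z_j$ for $i \neq j$. Fix a pair $i < j$. Combining the $i$-th and $j$-th summands over a common denominator, the numerator is
\[
    g_i \prod_{\ell \neq i,j}(z_\ell - z_j) \cdot(-1) + g_j \prod_{\ell \neq i,j}(z_\ell - z_i),
\]
up to an overall sign coming from the $(z_j - z_i)$ versus $(z_i - z_j)$ factors; when $z_i = z_j$ the two products $\prod_{\ell\neq i,j}(z_\ell - z_j)$ and $\prod_{\ell\neq i,j}(z_\ell - z_i)$ agree, and by hypothesis $g_i = g_j$ there as well, so the numerator vanishes on $z_i = z_j$ and hence is divisible by $(z_i - z_j)$. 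This cancels the simple pole. Since all the other summands $g_\ell/\prod_{m\neq\ell}(z_m - z_\ell)$ with $\ell \neq i,j$ are regular along $z_i = z_j$, the full sum $G$ has no pole along $z_i = z_j$. As $i<j$ was arbitrary, $G$ is a polynomial.

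Next I would bound the degree of $G$. Treating $z_2, \dots, z_k$ as fixed generic parameters and regarding $G$ as a function of $z_1$ alone, Lemma~\ref{Lemma: sum prod 1/(x_i-x_j)} (applied in the single variable $z = z_1$ to the polynomial whose values at $z_1 = z_i$ are $g_i$, which has degree at most $d < k-1$ in $z_1$) shows the $z_1$-degree contribution is controlled; more cleanly, one can argue directly: $G$ is a polynomial that is symmetric under permuting the $z_i$ (the defining sum is manifestly symmetric), homogeneous-ish in the sense that each term has degree $d - (k-1) < 0$ as a rational function, so $G$, being a polynomial of negative total degree, must be $0$. The cleanest phrasing: each summand $g_i / \prod_{j\neq i}(z_j - z_i)$ is a rational function of total degree $d - (k-1) \le (k-2) - (k-1) = -1 < 0$; a sum of rational functions each of negative degree, if it happens to be a polynomial, must be the zero polynomial. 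Hence $G \equiv 0$.

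The main obstacle is making the "negative degree forces zero" step rigorous in the multivariate setting — one must be slightly careful that "degree of a rational function" behaves well under addition and that a genuine polynomial cannot have strictly negative degree. The safe route, which I would actually write out, is to induct on $k$ or to fix all but one variable and invoke the single-variable Lemma~\ref{Lemma: sum prod 1/(x_i-x_j)} directly: for generic fixed $z_2,\dots,z_k$, the function $z_1 \mapsto G(z_1,z_2,\dots,z_k)$ equals $\sum_i g_i(z_i,\ldots)/\prod_{j\neq i}(z_j - z_i)$, which by Lagrange interpolation is the coefficient-extraction of a degree-$\le d$ polynomial interpolant evaluated through a degree-$(k-1)$ node polynomial; since $d < k-1$, Lemma~\ref{Lemma: sum prod 1/(x_i-x_j)} gives $G = 0$ for all such specializations, and since this holds on a Zariski-dense set, $G \equiv 0$.
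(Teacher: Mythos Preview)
Your core strategy---show that $G=\sum_i g_i/\prod_{j\neq i}(z_j-z_i)$ is actually a polynomial because the simple poles along each $z_i=z_j$ cancel (using $g_i=g_j$ there), and then kill it by a degree count---is exactly the paper's argument, just phrased dually. The paper clears denominators first, setting $P=G\cdot\prod_{i<j}(z_j-z_i)$, checks that $P$ vanishes on every hyperplane $z_\ell=z_m$ (your pole-cancellation step), and then observes $\deg P\le d+\binom{k}{2}-(k-1)<\binom{k}{2}$, forcing $P=0$. Your ``negative total degree, hence zero'' argument is precisely this after dividing back by the Vandermonde, and is easily made rigorous by the same clearing of denominators; your worry there is unfounded.

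Two cautions about the alternatives you offer. First, invoking Lemma~\ref{Lemma: sum prod 1/(x_i-x_j)} is circular in this paper: that lemma is \emph{proved} via the present one. Second, your ``safe route'' of fixing $z_2,\dots,z_k$ and applying the single-variable Lagrange identity does not work as stated. With $z_2,\dots,z_k$ fixed, each $g_i$ still depends on $z_1$, so the sum is not of the form $\sum_i g(z_i)/\prod_{j\neq i}(z_j-z_i)$ for a single polynomial $g$ of one variable; moreover, the summands with $i\neq 1$ have $z_1$-degree only $-1$ as rational functions (denominator linear in $z_1$), so the per-variable degree bound does not give anything negative unless $d=0$. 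Stick with the total-degree argument---it is clean and it is what the paper does.
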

\begin{proof}
    Consider the polynomial
    \[
        P(z_1,\dots,z_k)=\sum_{i=1}^k (-1)^{i-1} {g_i}{\prod_{\substack{1\le a < b \le k \\ a,b\neq i}}(z_b-z_a)}.
    \]
    Choose two integers $\ell<m$. Then by substituting $z_\ell=z_m$, the product $\prod_{\substack{1\le a < b \le k \\ a,b\neq i}}(z_b-z_a)$ vanishes, unless $i=\ell$ or $i=m$. Moreover, at $z_\ell=z_m$, we have
    \[
        \prod_{\substack{1\le a < b \le k \\ a,b\neq \ell}}(z_b-z_a) = (-1)^{m-\ell-1}\prod_{\substack{1\le a < b \le k \\ a,b\neq m}}(z_b-z_a)
    \]
    Therefore, the value of the $P(z_1,\dots,z_k)$ at $z_\ell=z_m$ is
    \begin{align*}
        P(z_1,\dots,z_k)|_{z_\ell=z_m} &= (-1)^{\ell-1}(g_\ell|_{z_\ell=z_m}){\prod_{\substack{1\le a < b \le k \\ a,b\neq \ell}}(z_b-z_a)} + (-1)^{m-1}(g_m|_{z_\ell=z_m}){\prod_{\substack{1\le a < b \le k \\ a,b\neq m}}(z_b-z_a)}\\
        &= (g_m|_{z_\ell=z_m}) {\prod_{\substack{1\le a < b \le k \\ a,b\neq \ell}}(z_b-z_a)} ( (-1)^{\ell-1} + (-1)^{m-1 + m -\ell - 1})= 0.
    \end{align*}
    This implies that the polynomial $P(z_1,\dots,z_k)$ is divided by $(z_m-z_\ell)$ for all choices of $1\le m < \ell \le k$. Therefore, we have
    \[
        P(z_1,\dots,z_k) = Q(z_1,\dots,z_k)\prod_{1\le i < j \le k} (z_j-z_i) 
    \]
    for some polynomial $Q$. Meanwhile, the degree of $P$ is $d + \dfrac{k(k-1)}{2} - k+1< \dfrac{k(k-1)}{2}$. This implies that the polynomial $Q$ is identically zero. By dividing the both sides of the equation $P(z_1,\dots,z_k)=0$ by $\prod_{1\le i < j \le k} (z_j-z_i)$, we prove the claim.
\end{proof}

\begin{lem}\label{Lemma: monomial symmetric sum} For a partition $\lambda$ of size less or equal to $k-1$, let $m_\lambda[z_1,z_2,\dots,z_{k-1}]$ be the corresponding monomial symmetric polynomial. Then we have
\[
    \sum_{i=1}^k \dfrac{z_i^{k-1-|\lambda|}\eta_{k,i}(m_\lambda[z_1,z_2,\dots,z_{k-1}]) }{\prod_{j \neq i} (z_j-z_i)}=(-1)^{k-1-\ell(\lambda)}\binom{\ell(\lambda)}{m_1(\lambda),m_2(\lambda),\dots},
\]
where $m_i(\lambda)$ is the multiplicity of $i$ in $\lambda$ and $\eta_{k,i}$ is the operator defined in Definition \ref{def: operator eta}.
\end{lem}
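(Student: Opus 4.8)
\textbf{Proof proposal for Lemma~\ref{Lemma: monomial symmetric sum}.}

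The plan is to reduce the statement to Lemma~\ref{Lemma: degree less than k-1 vanishes} together with the leading-term analysis of Lemma~\ref{Lemma: sum prod 1/(x_i-x_j)}, by viewing the left-hand side as a partial-fraction/Lagrange-interpolation sum. First I would fix $\lambda\vdash d$ with $d\le k-1$ and set $g_i(z_1,\dots,z_k)\coloneqq z_i^{k-1-d}\,\eta_{k,i}\!\left(m_\lambda[z_1,\dots,z_{k-1}]\right)$. Note that $\eta_{k,i}(m_\lambda[z_1,\dots,z_{k-1}])$ is exactly $m_\lambda$ evaluated in the $k-1$ variables $\{z_1,\dots,z_k\}\setminus\{z_i\}$, so each $g_i$ is a polynomial in $z_1,\dots,z_k$ of total degree $(k-1-d)+d=k-1$, and the family $\{g_i\}$ satisfies the symmetry hypothesis of Lemma~\ref{Lemma: degree less than k-1 vanishes}: if $z_i=z_j$ then $\{z_1,\dots,z_k\}\setminus\{z_i\}=\{z_1,\dots,z_k\}\setminus\{z_j\}$ as multisets and the powers $z_i^{k-1-d}=z_j^{k-1-d}$ agree, hence $g_i=g_j$. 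Since the $g_i$ have degree exactly $k-1$ rather than $<k-1$, Lemma~\ref{Lemma: degree less than k-1 vanishes} does not directly give $0$; instead, the standard argument behind it (forming $P=\sum_i(-1)^{i-1}g_i\prod_{a<b,\,a,b\ne i}(z_b-z_a)$, showing it is divisible by the full Vandermonde $\prod_{i<j}(z_j-z_i)$, and now of degree exactly $\binom{k}{2}$) shows the sum equals a \emph{constant}, namely the coefficient $Q$ in $P=Q\prod_{i<j}(z_j-z_i)$.

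The second step is to identify that constant. The cleanest route is to specialize or to extract the top-degree behaviour: the sum $\sum_i g_i/\prod_{j\ne i}(z_j-z_i)$ is a symmetric polynomial of degree $0$, so it suffices to compute it at a convenient point, e.g. by sending the variables to a generic geometric progression or, more simply, by recognizing the sum as the coefficient extraction in a Lagrange interpolation identity. Concretely, I would use the classical fact that for any Laurent polynomial the sum $\sum_{i=1}^{k} \frac{h(z_i)}{\prod_{j\ne i}(z_j-z_i)}$ computes (up to sign) the coefficient of $z^{k-1}$ in $h$ when $h$ has degree $\le k-1$ in a single variable $z$; here the dependence of $g_i$ on $z_i$ is $z_i^{k-1-d}$ times a symmetric function of the \emph{other} variables, so one expands $\eta_{k,i}(m_\lambda)$ in the power of $z_i$ implicitly present and tracks the monomials of $m_\lambda[z_1,\dots,z_{k-1}]$ that, after reinserting $z_i$, produce a total $z_i$-degree of $k-1$. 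Each monomial of $m_\lambda$ is $z_{a_1}^{\lambda_1}\cdots z_{a_{\ell}}^{\lambda_\ell}$ over distinct indices; combined with the factor $z_i^{k-1-d}$ and the requirement to hit degree $k-1$ in the variable being ``interpolated away'', a careful bookkeeping of which index plays the role of $i$ yields the multinomial count $\binom{\ell(\lambda)}{m_1(\lambda),m_2(\lambda),\dots}$ with the sign $(-1)^{k-1-\ell(\lambda)}$ coming from the $(-1)^{k-1}$ in Lemma~\ref{Lemma: sum prod 1/(x_i-x_j)} corrected by the $d-\ell(\lambda)$ extra degree absorbed.

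Alternatively — and this is the approach I would actually write up to avoid messy monomial bookkeeping — I would prove it by induction on the number of variables using the $q$-free, $t$-free version of the column-reduction already present in the paper (it is precisely the shape of Lemma~\ref{lem: bar Rk to Rk reduction} and Lemma~\ref{Lem: recurrence tilde LV and LV} specialized): split the sum according to whether the largest part of $\lambda$ is carried by the ``removed'' variable $z_i$ or not, apply the inductive hypothesis to $\lambda$ with its first part deleted in $k-1$ variables, and recombine. The base cases $\lambda=\varnothing$ (where the claim reduces to $\sum_i z_i^{k-1}/\prod_{j\ne i}(z_j-z_i)=(-1)^{k-1}$, a direct instance of Lemma~\ref{Lemma: sum prod 1/(x_i-x_j)}) and $\ell(\lambda)=1$ are immediate. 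The main obstacle I anticipate is exactly the sign/constant bookkeeping in either route: getting $(-1)^{k-1-\ell(\lambda)}$ rather than $(-1)^{k-1-|\lambda|}$ requires carefully accounting for how the monomial-symmetric (as opposed to monomial) structure collapses repeated parts, and one has to be disciplined about the orientation of the Vandermonde factors $\prod_{j\ne i}(z_j-z_i)$ versus $\prod_{i<j}(z_j-z_i)$; the polynomial identity part (degree count, divisibility by the Vandermonde) is routine and follows the template of Lemma~\ref{Lemma: degree less than k-1 vanishes} verbatim.
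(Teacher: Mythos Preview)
Your second (inductive) approach is essentially the paper's, but two imprecisions would need fixing before it goes through. The paper inducts on $|\lambda|$ with $k$ fixed, not on the number of variables, and the key splitting identity is
\[
\eta_{k,i}\bigl(m_\lambda[z_1,\dots,z_{k-1}]\bigr) \;=\; m_\lambda[z_1,\dots,z_k]\;-\;\sum_{r=1}^{s} z_i^{a_r}\,\eta_{k,i}\bigl(m_{\lambda^{(r)}}[z_1,\dots,z_{k-1}]\bigr),
\]
where $a_1,\dots,a_s$ are the \emph{distinct} parts of $\lambda$ and $\lambda^{(r)}$ is $\lambda$ with one copy of $a_r$ removed. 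The split is over \emph{all} distinct parts that $z_i$ might carry, not just the largest; this is precisely what makes Pascal's multinomial identity $\sum_r \binom{\ell(\lambda)-1}{m_1,\dots,m_r-1,\dots}=\binom{\ell(\lambda)}{m_1,m_2,\dots}$ appear at the end. After multiplying by $z_i^{k-1-|\lambda|}$ and summing over $i$, the first term on the right becomes $m_\lambda[z_1,\dots,z_k]\sum_i z_i^{k-1-|\lambda|}/\prod_{j\ne i}(z_j-z_i)$, which vanishes by Lemma~\ref{Lemma: degree less than k-1 vanishes} since $|\lambda|\ge 1$ in the inductive step; each remaining term matches the inductive hypothesis for $\lambda^{(r)}$ exactly, because $k-1-|\lambda|+a_r=k-1-|\lambda^{(r)}|$. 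Your worry about the sign is resolved automatically here: each step of the induction drops $\ell(\lambda)$ by one and flips the sign once.

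Your first approach has a genuine gap at the constant-identification step. You correctly argue that the sum is a constant (a degree-zero symmetric polynomial), but the single-variable Lagrange recipe ``extract the coefficient of $z^{k-1}$ in $h(z)$'' does not apply: $\eta_{k,i}(m_\lambda)$ involves no $z_i$ at all, so $g_i$ has degree exactly $k-1-|\lambda|$ in $z_i$, which is strictly less than $k-1$ whenever $|\lambda|>0$. Taken literally your recipe would return $0$ in all nontrivial cases. The inductive route above sidesteps this entirely.
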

\begin{proof}
    We proceed by induction on a size of $\lambda$. For the base case $|\lambda|=0$, we need to show the following:
    \[
        \sum_{i=1}^k \dfrac{z_i^{k-1}}{\prod_{j \neq i} (z_j-z_i)} =(-1)^{k-1}.
    \]
    Recall that the determinant of the Vandermonde matrix 
    \[
        V_n := \begin{vmatrix}
    1 & z_1 & z_1^2 & \cdots & z_1^{k-1} \\
    1 & z_2 & z_2^2 & \cdots & z_2^{k-1} \\
    \vdots & \vdots & \vdots & \ddots & \vdots \\
    1 & z_k & z_k^2 & \cdots & z_k^{k-1}
    \end{vmatrix}
    \]   
    is well-known to be
    \begin{equation*}
    \det(V_n) = \prod_{1\leq i < j \leq n} (z_j-z_i).
    \end{equation*}
    On the other hand, the cofactor expansion of $ \det(V_n)$ along the last column gives 
    \[
        \det(V_n)=\sum_{i=1}^k (-1)^{k-i} {z_i^{k-1}}{\prod_{\substack{1\le a < b \le k \\ a,b\neq i}}(z_b-z_a)},
    \] 
    thus we have
    \[ \prod_{1\leq i < j \leq n} (z_j-z_i) = \sum_{i=1}^k (-1)^{k-i} {z_i^{k-1}}{\prod_{\substack{1\le a < b \le k \\ a,b\neq i}}(z_b-z_a)}\]
    Dividing both sides by $\prod_{1\leq i < j \leq n} (z_j-z_i)$ proves the claim. Now consider a partition $\lambda$ such that $0<|\lambda|\leq k-1$ and assume we proved the claim for partitions of size less than $|\lambda|$.  Suppose there are $r$ distinct parts $a_1,\dots,a_r$ of multiplicity $b_1,\dots,b_r$ in $\lambda$. We have
    \[
        \eta_{k,i}(m_\lambda[z_1,\dots,z_{k-1}]) = m_{\lambda}[z_1,\dots,z_k] - \sum_{j=1}^{r}  z_i^{a_j } \eta_{k,i}(m_{\lambda^{(j)}}[z_1,\dots,z_{k-1}]),
    \]
    where $\lambda^{(j)}$ is the partition obtained by removing a part equals to $a_j$ in the partition $\lambda$. Therefore we obtain 
      \[
        \sum_{i=1}^k \dfrac{z_i^{k-1-|\lambda|}\eta_{k,i}(m_\lambda[z_1,\dots,z_{k-1}]) }{\prod_{j \neq i} (z_j-z_i)}=m_{\lambda}[z_1,\dots,z_k] \left(\sum_{i=1}^k \dfrac{z_i^{k-1-|\lambda|} }{\prod_{j \neq i} (z_j-z_i)}\right) - \sum_{j=1}^{r}(-1)^{k-1-|\lambda^{(j)}|}\binom{\ell(\lambda^{(j)})}{b_1,\dots,b_{j-1},b_{j}-1,b_{j+1}\dots,b_{r}}
    \]
    where we used the induction hypothesis for partitions $\lambda^{(j)}$'s. The first term on the right-hand side vanishes by Lemma~\ref{Lemma: degree less than k-1 vanishes} and since we have $\ell(\lambda^{(j)})=\ell(\lambda)-1$, the right-hand side becomes
    \begin{equation*}
        (-1)^{k-1-|\lambda|}\sum_{i=1}^{r}\binom{\ell(\lambda)-1}{b_1,\dots,b_{j-1},b_{j}-1,b_{j+1}\dots,b_{r}}
    \end{equation*}
    which equals $(-1)^{k-1-|\lambda|}\binom{\ell(\lambda)}{b_1,\dots,b_{r}}$ by Pascal's identity for multinomial coefficients. 
\end{proof}

\begin{corollary}\label{Cor: symmetric polynomial version of Lemma A3}Let $f[z_1,z_2,\dots,z_{k-1}]$ be a symmetric polynomial in $k-1$ variables of homogeneous degree $n\leq k-1$. Then we have
\[
    \sum_{i=1}^k \dfrac{z_i^{k-1-n}\eta_{k,i}(f[z_1,z_2,\dots,z_{k-1}])}{\prod_{j \neq i} (z_j-z_i)}=(-1)^{k-1} \sum_{\alpha\models n} (-1)^{\ell(\alpha)}[z^\alpha](f)
\]
where $[z^\alpha](f)$ represents the coefficient of the monomial $z^\alpha:=\prod_{i=1}^{\ell(\alpha)}z_i^{\alpha_i}$ in the polynomial $f$. 
\end{corollary}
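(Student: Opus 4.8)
\textbf{Proof proposal for Corollary~\ref{Cor: symmetric polynomial version of Lemma A3}.}

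The plan is to reduce Corollary~\ref{Cor: symmetric polynomial version of Lemma A3} to Lemma~\ref{Lemma: monomial symmetric sum} by expanding the symmetric polynomial $f$ in the monomial symmetric polynomial basis. First I would write
\[
    f[z_1,\dots,z_{k-1}] = \sum_{\lambda \vdash n} c_\lambda\, m_\lambda[z_1,\dots,z_{k-1}],
\]
where the sum is over partitions $\lambda$ of $n$ with $\ell(\lambda) \le k-1$ (any $\lambda$ with more parts contributes nothing in $k-1$ variables), and $c_\lambda = [z^\lambda](f)$ is the coefficient of the monomial $z_1^{\lambda_1}\cdots z_{\ell(\lambda)}^{\lambda_{\ell(\lambda)}}$. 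Since $\eta_{k,i}$ is linear, applying it and summing over $i$ distributes over this expansion, so the left-hand side of the corollary equals
\[
    \sum_{\lambda\vdash n} c_\lambda \sum_{i=1}^k \frac{z_i^{k-1-n}\,\eta_{k,i}(m_\lambda[z_1,\dots,z_{k-1}])}{\prod_{j\neq i}(z_j-z_i)}.
\]

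Next I would apply Lemma~\ref{Lemma: monomial symmetric sum} to each inner sum, which evaluates it to $(-1)^{k-1-\ell(\lambda)}\binom{\ell(\lambda)}{m_1(\lambda),m_2(\lambda),\dots}$. This turns the whole expression into
\[
    \sum_{\lambda\vdash n} c_\lambda (-1)^{k-1-\ell(\lambda)}\binom{\ell(\lambda)}{m_1(\lambda),m_2(\lambda),\dots}
    = (-1)^{k-1}\sum_{\lambda\vdash n} (-1)^{\ell(\lambda)} c_\lambda \binom{\ell(\lambda)}{m_1(\lambda),\dots}.
\]
It remains to identify this with $(-1)^{k-1}\sum_{\alpha\models n}(-1)^{\ell(\alpha)}[z^\alpha](f)$. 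The key observation is that because $f$ is symmetric, $[z^\alpha](f) = [z^\lambda](f) = c_\lambda$ for every composition $\alpha$ that rearranges the partition $\lambda$, and there are exactly $\binom{\ell(\lambda)}{m_1(\lambda),m_2(\lambda),\dots}$ such compositions, all of the same length $\ell(\alpha)=\ell(\lambda)$. Hence grouping the compositions $\alpha\models n$ by their underlying partition $\lambda$ gives precisely $\sum_{\alpha\models n}(-1)^{\ell(\alpha)}[z^\alpha](f) = \sum_{\lambda\vdash n}(-1)^{\ell(\lambda)}c_\lambda\binom{\ell(\lambda)}{m_1(\lambda),\dots}$, completing the proof.

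I do not expect a serious obstacle here; the corollary is essentially a bookkeeping consequence of Lemma~\ref{Lemma: monomial symmetric sum}. The only point requiring a little care is the bound on the number of parts: one must note that if $\ell(\lambda) > k-1$ then $m_\lambda$ vanishes identically in $k-1$ variables (and likewise no composition $\alpha\models n$ with more than $k-1$ parts can have a nonzero coefficient in $f$), so the sums on both sides genuinely range only over partitions/compositions with at most $k-1$ parts, which is exactly the regime where Lemma~\ref{Lemma: monomial symmetric sum} applies. One should also double-check the degree condition $n \le k-1$ is what guarantees $k-1-n \ge 0$ so that the powers $z_i^{k-1-n}$ appearing are genuine polynomials, matching the hypotheses of Lemma~\ref{Lemma: monomial symmetric sum}.
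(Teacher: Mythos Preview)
Your proposal is correct and is exactly the argument the paper has in mind: the corollary is stated without proof immediately after Lemma~\ref{Lemma: monomial symmetric sum}, and your expansion of $f$ in the monomial symmetric basis followed by the composition--partition regrouping is the intended deduction.
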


\subsection{Proof of Corollary~\ref{cor: n!/k}}\label{sec: n!/k appendix}  For a vector $\alpha=(\alpha_1,\dots,\alpha_{\ell})$ consisting of nonnegative integers that sum to $n$, we abuse our notation and refer to the multinomial $\binom{n}{\alpha_1,\dots,\alpha_{\ell}}$ as $\binom{n}{\alpha}$. It is well-known that $\langle h_{\lambda},e_{(1^n)}\rangle=\binom{n}{\lambda}$ for a partition $\lambda\vdash n$. Therefore by Theorem \ref{thm: h-expansion, Kreweras}, Corollary~\ref{cor: n!/k} is equivalent to
\begin{equation}\label{eq: nonamee}
    \sum_{\lambda \vdash k-1}(-1)^{k-1-\ell(\lambda)}\Krew((\lambda+(1^{\ell(\lambda)}))\binom{n}{\lambda+(1^{n+1-k})}=\frac{n!}{k}.
\end{equation}
For each $\lambda\vdash k-1$, the number of compositions $\alpha \sim \lambda$ which are rearrangements of $\lambda$ equals $\binom{\ell(\lambda)}{m_1(\lambda),m_2(\lambda),\dots}$ where $m_{j}(\lambda)$ denotes the multiplicity of $j$ among the parts of $\lambda$. By equally distributing the value, we may write 
\begin{equation*}
   \Krew((\lambda+(1^{\ell(\lambda)}))\binom{n}{\lambda+(1^{n+1-k})}=\frac{n!}{k!}\sum_{\alpha\sim\lambda}\frac{(k-1+\ell(\alpha))!}{(\ell(\alpha))!\prod_{i\geq 1}(\alpha_i+1)!}
\end{equation*}
and \eqref{eq: nonamee} becomes
\begin{equation*}
    \sum_{m=0}^{k-2}(-1)^m\sum_{\substack{\alpha\models k-1\\\ell(\alpha)=k-1-m}}\frac{(2k-2-m)!}{(k-1-m)!\prod_{i\geq 1}(\alpha_i+1)!}=(k-1)!.
\end{equation*}
Now let 
\begin{equation*}
    \tilde{T}(n,k):=\frac{(2n-k)!}{(n-k)!}\sum_{\substack{\alpha\models n\\\ell(\alpha)=n-k}}\frac{1}{\prod_{i\geq1}(\alpha_i+1)!}.
\end{equation*}
Then it is enough to show the following:
\begin{equation}\label{eq: Ward alternating}
    \sum_{k=0}^{n-1}(-1)^{k}\tilde{T}(n,k)=n!.
\end{equation}

\begin{lem}
    We have
\begin{equation*}
    \tilde{T}(n,k)=T(n,k).
\end{equation*}
where $T(n,k)$ is the Ward number given in \cite[\href{https://oeis.org/A181996}{A181996}]{Sloane}.
\end{lem}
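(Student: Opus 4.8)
The plan is to recognize $\tilde{T}(n,k)$ as a classical \emph{associated Stirling number of the second kind} and then identify it with the Ward number $T(n,k)$ through a common recurrence. First I would perform the change of variables $\beta_i\coloneqq\alpha_i+1$ in the defining sum. A composition $\alpha\models n$ with exactly $n-k$ parts corresponds bijectively to a composition $\beta\models 2n-k$ with exactly $n-k$ parts, each of size at least $2$, and $\prod_{i}(\alpha_i+1)!=\prod_i\beta_i!$. Hence
\[
    \tilde{T}(n,k)=\frac{(2n-k)!}{(n-k)!}\sum_{\substack{\beta\models 2n-k\\ \ell(\beta)=n-k,\ \beta_i\ge 2}}\frac{1}{\prod_{i\ge 1}\beta_i!}
    =\frac{1}{(n-k)!}\sum_{\substack{\beta\models 2n-k\\ \ell(\beta)=n-k,\ \beta_i\ge 2}}\binom{2n-k}{\beta}.
\]
Since $\tfrac{1}{(n-k)!}\sum_{\beta}\binom{2n-k}{\beta}$, the sum taken over \emph{ordered} tuples $\beta$, counts the \emph{unordered} set partitions of $[2n-k]$ into $n-k$ blocks each of cardinality at least $2$, we obtain $\tilde{T}(n,k)=d(2n-k,\,n-k)$, where $d(m,j)$ denotes the number of partitions of an $m$-element set into $j$ blocks of size $\ge 2$.

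Next I would establish the recurrence
\[
    \tilde{T}(n,k)=(n-k)\,\tilde{T}(n-1,k-1)+(2n-k-1)\,\tilde{T}(n-1,k),
\]
together with the boundary values $\tilde{T}(n,n-1)=1$ (a single block of size $n+1$) and $\tilde{T}(n,k)=0$ whenever the parameters force $2n-k<2(n-k)$. This follows from the standard argument of inspecting the block of the partition of $[2n-k]$ that contains the largest element: either that block has size exactly $2$, in which case the partner of $2n-k$ is chosen in $2n-k-1$ ways and what remains is a partition of a $(2n-k-2)$-set into $n-k-1$ blocks of size $\ge 2$ — which is $\tilde{T}(n-1,k)$ — or the block has size $\ge 3$, in which case deleting $2n-k$ leaves a partition of a $(2n-k-1)$-set into $n-k$ blocks of size $\ge 2$ with a marked block, contributing $(n-k)\,\tilde{T}(n-1,k-1)$. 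A short bookkeeping check confirms the index shifts $d(2n-k-2,\,n-k-1)=\tilde{T}(n-1,k)$ and $d(2n-k-1,\,n-k)=\tilde{T}(n-1,k-1)$.

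Finally I would invoke the description of the Ward numbers recorded in \cite{Sloane}: $T(n,k)$ is precisely the triangle of associated Stirling numbers of the second kind, which satisfies the same recurrence and the same boundary conditions. Comparing the two recurrences term by term and matching the base cases then yields $\tilde{T}(n,k)=T(n,k)$ by induction on $n$. The only genuinely delicate point — and the one I would treat most carefully — is reconciling the indexing convention of the OEIS triangle with our parametrization $(n,k)$, since the substitution $m=2n-k$, $j=n-k$ effectively reverses the role of the column index; after this is pinned down everything is routine. As a sanity check one verifies agreement on the first few rows, e.g.\ $\tilde{T}(2,0)=3$, $\tilde{T}(3,0)=15$, $\tilde{T}(3,1)=10$, $\tilde{T}(4,1)=105$, $\tilde{T}(4,2)=25$.
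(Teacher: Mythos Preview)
Your proposal is correct and takes a genuinely different route from the paper's own proof. The paper does not pass through the combinatorial interpretation as associated Stirling numbers; instead it keeps everything algebraic: starting from
\[
\tilde T(n,k)=\frac{1}{(n-k)!}\sum_{\alpha\in A(2n-k,\,n-k)^{\ge 2}}\binom{2n-k}{\alpha},
\]
it applies inclusion--exclusion twice (first to drop from ``each part $\ge 2$'' to ``each part $\ge 1$'', then to ``each part $\ge 0$'') to produce the closed form
\[
\tilde T(n,k)=\sum_{m=0}^{n-k}(-1)^{n-k-m}\binom{2n-k}{n+m}S(n+m,m),
\]
which is exactly the explicit formula for $T(n,k)$ listed on the OEIS page. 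So the paper matches \emph{formulas}, whereas you match \emph{recurrences}.

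Your argument is cleaner conceptually: once one sees that $\tilde T(n,k)=d(2n-k,\,n-k)$ counts set partitions with all blocks of size $\ge 2$, the three-term recurrence $(n-k)\tilde T(n-1,k-1)+(2n-k-1)\tilde T(n-1,k)$ drops out immediately from the ``look at the block containing the largest element'' bijection, and this is precisely the recurrence the paper itself invokes from OEIS in the \emph{next} lemma to prove $\sum_k(-1)^kT(n,k)=n!$. Thus your approach dovetails more naturally with what follows. The paper's approach, on the other hand, is more self-contained in that it verifies an explicit formula rather than relying on the reader to accept the recurrence from the OEIS entry; it also makes the link to ordinary Stirling numbers $S(n,k)$ visible. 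Your caution about matching indexing conventions is well placed: the substitution $m=2n-k$, $j=n-k$ is exactly what one has to track, and your sanity-check values are all correct.
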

\begin{proof}
We first state the formula for $T(n,k)$ given in \cite[\href{https://oeis.org/A181996}{A181996}]{Sloane}. Let $S(n,k)$ be the Stirling number of the second kind given by:
\begin{equation*}
    S(n,k)=\frac{1}{k!}\sum_{j=0}^{k}(-1)^{k-j}\binom{k}{j}j^n.
\end{equation*}
Then the Ward number $T(n,k)$ is given by:
\begin{equation*}
    T(n,k)=\sum_{m=0}^{n-k}(-1)^{n-k-m}\binom{2n-k}{n+m}S(n+m,m).
\end{equation*}

 We let $A(m,\ell)^{\geq r}$ to be the set of $\alpha=(\alpha_1,\dots,\alpha_{\ell})$ consisting of nonnegative integers that sum to $m$ and each entry $\alpha_i\geq r$. 
We also let $A(m,\ell)^{\geq r}_B$ to be the set of $\alpha\in A(m,\ell)^{\geq r}$ such that $\alpha_i=1$ for $i\in B$. Then from the definition of $\tilde{T}(n,k)$ we have 
\begin{equation*}
    \tilde{T}(n,k)=\frac{1}{(n-k)!}\sum_{\alpha\in A(2n-k,n-k)^{\geq2}}\binom{2n-k}{\alpha}
\end{equation*}
and applying the inclusion-exclusion principle gives 
\begin{equation*}
    \tilde{T}(n,k)=\frac{1}{(n-k)!}\sum_{B\subseteq[n-k]}(-1)^{|B|}\sum_{\alpha\in A(2n-k,n-k)_{B}^{\geq1}}\binom{2n-k}{\alpha}.
\end{equation*}
Now we let  $A(m,\ell)^{\geq r}_{B,C}$ to be the set of $\alpha\in A(m,\ell)^{\geq r}_B$ such that $\alpha_i=0$ for $i\in C$. Again by the inclusion-exclusion principle we have
\begin{equation*}
    \sum_{\alpha\in A(2n-k,n-k)_{B}^{\geq1}}\binom{2n-k}{\alpha}=\sum_{C\subseteq [n-k]\setminus B}(-1)^{|C|}\sum_{\alpha\in A(2n-k,n-k)_{B,C}^{\geq0}}\binom{2n-k}{\alpha}.
\end{equation*}
For each $\alpha\in A(2n-k,n-k)^{\geq 0}_{B,C}$, we let $\beta$ to be the integer vector obtained from $\alpha$ by erasing entries $\alpha_i$ such that $i \in B\cup C$. Then $\beta\in A(2n-k-|B|,n-k-|B|-|C|)^{\geq0}$ and we have 
\begin{equation*}
    \binom{2n-k}{\alpha}=\frac{(2n-k)!}{(2n-k-|B|)!}\binom{2n-k-|B|}{\beta}.
\end{equation*}
Therefore we have 
\begin{align*}
    \sum_{\alpha\in A(2n-k,n-k)_{B,C}^{\geq0}}\binom{2n-k}{\alpha}&=\frac{(2n-k)!}{(2n-k-|B|)!}\sum_{\beta\in A(2n-k-|B|,n-k-|B|-|C|)^{\geq0}}\binom{2n-k-|B|}{\beta}\\&=\frac{(2n-k)!}{(2n-k-|B|)!}(n-k-|B|-|C|)^{2n-k-|B|}.
\end{align*}
Now we get 
\begin{align*}
    \sum_{\alpha\in A(2n-k,n-k)_{B}^{\geq1}}\binom{2n-k}{\alpha}&=\frac{(2n-k)!}{(2n-k-|B|)!}\sum_{c=0}^{n-k-|B|}(-1)^c\binom{n-k-|B|}{c}(n-k-|B|-c)^{2n-k-|B|}\\
    &=\frac{(2n-k)!(n-k-|B|)!S(2n-k-|B|,n-k-|B|)}{(2n-k-|B|)!}.
\end{align*}

We conclude
\begin{align*}
    \tilde{T}(n,k)&=\dfrac{\sum_{b=0}^{n-k}(-1)^{b}\binom{n-k}{b}(2n-k)!(n-k-b)!S(2n-k-b,n-k-b)}{(2n-k-b)!(n-k)!}\\&=\sum_{b=0}^{n-k}(-1)^{b}\binom{2n-k}{b}S(2n-k-b,n-k-b)\\&=\sum_{m=0}^{n-k}(-1)^{n-k-m}\binom{2n-k}{n-k-m}S(n+m,m)=\sum_{m=0}^{n-k}(-1)^{n-k-m}\binom{2n-k}{n+m}S(n+m,m)
\end{align*}
which coincides with $T(n,k)$.

\end{proof}
Now the following lemma finishes the proof of \eqref{eq: Ward alternating}.
\begin{lem}
    We have
\begin{equation}\label{eq: main}
    \sum_{k=0}^{n-1}(-1)^{k}T(n,k)=n!.
\end{equation}
\end{lem}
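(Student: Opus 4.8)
\textbf{Proof proposal for $\sum_{k=0}^{n-1}(-1)^k T(n,k)=n!$.}

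The plan is to prove the identity via exponential generating functions, using the Ward-number formula $T(n,k)=\tilde T(n,k)$ established in the preceding lemma. Recall $\tilde T(n,k)=\frac{1}{(n-k)!}\sum_{\alpha\in A(2n-k,n-k)^{\ge2}}\binom{2n-k}{\alpha}$, i.e.\ $\tilde T(n,k)$ counts (weighted) ways to distribute $2n-k$ labeled objects into $n-k$ unordered nonempty blocks each of size $\ge 2$. Equivalently, if we let $B(m,j)$ be the number of set partitions of $[m]$ into $j$ blocks each of size $\ge 2$ (the associated Stirling numbers of the second kind), then $\tilde T(n,k)=B(2n-k,n-k)$. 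The generating-function identity I would use is the classical one
\[
\sum_{m\ge 0} B(m,j)\frac{x^m}{m!}=\frac{(e^x-1-x)^j}{j!}.
\]

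With this in hand, set $\ell=n-k$ so that $2n-k=n+\ell$, and rewrite the target sum as $\sum_{\ell=1}^{n}(-1)^{n-\ell}B(n+\ell,\ell)$. I would extract this from the generating function by writing
\[
\sum_{n\ge 0}\left(\sum_{\ell=0}^{n}(-1)^{n-\ell}B(n+\ell,\ell)\right)\frac{x^{?}}{?}
\]
— the cleanest route is to substitute into $\sum_{j\ge0}\frac{(e^x-1-x)^j}{j!}=e^{e^x-1-x}$ and track the coefficient of $x^{n+\ell}$ with the alternating sign absorbed by replacing $x$ with a suitable quantity. Concretely, I expect the bookkeeping to reduce to showing that the coefficient of $x^n$ in $e^{-x}\cdot(\text{something})$, or more precisely in a composition of the form $\exp\!\big(g(x)\big)$ with $g$ chosen so that $g(x)=x$, yields $n!$. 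The natural candidate: the alternating sum telescopes to the coefficient of $x^n$ in $\frac{1}{1-x}$ after the substitutions, since $\sum_{n}n!\frac{x^n}{n!}=\frac{1}{1-x}$. So the goal becomes the functional identity: the bivariate generating function $\sum_{n,k}(-1)^k\tilde T(n,k)x^n$ equals $\frac{1}{1-x}$.

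The concrete steps I would carry out are: (1) prove $\tilde T(n,k)=B(2n-k,n-k)$ directly from the definition (a short combinatorial/algebraic identity, essentially already in the excerpt's lemma); (2) recall/derive the EGF $\sum_m B(m,j)x^m/m!=(e^x-1-x)^j/j!$; (3) form $F(x)\coloneqq\sum_{n\ge1}\Big(\sum_{k=0}^{n-1}(-1)^k B(2n-k,n-k)\Big)x^n$ and manipulate the double sum, swapping the order of summation over $\ell=n-k$ and the "excess" $m=n+\ell$ variable, to express $F$ in closed form; I anticipate this produces $F(x)=\sum_{\ell\ge1}(-1)^{\ell}(-1)^{?}[\text{coeff extraction of }(e^{x'}-1-x')^\ell/\ell!]$ which sums (via $\sum_\ell u^\ell/\ell!=e^u$) to something of the form $e^{u(x)}-1$ with $u(x)=x$; (4) conclude $F(x)=e^{x}-1$ or $\frac{x}{1-x}$ or $\frac{1}{1-x}-1$ depending on the exact normalization, and read off the coefficient of $x^n$. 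I would double-check the normalization against small cases: for $n=1$, $T(1,0)=1$ and the sum is $1=1!$; for $n=2$, one needs $T(2,0)-T(2,1)=2$, which pins down the constant.

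The main obstacle is step (3): getting the index juggling exactly right so that the alternating sign on $k$ becomes a convergent exponential sum rather than a divergent or mis-indexed one. The subtlety is that the inner sum over $k$ has $k$ running only to $n-1$, and the arguments $2n-k$ and $n-k$ both move with $k$, so a naive "swap sums" will not immediately decouple; I expect to need the substitution $(\ell,m)=(n-k,\,n+\ell)$ and then to recognize the resulting sum as a diagonal extraction from $e^{e^x-1-x}$ evaluated with a sign-twist (replacing $x\mapsto -x$ somewhere and compensating). An alternative, possibly cleaner, route that avoids this: prove the identity by a sign-reversing involution or a direct bijective/inclusion-exclusion argument on the set-partition model for $\tilde T(n,k)$ — the right-hand side $n!$ strongly suggests that $\sum_k(-1)^k\tilde T(n,k)$ counts linear orders of $[n]$, so one could try to set up a weight-preserving, sign-reversing involution on pairs (set partition into $\ge2$-blocks of $[n+\ell]$, together with sign $(-1)^{n-\ell}$) whose fixed points are in bijection with permutations of $[n]$. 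I would pursue the generating-function proof first as the more mechanical option and fall back on the involution if the index bookkeeping proves recalcitrant.
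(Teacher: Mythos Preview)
Your plan takes a genuinely different route from the paper. The paper's proof is a four-line induction on $n$ using the standard Ward recurrence
\[
T(n,k)=(2n-1-k)\,T(n-1,k)+(n-k)\,T(n-1,k-1),
\]
which is quoted on the OEIS page referenced in the paper. Plugging this into $\sum_k(-1)^kT(n,k)$ and reindexing the second sum by $k\mapsto k+1$, the two pieces combine as $(2n-1-k)-(n-k-1)=n$, so the alternating sum for $n$ equals $n$ times the alternating sum for $n-1$. That is the whole argument.

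Your generating-function route, by contrast, runs into the very obstacle you flag in step~(3), and it is a real one rather than mere bookkeeping. With $T(n,k)=B(2n-k,n-k)$ and the bivariate EGF $\sum_{m,j}B(m,j)\frac{x^m}{m!}y^j=e^{y(e^x-1-x)}$, the target sum $\sum_\ell(-1)^{n-\ell}B(n+\ell,\ell)$ is a \emph{diagonal} ($m-j=n$) coefficient, not an ordinary one. A substitution like $y=-1/x$ produces $\sum_j(-1)^jB(n+j,j)/(n+j)!$ as the coefficient of $x^n$, with the unwanted $(n+j)!$ in the denominator; so the hoped-for collapse to $e^{u(x)}$ with $u(x)=x$ does not occur directly. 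Extracting diagonals typically requires a contour-integral or Lagrange-inversion argument, which is considerably more work than the identity warrants. The involution idea you mention at the end is also not obviously easy, since the ground set $[n+\ell]$ changes size with $\ell$. Given that the Ward recurrence is already available (and is in fact used in the paper's preceding lemma via the OEIS reference), induction is by far the shortest path; I would recommend abandoning the EGF approach here.
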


\begin{proof}
The Ward number $T(n,k)$ satisfies the recurrence given in \cite[\href{https://oeis.org/A181996}{A181996}]{Sloane}:
\begin{equation*}
    T(n,k)=(2n-1-k)T(n-1,k)+(n-k)T(n-1,k-1).
\end{equation*}
We proceed by induction on $n$ to establish \eqref{eq: main}. The base case can be checked easily. Assume we proved \eqref{eq: main} for $(n-1)$. We have 
\begin{align*}
    \sum_{k=0}^{n-1}(-1)^k T(n,k)&=\sum_{k=0}^{n-1}(-1)^k (2n-1-k)T(n-1,k)+\sum_{k=0}^{n-1}(-1)^k(n-k)T(n-1,k-1)\\
    &=\sum_{k=0}^{n-2}(-1)^k (2n-1-k)T(n-1,k)+\sum_{k=0}^{n-2}(-1)^{k+1}(n-k-1)T(n-1,k)\\&=\sum_{k=0}^{n-2}(-1)^k \left((2n-1-k)-(n-k-1)\right)T(n-1,k)\\&=n\sum_{k=0}^{n-2}(-1)^k T(n-1,k)=n\times (n-1)!=n!.
\end{align*}

\end{proof}
\section*{acknowledgement}
The authors are grateful to François Bergeron, James Haglund, Byung-Hak Hwang, Woo-Seok Jung, and Brendon Rhoades for helpful conversations. They also thank the anonymous referee for his or her insightful comments, which greatly enhanced the readability of the paper. D. Kim was supported by the National Research Foundation of Korea (NRF) grant funded by the Korean government (MEST) (No. 2019R1A6A1A10073437) and the individual NRF grant (No. 2022R1I1A1A01070620). S. J. Lee was supported by the National Research Foundation of Korea (NRF) grant funded by the Korean government (MSIT) (No.0450-20240021). J. Oh was supported by KIAS Individual Grant (CG083401, HP083401) at Korea Institute for Advanced Study.
\printbibliography

@article{Bet93,
  title={Zur theorie der metalle. I. eigenwerte und eigenfunktionen der lineare atomkette. Z. Physik, 71: 205--226, 1931},
  author={Bethe, Hans},
  journal={Eng. trans. Frederick, V.(trans), Mattis, DC (ed), The Many-Body Problem, World Scientific},
  year={1993}
}

@article{BG99,
  title={Science fiction and {M}acdonald’s polynomials},
  author={Bergeron, Francois and Garsia, Adriano},
  journal={CRM Proceedings \& Lecture Notes, American Mathematical Society},
  volume={22},
  pages={1--52},
  year={1999}
}

@article {CM18,
    AUTHOR = {Carlsson, Erik and Mellit, Anton},
     TITLE = {A proof of the shuffle conjecture},
   JOURNAL = {J. Amer. Math. Soc.},
  FJOURNAL = {Journal of the American Mathematical Society},
    VOLUME = {31},
      YEAR = {2018},
    NUMBER = {3},
     PAGES = {661--697},
      ISSN = {0894-0347,1088-6834},
   MRCLASS = {05E10 (05E05 33D52)},
  MRNUMBER = {3787405},
MRREVIEWER = {Tanja\ Stojadinovi\'c},
       DOI = {10.1090/jams/893},
       URL = {https://doi.org/10.1090/jams/893},
}

@incollection {Ges84,
    AUTHOR = {Gessel, Ira M.},
     TITLE = {Multipartite {$P$}-partitions and inner products of skew
              {S}chur functions},
 BOOKTITLE = {Combinatorics and algebra ({B}oulder, {C}olo., 1983)},
    SERIES = {Contemp. Math.},
    VOLUME = {34},
     PAGES = {289--317},
 PUBLISHER = {Amer. Math. Soc., Providence, RI},
      YEAR = {1984},
      ISBN = {0-8218-5029-6},
   MRCLASS = {05A17 (20C30)},
  MRNUMBER = {777705},
MRREVIEWER = {J.\ D\'esarm\'enien},
       DOI = {10.1090/conm/034/777705},
       URL = {https://doi.org/10.1090/conm/034/777705},
}

@incollection {GHT99,
    AUTHOR = {Garsia, A. M. and Haiman, M. and Tesler, G.},
     TITLE = {Explicit plethystic formulas for {M}acdonald {$q,t$}-{K}ostka
              coefficients},
      NOTE = {The Andrews Festschrift (Maratea, 1998)},
   JOURNAL = {S\'em. Lothar. Combin.},
  FJOURNAL = {S\'eminaire Lotharingien de Combinatoire},
    VOLUME = {42},
      YEAR = {1999},
     PAGES = {Art. B42m, 45},
      ISSN = {1286-4889},
   MRCLASS = {05E05 (33D52)},
  MRNUMBER = {1701592},
MRREVIEWER = {Yasmine\ B.\ Sanderson},
}

@article {GKLLRT95,
    AUTHOR = {Gelfand, Israel M. and Krob, Daniel and Lascoux, Alain and
              Leclerc, Bernard and Retakh, Vladimir S. and Thibon,
              Jean-Yves},
     TITLE = {Noncommutative symmetric functions},
   JOURNAL = {Adv. Math.},
  FJOURNAL = {Advances in Mathematics},
    VOLUME = {112},
      YEAR = {1995},
    NUMBER = {2},
     PAGES = {218--348},
      ISSN = {0001-8708,1090-2082},
   MRCLASS = {05E05 (15A15 16W30)},
  MRNUMBER = {1327096},
MRREVIEWER = {Arun\ Ram},
       DOI = {10.1006/aima.1995.1032},
       URL = {https://doi.org/10.1006/aima.1995.1032},
}

@article {GT96,
    AUTHOR = {Garsia, A. M. and Tesler, G.},
     TITLE = {Plethystic formulas for {M}acdonald {$q,t$}-{K}ostka
              coefficients},
   JOURNAL = {Adv. Math.},
  FJOURNAL = {Advances in Mathematics},
    VOLUME = {123},
      YEAR = {1996},
    NUMBER = {2},
     PAGES = {144--222},
      ISSN = {0001-8708,1090-2082},
   MRCLASS = {05E05 (33C80 33D80)},
  MRNUMBER = {1420484},
MRREVIEWER = {Mark\ D.\ Haiman},
       DOI = {10.1006/aima.1996.0071},
       URL = {https://doi.org/10.1006/aima.1996.0071},
}

@article {Hag03,
    AUTHOR = {Haglund, J.},
     TITLE = {Conjectured statistics for the {$q,t$}-{C}atalan numbers},
   JOURNAL = {Adv. Math.},
  FJOURNAL = {Advances in Mathematics},
    VOLUME = {175},
      YEAR = {2003},
    NUMBER = {2},
     PAGES = {319--334},
      ISSN = {0001-8708,1090-2082},
   MRCLASS = {05A30 (05A10 05A15 05A16 05E05)},
  MRNUMBER = {1972636},
MRREVIEWER = {Shaun\ Cooper},
       DOI = {10.1016/S0001-8708(02)00061-0},
       URL = {https://doi.org/10.1016/S0001-8708(02)00061-0},
}

@article {HHL05,
    AUTHOR = {Haglund, J. and Haiman, M. and Loehr, N.},
     TITLE = {A combinatorial formula for {M}acdonald polynomials},
   JOURNAL = {J. Amer. Math. Soc.},
  FJOURNAL = {Journal of the American Mathematical Society},
    VOLUME = {18},
      YEAR = {2005},
    NUMBER = {3},
     PAGES = {735--761},
      ISSN = {0894-0347,1088-6834},
   MRCLASS = {05E05},
  MRNUMBER = {2138143},
MRREVIEWER = {Frank\ Sottile},
       DOI = {10.1090/S0894-0347-05-00485-6},
       URL = {https://doi.org/10.1090/S0894-0347-05-00485-6},
}

@article {HHL08,
    AUTHOR = {Haglund, J. and Haiman, M. and Loehr, N.},
     TITLE = {A combinatorial formula for nonsymmetric {M}acdonald
              polynomials},
   JOURNAL = {Amer. J. Math.},
  FJOURNAL = {American Journal of Mathematics},
    VOLUME = {130},
      YEAR = {2008},
    NUMBER = {2},
     PAGES = {359--383},
      ISSN = {0002-9327,1080-6377},
   MRCLASS = {05E05},
  MRNUMBER = {2405160},
MRREVIEWER = {Pavlo\ Pylyavskyy},
       DOI = {10.1353/ajm.2008.0015},
       URL = {https://doi.org/10.1353/ajm.2008.0015},
}

@article{Mac88,
  title={A new class of symmetric functions.},
  author={Macdonald, Ian},
  journal={S{\'e}minaire Lotharingien de Combinatoire},
  volume={20},
  pages={B20a--41},
  year={1988},
  publisher={Universit{\"a}t Wien, Fakult{\"a}t f{\"u}r Mathematik}
}

@article{BHMPS23,
  title={A raising operator formula for Macdonald polynomials},
  author={Blasiak, Jonah and Haiman, Mark and Morse, Jennifer and Pun, Anna and Seelinger, George},
  journal={arXiv preprint arXiv:2307.06517},
  year={2023}
}

@article{BHMPS21LW,
  title={Dens, nests and the {L}oehr-{W}arrington conjecture},
  author={Blasiak, Jonah and Haiman, Mark and Morse, Jennifer and Pun, Anna and Seelinger, George},
  journal={arXiv preprint arXiv:2112.07070},
  year={2021}
}

@article {BHMPS21Delta,
    AUTHOR = {Blasiak, Jonah and Haiman, Mark and Morse, Jennifer and Pun,
              Anna and Seelinger, George H.},
     TITLE = {A proof of the extended delta conjecture},
   JOURNAL = {Forum Math. Pi},
  FJOURNAL = {Forum of Mathematics. Pi},
    VOLUME = {11},
      YEAR = {2023},
     PAGES = {Paper No. e6, 28},
      ISSN = {2050-5086},
   MRCLASS = {05A19 (20G05 35P15)},
  MRNUMBER = {4553915},
MRREVIEWER = {Eric\ S.\ Egge},
       DOI = {10.1017/fmp.2023.3},
       URL = {https://doi.org/10.1017/fmp.2023.3},
}

@article {BHMPS21LLT,
    AUTHOR = {Blasiak, Jonah and Haiman, Mark and Morse, Jennifer and Pun,
              Anna and Seelinger, George H.},
     TITLE = {L{LT} polynomials in the {S}chiffmann algebra},
   JOURNAL = {J. Reine Angew. Math.},
  FJOURNAL = {Journal f\"ur die Reine und Angewandte Mathematik. [Crelle's
              Journal]},
    VOLUME = {811},
      YEAR = {2024},
     PAGES = {93--133},
      ISSN = {0075-4102,1435-5345},
   MRCLASS = {05E05 (16T30)},
  MRNUMBER = {4751118},
       DOI = {10.1515/crelle-2024-0012},
       URL = {https://doi.org/10.1515/crelle-2024-0012},
}

@article {But94,
    AUTHOR = {Butler, Lynne M.},
     TITLE = {Subgroup lattices and symmetric functions},
   JOURNAL = {Mem. Amer. Math. Soc.},
  FJOURNAL = {Memoirs of the American Mathematical Society},
    VOLUME = {112},
      YEAR = {1994},
    NUMBER = {539},
     PAGES = {vi+160},
      ISSN = {0065-9266,1947-6221},
   MRCLASS = {05E10 (05E05 20K27)},
  MRNUMBER = {1223236},
MRREVIEWER = {A.\ O.\ Morris},
       DOI = {10.1090/memo/0539},
       URL = {https://doi.org/10.1090/memo/0539},
}

@article{CM21,
  title={GKM spaces, and the signed positivity of the nabla operator},
  author={Carlsson, Erik and Mellit, Anton},
  journal={arXiv preprint arXiv:2110.07591},
  year={2021}
}

@article{CO18,
  title={Affine {S}chubert calculus and double coinvariants},
  author={Carlsson, Erik and Oblomkov, Alexei},
  journal={arXiv preprint arXiv:1801.09033},
  year={2018}
}

@article {Deu99,
    AUTHOR = {Deutsch, Emeric},
     TITLE = {Dyck path enumeration},
   JOURNAL = {Discrete Math.},
  FJOURNAL = {Discrete Mathematics},
    VOLUME = {204},
      YEAR = {1999},
    NUMBER = {1-3},
     PAGES = {167--202},
      ISSN = {0012-365X,1872-681X},
   MRCLASS = {05A15},
  MRNUMBER = {1691869},
MRREVIEWER = {Ira\ Gessel},
       DOI = {10.1016/S0012-365X(98)00371-9},
       URL = {https://doi.org/10.1016/S0012-365X(98)00371-9},
}

@article {DM22,
    AUTHOR = {D'Adderio, Michele and Mellit, Anton},
     TITLE = {A proof of the compositional delta conjecture},
   JOURNAL = {Adv. Math.},
  FJOURNAL = {Advances in Mathematics},
    VOLUME = {402},
      YEAR = {2022},
     PAGES = {Paper No. 108342, 17},
      ISSN = {0001-8708,1090-2082},
   MRCLASS = {05E05},
  MRNUMBER = {4401822},
MRREVIEWER = {Sam\ Hopkins},
       DOI = {10.1016/j.aim.2022.108342},
       URL = {https://doi.org/10.1016/j.aim.2022.108342},
}

@incollection {ER91,
    AUTHOR = {E\u gecio\u glu, \"Omer and Remmel, Jeffrey B.},
     TITLE = {Brick tabloids and the connection matrices between bases of
              symmetric functions},
      NOTE = {Combinatorics and theoretical computer science (Washington,
              DC, 1989)},
   JOURNAL = {Discrete Appl. Math.},
  FJOURNAL = {Discrete Applied Mathematics. The Journal of Combinatorial
              Algorithms, Informatics and Computational Sciences},
    VOLUME = {34},
      YEAR = {1991},
    NUMBER = {1-3},
     PAGES = {107--120},
      ISSN = {0166-218X,1872-6771},
   MRCLASS = {05E05},
  MRNUMBER = {1137989},
MRREVIEWER = {Jian-yi\ Shi},
       DOI = {10.1016/0166-218X(91)90081-7},
       URL = {https://doi.org/10.1016/0166-218X(91)90081-7},
}

@article {GH93,
    AUTHOR = {Garsia, Adriano M. and Haiman, Mark},
     TITLE = {A graded representation model for {M}acdonald's polynomials},
   JOURNAL = {Proc. Nat. Acad. Sci. U.S.A.},
  FJOURNAL = {Proceedings of the National Academy of Sciences of the United
              States of America},
    VOLUME = {90},
      YEAR = {1993},
    NUMBER = {8},
     PAGES = {3607--3610},
      ISSN = {0027-8424},
   MRCLASS = {05E05 (05E10 20C30)},
  MRNUMBER = {1214091},
MRREVIEWER = {John\ R.\ Stembridge},
       DOI = {10.1073/pnas.90.8.3607},
       URL = {https://doi.org/10.1073/pnas.90.8.3607},
}

@incollection {GH96module,
    AUTHOR = {Garsia, A. M. and Haiman, M.},
     TITLE = {Some natural bigraded {$S_n$}-modules and {$q,t$}-{K}ostka
              coefficients},
      NOTE = {The Foata Festschrift},
   JOURNAL = {Electron. J. Combin.},
  FJOURNAL = {Electronic Journal of Combinatorics},
    VOLUME = {3},
      YEAR = {1996},
    NUMBER = {2},
     PAGES = {Research Paper 24, approx. 60},
      ISSN = {1077-8926},
   MRCLASS = {05E10 (05E05)},
  MRNUMBER = {1392509},
MRREVIEWER = {Edward\ E.\ Allen},
       DOI = {10.37236/1282},
       URL = {https://doi.org/10.37236/1282},
}

@article {GH96Catalan,
    AUTHOR = {Garsia, A. M. and Haiman, M.},
     TITLE = {A remarkable {$q,t$}-{C}atalan sequence and {$q$}-{L}agrange
              inversion},
   JOURNAL = {J. Algebraic Combin.},
  FJOURNAL = {Journal of Algebraic Combinatorics. An International Journal},
    VOLUME = {5},
      YEAR = {1996},
    NUMBER = {3},
     PAGES = {191--244},
      ISSN = {0925-9899,1572-9192},
   MRCLASS = {05E15 (05A10 05A30 05E10)},
  MRNUMBER = {1394305},
MRREVIEWER = {Edward\ E.\ Allen},
       DOI = {10.1023/A:1022476211638},
       URL = {https://doi.org/10.1023/A:1022476211638},
}

@article{GHQR19,
  title={$ e $-positivity results and conjectures},
  author={Garsia, Adriano and Haglund, James and Qiu, Dun and Romero, Marino},
  journal={arXiv preprint arXiv:1904.07912},
  year={2019}
}

@article {GKM04,
    AUTHOR = {Goresky, Mark and Kottwitz, Robert and Macpherson, Robert},
     TITLE = {Homology of affine {S}pringer fibers in the unramified case},
   JOURNAL = {Duke Math. J.},
  FJOURNAL = {Duke Mathematical Journal},
    VOLUME = {121},
      YEAR = {2004},
    NUMBER = {3},
     PAGES = {509--561},
      ISSN = {0012-7094,1547-7398},
   MRCLASS = {14M15 (14L30)},
  MRNUMBER = {2040285},
MRREVIEWER = {Shrawan\ Kumar},
       DOI = {10.1215/S0012-7094-04-12135-9},
       URL = {https://doi.org/10.1215/S0012-7094-04-12135-9},
}

@article {Hai01,
    AUTHOR = {Haiman, Mark},
     TITLE = {Hilbert schemes, polygraphs and the {M}acdonald positivity
              conjecture},
   JOURNAL = {J. Amer. Math. Soc.},
  FJOURNAL = {Journal of the American Mathematical Society},
    VOLUME = {14},
      YEAR = {2001},
    NUMBER = {4},
     PAGES = {941--1006},
      ISSN = {0894-0347,1088-6834},
   MRCLASS = {14C05 (05E05 20C30 33D45)},
  MRNUMBER = {1839919},
MRREVIEWER = {Claudio\ Procesi},
       DOI = {10.1090/S0894-0347-01-00373-3},
       URL = {https://doi.org/10.1090/S0894-0347-01-00373-3},
}

@article {Hai02,
    AUTHOR = {Haiman, Mark},
     TITLE = {Vanishing theorems and character formulas for the {H}ilbert
              scheme of points in the plane},
   JOURNAL = {Invent. Math.},
  FJOURNAL = {Inventiones Mathematicae},
    VOLUME = {149},
      YEAR = {2002},
    NUMBER = {2},
     PAGES = {371--407},
      ISSN = {0020-9910,1432-1297},
   MRCLASS = {14C05 (05E05 14F17 14R20)},
  MRNUMBER = {1918676},
MRREVIEWER = {Claudio\ Procesi},
       DOI = {10.1007/s002220200219},
       URL = {https://doi.org/10.1007/s002220200219},
}

@article {HHLRU05,
    AUTHOR = {Haglund, J. and Haiman, M. and Loehr, N. and Remmel, J. B. and
              Ulyanov, A.},
     TITLE = {A combinatorial formula for the character of the diagonal
              coinvariants},
   JOURNAL = {Duke Math. J.},
  FJOURNAL = {Duke Mathematical Journal},
    VOLUME = {126},
      YEAR = {2005},
    NUMBER = {2},
     PAGES = {195--232},
      ISSN = {0012-7094,1547-7398},
   MRCLASS = {05E10 (05A30 20C30)},
  MRNUMBER = {2115257},
MRREVIEWER = {Edward\ E.\ Allen},
       DOI = {10.1215/S0012-7094-04-12621-1},
       URL = {https://doi.org/10.1215/S0012-7094-04-12621-1},
}

@article {Hik14,
    AUTHOR = {Hikita, Tatsuyuki},
     TITLE = {Affine {S}pringer fibers of type {$A$} and combinatorics of
              diagonal coinvariants},
   JOURNAL = {Adv. Math.},
  FJOURNAL = {Advances in Mathematics},
    VOLUME = {263},
      YEAR = {2014},
     PAGES = {88--122},
      ISSN = {0001-8708,1090-2082},
   MRCLASS = {14L35 (05Exx)},
  MRNUMBER = {3239135},
MRREVIEWER = {Anthony\ Henderson},
       DOI = {10.1016/j.aim.2014.06.011},
       URL = {https://doi.org/10.1016/j.aim.2014.06.011},
}

@article{HL05,
    AUTHOR = {Haglund, J. and Loehr, N.},
     TITLE = {A conjectured combinatorial formula for the {H}ilbert series
              for diagonal harmonics},
   JOURNAL = {Discrete Math.},
  FJOURNAL = {Discrete Mathematics},
    VOLUME = {298},
      YEAR = {2005},
    NUMBER = {1-3},
     PAGES = {189--204},
      ISSN = {0012-365X,1872-681X},
   MRCLASS = {05E05 (05A15 05C30 05E15)},
  MRNUMBER = {2163448},
MRREVIEWER = {Edward\ E.\ Allen},
       DOI = {10.1016/j.disc.2004.01.022},
       URL = {https://doi.org/10.1016/j.disc.2004.01.022},
}

@article {KKR88,
    AUTHOR = {Kerov, S. V. and Kirillov, A. N. and Reshetikhin, N. Yu.},
     TITLE = {Combinatorics, the {B}ethe ansatz and representations of the
              symmetric group},
   JOURNAL = {Zap. Nauchn. Sem. Leningrad. Otdel. Mat. Inst. Steklov.
              (LOMI)},
  FJOURNAL = {Zapiski Nauchnykh Seminarov Leningradskogo Otdeleniya
              Matematicheskogo Instituta imeni V. A. Steklova Akademii Nauk
              SSSR (LOMI)},
    VOLUME = {155},
      YEAR = {1986},
     PAGES = {50--64, 193},
      ISSN = {0373-2703},
   MRCLASS = {82A15 (05A17 20C30 81C40)},
  MRNUMBER = {869576},
MRREVIEWER = {Leonid\ Krop},
       DOI = {10.1007/BF01247087},
       URL = {https://doi.org/10.1007/BF01247087},
}

@article{Kre72,
  title={Sur les partitions non crois{\'e}es d'un cycle},
  author={Kreweras, Germain},
  journal={Discrete mathematics},
  volume={1},
  number={4},
  pages={333--350},
  year={1972},
  publisher={Elsevier}
}

@article{LN14,
    AUTHOR = {Loehr, Nicholas A. and Niese, Elizabeth},
     TITLE = {New combinatorial formulations of the shuffle conjecture},
   JOURNAL = {Adv. in Appl. Math.},
  FJOURNAL = {Advances in Applied Mathematics},
    VOLUME = {55},
      YEAR = {2014},
     PAGES = {22--47},
      ISSN = {0196-8858,1090-2074},
   MRCLASS = {05E05 (05E10)},
  MRNUMBER = {3176715},
MRREVIEWER = {Timothy\ Y.\ Chow},
       DOI = {10.1016/j.aam.2013.12.003},
       URL = {https://doi.org/10.1016/j.aam.2013.12.003},
}

@article {Mel18,
    AUTHOR = {Mellit, Anton},
     TITLE = {Integrality of {H}ausel-{L}etellier-{V}illegas kernels},
   JOURNAL = {Duke Math. J.},
  FJOURNAL = {Duke Mathematical Journal},
    VOLUME = {167},
      YEAR = {2018},
    NUMBER = {17},
     PAGES = {3171--3205},
      ISSN = {0012-7094,1547-7398},
   MRCLASS = {14H60 (05E05 14D20)},
  MRNUMBER = {3874651},
MRREVIEWER = {Zhenbo\ Qin},
       DOI = {10.1215/00127094-2018-0030},
       URL = {https://doi.org/10.1215/00127094-2018-0030},
}

@article {Mel20,
    AUTHOR = {Mellit, Anton},
     TITLE = {Poincar\'e{} polynomials of character varieties, {M}acdonald
              polynomials and affine {S}pringer fibers},
   JOURNAL = {Ann. of Math. (2)},
  FJOURNAL = {Annals of Mathematics. Second Series},
    VOLUME = {192},
      YEAR = {2020},
    NUMBER = {1},
     PAGES = {165--228},
      ISSN = {0003-486X,1939-8980},
   MRCLASS = {14H60 (14D20 14M35)},
  MRNUMBER = {4125451},
MRREVIEWER = {Francesco\ Bottacin},
       DOI = {10.4007/annals.2020.192.1.3},
       URL = {https://doi.org/10.4007/annals.2020.192.1.3},
}

@article{KLO22,
  title={Toward {B}utler's conjecture},
  author={Kim, Donghyun and Lee, Seung Jin and Oh, Jaeseong},
  journal={arXiv preprint arXiv:2212.09419},
  year={2022}
}

@misc{Sloane,
    Author = {{Sloane Neil}},
    Note = {\url{http://oeis.org}},
    Title = {The {O}n-{L}ine {E}ncyclopedia of {I}nteger {S}equences},}

@article{OSW22,
  title={Wreath {M}acdonald operators},
  author={Orr, Daniel and Shimozono, Mark and Wen, Joshua },
  journal={preprint, arXiv:2211.03851},
  year={2022}
}

@article{Wen19,
  title={Wreath {M}acdonald polynomials as eigenstates},
  author={Wen, Joshua},
  journal={preprint, arXiv:1904.05015},
  year={2019}
}

\end{document}